\DeclareMathOperator{\col}{\colon}
\DeclareMathOperator{\Pbb}{\mathbb{P}}
\DeclareMathOperator{\Ncl}{\mathcal{N}}
\newcommand{\inj}{\hookrightarrow}
\newtheorem{theorem}{Theorem}[section]
\newtheorem{lemma}[theorem]{Lemma}
\newtheorem{corollary}[theorem]{Corollary}
\newtheorem{conjecture}[theorem]{Conjecture}
\newtheorem{proposition}[theorem]{Proposition}
\newtheorem{question}[theorem]{Question}
\newtheorem*{claim*}{Claim}
\newtheorem{claim}[theorem]{Claim}
\theoremstyle{definition}
\newtheorem{definition}[theorem]{Definition}
\theoremstyle{theorem}
\newtheorem{theoremA}{Theorem}
\newtheorem{corollaryA}[theoremA]{Corollary}
\newtheorem*{rep@theorem}{\rep@title}
\newcommand{\newreptheorem}[2]{%
	\newenvironment{rep#1}[1]{%
		\def\rep@title{#2 \ref{##1}}%
		\begin{rep@theorem}}%
		{\end{rep@theorem}}}
\theoremstyle{remark}
\newtheorem{remark}[theorem]{Remark}
\newtheorem{example}[theorem]{Example}
\newcounter{joecomments}
\newcounter{ncomments}
\def\Icl{\mathcal I}
\def\Z{\mathbb Z}
\def\N{\mathbb N}
\def\R{\mathbb R}
\def\Q{\mathbb Q}
\def\scl{\mathrm{scl}}
\def\fsn{\mathrm{fsn}}
\def\gb{\bar{g}}
\def\defeq{\vcentcolon=}
\def\Scl{\mathcal{S}}
\def\wtt{\texttt{\rm w}}
\def\xtt{\texttt{\rm x}}
\def\att{\texttt{\rm a}}
\def\btt{\texttt{\rm b}}
\def\vtt{\texttt{\rm v}}
\def\Arm{\mathrm{A}}
\def\Crm{\mathrm{C}}
\def\Vrm{\mathrm{V}}
\def\Erm{\mathrm{E}}
\def\Gcl{\mathcal{G}}
\def\supp{\mathrm{supp}}
\def\opp{\mathrm{opp}}
\def\pf{\mathrm{pf}}
\def\st{\mathrm{St}}
\def\Lk{\mathrm{Lk}}
\numberwithin{equation}{section}
\title{Stable commutator length in right-angled Artin and Coxeter groups}
\author{Lvzhou Chen}
\address{Department of Mathematics\\ The University of Texas at Austin \\ Austin, TX, USA}
\email[L.~Chen]{lvzhou.chen@math.utexas.edu}
\author{Nicolaus Heuer}
\address{Department of Pure Mathematics and Mathematical Statistics \\
	Centre for Mathematical Sciences \\
	University of Cambridge}
\email[N.~Heuer]{nh441@cam.ac.uk}
\date{}
\begin{document}
\maketitle

\begin{abstract}
We establish a spectral gap for stable commutator length (scl) of integral chains in right-angled Artin groups (RAAGs).
We show that this gap is \emph{not} uniform, i.e.\ there are RAAGs and integral chains with scl arbitrarily close to zero. 
We determine the size of this gap up to a multiplicative constant in terms of the 
\emph{opposite path length} of the defining graph. 
This result is in stark contrast with the known uniform gap $1/2$ for elements in RAAGs.
We prove an analogous result for right-angled Coxeter groups.

In a second part of this paper we relate certain integral chains in RAAGs to the \emph{fractional stability number} of graphs. 
This has several consequences:
Firstly, we show that every rational number $q \geq 1$ arises as the stable commutator length of an integral chain in some RAAG. 
Secondly, we show that computing scl of elements and chains in RAAGs is NP hard. 
Finally, we heuristically relate the distribution of $\scl$ for random elements in the free group to the distribution of fractional stability number in random graphs.

We prove all of our results in the general setting of graph products. In particular all above results hold verbatim for right-angled Coxeter groups. 
\end{abstract}

\section{Introduction}

The stable commutator length is a relative version of the Gromov--Thurston norm. 
For a finite collection of loops $\gamma_1,\cdots,\gamma_k$ in a topological space $X$, its stable commutator length is the least complexity of surfaces bounding it, 
measured in terms of Euler characteristics (see Definition \ref{defn: scl via euler charac}). 
This only depends on the fundamental group $G=\pi_1(X)$ and the conjugacy classes $g_1,\cdots,g_k$ representing the free homotopy classes of $\gamma_1,\cdots,\gamma_k$, 
and it is denoted as $\scl_G(g_1+\cdots+g_k)$. We call this the stable commutator length of the (integral) chain $g_1+\cdots+g_k$.

%

The stable commutator length arises naturally in geometry, topology and dynamics and has seen a vast development in recent years by Calegari and others 
\cite{Cal:rational,CF:sclhypgrp,BBF,Chen:sclBS,HL2020spectrum}.

A group $G$ has a \emph{spectral gap $C>0$ for elements} (resp. \emph{chains}) if $\scl_G(g)\notin(0,C)$ for any element (resp. any chain) $g$ in $G$.
The largest such $C$ is called the \emph{optimal} spectral gap of $G$ for elements (resp. \emph{chains}).
Various kinds of groups are known to have a gap for elements: word-hyperbolic groups \cite{CF:sclhypgrp}, finite index subgroups of mapping class groups \cite{BBF}, 
subgroups of right-angled Artin groups (defined below) \cite{Heuer}, and $3$-manifold groups \cite{CH:sclgap}; see Theorem \ref{thm: gap for elements summary}.
The spectral gap property can be used to obstruct group homomorphisms since the stable commutator length is non-increasing under homomorphisms.

In contrast, much less is known about spectral gaps for chains.
Calegari--Fujiwara \cite{CF:sclhypgrp} showed that hyperbolic groups have a spectral gap for chains.
Their estimates have been made uniform and explicit in certain families of hyperbolic groups (Theorem \ref{thm: gap for chains summary}).
To our best knowledge, all previously known nontrivial examples with a spectral gap for chains are direct products of hyperbolic groups.

In this article we establish a \emph{spectral gap} for chains in right-angled Artin groups. 
The \emph{right-angled Artin group $\Arm(\Gamma)$} associated to a simplicial graph $\Gamma$ is the group with presentation
$$
\Arm(\Gamma) = \langle \Vrm(\Gamma) \mid [v,w] ; (v,w) \in \Erm(\Gamma) \rangle,
$$
which is not hyperbolic unless the graph contains no edge.
Such groups are of importance due to their rich subgroup structure \cite{wise, HaglundWise, Agol, martin1, martin2}.


The gap is controlled by an invariant $\Delta(\Gamma)\ge0$ of the defining simplicial graph $\Gamma$ that we introduce, 
called \emph{the opposite path length}; see Section \ref{subsec: spectral gaps}.

\begin{theoremA} \label{theorem:main for raags}
	Let $G$ be the right-angled Artin 
	group associated to a simplicial graph $\Gamma$.
	Then the optimal spectral gap for integral chains in $G$ is at least $\frac{1}{24 + 12 \Delta(\Gamma)}$ and at most $\frac{1}{\Delta(\Gamma)}$.
\end{theoremA}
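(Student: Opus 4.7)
My plan splits the theorem into its upper and lower bounds, which call for very different techniques.

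For the upper bound $\scl_G\le 1/\Delta(\Gamma)$, the approach is purely constructive: I would exhibit an explicit integral chain whose scl realises this value. The natural input is a witnessing opposite path $v_0,v_1,\ldots,v_n$ of length $n=\Delta(\Gamma)$, so that consecutive vertices do not commute in $\Arm(\Gamma)$. Using these vertex elements one builds a chain $c$ (a plausible candidate being a signed sum involving the products $v_iv_{i+1}$, or a sum of commutators $[v_i,v_{i+1}]$) together with an admissible surface. Each non-commuting pair generates a rank-$2$ free subgroup $\langle v_i,v_{i+1}\rangle\cong F_2$ in which the relevant commutator bounds a once-punctured torus; gluing $n$ such pieces along the curves corresponding to intermediate vertices $v_1,\ldots,v_{n-1}$ produces a surface mapping to the Salvetti complex of $\Arm(\Gamma)$ whose Euler characteristic and winding number yield $\scl_G(c)\le 1/n$.

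For the lower bound $\scl_G\ge 1/(24+12\Delta(\Gamma))$, I would invoke Bavard duality
\[
\scl_G(c)=\sup_\phi\frac{\phi(c)}{2D(\phi)},
\]
the supremum being taken over homogeneous quasimorphisms, and construct for an arbitrary chain $c$ with positive scl a quasimorphism $\phi$ pairing nontrivially with $c$ and having defect $D(\phi)$ controlled linearly in $\Delta(\Gamma)$. The natural building blocks are Brooks-type counting quasimorphisms supported on the free subgroup $\langle v,w\rangle\cong F_2$ spanned by any two non-commuting vertices, promoted to the whole of $\Arm(\Gamma)$ by an averaging procedure compatible with the graph-product normal form. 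The opposite path length enters because it bounds how many non-commuting layers can appear in a cyclically reduced representative of an element before they must interfere; each additional layer contributes a bounded amount of defect, so the overall defect scales with $\Delta(\Gamma)$ and the explicit constants $24$ and $12$ emerge from tracking these contributions.

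The main obstacle is the lower bound, specifically the passage from elements to chains. Heuer's earlier $\tfrac12$-gap for single elements in RAAGs exploits a clean dichotomy between commuting and non-commuting configurations, but cancellation among the summands of a chain can break this dichotomy, so I expect to need a careful normal form for integral chains together with a case analysis: either the chain is, up to commutators, supported in a proper special subgroup, in which case one reduces to a smaller graph by induction on $|\Vrm(\Gamma)|$, or it genuinely spreads across the full opposite path of length $\Delta(\Gamma)$, in which case a global quasimorphism must be assembled from local $F_2$-data. Making this alternative precise, and then tracking all the defects finely enough to land at the explicit bound $1/(24+12\Delta(\Gamma))$, is where most of the technical work will sit.
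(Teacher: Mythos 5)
Your high-level architecture (explicit chain for the upper bound; Bavard duality plus Brooks-type quasimorphisms plus a ``vertex-supported vs.\ spread out'' dichotomy and induction on the graph for the lower bound) does resemble the paper's, but both halves have genuine gaps. For the upper bound, the candidate chains you name would not work: a sum of commutators $[v_i,v_{i+1}]$, or a signed sum of pairwise products $v_iv_{i+1}$, has scl bounded below by roughly $1/2$ (each commutator alone already has scl $1/2$), not $1/\Delta(\Gamma)$. The chain the paper uses is $\delta_m=g_{0,m}-g_{0,m-1}-g_{1,m}+g_{1,m-1}$ with $g_{i,j}=g_i\cdots g_j$ \emph{long products along the whole opposite path}, and the estimate $\scl(\delta_m)\le 1/m$ comes from the algebraic identity $g_{i,m}^m=g_{i,m-1}^m\,c_m$ (Lemma \ref{lemma: the c_m}), valid because $v_j$ commutes with $v_k$ exactly when $|j-k|\ge 2$, with the \emph{same} correction term $c_m$ for $i=0$ and $i=1$; two pants chains and the triangle inequality then give $\scl(m\delta_m)\le 1$. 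Your surface sketch (gluing once-punctured tori along intermediate vertex curves) neither produces degree $m$ nor exploits this cancellation. You also need to certify that the exhibited chain has \emph{positive} scl (otherwise it says nothing about the optimal gap); in the paper this comes for free from the lower-bound theorem since $\delta_m$ is not equivalent to a vertex chain.

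For the lower bound, the mechanism you propose for how $\Delta(\Gamma)$ enters is not the one that works, and the hard step is missing. In the actual argument the Brooks-type quasimorphism (generalized to the amalgam $\Arm(\Gamma)=\Arm(\mathrm{St}(v))\star_{\Arm(\mathrm{Lk}(v))}\Arm(\Gamma\setminus v)$, counting subwords in $CwC$ with $w=g^N$) has defect uniformly bounded by $3$ (hence $6$ after homogenization), independent of $\Delta(\Gamma)$; the dependence on $\Delta$ enters through the exponent $N$: either $\scl(c)\ge \frac{1}{12N}$ or some term satisfies $g^N=h^kh'd$ with $h$ a cyclic conjugate of the inverse of another term, $h'$ a prefix, $d$ in the edge group (Theorem \ref{thm: no long pairing}). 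Your heuristic ``each layer contributes bounded defect, so the defect scales with $\Delta(\Gamma)$'' would, even if realized, have to be paired with a uniform lower bound on $\phi(c)$, and you give no construction achieving either. More importantly, the genuinely hard part is ruling out the exceptional equation for $N=\Delta(\Gamma)+2$ after normalizing the chain: this requires solving $g^N=h^kh'd$ over the edge subgroup $\Arm(\mathrm{Lk}(v))$, which the paper does via the BCMS-$D$/centralizer machinery (CM-representatives, nested centralizer sequences, and the fact that a centralizer chain of length $m$ forces an induced $\Delta_m\subset\Gamma$ — this is exactly where the opposite path length appears), together with a reduction of the chain to one with no proper powers, no inverse pairs up to commuting edge-group factors, and CM-reduced terms. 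None of this, nor any substitute for it, appears in your plan; without it the dichotomy ``reduce to a smaller graph or pair nontrivially with a quasimorphism'' cannot be closed, and the constant $\frac{1}{24+12\Delta(\Gamma)}$ is not reachable from the ingredients you list.
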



The gap cannot be uniform among all right-angled Artin groups as there is an explicit finite graph $\Delta_m$ with $\Delta(\Delta_m)=m$ for any $m\in\Z_+$.
The nonuniformness of the gap for chains is striking since right-angled Artin groups are known to have a uniform spectral gap $1/2$ for elements \cite{Heuer,RAAGgap1}. 
This is the first class of groups where the optimal gap for chains is known to be different from the optimal gap for elements.
Using the nonuniformness, we construct countable groups where this difference becomes more apparent (Section \ref{subsec:gps new scl spec}).

We prove these results in the much more general setting of graph products (Theorem \ref{theorem: graph prod}).
In particular, Theorem \ref{theorem:main for raags} holds verbatim for right-angled Coxeter groups, which are defined in the same way as right-angled Artin groups except that generators have order $2$.
For right-angled Coxeter groups, no gap was previously known in general, even for elements. 

For a simplicial graph $\Gamma$ we will construct a graph $D_{\Gamma}$ and a chain $c_{\Gamma}$ in $\Arm(D_{\Gamma})$, called the \emph{double chain of $\Gamma$}; see Definition \ref{def:double chain}.
We will relate the stable commutator length of this chain linearly to the \emph{fractional stability number} (Definition \ref{def: fsn}) of $\Gamma$ (Theorem \ref{theorem:scl and fsn}).
The latter invariant is well studied \cite{scheinerman2011fractional}. It is known that computing the fractional stability number is NP hard \cite{grotschel1981ellipsoid} and that every rational number $q \geq 2$ is the fractional stability number of some graph \cite[Proposition 3.2.2]{scheinerman2011fractional}.
As consequences of this connection, we obtain the following two theorems.

\begin{theoremA}[NP-hardness, Theorem \ref{thm: np hard sec}] \label{theorem:np hard}
	Unless P=NP, there is no algorithm that, given a simplicial graph $\Gamma$, an element $w \in \Arm(\Gamma)$ and a rational number $q \in \Q^+$, decides if $\scl_{\Arm(\Gamma)}(w) \leq q$ with polynomial run time in $|\Vrm(\Gamma)| + |w|$. The same holds for chains. 
\end{theoremA}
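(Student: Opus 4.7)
The approach is a polynomial-time reduction from the fractional stability number decision problem, shown to be NP-hard in~\cite{grotschel1981ellipsoid}. The engine is Theorem~\ref{theorem:scl and fsn}, which to every simplicial graph $\Gamma$ attaches an explicit graph $D_\Gamma$ and an integral chain $c_\Gamma \in \Arm(D_\Gamma)$ satisfying an affine identity
\[
\scl_{\Arm(D_\Gamma)}(c_\Gamma) \;=\; \alpha\cdot\fsn(\Gamma)+\beta
\]
for explicit rational constants $\alpha>0$ and $\beta$ (e.g.\ of the form $1/2$ and $0$ up to normalisation).

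First, I would check that the map $\Gamma \mapsto (D_\Gamma, c_\Gamma)$ is computable in polynomial time and that both $|\Vrm(D_\Gamma)|$ and the total word length of $c_\Gamma$ are bounded by a polynomial in $|\Vrm(\Gamma)|$. This should be immediate from inspecting the explicit double-chain construction (Definition~\ref{def:double chain}), since $D_\Gamma$ consists of doubling and minor combinatorial gadgets, and $c_\Gamma$ is a sum of $O(|\Vrm(\Gamma)|)$ generators.

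Next, I would prove the chain version. Suppose for contradiction an algorithm $\mathcal{A}$ decided $\scl_{\Arm(\Gamma')}(c)\le q$ in time polynomial in $|\Vrm(\Gamma')|+|c|$. On input $(\Gamma,q)$ to the fractional stability number decision problem, I would compute $(D_\Gamma,c_\Gamma)$ in polynomial time and call $\mathcal{A}$ on $(D_\Gamma, c_\Gamma, \alpha q + \beta)$. Monotonicity of the affine relation turns the answer into a decision for $\fsn(\Gamma) \leq q$, contradicting NP-hardness.

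Finally, the element version reduces to the chain version by converting $c_\Gamma = g_1 + \cdots + g_k$ into a single element. A natural choice is to adjoin an isolated vertex $t$, set $D'_\Gamma \defeq D_\Gamma \sqcup \{t\}$ so that $\Arm(D'_\Gamma)\cong\Arm(D_\Gamma)*\Z$, and take
\[
w \;\defeq\; g_1 t g_2 t \cdots g_k t \;\in\; \Arm(D'_\Gamma).
\]
Using the well-understood behaviour of $\scl$ on free products (see~\cite{Cal:rational}), $\scl_{\Arm(D'_\Gamma)}(w)$ is an explicit affine function of $\scl_{\Arm(D_\Gamma)}(c_\Gamma)$, so composing this with the chain reduction yields an element reduction of polynomial size. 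The main obstacle is pinning down this chain-to-element identity with the correct explicit constants, so that deciding $\scl(w) \leq q''$ really decides $\scl(c_\Gamma) \leq q$ and hence $\fsn(\Gamma)\leq q'$; the remaining ingredients (polynomial bookkeeping and the cited NP-hardness of $\fsn$) are routine.
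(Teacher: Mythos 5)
Your chain version coincides with the paper's argument: reduce from the NP-hard computation of $\fsn$ \cite{grotschel1981ellipsoid} via the polynomial-size double graph and double chain (Definition \ref{def:double chain}) together with the identity $\scl_{\Arm(D_\Gamma)}(d_\Gamma)=\tfrac{1}{2}\fsn(\Gamma)$ of Theorem \ref{thm: scl and fsn}; the bookkeeping you describe (affine rescaling of the threshold, polynomial size of $D_\Gamma$ and $d_\Gamma$) is exactly what is needed there.

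The element version, however, has a genuine gap. With a single new isolated vertex $t$ and $w=g_1tg_2t\cdots g_kt$, the exponent sum of $t$ in $w$ is $k\neq 0$, so $w$ represents a nontrivial class in $H_1\bigl(\Arm(D_\Gamma)\star\langle t\rangle;\Q\bigr)$; by the convention of Definition \ref{defn: scl via euler charac} this forces $\scl(w)=+\infty$ (equivalently, no power of $w$ is a product of commutators). Hence no identity of the form $\scl(w)=a\,\scl_{\Arm(D_\Gamma)}(d_\Gamma)+b$ can hold, and querying an algorithm with $(w,q'')$ yields no information about $\fsn(\Gamma)$. This is not a matter of ``pinning down the constants'': the encoding itself must be changed. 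The correct device, and the one the paper uses, is Proposition \ref{prop: formulas for scl} (i.e.\ \cite[Theorem 2.101]{Cal:sclbook}): adjoin $k-1$ \emph{distinct} isolated vertices $t_1,\dots,t_{k-1}$ (the paper adds $|\Vrm(\Gamma)|$ of them, so that $\Arm(\tilde D_\Gamma)\cong \Arm(D_\Gamma)\star F_{|\Vrm(\Gamma)|}$) and take the single element $\tilde d = g_1\prod_{i=1}^{k-1}t_i g_{i+1}t_i^{-1}$, in which each stable letter cancels in homology and which satisfies $\scl(\tilde d)=\scl(d_\Gamma)+\frac{k-1}{2}$. This element is still of polynomial size, and substituting it for your $w$ makes the rest of your reduction go through verbatim.
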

This is in stark contrast to the case of free groups, as there is an algorithm by Calegari computing stable commutator length with polynomial run time in the word length of the input \cite{Cal:rational, Cal:scallop}; also compare to \cite{heuer2020computing}. 

\begin{theoremA}[Rational Realization, Theorem \ref{thm: rat real sec}] \label{theorem:rat realize}
	For every rational $q \in \Q_{\geq 1}$ there is an integral chain $c$ in a right-angled Artin group $A(\Gamma)$ such that $\scl_{\Arm(\Gamma)}(c) = q$.
\end{theoremA}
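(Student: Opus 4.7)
The plan is to reduce the realization problem to the analogous question for the fractional stability number via the double chain construction. By Theorem \ref{theorem:scl and fsn}, the map $\Gamma \mapsto \scl_{\Arm(D_\Gamma)}(c_\Gamma)$ is an explicit affine (linear) function of $\fsn(\Gamma)$. The classical result cited from \cite{scheinerman2011fractional} tells us that $\fsn\col \{\text{simplicial graphs}\} \to \Q_{\geq 2}$ is surjective. Composing, we obtain an explicit set of rationals that arise as $\scl$ of some double chain, and the first thing I would do is compute which subset of $\Q_{\geq 1}$ this produces.

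Second, I would use the homogeneity of $\scl$ to fill in the remaining rationals. Recall that $\scl_G(n\cdot c) = n \cdot \scl_G(c)$ for every positive integer $n$ and every integral chain $c$, so if some positive rational $r$ is achieved as $\scl$ of a double chain, then every rational of the form $nr$ is achieved (in the same RAAG). More flexibly, one can take disjoint unions of graphs and the corresponding chains: the double chain $c_{\Gamma_1 \sqcup \Gamma_2}$ lies naturally in $\Arm(D_{\Gamma_1}) * \Arm(D_{\Gamma_2})$, and since $\scl$ is additive over free products on chains supported in distinct factors, one can add achievable values. Between homogeneity (multiplying by $n$) and additivity over free products (adding achievable values), the set of achievable rationals is a submonoid of $(\Q_{>0},+)$ that already contains all rationals $\geq $ some explicit bound.

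Third, the only remaining issue is achieving rationals in some bounded initial interval $[1,N]$ that might not be directly in the image. Here I would exploit additivity more finely: given a target $q \in \Q_{\geq 1}$, write $q = r_1 + \cdots + r_k$ where each $r_i$ lies in the image of $\scl$ on double chains (for instance by first subtracting off a large multiple of a small achievable value and then expressing the remainder as another achievable value, using that the image set in $\Q_{\geq 2}$ contains arbitrarily close-to-integer rationals with both numerator and denominator freely prescribed through the graphs constructed in \cite[Proposition 3.2.2]{scheinerman2011fractional}). The result is a chain $c = c_1 + \cdots + c_k$ in the free product $\Arm(D_{\Gamma_1}) * \cdots * \Arm(D_{\Gamma_k})$, and this free product is itself a right-angled Artin group, namely $\Arm(D_{\Gamma_1} \sqcup \cdots \sqcup D_{\Gamma_k})$, so $c$ is an integral chain in a single RAAG as required.

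The main obstacle I anticipate is the third step: verifying that after accounting for the precise constants in the linear relation of Theorem \ref{theorem:scl and fsn}, the submonoid of $\Q$ generated by the values $\{\scl_{\Arm(D_\Gamma)}(c_\Gamma) : \Gamma\}$ together with positive integer scaling equals all of $\Q_{\geq 1}$, with no small-denominator gaps left uncovered. If the affine function has a nontrivial constant term, some care is needed to ensure that small rationals close to $1$ are also hit; in that case I would either introduce a fixed ``offset'' chain whose $\scl$ cancels the constant, or argue directly using graphs of small fractional stability number and rescaling by integers.
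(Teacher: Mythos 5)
Your proposal is correct and follows essentially the same route as the paper: combine Theorem \ref{theorem:scl and fsn} with the fact that every rational $q \geq 2$ is realized as a fractional stability number (which the paper reproves with explicit graphs in Proposition \ref{prop: rat realization of fsn}). Since the relation is exactly $\scl_{\Arm(D_\Gamma)}(d_\Gamma)=\tfrac{1}{2}\fsn(\Gamma)$ with no constant term, this composition already hits every rational $q\geq 1$, so the auxiliary homogeneity and free-product steps in your second and third paragraphs are not needed.
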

In the case of free groups, it is an unsolved conjecture of Calegari--Walker that the set of values of stable commutator lengths is dense in some intervals.

In the following subsections we will describe the generalization of our results to graph products, and collect some further results.

\subsection{Spectral Gaps for Integral Chains: Overview of the proof}\label{subsec: spectral gaps}
We now state the generalization of Theorem \ref{theorem:main for raags} to graph products and describe the main steps in its proof.

For a simplicial graph $\Gamma$, let $\{ G_v \}_{v \in \Vrm(\Gamma)}$ be a family of groups indexed by the vertex set $\Vrm(\Gamma)$ of $\Gamma$. 
The \emph{graph product} for this data is the free product $\star_{v \in \Vrm(\Gamma)} G_v$ subject to the relations $[g_v,h_w]$ 
for every $g_v \in G_v$ and $h_w \in G_w$ whenever $(v,w) \in \Erm(\Gamma)$ is an edge of $\Gamma$.
Graph products are generalizations of both right-angled Artin groups (which have vertex groups $\Z$) and right-angled Coxeter groups (which have vertex groups $\Z/2$).

For an integer $m\ge1$, the \emph{opposite path of length $m$} is the simplicial graph $\Delta_m$ with vertex set $\Vrm(\Delta_m) = 
\{ v_0, \ldots, v_m \}$ and edge set $\Erm(\Delta_m) = \{ (v_i, v_j) \mid |i-j| \geq 2 \}$.
We define the \emph{opposite path length} of a simplicial graph $\Gamma$ to be
$$
\Delta(\Gamma) \defeq \max \{ m \mid \Delta_m \mbox{ is an induced subgraph of } \Gamma \}.
$$
Here a subgraph $\Lambda$ of $\Gamma$ is induced if any edge in $\Gamma$ connecting $u,v\in \Lambda$ belongs to $\Lambda$.

\begin{theoremA}[Theorem \ref{thm: gap for graph products}]\label{theorem: graph prod}
	Let $\Gamma$ be a simplicial graph, let $\{ G_v \}_{v \in \Vrm(\Gamma)}$ be a family of groups and let $\Gcl(\Gamma)$ be the associated graph product. 
	If $c$ is an integral chain in $\Gcl(\Gamma)$ then either
	$\scl_{\Gcl(\Gamma)} (c) \geq \frac{1}{12\Delta(\Gamma)+24}$ or $c$ is equivalent (see below) to a chain supported on the vertex groups, 
	called \emph{a vertex chain}. For vertex chains, there is an algorithm to compute $\scl_{\Gcl(\Gamma)}(c)$ in terms of the stable commutator lengths in the vertex groups.
	
	Moreover, there is an integral chain $\delta$ on $\Gcl(\Gamma)$ such that 
	$$
	\frac{1}{12(\Delta(\Gamma)+2)} \leq \scl_{\Gcl(\Gamma)}(\delta) \leq \frac{1}{\Delta(\Gamma)}.$$
\end{theoremA}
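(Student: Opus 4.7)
The plan is to analyze admissible surfaces for a chain $c$ in $\Gcl(\Gamma)$ using a $\CAT$ cube complex $X_\Gamma$ of Salvetti/Davis type on which $\Gcl(\Gamma)$ acts: each vertex $v \in \Vrm(\Gamma)$ contributes a family of hyperplanes labelled by $v$, with commuting relations encoded by edges of $\Gamma$. After putting the representatives of $c$ into Green's cyclically reduced normal form, an admissible surface $f \colon S \to X_\Gamma / \Gcl(\Gamma)$ pulls hyperplanes back to a properly embedded $1$-manifold on $S$, cutting $S$ into pieces each labelled by a vertex of $\Gamma$; the adjacency pattern of these labels along $S$ is constrained by the edge structure of $\Gamma$.

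For the lower bound I would show that if $c$ is not equivalent to a vertex chain then the decomposition of any admissible surface $S$ must contain a chain of adjacent pieces whose labels form an induced opposite path in $\Gamma$. Since no such induced path has length greater than $\Delta(\Gamma)$, the number of \emph{essentially distinct} label layers that can be threaded consecutively along $S$ is bounded by $\Delta(\Gamma)+2$. A disk-pushing and surface-cleaning argument then forces each boundary component of $S$ to contribute a definite amount of negative Euler characteristic, producing a linear estimate of the form $-\chi(S) \geq \frac{n(S)}{12\Delta(\Gamma)+24}$, where $n(S)$ is the total boundary degree. This yields $\scl_{\Gcl(\Gamma)}(c) \geq \frac{1}{12\Delta(\Gamma)+24}$.

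For vertex chains I would exploit the graph-product retractions $\Gcl(\Gamma) \to G_v$ and the iterated amalgamated-product decompositions of $\Gcl(\Gamma)$ along parabolic subgroups: a vertex chain splits into contributions supported on individual vertex groups, and scl distributes across the splitting, which gives the algorithmic computation in terms of the vertex scls. For the explicit witness $\delta$ I would take a chain supported on an induced opposite path $\Delta_m \hookrightarrow \Gamma$ with $m = \Delta(\Gamma)$ (for instance the double chain of Definition \ref{def:double chain} applied to $\Delta_m$, or a product-of-commutators chain running along the non-edges of $\Delta_m$), and build an explicit admissible surface realizing the ratio $1/\Delta(\Gamma)$ of $-\chi$ to boundary degree. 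The matching lower bound $\scl(\delta) \geq 1/(12(\Delta(\Gamma)+2))$ is then inherited from the first part of the theorem, once one verifies using the vertex-group retractions that $\delta$ is not equivalent to a vertex chain.

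The main obstacle will be the combinatorial core of the lower bound: converting the qualitative statement \emph{$\Gamma$ has no induced opposite path of length exceeding $\Delta(\Gamma)$} into a uniform quantitative Euler-characteristic estimate that applies to \emph{all} admissible surfaces at once. This will likely require a surface-cleaning procedure that homotopes $f$ to an efficient representative (removing bigons, innermost trivial hyperplane circles, and opposite-path-irrelevant pieces), followed by a careful local-to-global bookkeeping that charges a fixed amount of negative Euler characteristic to each maximal opposite-path arc appearing in $S$. Separating the vertex-chain case from the gap case directly at the level of surfaces, rather than through an external algebraic argument, is where most of the work is concentrated.
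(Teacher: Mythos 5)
Your proposal does not actually contain the core of the proof. For the main inequality you describe a hyperplane/piece decomposition of an admissible surface and then assert that a ``disk-pushing and surface-cleaning argument'' charges a definite amount of $-\chi$ to each boundary, while yourself conceding that converting ``no induced opposite path longer than $\Delta(\Gamma)$'' into a uniform Euler-characteristic estimate is ``where most of the work is concentrated.'' That deferred step \emph{is} the theorem. In the paper the surface (or quasimorphism) part is deliberately soft: Theorem \ref{thm: no long pairing} only yields the gap $\tfrac{1}{12N}$ \emph{unless} an exceptional equation $g^N=h^kh'd$ holds over the edge group, and the real content is algebraic, not surface-combinatorial: the centralizer theorem for graph products, the BCMS-$D$ machinery, the identification of the depth $D$ with $\Delta(\Gamma)$ by extracting an induced $\Delta_m$ from any CM-sequence (Lemma \ref{lemma: CM length and delta subgraphs}), the solution of $g^N=h^kh'c$ for $N=D+2$ (Propositions \ref{prop:gmhm} and \ref{prop:sol to main equn}), the reduction of an arbitrary integral chain to a normal form in which these exceptional solutions force an equivalence to a simpler chain (Lemma \ref{lemma: reduced form for main thm}, Theorem \ref{thm:CM gap}), and finally an induction over vertices splitting $\Gcl(\Gamma)$ as an amalgam over $\Gcl(\Lk(v))$, with the star case handled by the direct-product Proposition \ref{prop:chains on direct products} (Lemma \ref{lemma:small chains equiv to zero}). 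Your plan has no counterpart to any of this; in particular you never connect the hypothesis ``$c$ is not equivalent to a vertex chain'' (an algebraic condition involving passing to powers, conjugates, and splitting products of \emph{commuting} elements) to any property of the surface, and it is precisely ruling out those equivalence moves that makes the argument delicate.

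Two of the concrete claims you do make are wrong. First, for vertex chains ``scl distributes across the splitting'' is false: $\scl$ of a vertex chain is additive in free-product directions (Proposition \ref{prop: vertex chains on free products}) but is a \emph{maximum} in direct-product directions (Proposition \ref{prop:chains on direct products}), and in general it is the value of a linear program, $\scl_{\Gcl(\Gamma)}(c)=\max_\mu\sum_v\mu_v\,\scl_{G_v}(c_v)$ over stable measures (Theorem \ref{thm: scl of vertex chains}); proving this needs the clique-cover duality and the assembly of vertex quasimorphisms with clique-controlled defect, not just retractions. The relation $\scl(d_\Gamma)=\tfrac12\fsn(\Gamma)$ (Theorem \ref{thm: scl and fsn}) already shows the answer is neither a sum nor a single max. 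Second, your suggested witness fails: the double chain of $\Delta_m$ is itself a vertex chain with $\scl=\tfrac12\fsn(\Delta_m)=1$ for every $m\ge1$ (two consecutive vertices give $\fsn\ge2$, the even and odd cliques give $\fsn\le2$), so it never realizes an upper bound $\le 1/m$. The paper's witness is $\delta_m=g_{0,m}-g_{0,m-1}-g_{1,m}+g_{1,m-1}$, whose bound $1/m$ comes from the telescoping identity $g_{i,m}^m=g_{i,m-1}^m c_m$ with $c_m$ independent of $i$, pants surfaces, and the triangle inequality (Proposition \ref{prop: delta_m has low scl}); and verifying that $\delta_m$ is not equivalent to a vertex chain uses the pure-factor invariant of Proposition \ref{prop: pure factor chain}, not merely vertex-group retractions.
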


The equivalence relation of chains, roughly speaking, is based on the following moves that does not change the stable commutator length. In an arbitrary group $G$ with a chain $c$ and elements $g,h \in G$ we have $\scl_G(c + g^n) = \scl_{G}(c + n \cdot g)$ for every $n \in \Z$ and $\scl_G(c + g) = \scl_G(c + h g h^{-1})$. If in addition $g$ and $h$ commute, we have $\scl_G(c + g \cdot h) = \scl_G(c + g + h)$. We say that two chains $c$, $c'$ in $G$ are \emph{equivalent}, if $c$ can be transformed into $c'$ by a finite sequence of these identities. See Definition \ref{def: equivalent chains} for the precise definition.

Formally, a \emph{vertex chain} is of the form $c=\sum_v c_v$, where each $c_v$ is a chain in the vertex group $G_v$. For right-angled Artin groups and right-angled Coxeter groups, any null-homologous vertex chain is equivalent to the zero chain and has zero scl. 
Thus Theorem \ref{theorem:main for raags} immediately follows from Theorem \ref{theorem: graph prod}.
Moreover, we have a uniform gap $1/60$ for all \emph{hyperbolic} right-angled Coxeter groups; see Corollary \ref{cor: hyp RACGs}.

In particular, Theorem \ref{theorem: graph prod} implies that groups with a gap for integral chains are preserved under taking graph products over finite graphs; see Corollary \ref{cor: gap pres by graph prod}.


\subsubsection{Gaps for chains in graphs of groups} \label{subsubsec:gaps on graph of grps}
The spectral gap result in Theorem \ref{theorem: graph prod} is based on a simple criterion for spectral gaps in graphs of groups that we prove. 
For simplicity, we state it for amalgamations.
\begin{theoremA}[Theorem \ref{thm: no long pairing}, Long Pairings] \label{theorem:no long pair}
Let $G=A\star_C B$ be an amalgamation and let $\sum_{i \in I} g_i$ be an integral chain. Then either
$$
\scl_G(c) \geq \frac{1}{12 N}
$$
or $c = \sum_{i \in I} g_i$ has a term $g=g_i$ such that $g^N = h^k h' d$ as reduced elements, where $h$ is cyclically conjugate to the inverse of some term $g_j$ in $c$, 
$h'$ is a prefix (Definition \ref{def:ab-alternating}) of $h$ and $d \in C$.
\end{theoremA}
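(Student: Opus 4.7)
The plan is to work with admissible surfaces for the chain and the Bass-Serre splitting of $G = A \star_C B$, using a dichotomy between short pairings (which force a gap) and a long pairing (which produces the structural conclusion).

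Place each $g_i$ in reduced normal form (alternating syllables in $A \setminus C$ and $B \setminus C$, possibly with a trailing $C$-factor). Fix a $K(G,1)$ space $X$ built by gluing $K(A,1)$ and $K(B,1)$ along $K(C,1)$, and take an admissible surface $f \colon S \to X$ for $n \cdot c$ with $-\chi(S)/(2n)$ close to $\scl_G(c)$. Let $T$ be the Bass-Serre tree. After transverse homotopy, $\Sigma \defeq f^{-1}(T) \subset S$ is a properly embedded graph meeting $\partial S$ transversally at syllable breaks, and each component (a \emph{piece}) of $S \setminus \Sigma$ maps into a conjugate of $A$ or of $B$. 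A disk piece $P$ has $\partial P$ reading a word that is trivial in the ambient factor, so its $\partial S$-arcs ``pair up'' into inverse syllable runs modulo $C$. Extending such matchings across pieces gives a global notion of a \emph{pairing} between two syllable strings drawn from some $g_i^{\pm 1}$ and $g_j^{\pm 1}$ that are inverse in $G$ modulo $C$; its \emph{length} is the number of matched syllables.

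If no pairing has length $\geq N$, a piece-by-piece Euler characteristic count gives a linear lower bound on $-\chi(S)$ in terms of $n$. The key per-piece inequality is that each piece has only a bounded run of paired syllables before a pairing must terminate, each termination producing a corner on $\partial S \cap \Sigma$; annular pieces either do not occur in this regime or already force a long pairing. Balancing the number of corners against $-\chi(S)$ (via the standard identity for a CW decomposition induced by $\Sigma$) delivers the bound $\scl_G(c) \geq 1/(12N)$. If some pairing has length $\geq N$, its two sides read syllable strings of length $\geq N$ that are inverse modulo $C$, one sitting inside some power $(g_i^{\pm 1})^M$ and the other inside a cyclic window of $(g_j^{\pm 1})^M$. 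Matching the periodicity of the first (period $g_i^{\pm 1}$) against the cyclic occurrence of the second (which is then a cyclic conjugate $h$ of $g_j^{\mp 1}$) yields the identity $g_i^N = h^k h' d$ with $h'$ a prefix of $h$ and $d \in C$.

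The main obstacle is the Euler characteristic bookkeeping in the first case: disks and annuli contribute nonnegative Euler characteristic, so one must ensure that disk pieces contribute a definite amount of $-\chi$ per boundary syllable in the absence of long pairings, and that annular pieces either do not arise or are counted against pairings they must generate. This careful accounting of low-complexity pieces is what produces the explicit constant $12N$; it is also where the precise form $h^k h' d$ of the structural alternative emerges, since the argument in the long-pairing case is essentially a direct read-off of two overlapping periodic subwords in the universal cover of $X$.
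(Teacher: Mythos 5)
Your setup is the paper's own surface argument for Theorem \ref{thm: no long pairing}: cut an admissible surface along the preimage of the edge space into pieces (polygons with boundary arcs and turns, possibly with a hole), and run a dichotomy between ``complexity everywhere'' and ``a long strip of rectangles''. The genuine gap is in your long-pairing horn. You define the length of a pairing as the number of matched \emph{syllables} and claim that a pairing of length at least $N$ lets you read off $g_i^N = h^k h' d$. That implication is false: such a pairing only says that some $N$-syllable subword of the periodic word built from $g_i$ agrees, syllable by syllable modulo $C$, with an $N$-syllable subword of a power of a cyclic conjugate of $g_j^{-1}$; it says nothing about the other $N|g_i|-N$ syllables of $g_i^N$. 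Already in $\langle a\rangle \star \langle b\rangle$ (so $C=\{1\}$) two cyclically reduced words such as $g_i=a^2(ba)^{N-1}b$ and $g_j^{-1}=(ab)^N a b^2$ share a common subword of syllable length about $2N$, yet $g_i^N$ contains the syllable $a^2$ while no word of the form $h^kh'$ with $h$ a cyclic conjugate of $g_j^{-1}$ does, so by uniqueness of reduced forms the equation fails. So with your threshold the second horn does not deliver the theorem's conclusion, while raising the threshold without changing the count would wreck the first horn.

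The correct dichotomy is taken per block of the boundary word, not per syllable, and this is exactly what the paper does: pass to a finite cover so that each boundary component wrapping $g=g_i$ does so a multiple of $N$ times, cut such components into segments labelled by the full reduced word of $g^N$ (that is, $N|g|$ syllables, aligned with the period structure), and ask whether each segment witnesses a piece $v$ with $d(v)/2-\delta(v)>0$ in the gluing-graph formula $-\chi(S)=\sum_v\bigl[d(v)/2-\delta(v)\bigr]$. If every segment does, then since a piece is witnessed by at most $d(v)$ segments it contributes at least $1/6$ per witness (annular pieces contribute at least $1/2$; your claim that annuli ``do not occur or force a long pairing'' is unjustified but also unnecessary), and with $n/N$ segments in a degree-$n$ surface this gives $-\chi(S)\ge n/(6N)$, hence $\scl_G(c)\ge 1/(12N)$. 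Only when some whole $g^N$-segment meets nothing but two-turn disk pieces do the rectangles concatenate into a single strip whose opposite side is a reduced subword $w$ of one boundary word $g_j^m$, yielding $g^N c_1 w c_2=id$ and hence $g^N=h^kh'd$ with $h$ a cyclic conjugate of $g_j^{-1}$, $h'$ a prefix and $d\in C$. The alignment matters too: a long pairing not aligned with the $g^N$-blocks would only give the equation for a cyclic conjugate of $g^N$, which is not the stated conclusion. (The paper also gives an independent proof via Brooks-type counting quasimorphisms for $w=g^N$ and Bavard duality, which sidesteps all of this bookkeeping.)
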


We give two proofs of this criterion in Section \ref{sec:gaps from short overlaps}, one using surfaces and the other using quasimorphisms.

To make use of this criterion, we reduce chains so that the exceptional algebraic relation $g^N = h^k h' d$ does not occur for a suitable $N$.
In Section \ref{sec:cm subgroups} we develop tools to achieve this goal for $N=D+2$, provided that all edge groups are \emph{BCMS-$D$ subgroups} (Definition \ref{def:bcmsm}).
BCMS-$D$ subgroups are generalizations of malnormal and central subgroups. In particular, malnormal subgroups are BCMS-$1$ and central subgroups are BCMS-$0$. 

As key examples, for a graph product over a graph $\Gamma$, the subgroup corresponding to any induced subgraph is BCMS-$D$ if the opposite path length $\Delta(\Gamma)=D$.
Theorem \ref{theorem: graph prod} is obtained from the following estimate.

\begin{theoremA}[Theorem \ref{thm:CM gap}, BCMS gap]\label{theorem: gap for BCMS}
Let $G$ be a graph of groups such that the embedding of every edge group $C\le G$ is a BCMS-$D$ subgroup. Then for any integral chain $c$ in $G$,
either $c$ is equivalent to a chain supported on vertex groups, or
$$
\scl_G(c) \geq \frac{1}{12 (D+2)}.
$$
\end{theoremA}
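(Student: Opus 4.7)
The plan is to invoke the Long Pairings criterion (Theorem \ref{theorem:no long pair}) with $N = D+2$. If no term of the chain admits such a long pairing, the criterion immediately yields
$$\scl_G(c) \geq \frac{1}{12(D+2)},$$
so all the work goes into the exceptional case: when a long pairing of length $D+2$ exists, I must show that $c$ is equivalent to a vertex chain. I would organize this as an induction on a complexity function of the chain — for instance, the total syllable length of its terms in the graph-of-groups normal form — so that each application of the exceptional case produces a strictly simpler equivalent chain.

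In the exceptional case some term $g = g_i$ satisfies a reduced equality $g^{D+2} = h^k h' d$ with $h$ cyclically conjugate to $g_j^{-1}$, $h'$ a prefix of $h$, and $d$ in an edge group $C$. Unwinding this equality in the Bass--Serre tree, powers of $g$ and of $h^{-1}$ share an overlap of more than $D$ consecutive syllables. The next step is to feed this overlap into the BCMS-$D$ definition (Definition \ref{def:bcmsm}) at every edge group the overlap crosses: by its defining property, an overlap of depth exceeding $D$ forces a centralizer-type identification, so that conjugates of $g$ and of $h^{-1}$ become commuting elements, modulo corrections lying in the appropriate edge groups.

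Once this algebraic consequence is in hand, I would translate it into chain equivalences using the three moves permitted by Definition \ref{def: equivalent chains}: conjugation lets me align $g$ with a cyclic conjugate of $g_j^{-1}$, the power identity lets me rescale so that the two terms appear with compatible multiplicities, and the commuting-terms identity splits products into sums. The net effect is to cancel matching contributions between $g_i$ and $g_j$ while pushing the residual pieces into the adjacent vertex groups. After this reduction the complexity has strictly decreased, so induction eventually leaves only terms supported on vertex groups, i.e.\ a vertex chain.

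The main obstacle is the bookkeeping in Step 2: the BCMS-$D$ property produces commutation only up to an edge-group correction, and these corrections must be composed coherently across all $D+2$ layers of the overlap rather than drifting with each syllable. I would expect to package this as a dedicated lemma (the natural home of which is the framework of Section \ref{sec:cm subgroups}), asserting that a single BCMS-$D$ witness can be propagated through an overlap of length $D+2$; after that lemma the passage to chain equivalences is formal.
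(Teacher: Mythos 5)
Your skeleton matches the paper's: apply Theorem \ref{thm: no long pairing} with $N=D+2$, and in the exceptional case use the BCMS-$D$ machinery to produce a strictly simpler equivalent chain (your induction on total length is just the paper's choice of a minimal-complexity representative in Lemma \ref{lemma: reduced form for main thm} run in the other direction). But the central step, as you describe it, is not what the equation $g^{D+2}=h^kh'd$ actually yields, and in one of the two cases your proposed reduction would fail. The correct dichotomy (Propositions \ref{prop:gmhm} and \ref{prop:sol to main equn}) is: if $|g|=|h|$, then indeed $g=hz$ with $z\in C$ commuting with $g$, so $g_i$ is equivalent to $-g_j+z$ and the two terms cancel as you envision; but if $|g|>|h|$, the conclusion is only that $g=x^m c$ is a proper power up to a commuting edge element, with no commutation or cancellation between $g_i$ and a conjugate of $g_j^{-1}$ at all. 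Your ``net effect is to cancel matching contributions between $g_i$ and $g_j$'' therefore misses an entire branch; the reduction move there is root extraction, replacing $g_i$ by $m\cdot x + c$ (which still lowers your complexity, so the induction survives once the case is added, but the step as written is wrong). Moreover, to be in the situation $|g|\ge|h|$ at all you must apply the long-pairing criterion to a term of \emph{maximal} length --- the criterion gives the equation for every cyclically reduced term, and for a short term one could have $|g|<|h|$, where neither analysis applies.

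Second, the ``dedicated lemma'' you defer --- propagating a single BCMS-$D$ witness through the overlap --- is the real content of Section \ref{sec:cm subgroups}, and it needs hypotheses your setup does not yet provide. The equation-solving arguments require the term $g$ to be \emph{CM-reduced} (Definition \ref{def: CM reduced elts}) and cyclically reduced; the inductive claims then push the correcting edge-group elements $d_i$ down the CM-subgroup sequence $C_1\ge C_2\ge\cdots$, using at each stage that the relevant double-coset representative is a CM-representative, until after at most $D+1$ steps they land in a subgroup commuting with the building block. Without first normalizing each term into CM-reduced form (Proposition \ref{prop: cm reduced}, which costs nothing in length but must be built into your reduced form, as in condition (5) of Lemma \ref{lemma: reduced form for main thm}), the ``centralizer-type identification'' you assert does not follow from the BCMS-$D$ definition alone. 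Finally, Theorem \ref{thm: no long pairing} is stated for a single splitting (amalgam or HNN extension); for a general graph of groups you still need the routine induction over the edges in the support of the chain, which your proposal omits.
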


The special case where every edge group is malnormal in $G$ is equivalent to that
the fixed point set of each $g\neq id\in G$ has diameter at most $1$ for the action on the Bass--Serre tree. In this case, we have the following corollary
\begin{corollaryA}
Let $G$ be a graph of groups such that the embedding of each edge group $C\le G$ is malnormal.
Then for any integral chain $c$ in $G$,
either $c$ is equivalent to a chain supported on vertex groups, or
$$
\scl_G(c) \geq \frac{1}{36}.
$$
\end{corollaryA}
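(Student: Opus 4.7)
The plan is to derive this corollary as an immediate specialization of Theorem \ref{theorem: gap for BCMS} with $D=1$, using the observation recorded earlier in the introduction that malnormal subgroups are precisely the BCMS-$1$ subgroups. Given this identification, the hypothesis that each edge-group embedding $C \le G$ is malnormal coincides with the hypothesis of Theorem \ref{theorem: gap for BCMS} at $D=1$, so the dichotomy there yields either that $c$ is equivalent to a vertex chain, or
$$
\scl_G(c) \;\geq\; \frac{1}{12(1+2)} \;=\; \frac{1}{36},
$$
which is exactly the corollary.

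The only real content is therefore verifying the identification malnormal $=$ BCMS-$1$, which I would do once the class BCMS-$D$ is spelled out in Section \ref{sec:cm subgroups} (Definition \ref{def:bcmsm}). Informally, BCMS-$D$ controls the depth of intersection of conjugates of the subgroup by at most $D$, and malnormality is exactly the statement that a single conjugation step already trivializes the intersection, so the $D=1$ case is the right match.

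As a consistency check I would also verify the Bass--Serre tree reformulation noted in the paragraph preceding the corollary. If every edge group $C$ is malnormal in $G$ and a nontrivial $g \in G$ fixed two adjacent edges $e, e'$ of the Bass--Serre tree $T$, then $g$ would lie in $C_e \cap C_{e'}$; conjugating so that $C_e$ is a standard edge group, this places $g \in C \cap hCh^{-1}$ for some $h$ in a vertex stabilizer, contradicting malnormality. The converse direction is analogous. This gives the promised characterization and in particular shows that the hypothesis is automatic for free products (amalgamation along the trivial subgroup), giving a uniform $\tfrac{1}{36}$ gap for all chains in free products that are not equivalent to vertex chains.

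The main obstacle, such as it is, lies entirely upstream in the proof of Theorem \ref{theorem: gap for BCMS}; the corollary itself contributes no new technique beyond the substitution $D=1$ and the translation between BCMS-$1$ and malnormality.
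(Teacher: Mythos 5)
Your proposal is correct and is essentially the paper's own derivation: the corollary is exactly Theorem \ref{thm:CM gap} specialized to $D=1$, using the observation (made right after the definition of CM-subgroup sequences, before Definition \ref{def:bcmsm}) that a malnormal edge group forces $H_2=Z_{H_1}(g_0)=\{e\}=H_3$ and hence is BCMS-$1$, which gives the bound $\tfrac{1}{12(1+2)}=\tfrac{1}{36}$. One small caveat: only the implication malnormal $\Rightarrow$ BCMS-$1$ is needed (and true); your claim that malnormal subgroups are \emph{precisely} the BCMS-$1$ subgroups overstates what holds (central subgroups are BCMS-$0$, hence BCMS-$1$, without being malnormal), but this does not affect the argument.
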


A similar result was obtained by Clay--Forester--Louwsma for hyperbolic elements in a group acting $K$-acylindrically on a simplicial tree; see \cite[Theorem 6.11]{CFL16}.

\subsection{fractional stability number and stable commutator length}\label{subsec:scl and fsn}
The algorithm mentioned in Theorem \ref{theorem: graph prod} computes stable commutator lengths of vertex chains as certain graph-theoretic quantities; see Section \ref{sec:vertex chains}. 
As a special case, we discover a connection between stable commutator lengths of certain chains in right-angled Artin groups and the fractional stability numbers of graphs.

\begin{definition}[Double Graphs and double chains] \label{def:double chain} 
For a simplicial graph $\Gamma$ with vertices $\Vrm(\Gamma)$ and edges $\Erm(\Gamma)$ we define the double graph $D_{\Gamma}$ as the graph with vertex and edge set 
\begin{eqnarray*}
\Vrm(D_\Gamma) &=& \{ \att_v, \btt_v \mid v \in \Vrm(\Gamma) \} \mbox{ and} \\
\Erm(D_\Gamma) &=& \{ (\att_v, \att_w), (\att_v, \btt_w), (\btt_v, \att_w), (\btt_v, \btt_w) \mid (v,w) \in \Erm(\Gamma) \}.
\end{eqnarray*}
Let $d_{\Gamma}$ be the integral chain $\sum_{v \in \Vrm(\Gamma)} [\att_v, \btt_v]$. We call $D_\Gamma$ the \emph{double graph} and $d_{\Gamma}$ the double chain in $\Arm(D_\Gamma)$.
\end{definition}
A key feature of this construction is that $\Arm(D_\Gamma)$ is the graph product over the graph $\Gamma$ with vertex groups $F( \att_v, \btt_v)$. 

\begin{definition}[fractional stability number] \label{def: fsn}
A \emph{stable measure} is a collection of non-negative weights $x = \{ x_v \}_{v \in \Vrm }$ assigned to vertices of $\Gamma$ such that for any clique $C$ (i.e. a complete subgraph) in $\Gamma$ we have that $\sum_{c \in C} x_c \leq 1$.
The \emph{fractional stability number} of $\Gamma$ is the supremum of $\sum_{v \in \Vrm} x_v$ over all stable measures and denoted by $\fsn(\Gamma)$. 
\end{definition}



\begin{theoremA}[\protect{Theorem \ref{thm: scl and fsn}}] \label{theorem:scl and fsn}
	Let $\Gamma$ be a graph and let $D_\Gamma$ and $d_\Gamma$ be the associated double graph and double chain respectively.  Then
	$$
	\scl_{\Arm(D_{\Gamma})}(d_\Gamma) = \frac{1}{2}\cdot \fsn(\Gamma).
	$$
\end{theoremA}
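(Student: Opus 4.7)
The plan is to identify the double chain $d_\Gamma$ as a vertex chain in the graph product $\Arm(D_\Gamma)$, viewed as $\Gcl(\Gamma)$ with vertex groups $G_v = F(\att_v,\btt_v)$, and to sandwich $\scl(d_\Gamma)$ between $\tfrac12\fsn(\Gamma)$ and $\tfrac12\fsn(\Gamma)$ from two sides: a constructive admissible-chain argument for the upper bound, and Bavard duality for the lower bound. Concretely, since $D_\Gamma$ is built by duplicating each vertex of $\Gamma$ and joining all duplicates across every edge of $\Gamma$, the edge relations in $\Arm(D_\Gamma)$ assert exactly that $F(\att_v,\btt_v)$ and $F(\att_w,\btt_w)$ commute iff $(v,w)\in \Erm(\Gamma)$, so $\Arm(D_\Gamma)$ really is the graph product over $\Gamma$ with rank-two free vertex groups, and each summand $[\att_v,\btt_v]$ of $d_\Gamma$ lies in the single vertex group $G_v$.

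For the upper bound I would first observe that for any clique $C\subseteq \Gamma$ the subgroup $\langle G_v : v\in C\rangle$ is the direct product $\prod_{v\in C} F(\att_v,\btt_v)$, so $\prod_{v\in C}[\att_v,\btt_v] = \bigl[\prod_{v\in C}\att_v,\prod_{v\in C}\btt_v\bigr]$ is a single commutator. Applying the chain equivalence $\scl(g\cdot h) = \scl(g+h)$ for commuting $g,h$ repeatedly then yields $\scl\bigl(\sum_{v\in C}[\att_v,\btt_v]\bigr)\le \tfrac12$. Now choose a fractional clique cover $\{y_C\}$ of $\Gamma$ realizing the LP dual of the fractional stability number, so $\sum_{C\ni v}y_C = 1$ for every $v$ and $\sum_C y_C = \fsn(\Gamma)$. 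Rewriting $d_\Gamma = \sum_C y_C\bigl(\sum_{v\in C}[\att_v,\btt_v]\bigr)$ and invoking the positive-homogeneity and subadditivity of scl on null-homologous chains gives $\scl(d_\Gamma)\le \tfrac12\fsn(\Gamma)$.

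For the lower bound I would apply Bavard duality. On each vertex group $G_v$ pick a Brooks-type homogeneous quasimorphism $\phi_v$ with $\phi_v([\att_v,\btt_v]) = 1$ and defect one, and extend it to $\tilde\phi_v\col \Arm(D_\Gamma)\to\R$ by summing $\phi_v$ over the $v$-syllables of the graph-product normal form. Given an optimal fractional stable measure $\{x_v\}$ with $\sum_v x_v = \fsn(\Gamma)$ and $\sum_{v\in C}x_v\le 1$ for every clique $C$, form $\phi \defeq \sum_v x_v\tilde\phi_v$. Then by construction $\phi(d_\Gamma) = \sum_v x_v\phi_v([\att_v,\btt_v]) = \fsn(\Gamma)$; the key point is that the defect of $\phi$ is controlled by $\max_C \sum_{v\in C}x_v\le 1$, because in a graph product the only way two different $\tilde\phi_v$, $\tilde\phi_w$ contribute simultaneously to a defect triple is when their syllables interleave freely, which forces $v$ and $w$ to lie in a common clique. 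Bavard duality then yields $\scl(d_\Gamma)\ge \tfrac{\fsn(\Gamma)}{2}$.

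The main obstacle I anticipate is the sharp defect estimate $D\bigl(\sum_v x_v\tilde\phi_v\bigr)\le \max_C \sum_{v\in C}x_v$. Establishing this requires a careful analysis of how reductions in the graph-product normal form can redistribute and cancel syllables belonging to different vertex groups, and is precisely what forces the clique constraint to appear. The cleanest packaging of this step, I expect, is through the general formula for scl of vertex chains in graph products developed in the preceding section, so that Theorem \ref{theorem:scl and fsn} emerges as the specialization of that formula to $d_\Gamma$, with the linear program defining $\fsn(\Gamma)$ matching the LP whose optimum computes $2\,\scl(d_\Gamma)$.
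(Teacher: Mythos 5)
Your overall route is the paper's: you identify $\Arm(D_\Gamma)$ as the graph product over $\Gamma$ with vertex groups $F(\att_v,\btt_v)$, note that $d_\Gamma$ is a vertex chain whose terms have scl $\tfrac12$, and the theorem is then the specialization of the vertex-chain formula (Theorem \ref{thm: scl of vertex chains}), which is itself proved by exactly the LP sandwich you sketch. Your upper bound is essentially Lemma \ref{lemma: vertex chain scl upper bound}; the only small point is that an optimal fractional clique cover is only guaranteed to satisfy $\sum_{C\ni v} y_C \ge 1$, so you either need to shift weight from a clique $C$ to the subclique $C\setminus\{v\}$ to force equality at every vertex, or invoke monotonicity of scl of vertex chains in the coefficients (Lemma \ref{lemma: vertex chain monotone}), which is how the paper handles the inequality.

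The genuine gap is in your lower bound. Bavard duality (Theorem \ref{thm:bavard}) is a supremum over \emph{homogeneous} quasimorphisms, but $\phi=\sum_v x_v\tilde\phi_v$, defined by summing $\phi_v$ over $v$-syllables, is not homogeneous on $\Arm(D_\Gamma)$ even when each $\phi_v$ is homogeneous on $F(\att_v,\btt_v)$: for instance, for $g=\att_v\att_w\btt_v$ with $v,w$ non-adjacent one has $\tilde\phi_v(g^2)-2\tilde\phi_v(g)=\phi_v(\btt_v\att_v)-\phi_v(\att_v)-\phi_v(\btt_v)$, which need not vanish. So you must pass to the homogenization $\bar\phi$, and in general one only knows $D(\bar\phi)\le 2D(\phi)$ (Proposition \ref{prop:homog rep}); with your normalization $D(\phi_v)=1$, $\phi_v([\att_v,\btt_v])=1$, this yields only $\scl(d_\Gamma)\ge\tfrac14\fsn(\Gamma)$. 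The sharp defect control you assert for the homogenized map is precisely the missing step, and the paper's fix is a different normalization rather than a sharper homogenized-defect estimate: take each $\phi_v$ to be an antisymmetric extremal quasimorphism with $D(\phi_v)=\tfrac14$ and $\bar\phi_v([\att_v,\btt_v])=\tfrac12$ (Proposition \ref{prop:extremal qm}), combine them so that $D(\phi)\le\tfrac14$ by Lemma \ref{lemma: vertex qm defect bound} (antisymmetry is what makes the clique defect formula work), homogenize to get $D(\bar\phi)\le\tfrac12$, and then Bavard duality gives $\scl(d_\Gamma)\ge\bar\phi(d_\Gamma)=\tfrac12\fsn(\Gamma)$, as in Lemma \ref{lemma: vertex chain scl lower bound}. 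With that repair (or by simply quoting Theorem \ref{thm: scl of vertex chains}, as you anticipate), your argument matches the paper's.
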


Combining with known results about fractional stability numbers, we deduce Theorems \ref{theorem:np hard} and \ref{theorem:rat realize}. 
See Section \ref{sec:vertex chains} for the more general results about computations of stable commutator lengths of vertex chains in graph products.

The distributions of stable commutator length of random elements in free groups and $\fsn$ of random graphs are depicted in Figure \ref{figure:scl and fsn intro}.
They exhibit a strikingly similar behavior: For both distributions values with small denominators appear more frequently, and the histograms exhibit some self-similarity.
In Section \ref{subsec:statistics of scl and fsn} we analyze the distribution of $\scl$ and $\fsn$ further. This analysis allows us to describe a $5$-parameter random variable $X$ (Definition \ref{defn:dist X}) which exhibits qualitatively the same distribution as $\scl$ and $\fsn$. 
We use $X$ to model both $\scl$ and $\fsn$ in Figure \ref{figure:scl and fsn sec}. 
While this is purely heuristic, it suggests that the distribution of $\scl$ and $\fsn$ converge to a similar distribution for large words or graph sizes; see Question \ref{quest: dist scl and fsn similar}.

\begin{figure}[!tbp]
	\centering
	\subfloat[Histogram of $\scl_{F_2}(w)$ for $50.000$ random words $w$ uniformly choosen of length $24$ in $F_2$ ]{\includegraphics[width=0.7\textwidth]{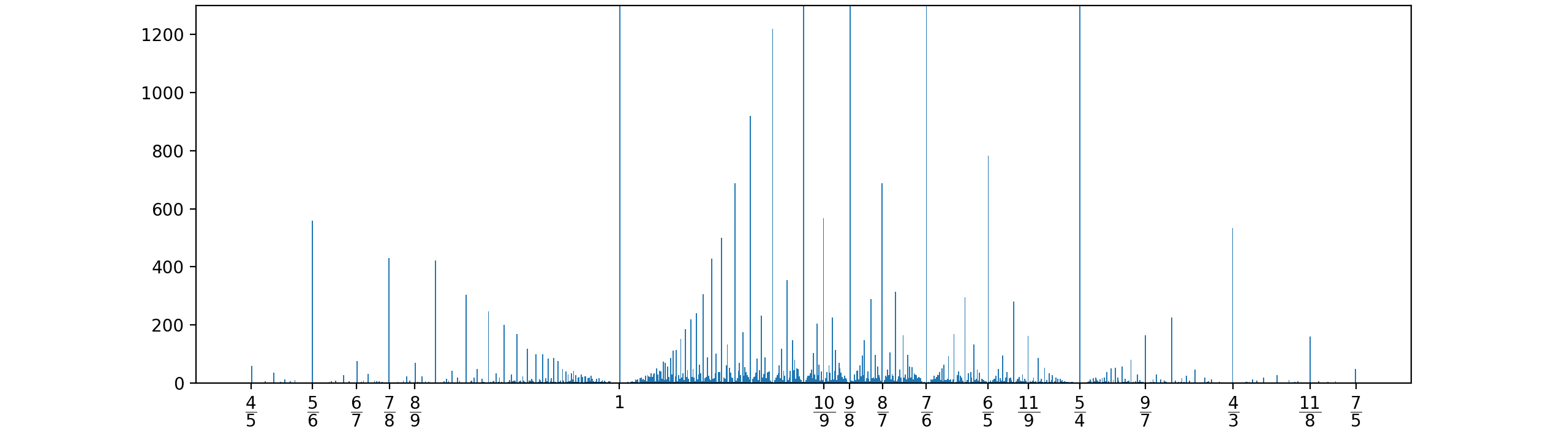}\label{fig:scl}}
	\\
	\subfloat[Histogram of $\scl_{\Arm(D_\Gamma)}(d_\Gamma) = \frac{1}{2} \cdot \fsn(\Gamma)$ for $50.000$ random graphs $\Gamma$ uniformly choosen on 24 vertices. Here, $d_\Gamma$ is the double chain in $\Arm(D_\Gamma)$ (Definition \ref{def:double chain}).]{\includegraphics[width=0.7\textwidth]{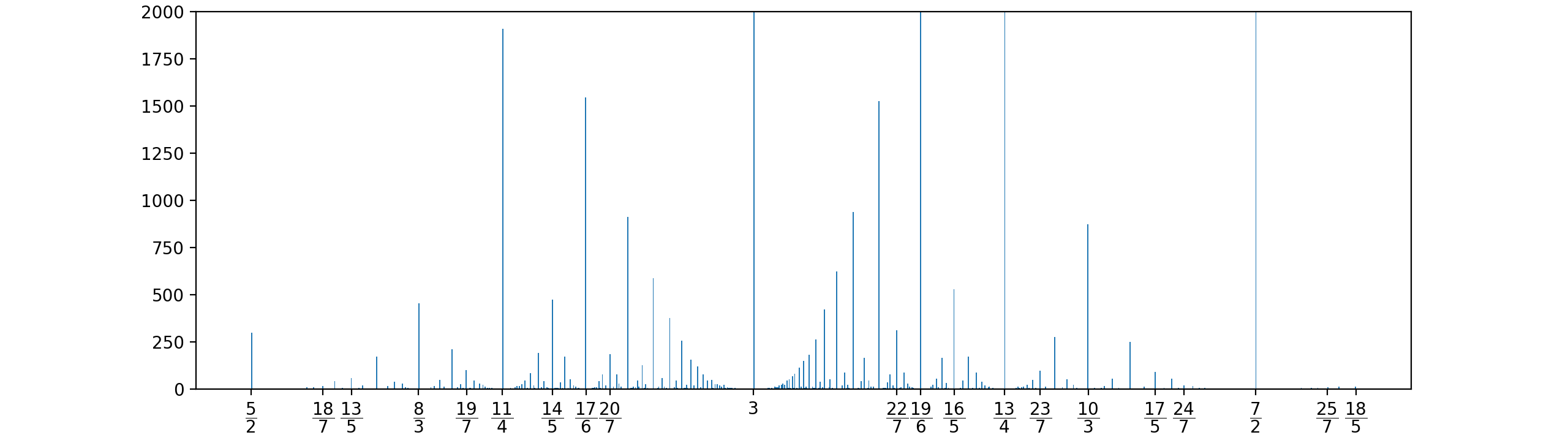}\label{fig:fsn}}
	\caption{$\scl$ for random words in the free group vs. $\scl$ of random chains $d_\Gamma$ in right-angled Artin groups $\Arm(D_\Gamma)$.
		In both cases, $\scl$ is rational and values with small denominator appear more frequent and the histogram exhibits a fractal behavior.
		In Section~\ref{subsec:statistics of scl and fsn} we explain this distribution as the interference of (rounded) Gaussian distributions.} \label{figure:scl and fsn intro}
\end{figure}



\subsection{Groups with interesting scl spectrum} \label{subsec:gps new scl spec}
The scl \emph{spectrum} of a group is the range of the map $\scl_G: [G,G]\to \R_{\ge0}$.
The nonuniformness of spectral gap in Theorem \ref{theorem:main for raags} allows us to construct groups with interesting spectrum.
There are few (classes of) groups where the spectrum of $\scl$ is fully known; see \cite[Remark 5.20]{Cal:sclbook} and \cite{heuer2019full, zhuang2007irrational}. 

\begin{theoremA} \label{theorem: no uniform chain gap}
There is a countable (right-angled Artin) group $G$ such that $\scl_G(g) \ge 1/2$ for all $g\neq id \in [G,G]$ but there is no spectral gap for chains in $G$.
\end{theoremA}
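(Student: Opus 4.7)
The plan is to construct $G$ as the right-angled Artin group on a countable disjoint union of the opposite-path graphs. Set $\Gamma \defeq \bigsqcup_{m\ge 1} \Delta_m$; since there are no edges between distinct components, $\Gamma$ is a countable simplicial graph and $G \defeq \Arm(\Gamma)$ is a countable right-angled Artin group that decomposes as the free product $\star_{m\ge 1} \Arm(\Delta_m)$. By design, $G$ contains every $\Arm(\Delta_m)$ as a retract, which is exactly what is needed to transport the non-uniform chain bound from Theorem~\ref{theorem:main for raags} into a single group while keeping the element gap.

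First I would verify the uniform element gap. Given any $g \in [G,G]$, only finitely many generators occur in any word for $g$, so $g$ lies in $\Arm(\Gamma')$ for the finite induced subgraph $\Gamma' \subseteq \Gamma$ spanned by those generators. Killing the remaining generators defines a retraction $r \col G \twoheadrightarrow \Arm(\Gamma')$. Monotonicity of $\scl$ under $r$ yields $\scl_{\Arm(\Gamma')}(g) \le \scl_G(g)$, while any commutator decomposition of $g$ in $\Arm(\Gamma')$ remains a valid decomposition in $G$, giving $\scl_G(g) \le \scl_{\Arm(\Gamma')}(g)$. Thus $\scl_G(g) = \scl_{\Arm(\Gamma')}(g) \ge \tfrac{1}{2}$, where the last inequality is the uniform element gap for finite right-angled Artin groups \cite{Heuer, RAAGgap1}.

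For the absence of a chain gap, apply Theorem~\ref{theorem: graph prod} to each $\Arm(\Delta_m)$: since $\Delta(\Delta_m) = m$, it produces an integral chain $\delta_m$ with
$$
\frac{1}{12(m+2)} \le \scl_{\Arm(\Delta_m)}(\delta_m) \le \frac{1}{m}.
$$
The same retraction/inclusion argument applied to the pair $\Arm(\Delta_m) \hookrightarrow G \twoheadrightarrow \Arm(\Delta_m)$ now for chains gives $\scl_G(\delta_m) = \scl_{\Arm(\Delta_m)}(\delta_m)$, so $0 < \scl_G(\delta_m) \le \tfrac{1}{m}$ for every $m$. Hence the scl spectrum of integral chains in $G$ accumulates at $0$ and no chain gap exists.

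The only subtle point is confirming that the identity $\scl_G(c) = \scl_{\Arm(\Gamma')}(c)$ passes from elements to arbitrary integral chains. This follows directly from the definition of $\scl$ via commutator length of powers of the chain together with the functoriality of commutator length under homomorphisms (using the retraction in one direction and the inclusion in the other); no new mechanism beyond the element argument is required. Given this, the theorem is an immediate assembly of the uniform element gap with the non-uniform chain estimate provided by Theorem~\ref{theorem: graph prod}.
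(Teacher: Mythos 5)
Your proof is correct and is essentially the paper's own argument: the paper takes $G=\Arm(\Delta_\infty)$ (the RAAG on the single infinite opposite path) rather than your free product $\star_{m}\Arm(\Delta_m)$, but in both cases the element gap comes from Heuer's uniform $1/2$ bound for RAAGs and the failure of a chain gap comes from retracting onto each $\Arm(\Delta_m)$ and using the chains $\delta_m$ with $\scl(\delta_m)\in\left[\tfrac{1}{12(m+2)},\tfrac{1}{m}\right]$ (Proposition \ref{prop: delta_m has low scl}), transported via Proposition \ref{prop: mono and retract}. The choice of defining graph is the only difference, so no further comparison is needed.
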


\begin{theoremA} \label{theorem: eventually dense}
There is a countable group $G$ such that
$\scl_G(g) \geq 1/2$ for all $g\neq id \in [G,G]$, and its scl spectrum is dense in $[3/2, \infty)$.
\end{theoremA}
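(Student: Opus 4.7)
The plan is to construct $G$ as a countable free product of finite right-angled Artin groups, each contributing a single element of a prescribed rational $\scl$ value, so that these values form a dense subset of $[3/2, \infty)$. The main technical step is producing, for each $q$ in a dense subset of $[3/2, \infty) \cap \Q$, a finite RAAG $A(\Lambda)$ together with an element $w \in [A(\Lambda), A(\Lambda)]$ satisfying $\scl_{A(\Lambda)}(w) = q$.

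The starting point is the rational realization theorem (Theorem~\ref{theorem:rat realize}): every $p \in \Q_{\ge 1}$ arises as $\scl$ of an integral chain $c = g_1 + \cdots + g_k$ in some finite RAAG $A(\Gamma)$. To upgrade such a chain to a single element while controlling its $\scl$, I would adjoin $k$ fresh isolated vertices $t_1, \ldots, t_k$ to $\Gamma$ (so $A(\Gamma \sqcup \{t_1, \ldots, t_k\}) = A(\Gamma) \ast F_k$) and form the interleaved element
$$
w_c := g_1 t_1\, g_2 t_2 \cdots g_k t_k \cdot (t_1 t_2 \cdots t_k)^{-1} \in A(\Gamma) \ast F_k,
$$
which is null-homologous precisely because $c$ is. A standard amalgamation/surface argument (upper bound by attaching $k-1$ bands to an admissible surface for $c$; matching lower bound from an extremal counting quasimorphism on $A(\Gamma) \ast F_k$, in the spirit of Section~\ref{sec:gaps from short overlaps}) should give
$$
\scl_{A(\Gamma) \ast F_k}(w_c) = \scl_{A(\Gamma)}(c) + \tfrac{k-1}{2}
$$
when the chain terms $g_i$ are in cyclically reduced generic position. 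Varying $p \in \Q_{\ge 1}$ and the chain length $k$ (padding shorter chains with cancelling pairs $g + g^{-1}$, which preserve $\scl$), the resulting set of values $p + (k-1)/2$ is dense in $[3/2, \infty) \cap \Q$.

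Enumerate pairs $(\Lambda_n, w_n)_{n \in \N}$ so that $\{\scl(w_n)\}$ is dense in $[3/2, \infty)$ and set
$$
G := \underset{n \in \N}{\ast} A(\Lambda_n).
$$
This is a countable group, and each factor $A(\Lambda_n)$ is a retract of $G$ via the projection killing all other factors, so monotonicity of $\scl$ under retractions yields $\scl_G(w_n) = \scl_{A(\Lambda_n)}(w_n)$; the spectrum of $G$ is therefore dense in $[3/2, \infty)$. For the element gap, each factor $A(\Lambda_n)$ is a RAAG and hence enjoys the uniform element gap $\scl \ge 1/2$ by \cite{Heuer, RAAGgap1}. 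Any $g \in [G, G]$ is either conjugate into some factor (and inherits the $1/2$ gap by monotonicity) or is cyclically reduced with at least two syllables in the free product, in which case Theorem~\ref{theorem: graph prod} applied to the edgeless graph product with vertex groups $A(\Lambda_n)$ (or, equivalently, a direct scl-in-free-products argument) gives $\scl_G(g) \ge 1/2$.

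The main obstacle is the chain-to-element formula $\scl(w_c) = \scl(c) + (k-1)/2$. The upper bound follows from attaching $k-1$ bands to an admissible surface bounding $c$, each band a $1$-handle that reduces $\chi$ by the degree while merging two boundary components. The matching lower bound is more delicate and requires an extremal counting quasimorphism on the amalgam $A(\Gamma) \ast F_k$ that detects the interleaving of the $g_i$ with the $t_i$ in $w_c$; this is in the spirit of the surface- and quasimorphism-based arguments used to prove Theorems~\ref{theorem:no long pair} and~\ref{theorem: gap for BCMS}. Once this formula is in place, the countable free-product assembly and the preservation of the element gap are routine.
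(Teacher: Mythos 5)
Your assembly step (countable free product of finitely generated RAAGs, retractions onto the factors, and the uniform element gap) is fine -- indeed the free product of RAAGs is again a RAAG, so the $1/2$ gap for elements follows directly from \cite{Heuer}; note, though, that citing Theorem \ref{theorem: graph prod} for the bound $1/2$ is off, since for an edgeless defining graph it only yields $1/36$ for elements not conjugate into a factor. Also, the step you flag as the ``main obstacle'' is not actually an obstacle: after the basis change $s_i = t_1\cdots t_i$ your element $w_c$ reduces to $g_1\prod_{i=1}^{k-1} s_i g_{i+1} s_i^{-1}$, so the identity $\scl(w_c)=\scl(c)+\tfrac{k-1}{2}$ is exactly Proposition \ref{prop: formulas for scl} (\cite[Theorem 2.101]{Cal:sclbook}), with no genericity hypothesis needed.

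The genuine gap is the density claim. The value you realize is $\scl(c)+\tfrac{k-1}{2}$, where $k$ is the number of terms of the chain, and Theorem \ref{theorem:rat realize} gives no control on $k$ in terms of the target $p=\scl(c)$: the realizing chains are double chains $d_\Gamma$ with one term per vertex of a graph $\Gamma$ having $\fsn(\Gamma)=2p$, so small denominators of $p$ do not come with few terms, and in general $k$ grows with the complexity of $p$. Padding goes the wrong way (it only increases $k$, adding $+1$ to the value each time), so you cannot choose $p$ and $k$ independently as your argument assumes. Concretely, the values you can produce below any fixed bound $Q$ require $k\le 2Q-1$, hence come from the finitely many graphs on at most $2Q-1$ vertices together with integer shifts; this set is locally finite, not dense in $[3/2,\infty)$. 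What is needed -- and what the paper does -- is the opposite trade-off: chains with a \emph{fixed} small number of terms but \emph{arbitrarily small positive} scl. The paper works in the single group $\Arm(\Delta_\infty)\star F_3$ and uses the four-term chains $n\delta_m$, rewritten as $g_{0,m}^n-g_{1,m}^n-g_{0,m-1}^n+g_{1,m-1}^n$ (taking powers keeps the term count at four), whose scl lies in $[\tfrac{n}{12(m+2)},\tfrac{n}{m}]$ by Proposition \ref{prop: delta_m has low scl}; since $\scl(\delta_m)\le 1/m$ is positive, integer multiples give a set dense in $(0,\infty)$, and Proposition \ref{prop: formulas for scl} then adds exactly $3/2$, yielding single elements with scl dense in $[3/2,\infty)$. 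Your construction would need an analogous source of chains with bounded term number and scl values dense in some interval, which the rational realization theorem alone does not provide.
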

To the authors' best knowledge there was no group known that has a spectral gap for elements and the spectrum of elements becomes eventually dense, 
though free groups are conjectured to have this property.


These results are proved in Section \ref{subsec: scl spec on a delta inf}.

\subsection{Organization} \label{subsec:organ}
This article is organized as follows. In Section \ref{sec:Background} we recall basic results of stable commutator length, graph of groups and graph products respectively.
In Section \ref{sec:gaps from short overlaps} we prove Theorem \ref{theorem:no long pair} estimating stable commutator length in graphs of groups.
In Section \ref{sec:cm subgroups} we will develop the theory of BCMS-$D$ subgroups and prove Theorem \ref{theorem: gap for BCMS}.
Then we apply this to graph products of groups and prove Theorem \ref{theorem: graph prod} in Section \ref{sec: gap in graph prod of groups}.
Finally in Section \ref{sec:vertex chains} we compute stable commutator lengths of vertex chains in graph products and relate them to fractional stability numbers.

\subsection*{Acknowledgments}
We would like to thank Danny Calegari, Daniel Groves, Elia Fioravanti, Jason Manning, Henry Wilton and Daniel Wise for helpful discussions. 
We are grateful to Jing Tao for suggesting a characterization of elements with zero scl in right-angle Coxester groups, which helped us discover and correct a mistake in an earlier version. 
We really appreciate the referee for the constructive suggestions, which helped improving the exposition.
Nicolaus Heuer is supported by the Herchel Smith Fund.

\setcounter{tocdepth}{1}
\tableofcontents

\section{Background} \label{sec:Background}
We briefly introduce several concepts and set up some notations related to stable commutator length and graphs of groups.
All results in this section are standard. Readers familiar with these topics may skip this section and refer to it when necessary. 

\subsection{Stable Commutator Length}
We give the precise definition of the stable commutator length (scl) and recall some basic results. The reader may refer to \cite[Chapter 2]{Cal:sclbook} for details.

Given a group $G$, let $X$ be a topological space with fundamental group $G$. 
An integral chain is a finite formal sum of elements in $G$.
Given an integral chain $\sum g_j$, consider loops $\gamma_j$ in $X$ so that
the free homotopy class of $\gamma_j$ represents the conjugacy class of $g_j$ for each $j$.

An \emph{admissible surface} is a pair $(S,f)$, where $S$ is a compact oriented surface and $f:S\to X$ is a continuous map such that
the following diagram commutes and $\partial f_*[\partial S]=n(S,f)[\sqcup S^1_j]$ for some integer $n(S,f)>0$,
called the \emph{degree} of the admissible surface.
\[
\begin{CD}
	\partial S @>{i}>> S\\
	@V{\partial f}VV @V{f}VV\\
	\sqcup S^1_j @>{\sqcup \gamma_j}>> X
\end{CD}
\]
Admissible surfaces exist if the chain is null-homologous, i.e. $\sum [g_j]=0\in H_1(G;\Q)$.
Let $\chi^-(S)$ be the Euler characteristic of $S$ after removing disk and sphere components.

\begin{definition}\label{defn: scl via euler charac} 
	For any null-homologous integral chain $\sum g_j$ in $G$, we define
	$$\scl_G(\sum g_j)\defeq\inf_{(S,f)}\frac{-\chi^-(S)}{2\cdot n(S,f)}.$$
	When the chain represents a nontrivial rational homology class, we make the convention that $\scl_G(\sum g_j)=+\infty$.
\end{definition}

We often omit the map $f$ and refer to an admissible surface $(S,f)$ simply as $S$.

In the special case where the chain is an element $g\in[G,G]$, this agrees with the algebraic definition 
using commutator lengths. 
See \cite[Chapter 2]{Cal:sclbook} for more details as well as an algebraic definition for scl of integral chains.

\begin{lemma}\label{lemma: pants chain}
	For an integral chain $c=gh-g-h$ with $g,h\in G$, we have $\scl_G(c)\le 1/2$.
\end{lemma}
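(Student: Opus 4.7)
The plan is to exhibit an explicit admissible surface of degree $1$ for the chain $c = gh - g - h$, namely a pair of pants (thrice-punctured sphere), and then read off the inequality from its Euler characteristic.

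More precisely, let $P$ denote the thrice-punctured sphere, with boundary components $\partial_1 P, \partial_2 P, \partial_3 P$ oriented so that the concatenation $\partial_1 P \cdot \partial_2 P \cdot \partial_3 P$ is null-homotopic in $P$ (equivalently, $P$ retracts onto a wedge of two circles, and its fundamental group is free on any two of these boundary loops). Since $gh$, $h^{-1}$, $g^{-1}$ multiply to the identity in $G$, the assignment $\partial_1 P \mapsto gh$, $\partial_2 P \mapsto h^{-1}$, $\partial_3 P \mapsto g^{-1}$ extends (after picking loops $\gamma_{gh}, \gamma_g, \gamma_h$ in $X$ representing the corresponding conjugacy classes) to a continuous map $f\colon P\to X$ making the appropriate square commute. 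In $H_1$ we then have
\[
\partial f_*[\partial P] \;=\; [gh] + [h^{-1}] + [g^{-1}] \;=\; [gh] - [g] - [h],
\]
so $(P,f)$ is admissible for $c$ with degree $n(P,f) = 1$.

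Since $P$ is a sphere with three open disks removed, $\chi(P) = -1$, and $P$ has no disk or sphere components, so $\chi^-(P) = -1$. Plugging into Definition \ref{defn: scl via euler charac} gives
\[
\scl_G(c) \;\le\; \frac{-\chi^-(P)}{2\cdot n(P,f)} \;=\; \frac{1}{2}.
\]

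There is essentially no obstacle here; the only thing to check carefully is the sign bookkeeping: the chain $gh - g - h$ demands that the boundary homology class equal $[gh] - [g] - [h]$, and one has to orient the three cuffs of $P$ consistently with the relation $(gh)(h^{-1})(g^{-1}) = 1$, which is exactly what makes the pair of pants admissible with degree one.
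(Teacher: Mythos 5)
Your proposal is correct and is essentially the paper's own argument: both construct a degree-one admissible pair of pants using the relation $(gh)(h^{-1})(g^{-1})=1$ (the paper phrases this via the identification $\pi_1(P)\cong F_2$ with boundary words $ab$, $a^{-1}$, $b^{-1}$ and the homomorphism $a\mapsto g$, $b\mapsto h$) and then reads off $\scl_G(c)\le \frac{-\chi(P)}{2}=\frac{1}{2}$. No gaps.
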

\begin{proof}
	The fundamental group of a pair of pants $S$ is the free group of rank $2$, where the generators $a,b$ can be chosen so that the boundary loops with the induced orientation are represented by $ab$, $a^{-1}$ and $b^{-1}$ respectively. The homomorphism $F_2\to G$ determined by $a\mapsto g$ and $b\mapsto h$ corresponds to a map $f:S\to X$, which provides an admissible surface for $c$ of degree one. Hence 
	$$\scl_G(c)\le \frac{-\chi(S)}{2}=\frac{1}{2}.$$
\end{proof}

Let $C_1(G)$ be the space of real $1$-chains. By identifying $g^{-1}$ with $-g$ in $C_1(G)$, scl is defined for any
finite sum $\sum t_i g_i\in C_1(G)$, where $t_i\in \Z$. It is known that scl is linear on rays and satisfies the
triangle inequality, and thus extends to a (semi-)norm on $C_1(G)$.

\begin{proposition}[scl as a norm] \label{prop:scl is norm}
	Scl is a semi-norm on $C_1(G)$. In particular, $\scl(c_1 + c_2) \le \scl(c_1) + \scl(c_2)$ for any $c_1,c_2\in C_1(G)$.
\end{proposition}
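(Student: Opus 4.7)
The proof proposal is as follows. The semi-norm properties to verify are (i) non-negativity (immediate from the definition), (ii) absolute homogeneity $\scl(t\cdot c) = |t|\cdot \scl(c)$ for $t \in \R$, and (iii) the triangle inequality. Since the definition via admissible surfaces is given only for integral, null-homologous chains, the plan is to first establish (ii) and (iii) on integral chains, then extend $\scl$ to $C_1(G;\Q)$ by homogeneity, and finally to $C_1(G;\R)$ by continuity. Throughout, if either $c_1$ or $c_2$ fails to have vanishing class in $H_1(G;\Q)$ then $\scl$ equals $+\infty$ on it by convention, and the triangle inequality holds trivially; so we may assume both integral chains are null-homologous.

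The key step is the triangle inequality on integral chains. Given admissible surfaces $(S_1,f_1)$ for $c_1$ of degree $n_1$ and $(S_2,f_2)$ for $c_2$ of degree $n_2$, I would form the disjoint union of $n_2$ copies of $(S_1,f_1)$ and $n_1$ copies of $(S_2,f_2)$. The resulting surface $(S,f)$ has boundary mapping to $n_1 n_2 \cdot[\gamma_1 + \gamma_2]$ where $\gamma_1,\gamma_2$ realize $c_1,c_2$, so $(S,f)$ is admissible for $c_1+c_2$ of degree $n_1 n_2$. Since $-\chi^-$ is additive under disjoint union,
\[
\scl_G(c_1+c_2) \le \frac{n_2\cdot(-\chi^-(S_1)) + n_1\cdot(-\chi^-(S_2))}{2 n_1 n_2} = \frac{-\chi^-(S_1)}{2 n_1} + \frac{-\chi^-(S_2)}{2 n_2},
\]
and taking infima over admissible surfaces yields $\scl_G(c_1+c_2)\le \scl_G(c_1)+\scl_G(c_2)$.

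For integer homogeneity, I would observe that an admissible surface $(S,f)$ for $c$ of degree $n$ is tautologically an admissible surface for $k\cdot c$ of degree $n/k$ whenever $k \mid n$; running this in both directions (and passing to common multiples of the degrees) shows $\scl_G(k c) = k\cdot \scl_G(c)$ for $k\in\Z_{>0}$. Using the identification $-g = g^{-1}$ in $C_1(G)$ and reversing the orientation of an admissible surface gives $\scl_G(-c) = \scl_G(c)$, so homogeneity holds for all integers. These two properties then let me define $\scl_G$ unambiguously on rational chains by $\scl_G(c) := \scl_G(Nc)/N$ for any $N\in\Z_{>0}$ clearing denominators, and the triangle inequality and $\Q$-homogeneity follow immediately by clearing denominators.

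Finally, I would extend to real chains by continuity: the triangle inequality on $C_1(G;\Q)$ combined with $\Q$-homogeneity shows that $\scl_G$ is $1$-Lipschitz with respect to the norm $\|c\|_1 = \sum |t_i|$ on the finitely supported rational chains supported on any finite set $\{g_1,\dots,g_k\}$ (using $\scl_G(g)\le 1/2$ on individual commutators or the bound $\scl_G(g)\le \|g\|_1/2$ from Lemma \ref{lemma: pants chain} applied to trivial pair-of-pants surfaces); hence $\scl_G$ admits a unique continuous extension to $C_1(G;\R)$ which remains a semi-norm. I expect the only genuinely delicate point to be the well-definedness of the rational extension, i.e.\ showing $\scl_G(Nc)/N$ is independent of $N$, which follows cleanly once integer homogeneity is in place.
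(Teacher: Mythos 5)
The paper never actually proves this proposition: it is quoted as standard background, with the surrounding text deferring to \cite[Chapter 2]{Cal:sclbook}, so your argument has to stand on its own rather than be compared line-by-line. Most of it does. The disjoint-union trick (take $n_2$ copies of an admissible surface of degree $n_1$ for $c_1$ and $n_1$ copies of one of degree $n_2$ for $c_2$, giving an admissible surface of degree $n_1n_2$ for $c_1+c_2$) is exactly the standard proof of subadditivity for integral chains, and your treatment of integer homogeneity, orientation reversal for $\scl(-c)=\scl(c)$, and the well-definedness of the rational extension is correct.

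The genuine gap is the final extension to real chains. Your $1$-Lipschitz claim is justified by a bound ``$\scl_G(g)\le 1/2$'' or ``$\scl_G(g)\le \|g\|_1/2$'' for single elements, attributed to Lemma \ref{lemma: pants chain}; but that lemma bounds $\scl_G(gh-g-h)$, not $\scl_G(g)$, and no such bound exists: scl is unbounded on single elements (e.g.\ $\scl_{F_2}([a,b]^n)=n/2$ while the chain has $\|\cdot\|_1$-norm $1$) and equals $+\infty$ on elements that are not null-homologous. Consequently the estimate $|\scl(c)-\scl(c')|\le\sum_i|t_i-t_i'|\,\scl(g_i)$ is useless in general; for instance $c=g+h$ with $[g]=-[h]\neq 0$ in $H_1(G;\Q)$ has finite scl even though both terms have infinite scl, so Lipschitz control cannot come from the values of scl on the supporting group elements. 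The standard repair is to work inside $B_1(G;\R)$, the space of null-homologous real chains (off of it the $+\infty$ convention makes subadditivity trivial): on any finite-dimensional subspace, scl is finite at rational points, subadditive and $\Q$-homogeneous, hence convex; convexity together with the bound at the vertices of a small rational simplex gives local boundedness, hence local Lipschitz continuity on rational points, and this is what licenses a well-defined continuous extension. One must also take the approximating rational chains inside $B_1(G;\Q)$, not merely with the same support, since rational perturbations of the coefficients can destroy null-homology. With these corrections your argument coincides with the one in Calegari's book.
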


\begin{definition}[Equivalent chains]\label{def: equivalent chains}
	Let $E(G)$ be the subspace of $C_1(G)$ spanned by elements of the following forms:
	\begin{enumerate}
		\item $g^n-n\cdot g$, where $n\in\Z$ and $g\in G$, \label{item: linear equiv}
		\item $hgh^{-1}-g$, where $g,h\in G$, and \label{item: conj equiv}
		\item $gh-g-h$, where $g$ and $h$ are \emph{commuting} elements in $G$.\label{item: comm equiv}
	\end{enumerate}
	We say two chains $c$ and $c'$ are \emph{equivalent} if they differ by an element in $E(G)$.
\end{definition}
Note that this is slightly different from the usual definition (e.g. \cite[Definition 2.78]{Cal:sclbook}) by adding (\ref{item: comm equiv}).

\begin{proposition}[scl of equivalent chains]\label{prop: equivalent chains have the same scl}
	If $c$ and $c'$ are equivalent chains then
	$$
	\scl_G(c) = \scl_G(c').
	$$
\end{proposition}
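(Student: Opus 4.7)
The plan is to reduce the statement to showing that $\scl_G$ vanishes on each of the three families of generators of the subspace $E(G)$ introduced in Definition \ref{def: equivalent chains}. Since $\scl_G$ is a semi-norm on $C_1(G)$ by Proposition \ref{prop:scl is norm}, we have $|\scl_G(c) - \scl_G(c')| \le \scl_G(c - c')$, and $\scl_G$ is subadditive. So once $\scl_G$ is shown to vanish on each generator of $E(G)$, it will vanish on all of $E(G)$ by the triangle inequality, and the conclusion follows.

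Cases (1) and (2) of Definition \ref{def: equivalent chains} are classical; see \cite[Chapter 2]{Cal:sclbook}. Case (2), the conjugation move, admits a one-line topological witness: the cylinder $S^1 \times [0,1]$ equipped with a free homotopy between $\gamma_g$ and $\gamma_{hgh^{-1}}$ is a degree-$1$ admissible surface for $hgh^{-1} - g$, and $\chi^-$ of a cylinder is $0$. Case (1), the power-linearization move, can be handled analogously by replacing the cylinder with the $(n+1)$-holed sphere $P_{n+1}$: its $n+1$ boundary classes $c_0, c_1, \dots, c_n$ satisfy $c_0 c_1 \cdots c_n = 1$ in $\pi_1(P_{n+1})$, and the assignment $c_0 \mapsto g^{nk}$, $c_i \mapsto g^{-k}$ for $i \ge 1$ is consistent because powers of $g$ commute. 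This produces degree-$k$ admissible surfaces for $g^n - n \cdot g$ with fixed $-\chi^- = n-1$, giving $\scl_G(g^n - n g) \le (n-1)/(2k) \to 0$.

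The novel case is (3), which I expect to be the main step. Let $g, h \in G$ commute. I would construct admissible surfaces of arbitrarily large degree for $gh - g - h$ out of the pair of pants $P$. Identify $\pi_1(P)$ with the free group $F_2 = \langle a, b \rangle$ so that the three boundary circles of $P$ represent the conjugacy classes of $ab$, $a^{-1}$, and $b^{-1}$. For any $k \ge 1$, send $a \mapsto g^k$ and $b \mapsto h^k$; this is a well-defined homomorphism $F_2 \to G$ because $g^k$ and $h^k$ commute, and it induces a map $f_k : P \to X$. Since $g$ and $h$ commute, the three boundary components map to loops freely homotopic to $(gh)^k$, $g^{-k}$, and $h^{-k}$, which wind $k$ times around $\gamma_{gh}$, $\gamma_{g^{-1}}$, and $\gamma_{h^{-1}}$, respectively. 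Thus $f_k$ is an admissible surface of degree $k$ for the chain $gh - g - h$, and since $-\chi^-(P) = 1$ we obtain $\scl_G(gh - g - h) \le 1/(2k)$. Letting $k \to \infty$ finishes the argument.

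The main obstacle is getting the boundary-winding bookkeeping right in case (3); the key algebraic input is the identity $(g^k)(h^k) = (gh)^k$, which holds precisely because $[g,h]=1$ and which is exactly what makes the $[ab]$-boundary map to the loop with winding $k$ around $\gamma_{gh}$, as opposed to a loop that would only be admissible for some more complicated chain.
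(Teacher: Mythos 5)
Your proposal is correct and follows essentially the same route as the paper: reduce via the semi-norm property to vanishing on the generators of $E(G)$, cite the classical cases (1) and (2), and handle case (3) with exactly the paper's construction — a thrice-punctured sphere mapped in via $a\mapsto g^k$, $b\mapsto h^k$, giving degree-$k$ admissible surfaces with $-\chi^-=1$, so $\scl_G(gh-g-h)\le 1/(2k)\to 0$. The extra explicit surface constructions you give for cases (1) and (2) are fine but not needed beyond the citation.
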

\begin{proof}
	Since scl is a semi-norm, this is to show that scl vanishes on each basis element of $E(G)$. For chains of the first two kinds, see \cite[Section 2.6]{Cal:sclbook}. For a chain $gh-g-h$, where $g$ and $h$ commute, since $(gh)^n=g^n\cdot h^n$ for any $n\in\Z_+$, there is a thrice-punctured sphere with boundary components representing $(gh)^n$, $g^{-n}$ and $h^{-n}$ respectively. This gives rise to an admissible surface $S$ for the chain $gh-g-h$ of degree $n$, which has $-\chi(S)=1$. Letting $n$ go to infinity, we have $\scl_G(gh-g-h)=0$.
\end{proof}

We collect a few properties of stable commutator length. The main reference is \cite{Cal:sclbook}.
\begin{proposition}[Monotonicity and Retract] \label{prop: mono and retract}
Let $H, G$ be groups and let $f: H \to G$ be a homomorphism. Then for any chain $c$ in $C_1(H)$ we have $\scl_{H}(c) \geq \scl_{G}(f(c))$. 
If in addition $H$ is a retract of $G$, i.e. there is a homomorphism $r:G\to H$ such that $r\circ f=id_H$, then for any chain $c$ in $H$ we have that $\scl_H(c) = \scl_G(c)$. 
\end{proposition}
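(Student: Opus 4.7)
The plan is to handle monotonicity first and then deduce the retract statement as an immediate corollary. For monotonicity, I would work with the topological definition of scl. Fix classifying spaces $X_H$ and $X_G$ for $H$ and $G$ (e.g., Eilenberg--MacLane spaces $K(H,1)$ and $K(G,1)$), together with a continuous map $\phi_f \col X_H \to X_G$ inducing $f$ on fundamental groups. Given an admissible surface $(S, \psi)$ for a chain $c = \sum g_j$ in $H$, the composition $(S, \phi_f \circ \psi)$ is a continuous map to $X_G$. I would check that this is an admissible surface for $f(c) = \sum f(g_j)$ in the sense of Definition \ref{defn: scl via euler charac}: the boundary of $S$ maps to a collection of loops freely homotopic to representatives of the conjugacy classes of $f(g_j)$, and the induced degree $n(S, \phi_f \circ \psi)$ equals $n(S,\psi)$ because $\phi_f$ is functorial on loops. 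Since $\chi^-(S)$ is unchanged, taking the infimum gives
\[
\scl_G(f(c)) \le \inf_{(S,\psi)} \frac{-\chi^-(S)}{2 \cdot n(S,\psi)} = \scl_H(c).
\]
A minor point to verify is that the inequality is still meaningful (and in fact trivially holds) when $c$ fails to be null-homologous, since then $\scl_H(c) = +\infty$ by convention.

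For the retract statement, apply monotonicity twice. Using $f \col H \to G$ gives $\scl_H(c) \ge \scl_G(f(c))$. Using $r \col G \to H$ gives $\scl_G(f(c)) \ge \scl_H(r(f(c)))$. Since $r \circ f = \id_H$ by hypothesis, $r(f(c)) = c$, so both inequalities combine to an equality $\scl_H(c) = \scl_G(f(c))$, which is the claim (under the standard identification of $c$ with its image $f(c)$).

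I do not anticipate a genuine obstacle here: the argument is essentially a functoriality check. The only mildly delicate bookkeeping is confirming that the definition of admissible surface transports cleanly along $\phi_f$, in particular that the degree and the negative-part Euler characteristic $\chi^-$ are preserved, and that the commuting diagram in Definition \ref{defn: scl via euler charac} is preserved up to the free homotopy allowed in the definition. Once this is in place, both statements follow in a few lines.
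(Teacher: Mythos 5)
Your argument is correct and is the standard functoriality proof; the paper itself states Proposition \ref{prop: mono and retract} without proof, citing \cite{Cal:sclbook}, where essentially this argument (or its algebraic counterpart: a homomorphism sends a product of commutators bounding $g^n$ to one bounding $f(g)^n$) is what underlies the result, and the retract statement is exactly the two-fold application of monotonicity you describe. The only bookkeeping you leave implicit is that Definition \ref{defn: scl via euler charac} is stated for integral chains, so for a general real chain in $C_1(H)$ one first obtains the inequality on integral (hence, by homogeneity, rational) chains via your surface argument and then passes to all of $C_1(H)$ using that scl is the continuous semi-norm extension (Proposition \ref{prop:scl is norm}) and that $f$ induces a linear map $C_1(H)\to C_1(G)$; this is routine and does not affect the validity of your proof.
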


\begin{proposition}\label{prop: vertex chains on free products}
	If $c=c_1+c_2$ is a chain in $G = G_1 \star G_2$, where $c_1$ is supported on $G_1$ and $c_2$ is supported on $G_2$ then  $\scl_G(c) =  \scl_{G_1}(c_1) + \scl_{G_2}(c_2)$.
\end{proposition}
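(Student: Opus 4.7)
The plan is to establish the two inequalities separately. For the upper bound $\scl_G(c) \le \scl_{G_1}(c_1) + \scl_{G_2}(c_2)$, observe that each inclusion $G_i \hookrightarrow G_1 \star G_2$ admits a retraction, namely the canonical projection that kills the other factor. Proposition \ref{prop: mono and retract} then gives $\scl_G(c_i) = \scl_{G_i}(c_i)$ for $i = 1, 2$, and the triangle inequality from Proposition \ref{prop:scl is norm} yields $\scl_G(c_1 + c_2) \le \scl_G(c_1) + \scl_G(c_2) = \scl_{G_1}(c_1) + \scl_{G_2}(c_2)$.

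For the reverse inequality, I would use a topological surface-decomposition argument. Model the classifying space of $G$ as $X = Y_1 \sqcup_{p_1 \sim 0} I \sqcup_{1 \sim p_2} Y_2$, where each $Y_i$ is a $K(G_i,1)$ with basepoint $p_i$ joined to an interval $I = [0,1]$; this is a $K(G,1)$. Let $m$ be the midpoint of $I$. Represent each conjugacy class $g_j$ in $c$ by a free loop $\gamma_j \subset Y_{i(j)} \setminus \{p_{i(j)}\}$, so the boundary of any admissible surface avoids $m$ entirely.

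Given an admissible surface $(S, f)$ for $c$ of degree $n$, I would homotope $f$ to be transverse to $m$. Since $\partial S$ misses $m$, the preimage $L = f^{-1}(m)$ is a disjoint union of circles in the interior of $S$. Cutting $S$ along $L$ produces subsurfaces $S^{(1)}$ and $S^{(2)}$ with $f(S^{(i)})$ contained in the $i$-th half of $X$; capping each $L$-boundary circle of $S^{(i)}$ by a disk mapping to $m$ yields $\hat S^{(i)}$. After retracting the half-interval, $\hat S^{(i)}$ is an admissible surface for $c_i$ in $Y_i$ of degree $n$, because its remaining boundary consists exactly of the components of $\partial S$ mapping into $Y_i$, which by construction are $n$ copies of the loops representing $c_i$. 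Cutting along circles preserves Euler characteristic, and the $2|L|$ caps contribute positively to $\chi$, giving
\[
-\chi(\hat S^{(1)}) - \chi(\hat S^{(2)}) = -\chi(S) - 2|L| \le -\chi(S).
\]
Dividing by $2n$ and using admissibility of $\hat S^{(i)}$ for $c_i$ yields $\scl_{G_1}(c_1) + \scl_{G_2}(c_2) \le -\chi^-(S)/(2n)$; taking the infimum over $(S,f)$ then closes the argument.

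The main obstacle is ensuring that $\hat S^{(i)}$ has no spurious disk or sphere components, which would inflate $-\chi^-$ above $-\chi$ and threaten the estimate. To handle this, I would start with $(S, f)$ chosen to minimize $|L|$ among admissible pairs of the same $\chi^-$, after first discarding any disk or sphere components of $S$ and removing any trivial terms from $c$. Asphericity of $X$ forces any sphere component of $\hat S^{(i)}$ to arise from a planar piece of $S^{(i)}$ whose image is null-homotopic rel $L$; pushing this piece across $m$ strictly reduces $|L|$, contradicting minimality. The same disk-swap removes any disk component whose boundary represents a non-trivial $g_j$. With these simplifications, $\chi^-(\hat S^{(i)}) = \chi(\hat S^{(i)})$ and the chain of inequalities above is valid.
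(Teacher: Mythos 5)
Your upper bound (retraction plus subadditivity via Propositions \ref{prop: mono and retract} and \ref{prop:scl is norm}) is correct. The lower bound follows the right strategy — cut an admissible surface along $L=f^{-1}(m)$, cap the new circles, and compare Euler characteristics — and you correctly flag the only danger, namely disk and sphere components of the capped pieces $\hat S^{(i)}$. But your resolution of that danger has a genuine gap. A sphere component of $\hat S^{(i)}$ comes from a planar piece $P$ of the cut surface with all boundary circles on $L$; asphericity tells you the capped sphere map is null-homotopic, but that null-homotopy is not rel boundary, so it does not let you push $P$ across $m$. For instance, an annular piece both of whose boundary circles lie on $L$ and whose core maps to an essential loop of $Y_i$ caps off to a sphere, yet cannot be removed, and no minimization of $|L|$ forbids it. On the disk side, the components you treat (boundary representing a nontrivial power of $g_j$) cannot occur at all, since such a boundary would bound a disk in the aspherical half; the problematic disk components are those whose boundary is a component of $\partial S$ winding with degree $0$ (or a torsion power) around its $\gamma_j$, and removing trivial terms from $c$ does not exclude these. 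So the assertion $\chi^-(\hat S^{(i)})=\chi(\hat S^{(i)})$ is not established and is false in general.

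The estimate can be repaired without eliminating these components, by counting caps instead. Use minimality of $|L|$ together with $\pi_2=0$ only for the move that is actually legitimate: a cut piece that is a disk with its single boundary circle on $L$ maps null-homotopically rel boundary into its half and can be pushed across $m$, reducing $|L|$; so after minimization no such piece exists. Then every sphere component of $\hat S^{(1)}\sqcup\hat S^{(2)}$ contains at least two caps and every disk component at least one, unless it is itself a disk or sphere component of $S$ (which $\chi^-(S)$ also discounts). Since the total number of caps is $2|L|$, writing $s$ and $d$ for the numbers of sphere and disk components of the capped pieces one gets $2s+d\le 2|L|+\bigl(\chi(S)-\chi^-(S)\bigr)$, hence $-\chi^-(\hat S^{(1)})-\chi^-(\hat S^{(2)})=-\chi(S)-2|L|+2s+d\le-\chi^-(S)$, which is exactly what your final division by $2n$ requires. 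With that replacement your argument closes, and it is genuinely different from the paper's: the paper deduces the proposition in one line as the trivial-edge-group case of the graph-of-groups formula of \cite[Theorem 6.2]{CH:sclgap}, whereas your route is a self-contained cut-and-cap proof — but as written the key step fails and needs the counting repair above.
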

\begin{proof}
	This is a special case of \cite[Theorem 6.2]{CH:sclgap} since $G$ is a graph of groups with vertex groups $G_1,G_2$ and a trivial edge group.
\end{proof}

\begin{proposition}[\protect{\cite[Theorem 2.101]{Cal:sclbook}}] \label{prop: formulas for scl}
Let $G$ be a group and let $c = \sum_{i=1}^n g_i$ be a chain. Let $\tilde{G} = G \star \langle t_1 \rangle \star \cdots \star \langle t_{n-1} \rangle $ be the free product of $G$ with $n-1$ infinite cyclic groups. Then 
$$
\scl_G(c) = \scl_{\tilde{G}}(g_1\cdot \prod_{i=1}^{n-1} t_i g_{i+1} t_i^{-1} ) - \frac{n-1}{2}.
$$
\end{proposition}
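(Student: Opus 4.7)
The plan is to establish the equality by proving matching upper and lower bounds on the difference $\scl_{\tilde{G}}(w) - \scl_G(c) = \frac{n-1}{2}$ via surface constructions: a pants-attachment argument for the upper bound and a cutting argument along $t_i$-midpoints for the lower bound. This is the standard paradigm for relating the scl of a chain to the scl of a single ``merged'' element obtained by inserting stable letters in a free-product extension.

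For the upper bound $\scl_{\tilde{G}}(w) \le \scl_G(c) + \frac{n-1}{2}$, take an admissible surface $(S,f)$ for the chain $c$ of degree $d$ in a space $X$ with $\pi_1(X) = G$; after passing to a suitable cover, arrange $\partial S$ so it consists of exactly $d$ loops labeled by each $g_i$. Let $\tilde{X} = X \vee \bigvee_{i=1}^{n-1} S^1_i$ so that $\pi_1(\tilde{X}) = \tilde{G}$, with the $i$-th circle representing $t_i$. For each of the $d$ groups of $n$ boundary loops of $S$, successively attach $n-1$ pairs of pants: the $i$-th pants combines the running accumulated boundary with the next $g_{i+1}$-boundary (routed around the $t_i$-circle to produce $t_i g_{i+1} t_i^{-1}$) into the next prefix of $w$. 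Each pants contributes $-\chi = 1$, yielding an admissible surface for $w$ of degree $d$ with $-\chi = -\chi(S) + (n-1)d$. Dividing by $2d$ and taking the infimum over $(S,f)$ gives the bound.

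For the lower bound, given an admissible surface $(T,h)$ of degree $d$ for $w$, homotope $h$ to be transverse to the midpoint of each $t_i$-circle. The preimage $\Lambda \subset T$ is a properly embedded $1$-submanifold whose arc-endpoints on $\partial T$ pair the $2d$ transversal crossings with each $t_i$. A standard simplification procedure—compressing circle components of $\Lambda$ (the midpoints are contractible) and applying innermost-arc arguments so that the outgoing $t_i$-crossing and the returning $t_i^{-1}$-crossing of each copy of $w$ are joined by a single cut arc—reduces $\Lambda$ to exactly $(n-1)d$ essential arcs. Cutting $T$ along $\Lambda$ and deformation-retracting the resulting ``$t_i$-tails'' in the cut space yields a surface mapping into $X$ whose boundary has $nd$ components, each representing a single $g_i$; this is an admissible surface for $c = \sum g_i$ of degree $d$. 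Since each arc-cut increases Euler characteristic by exactly one, the cut surface has $-\chi = -\chi(T) - (n-1)d$, and dividing by $2d$ followed by taking the infimum yields the lower bound; combining with the upper bound gives the stated equality.

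The main obstacle is the simplification step in the lower bound, specifically ensuring the correct arc pairing: one must show that homotopies of $h$ allow the outgoing $t_i$-crossing to be paired with the returning $t_i^{-1}$-crossing of the \emph{same} copy of $w$ rather than with a crossing from a different copy. This pairing is precisely what causes the boundary components of the cut surface to represent the individual $g_i$'s (yielding an admissible surface for the chain $c$) rather than the concatenation $g_1 g_2 \cdots g_n$ (which would give only the weaker bound $\scl_G(g_1 g_2 \cdots g_n) \le \scl_{\tilde{G}}(w)$). Establishing this rigorously is a routine but delicate innermost-disk/tower argument in the graph-of-spaces structure on $\tilde{X}$, together with the standard observation that an scl-minimizing $T$ can be taken to have no disk or sphere components so that $\chi^-$ and $\chi$ agree.
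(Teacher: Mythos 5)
The paper gives no argument for this proposition at all (it is quoted from Calegari's book), so a self-contained proof is welcome, and your two-sided surface strategy is the standard one. However, your upper bound contains a false step: ``after passing to a suitable cover, arrange $\partial S$ so it consists of exactly $d$ loops labeled by each $g_i$'' cannot be done --- finite covers only \emph{increase} the winding of boundary components, and scl is not known to be computed by admissible surfaces all of whose boundary components have degree one. This is repairable: either attach a $t_i$-band at every marked point of every (possibly multiply wrapping) boundary component, interleaving them so the new boundary reads powers of $w$; or avoid surfaces altogether and iterate the inequality $\scl(xy+c')\le \scl(x+y+c')+\tfrac12$ (Lemma \ref{lemma: pants chain} together with Proposition \ref{prop:scl is norm}), then use conjugation invariance of chains (Proposition \ref{prop: equivalent chains have the same scl}) and the retraction $\tilde G\to G$ (Proposition \ref{prop: mono and retract}).

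In the lower bound you have identified the wrong obstacle. No renormalization of the arc pairing is needed: orientability of $T$ forces every arc of $\Lambda$ over the $t_i$-midpoint to join a $t_i$-crossing to a $t_i^{-1}$-crossing, and tracking which segments of $\partial T$ lie on which side of the arcs shows that each boundary circle of the cut surface automatically reads a power of a \emph{single} $g_i$ (after first making $T$ monotone, which is standard); multiply wrapping boundary is perfectly admissible for the chain $c$, so you neither need, nor in general can arrange, the same-copy pairing you describe. The genuine gap is different: the cut surface may acquire \emph{disk} components, which happens exactly when some power $g_i^m$ is trivial, and then $-\chi^-(T_{\mathrm{cut}})$ exceeds $-\chi(T)-(n-1)d$ and your estimate collapses. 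This cannot be argued away, because the statement as quoted, with no hypothesis on the $g_i$, is false in the presence of torsion: for $G=\Z/2=\langle a\rangle$ and $c=a+a$ one has $\scl_G(c)=0$, while $w=at_1at_1^{-1}$ satisfies $awa^{-1}=w^{-1}$, hence $\scl_{\tilde G}(w)=0\neq \scl_G(c)+\tfrac12$. Under the implicit hypothesis that each $g_i$ has infinite order --- which holds in every application made in this paper --- no cut piece can be a disk (its boundary would represent a trivial power of some $g_i$), and with that hypothesis made explicit your cutting argument does close the proof.
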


\begin{proposition}[Index formula \protect{\cite[Corollary 2.81]{Cal:sclbook}}] \label{prop: index formula}
	Let $H\trianglelefteq G$ be a finite index normal subgroup. The quotient $F=G/H$ acts on $H$ by outer-automorphisms $h\mapsto f.h$, where $f.h$ is a well-defined conjugacy class in $H$. Then for any $h\in H$, we have
	$$\scl_G(h)=\frac{1}{|F|}\scl_H(\sum_{f\in F} f.h).$$
\end{proposition}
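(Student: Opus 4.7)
The plan is to prove equality by establishing the two inequalities separately. Let $X$ be a $K(G,1)$ and let $\tilde X\to X$ be the regular covering of degree $|F|$ corresponding to $H$, so $\pi_1(\tilde X)=H$ and the deck group is $F=G/H$ acting on $\tilde X$ with quotient $X$.

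For the easy direction, $\scl_G(h)\le \frac{1}{|F|}\scl_H(\sum_f f.h)$, I would use monotonicity under the inclusion $H\hookrightarrow G$ (Proposition~\ref{prop: mono and retract}). Since each conjugacy class $f.h$ in $H$ is contained in the $G$-conjugacy class of $h$, when viewed in $G$ the chain $\sum_f f.h$ is a sum of $|F|$ elements each conjugate to $h$. Using the conjugation and homogeneity moves in Definition~\ref{def: equivalent chains} together with Proposition~\ref{prop: equivalent chains have the same scl}, this chain is equivalent in $C_1(G)$ to $|F|\cdot h$, so monotonicity yields
\begin{equation*}
\scl_H\!\Bigl(\sum_{f\in F} f.h\Bigr)\;\ge\;\scl_G\!\Bigl(\sum_{f\in F} f.h\Bigr)\;=\;|F|\cdot\scl_G(h).
\end{equation*}

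For the reverse inequality I would use a covering-space argument. Given any admissible surface $(S,\phi)$ for $h$ of degree $n$ in $X$ with no disk or sphere components, let $\tilde S\defeq S\times_X \tilde X$ with induced map $\tilde\phi\col \tilde S\to\tilde X$. This is an $|F|$-fold cover of $S$, so $\chi(\tilde S)=|F|\chi(S)$; since any cover of a non-disk, non-sphere compact surface is again non-disk and non-sphere (by Euler characteristic sign), we obtain $\chi^-(\tilde S)=|F|\chi^-(S)$. For each boundary circle $C_i\subset\partial S$ wrapping $m_i$ times around $\gamma_h$ (so $\sum_i m_i=n$), the pullback of the cover along $C_i\to X$ is a cover of $C_i$ of total degree $|F|$; since $h^{m_i}\in H$ acts trivially on the fiber $F$, this pullback splits into $|F|$ components, each of degree one over $C_i$. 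The component based at the sheet labeled by $f\in F$ lifts $\gamma_h^{m_i}$ to a loop representing $(f.h)^{m_i}$ in $H$. Summing over $i$ and over $f$, the boundary of $\tilde S$ wraps exactly $n=\sum_i m_i$ times around each $\gamma_{f.h}$, so $(\tilde S,\tilde\phi)$ is admissible for $\sum_{f\in F} f.h$ of degree $n$. Hence
\begin{equation*}
\scl_H\!\Bigl(\sum_{f\in F} f.h\Bigr)\;\le\;\frac{-\chi^-(\tilde S)}{2n}\;=\;|F|\cdot\frac{-\chi^-(S)}{2n},
\end{equation*}
and taking the infimum over $S$ gives $\scl_H(\sum_f f.h)\le |F|\cdot\scl_G(h)$.

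The main obstacle is the boundary bookkeeping for $\tilde S$: one must verify that each conjugacy class $f.h$ in $H$ is represented with the correct total multiplicity $n$ in $\partial \tilde S$, which is precisely what makes $\tilde S$ admissible for the full chain $\sum_{f\in F} f.h$ rather than for some smaller subsum. Once one identifies the lifts of $\gamma_h^{m_i}$ with the action of $h^{m_i}$ on $F=G/H$ by left multiplication and uses $h^{m_i}\in H$ to conclude that this action is trivial, the bookkeeping reduces to checking that permuting the sheets by the $F$-action cycles through all conjugates $f.h$ as basepoints vary. Everything else is then a routine Euler characteristic and degree computation.
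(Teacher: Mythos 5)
Your proposal is correct. Note that the paper does not prove this proposition at all: it is quoted as background with a citation to Calegari's book (Corollary 2.81), so there is no in-paper argument to compare against. Your two-sided argument is essentially the standard proof of that cited result. The easy direction is fine: each $f.h$ is a $G$-conjugate of $h$, so in $C_1(G)$ the chain $\sum_f f.h$ is equivalent to $|F|\cdot h$, and monotonicity under $H\hookrightarrow G$ plus linearity on rays gives $\scl_H(\sum_f f.h)\ge |F|\scl_G(h)$. The covering direction is also sound: normality of $H$ makes $h^{m_i}$ act trivially on the fiber $G/H$, so the pullback of the regular cover over each boundary circle splits into $|F|$ degree-one lifts, and the lift through the sheet labeled by $f$ is freely homotopic to $\gamma_{f.h}^{m_i}$; summing over boundary components gives total degree $n$ over every $\gamma_{f.h}$, while $\chi^-(\tilde S)=|F|\chi^-(S)$ since components of $S$ have $\chi\le 0$ and so do their covers. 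The only step you gloss over is that the lifted boundary loops are only freely homotopic (not equal) to chosen representatives $\gamma_{f.h}$, so one should homotope $\tilde\phi$ on a boundary collar to make the admissibility diagram commute on the nose; this is routine and does not affect $\chi^-$ or the degree, so the argument stands.
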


\subsection{Quasimorphisms}

Let $G$ be a group. A map $\phi \col G \to \R$ is called a \emph{quasimorphism} if there is a constant $D > 0$ such that $|\phi(g) + \phi(h) - \phi(gh)| \leq D$ for all $g, h \in G$. The infimum of all such $D$ is called the \emph{defect} of $\phi$ and denoted by $D(\phi)$.
Every bounded map and every homomorphism to $\R$ are trivially quasimorphisms but there are many nontrivial examples; see Example \ref{exmp:brooks}. 
A quasimorphism is called \emph{homogeneous} if $\phi(g^n) = n \cdot \phi(g)$ for every $g\in G$ and $n \in \Z$. 
Every quasimorphism $\phi \col G \to \R$ has a unique associated homogeneous quasimorphism $\bar{\phi}$ defined via
$$
\bar{\phi}(g) := \lim_{n \to \infty} \frac{\phi(g^n)}{n}
$$
which we call the \emph{homogeneous representative} of $\phi$.
\begin{proposition}[Homogeneous Representative, \protect{\cite[Lemma 2.58]{Cal:sclbook}}]  \label{prop:homog rep}
Let $\phi \col G \to \R$ be a quasimorphism with defect $D(\phi)$. Then the homogeneous representative 
$\bar{\phi}$ is in bounded distance to $\phi$ and satisfies $D(\bar{\phi}) \leq 2 D(\phi)$.
\end{proposition}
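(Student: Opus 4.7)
The plan is to verify the three assertions in the proposition: existence and homogeneity of $\bar\phi$, the bounded distance $\| \bar\phi - \phi \|_\infty \leq D(\phi)$, and the defect estimate $D(\bar\phi) \leq 2 D(\phi)$.

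First I would establish that $\bar\phi(g) := \lim_n \phi(g^n)/n$ is well-defined. The quasimorphism property applied to $g^m, g^n$ gives $|\phi(g^{m+n}) - \phi(g^m) - \phi(g^n)| \leq D(\phi)$, so the sequence $a_n := \phi(g^n)$ is $D(\phi)$-subadditive. A Fekete-type argument on the dyadic subsequence $a_{2^k}/2^k$ combined with the iterated estimate $|\phi(g^{kn}) - k \phi(g^n)| \leq (k-1)D(\phi)$ shows the full sequence $a_n/n$ converges. Substituting $g^n$ for $g$ in the iterated estimate, dividing by $k n$, and letting $k \to \infty$ upgrades this to the sharp uniform bound $|\phi(g^n) - n \bar\phi(g)| \leq D(\phi)$ for every $n \geq 1$, whose $n = 1$ case is the bounded distance claim. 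Homogeneity is then immediate: for positive $k$ one has $\bar\phi(g^k) = \lim_n \phi(g^{kn})/n = k \bar\phi(g)$, and for negative $k$ one applies $\bar\phi(g^{-1}) = -\bar\phi(g)$, which follows from $|\phi(g^n) + \phi(g^{-n})| \leq 2 D(\phi)$ (via $|\phi(1)| \leq D(\phi)$) after dividing by $n$.

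For the defect estimate I would use homogeneity to rescale: for $g, h \in G$ and any $n \in \N$,
\[
n \bigl| \bar\phi(gh) - \bar\phi(g) - \bar\phi(h) \bigr| = \bigl| \bar\phi((gh)^n) - \bar\phi(g^n) - \bar\phi(h^n) \bigr|.
\]
Replacing each $\bar\phi(x^n)$ on the right by $\phi(x^n)$ via the uniform bound introduces at most $3 D(\phi)$ of slack, reducing the problem to estimating $|\phi((gh)^n) - \phi(g^n) - \phi(h^n)|$. I would control this by the rewriting $(gh)^n = g^n h^n \cdot c_n$ with $c_n \in [G,G]$: combined with the quasimorphism inequality and $|\phi(g^n h^n) - \phi(g^n) - \phi(h^n)| \leq D(\phi)$, and a careful estimate of $\phi(c_n)$, one obtains a bound of the form $2 n D(\phi) + O(D(\phi))$, which after division by $n$ and passage to the limit yields $D(\bar\phi) \leq 2 D(\phi)$.

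The main obstacle is the sharp constant $2$, not $4$, in the defect bound. A naive sup-norm triangle inequality based on the bounded distance claim alone produces only $D(\bar\phi) \leq 4 D(\phi)$; the improvement to $2 D(\phi)$ hinges on two ingredients: the uniformity of the bound $|\phi(x^n) - n \bar\phi(x)| \leq D(\phi)$ (which does not degrade as $n$ grows), and a careful control of $\phi(c_n)$ for $c_n = (gh)^n h^{-n} g^{-n} \in [G,G]$, whose linear-in-$n$ growth has leading coefficient at most $2 D(\phi)$. Together these extract the factor $2$ upon rescaling by $1/n$.
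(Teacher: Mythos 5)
Your first two steps are fine: the subadditivity argument for existence of $\bar\phi$, the uniform estimate $|\phi(g^n)-n\bar\phi(g)|\le D(\phi)$ (obtained from $|\phi(g^{nk})-k\phi(g^n)|\le (k-1)D(\phi)$ by letting $k\to\infty$), homogeneity, and the bounded-distance claim are all correct and standard. Note that the paper itself offers no proof of this proposition; it is quoted from Calegari's book (Lemma 2.58), so the only meaningful comparison is with that standard argument.

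The defect estimate, however, has a genuine gap exactly where the constant $2$ must be produced. Your rescaling reduces the problem to showing $\limsup_n \frac1n\bigl|\phi((gh)^n)-\phi(g^n)-\phi(h^n)\bigr|\le 2D(\phi)$, and you propose to get this from the decomposition $(gh)^n=c_n\,g^n h^n$ together with ``a careful estimate of $\phi(c_n)$'' whose linear growth has leading coefficient at most $2D(\phi)$. But no such estimate is given, and none is available from the ingredients you list: two applications of the defect inequality give $\bigl|\phi(c_n)-\bigl[\phi((gh)^n)-\phi(g^n)-\phi(h^n)\bigr]\bigr|\le 2D(\phi)$, so the claim that $\phi(c_n)$ grows with leading coefficient at most $2D(\phi)$ is, up to a bounded error, \emph{equivalent} to the inequality you are trying to prove. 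Asserting it is circular; it is precisely the content of the lemma. The estimates that your outline does support fall short of $2$: telescoping $|\phi(x^n)-n\phi(x)|\le (n-1)D(\phi)$ for $x=gh,g,h$ together with $|\phi(gh)-\phi(g)-\phi(h)|\le D(\phi)$ yields only $D(\bar\phi)\le 4D(\phi)$, and writing $c_n$ as a product of roughly $n$ conjugates or commutators and applying the defect inequality term by term again gives a leading coefficient of about $4D(\phi)$, not $2D(\phi)$. So, as written, your argument proves the weaker bound $D(\bar\phi)\le 4D(\phi)$; the additional idea that sharpens $4$ to $2$ (which is the whole point of the cited Lemma 2.58) is missing and needs to be supplied, either by reproducing Calegari's argument or by an independent one.
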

Here two quasimorphisms $\phi, \psi \col G \to \R$ are in bounded distance if $\phi-\psi$ is bounded in the supremum norm.

Quasimorphisms are intimately connected to scl through Bavard's duality:
\begin{theorem}[Bavard's Duality Theorem \cite{bavard}, \protect{\cite[Theorem 2.79]{Cal:sclbook}}] \label{thm:bavard}
For any chain $c = \sum_{i \in I} n_i g_i$ with real coefficients $n_i \in \R$ we have
$$
\scl_G(c) = \sup_{\phi} \frac{\sum_{i \in I} n_i \phi(g_i)}{2D(\phi)},
$$
where the supremum is taken over all homogeneous quasimorphisms $\phi \col G \to \R$. Moreover, this supremum is achieved.
\end{theorem}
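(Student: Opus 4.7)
The plan is to prove the two inequalities $\geq$ and $\leq$ separately, viewing $\scl_G$ as a semi-norm on $C_1(G)$ and realizing the right-hand side as its Hahn--Banach dual.

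For the easy direction $\scl_G(c) \geq \sup_\phi \frac{\phi(c)}{2D(\phi)}$, I would take any admissible surface $(S,f)$ of degree $n$ for $c$ and any homogeneous quasimorphism $\phi$, and prove the estimate $n\, |\sum_i n_i \phi(g_i)| \leq -\chi^-(S) \cdot D(\phi)$. The idea is to cut $S$ along a maximal collection of disjoint interior arcs so that each complementary piece is a polygonal disk. Summing $\phi$ around the boundary components of $S$ produces $n \cdot \sum_i n_i \phi(g_i)$ by homogeneity of $\phi$ (since each boundary component wraps $\gamma_{i}$ an integer number of times), while each interior disk corresponds to a relation $w = 1$ in $G$ whose image under $\phi$ is controlled by a multiple of $D(\phi)$ via the defining defect inequality. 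An Euler-characteristic count of the cut system yields the constant $-\chi^-(S)$; dividing by $2n$ and taking the infimum over $(S,f)$ and the supremum over $\phi$ finishes the direction.

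For the hard direction and the fact that the supremum is achieved, set $s \defeq \scl_G(c)$ and assume first $s > 0$. On the one-dimensional subspace $\R \cdot c \subset C_1(G)$ define the linear functional $\ell(tc) \defeq ts$, which satisfies $|\ell(tc)| = |t| s = \scl_G(tc)$ by positive homogeneity. Since $\scl_G$ is a semi-norm on $C_1(G)$ (Proposition \ref{prop:scl is norm}), Hahn--Banach extends $\ell$ to a real-linear functional $\ell \col C_1(G) \to \R$ dominated by $\scl_G$. Define $\phi \col G \to \R$ by $\phi(g) \defeq \ell(g)$, viewing $g$ as the $1$-chain $1 \cdot g$. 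Then $\phi$ is a homogeneous quasimorphism with $D(\phi) \leq 1/2$: the vanishing $\scl_G(g^n - ng) = 0$ from Proposition \ref{prop: equivalent chains have the same scl} forces $\ell(g^n) = n\ell(g)$, hence $\phi(g^n) = n\phi(g)$, while Lemma \ref{lemma: pants chain} gives
\[
|\phi(gh) - \phi(g) - \phi(h)| \;=\; |\ell(gh - g - h)| \;\leq\; \scl_G(gh - g - h) \;\leq\; \tfrac{1}{2}.
\]
Since $\phi(c) = \ell(c) = s$, we obtain $\phi(c)/(2D(\phi)) \geq s = \scl_G(c)$, which, combined with the easy direction, both proves the inequality and exhibits a $\phi$ realizing the supremum. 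The degenerate case $s = 0$ is handled by $\phi \equiv 0$, which realizes the supremum trivially.

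The main obstacle I expect is the combinatorial bookkeeping in the easy direction: making the polygonal decomposition of the admissible surface precise, summing the defect contributions around each face, and matching the total boundary contribution with $n \cdot \phi(c)$ so that the Euler-characteristic constant comes out exactly right (including the effect of passing from $\chi$ to $\chi^-$ by discarding disks and spheres). This is classical but technical. By contrast, once the seminormality of $\scl_G$ and the vanishing on the equivalence relations of Definition \ref{def: equivalent chains} are in hand, the hard direction is a clean Hahn--Banach extension in which those two vanishing conditions, together with the pants bound of Lemma \ref{lemma: pants chain}, encode precisely the axioms of a homogeneous quasimorphism with defect at most $1/2$.
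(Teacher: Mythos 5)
The paper does not prove this theorem at all---it is quoted as background from Bavard and from \cite[Theorem 2.79]{Cal:sclbook}---and your argument is essentially the standard proof from those sources: the lower bound by summing defects of a homogeneous quasimorphism over a polygonal decomposition of an admissible surface to get $n\,|\phi(c)|\le -\chi^-(S)\,D(\phi)$, and the upper bound by a Hahn--Banach extension of $tc\mapsto t\,\scl_G(c)$ dominated by the seminorm $\scl_G$, with homogeneity of the resulting $\phi$ forced by $\scl_G(g^n-ng)=0$ and the defect bound $D(\phi)\le 1/2$ forced by the pants chain of Lemma \ref{lemma: pants chain}. The only blemishes are peripheral: the extension should be carried out on the subspace of null-homologous real chains (where $\scl_G$ is finite; for non-null-homologous chains both sides are $+\infty$ by convention via a homomorphism pairing nontrivially with $c$), and your treatment of the case $\scl_G(c)=0$ via $\phi\equiv 0$ does not literally work since then $D(\phi)=0$ and the quotient is undefined---instead note that the easy direction applied to $c$ and $-c$ forces $\phi(c)=0$ for every homogeneous quasimorphism of positive defect, so any such $\phi$ attains the supremum.
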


One can actually choose the homogeneous quasimorphism achieving the supremum in Bavard's duality to be the homogenization of a quasimorphism with nice properties. A quasimorphism $\phi$ is called \emph{antisymmetric} if $\phi(g)=-\phi(g^{-1})$ for all $g\in G$.
\begin{proposition}[Extremal Quasimorphisms] \label{prop:extremal qm}
Let $G$ be a group. For any chain $c$ in $G$ there is a quasimorphism $\phi \col G \to \R$ with $D(\phi)=1/4$ that achieves the supremum of Bavard's duality, i.e.\ such that
$$
\scl_G(c) = \bar{\phi}(c)
$$
where $\bar{\phi}$ is the homogenization of $\phi$. 
Moreover, we may choose $\phi$ to be antisymmetric.
\end{proposition}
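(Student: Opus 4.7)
The plan is to combine Bavard's duality with the standard ``half-defect'' construction for homogeneous quasimorphisms. First, by Bavard's duality (Theorem~\ref{thm:bavard}) applied to $c$, there exists a homogeneous quasimorphism $\psi\col G\to\R$ with $\scl_G(c)=\psi(c)/(2D(\psi))$. Rescaling $\psi$ by $1/(2D(\psi))$, I may assume $D(\psi)=1/2$, so that $\scl_G(c)=\psi(c)$. The proposition then reduces to producing a (not necessarily homogeneous) quasimorphism $\phi$ with $\bar\phi=\psi$ and $D(\phi)=1/4$, and then antisymmetrising it.

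The core step is to find a bounded function $\beta\col G\to\R$ such that $\phi\defeq\psi-\beta$ has defect $1/4$. Since $\beta$ is bounded, the homogenisation is unchanged: $\bar\phi=\psi$. The existence of such a $\beta$ is the classical fact that the natural map from the space of homogeneous quasimorphisms modulo $H^1(G;\R)$ (with the defect norm) into the exact part of $H^2_b(G;\R)$ is an isometric embedding up to a factor of $2$; equivalently, $\|[\delta\psi]\|_{H^2_b}\le D(\psi)/2$ for every homogeneous $\psi$ (see Calegari, \emph{scl}, Section~2.5). The infimum defining the bounded cohomology seminorm is attained by a weak-$\ast$ compactness argument on $\ell^\infty(G\times G)$, so $\beta$ can be chosen with $D(\phi)=1/4$ exactly. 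The reverse inequality $D(\phi)\ge D(\bar\phi)/2=1/4$ provided by Proposition~\ref{prop:homog rep} then forces $D(\bar\phi)=2D(\phi)=1/2$, and Bavard's duality yields $\scl_G(c)\ge\bar\phi(c)/(2D(\bar\phi))=\bar\phi(c)$, with equality because $\bar\phi=\psi$ is extremal.

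For the antisymmetry clause I would replace $\phi$ by $\phi^{a}(g)\defeq(\phi(g)-\phi(g^{-1}))/2$. The auxiliary map $\phi^{*}(g)\defeq-\phi(g^{-1})$ has the same homogenisation as $\phi$ (since $\bar\phi(g^{-1})=-\bar\phi(g)$), so $\overline{\phi^{a}}=(\bar\phi+\overline{\phi^{*}})/2=\bar\phi=\psi$. Using $(gh)^{-1}=h^{-1}g^{-1}$ one computes
$$\delta\phi^{a}(g,h)=\tfrac{1}{2}\bigl(\delta\phi(g,h)-\delta\phi(h^{-1},g^{-1})\bigr),$$
so $D(\phi^{a})\le D(\phi)=1/4$, with equality forced again by Proposition~\ref{prop:homog rep} applied to $\phi^{a}$. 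The main obstacle in the whole argument is the bounded cohomology input in the core step: the naive choice $\phi=\psi$ only yields $D(\phi)=1/2$, and any linear rescaling of $\psi$ distorts $\bar\phi$, so one genuinely needs the half-defect theorem to reconcile $D(\phi)=1/4$ with $\bar\phi=\psi$.
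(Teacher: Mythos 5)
Your final antisymmetrization step coincides with the paper's, and the surrounding bookkeeping (a correction $\beta$ bounded because a quasimorphism stays within its defect of its homogenization, weak-$\ast$ compactness to attain the infimum, and forcing $D(\phi)=1/4$ exactly via Proposition \ref{prop:homog rep} and Theorem \ref{thm:bavard}) is fine. The gap is the core step. You reduce the proposition to the claim that \emph{every} homogeneous quasimorphism $\psi$ admits a bounded perturbation of defect $D(\psi)/2$, i.e.\ that $\|[\delta\psi]\|_{H^2_b}\le D(\psi)/2$, and you call this a classical fact found in Calegari's Section~2.5. It is not there, and it is not part of the standard toolkit: what is elementary (and is in the book) is the \emph{opposite} inequality $\|[\delta\psi]\|_{H^2_b}\ge D(\psi)/2$, coming from $D(\bar\phi)\le 2D(\phi)$; in general one only knows $\|[\delta\psi]\|_{H^2_b}\in[D(\psi)/2,\,D(\psi)]$, and the reverse inequality for an arbitrary homogeneous quasimorphism is exactly the hard content (it holds for special classes, e.g.\ rotation quasimorphisms via the Milnor--Wood cocycle, but we are not aware of it being known in the generality you invoke). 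Note moreover that even the weaker statement you actually need --- a half-defect representative for \emph{some} extremal homogeneous $\psi$ for $c$ --- is essentially equivalent to the proposition being proved, so appealing to the general ``isometric embedding up to a factor of $2$'' assumes at least as much as the conclusion.

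The paper sidesteps this by not starting from the homogeneous extremal $\psi$ of Theorem \ref{thm:bavard} at all: it cites the \emph{proof} (not the statement) of Calegari's Theorem~2.70, where the Hahn--Banach argument is run at the level of bounded cochains/filling norms and therefore directly outputs a non-homogeneous quasimorphism $\phi$ with $D(\phi)=1/4$ whose homogenization evaluates to $\scl_G(c)$; the constant $1/4$ falls out of that construction rather than being obtained by halving the defect of a previously chosen homogeneous extremal quasimorphism. To repair your argument you would either have to prove the half-defect property for the specific class $[\delta\psi]$ (which in effect means redoing that Hahn--Banach/filling-norm argument) or cite a result that genuinely states it; Bavard duality as stated, plus weak-$\ast$ compactness, does not yield it.
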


\begin{proof}
	The statement without the moreover part is well known, and follows from the proof of \cite[Theorem 2.70]{Cal:sclbook}. 
	Now suppose $\psi$ is such a quasimorphism with $D(\psi)=1/4$ and $\bar{\psi}(c)=\scl_G(c)$.
	Let $\phi(g)\defeq (\psi(g)-\psi(g^{-1}))/2$. Then $\phi$ is an antisymmetric quasimorphism with $D(\phi)\le D(\psi)=1/4$.
	It also follows by definition that $\bar{\phi}=\bar{\psi}$, and in particular $\bar{\phi}(c)=\bar{\psi}(c)=\scl_G(c)$.
	Thus by Barvard's duality, we must also have $D(\phi)\ge 1/4$ and hence $D(\phi)=1/4$. This gives us the desired quasimorphism $\phi$.
\end{proof}

\begin{lemma}\label{lemma: quasimorphism for commuting elements}
	For any homogeneous quasimorphism $\phi$ on $G$, we have $\phi(gh)=\phi(g)+\phi(h)$ if $g$ and $h$ commute.
\end{lemma}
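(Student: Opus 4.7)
The plan is to exploit homogeneity of $\phi$ together with the commutativity of $g$ and $h$ to force the defect bound to collapse to equality. The key observation is that when $g$ and $h$ commute we have $(gh)^n = g^n h^n$ for every $n \in \Z$, so the expression $\phi(g^n h^n) - \phi(g^n) - \phi(h^n)$ can be compared to the homogenized quantity $n\phi(gh) - n\phi(g) - n\phi(h)$ directly, without accumulating error across $n$ factors.

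Concretely, I would start from the defining inequality of the defect applied to the pair $(g^n, h^n)$, namely
\[
\bigl|\phi(g^n h^n) - \phi(g^n) - \phi(h^n)\bigr| \leq D(\phi).
\]
Using $(gh)^n = g^n h^n$ (since $g$ and $h$ commute) on the first term, and then using homogeneity $\phi(x^n) = n\,\phi(x)$ on each of the three terms, this becomes
\[
\bigl|n\,\phi(gh) - n\,\phi(g) - n\,\phi(h)\bigr| \leq D(\phi),
\]
valid for every $n \in \Z_+$. Dividing by $n$ and letting $n \to \infty$ yields $\phi(gh) = \phi(g) + \phi(h)$, as desired.

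There is essentially no obstacle here: the proof is a one-line consequence of homogeneity plus the defect bound, with the only subtle point being that commutativity is what allows the rewriting $(gh)^n = g^n h^n$, which in turn lets us avoid the usual $nD(\phi)$ error one would pick up when trying to split $\phi((gh)^n)$ into $n$ copies of $\phi(gh)$.
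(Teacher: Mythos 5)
Your proof is correct and is essentially identical to the paper's argument: both apply the defect bound to the pair $(g^n,h^n)$, use $(gh)^n=g^nh^n$ together with homogeneity to rewrite the left side as $n|\phi(gh)-\phi(g)-\phi(h)|$, and let $n\to\infty$. Nothing to add.
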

\begin{proof}
	Note that for any $n\in\Z_+$ we have $(gh)^n=g^nh^n$ and
	$$|\phi(gh)-\phi(g)-\phi(h)|=\frac{1}{n}|\phi(g^nh^n)-\phi(g^n)-\phi(h^n)|\le D(\phi)/n.$$
	Taking $n\to\infty$ we have $\phi(gh)=\phi(g)+\phi(h)$.
\end{proof}

\begin{proposition}\label{prop:chains on direct products}
	Let $c$ be a chain in $G \cong G_1 \times G_2$. Then $c$ is equivalent to a chain $c_1 + c_2$ where $c_1$ is supported on $G_1$ and $c_2$ is supported on $G_2$,
	and $c_1,c_2$ are integral chains if $c$ is.
	Moreover,
	$$
	\scl_G (c) = \max \{ \scl_{G_1} (c_1), \scl_{G_2} (c_2) \}.
	$$
\end{proposition}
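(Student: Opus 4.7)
The plan is to prove the equivalence claim from the relations in Definition~\ref{def: equivalent chains}, the lower bound from monotonicity under the coordinate projections, and the upper bound from Bavard duality.

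First I would handle the equivalence and the lower bound. Each element $g\in G$ can be written uniquely as $g=g^{(1)}g^{(2)}$ with $g^{(j)}\in G_j$ (viewing the $G_j$ as subgroups of $G$), and since $g^{(1)}$ and $g^{(2)}$ commute, relation~(\ref{item: comm equiv}) of Definition~\ref{def: equivalent chains} gives $g\sim g^{(1)}+g^{(2)}$. Extending linearly, the integral chain $c=\sum_i n_i g_i$ is equivalent to $c_1+c_2$, where $c_j\defeq \sum_i n_i g_i^{(j)}$ is an integral chain supported on $G_j$. By Proposition~\ref{prop: equivalent chains have the same scl}, it suffices to compute $\scl_G(c_1+c_2)$. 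The coordinate projection $\pi_j\col G\to G_j$ is a homomorphism sending $c_1+c_2$ to $c_j$, so Proposition~\ref{prop: mono and retract} gives $\scl_G(c_1+c_2)\ge\scl_{G_j}(c_j)$ and hence $\scl_G(c)\ge \max\{\scl_{G_1}(c_1),\scl_{G_2}(c_2)\}$.

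The upper bound is the substantive part; I would apply Bavard's duality (Theorem~\ref{thm:bavard}). Let $\phi\col G\to\R$ be a homogeneous quasimorphism and set $\phi_j\defeq \phi|_{G_j}$, with defect $D_j$. Every element of the support of $c_1+c_2$ lies in a single $G_j$, so Lemma~\ref{lemma: quasimorphism for commuting elements} (or rather its degenerate single-factor case) gives $\phi(c_1+c_2)=\phi_1(c_1)+\phi_2(c_2)$. The crucial additional input is the defect estimate $D(\phi)\ge D_1+D_2$: given $\epsilon>0$, choose $(g_j,h_j)\in G_j\times G_j$ with $|\phi_j(g_jh_j)-\phi_j(g_j)-\phi_j(h_j)|\ge D_j-\epsilon$. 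Since $\phi$ is homogeneous it is conjugation invariant, so $\phi(h_jg_j)=\phi(g_jh_j)$, and hence replacing $(g_j,h_j)$ with $(g_j^{-1},h_j^{-1})$ flips the sign of the defect gap; this lets us arrange both gaps to have the same sign. The pair $(g_1g_2,\,h_1h_2)\in G$ then realizes a defect gap of at least $D_1+D_2-2\epsilon$. Combining this with $\phi_j(c_j)\le 2D_j\,\scl_{G_j}(c_j)$ yields
\[
\frac{\phi(c_1+c_2)}{2D(\phi)}\le \frac{D_1\scl_{G_1}(c_1)+D_2\scl_{G_2}(c_2)}{D_1+D_2}\le \max\{\scl_{G_1}(c_1),\scl_{G_2}(c_2)\},
\]
since the middle expression is a convex combination. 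Taking the supremum over $\phi$ and invoking Bavard's duality gives the upper bound. In the degenerate case $D_1+D_2=0$, the quasimorphism $\phi$ is a homomorphism, and $\phi(c_1+c_2)=0$ because $c$ is null-homologous, so such $\phi$ contribute $0$ to the Bavard supremum.

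The main obstacle I anticipate is the sign-alignment step used to establish $D(\phi)\ge D_1+D_2$; once that is justified, every other step is formal manipulation of Bavard duality and of the equivalence relation on chains.
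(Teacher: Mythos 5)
Your proposal is correct and follows essentially the same route as the paper: decompose each group element into commuting factors to get the equivalent chain $c_1+c_2$, obtain the lower bound from monotonicity under the coordinate projections, and obtain the upper bound from Bavard duality via the restriction of a homogeneous quasimorphism to the factors together with the defect inequality $D(\phi)\ge D(\phi_1)+D(\phi_2)$. The only difference is one of exposition: your sign-alignment argument (using conjugation invariance of homogeneous quasimorphisms to flip the defect gap) spells out the defect additivity that the paper asserts with ``it follows that $D(\phi)=D(\phi_1)+D(\phi_2)$'' without further justification, and you also record the degenerate case $D(\phi_1)+D(\phi_2)=0$ explicitly.
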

\begin{proof}
	Each element $g\in G$ can be written as $g_1g_2$ for some $g_1\in G_1$ and $g_2\in G_2$, and thus $g$ is equivalent to $g_1+g_2$ as chains. The first claim easily follows from this.
	
	Every homogeneous quasimorphism $\phi$ on $G$ restricts to quasimorphisms $\phi_1$ and $\phi_2$ on $G_1$ and $G_2$ respectively. 
	Then for the decomposition $g=g_1g_2$ above for any $g\in G$, we have $\phi(g)=\phi(g_1)+\phi(g_2)=\phi_1(g_1)+\phi_2(g_2)$ by Lemma \ref{lemma: quasimorphism for commuting elements}. It follows that $D(\phi)=D(\phi_1)+D(\phi_2)$ and $\phi(c)=\phi_1(c_1)+\phi_2(c_2)$ for the decomposition above.
	
	Let $\phi$ be an extremal homogeneous quasimorphism for a chain $c$. For the decomposition $c=c_1+c_2$ and $\phi=\phi_1+\phi_2$, we have
	$$\scl_G(c)=\frac{\phi(c_1+c_2)}{2D(\phi)}\le\frac{|\phi_1(c_1)|+|\phi_2(c_2)|}{D(\phi_1)+D(\phi_2)}\le \max\left\{\frac{|\phi_1(c_1)|}{D(\phi_1)},\frac{|\phi_2(c_2)|}{D(\phi_2)}\right\}\le\max\{\scl_{G_1}(c_1),\scl_{G_2}(c_2)\}$$
	by Bavard's duality. This proves the second claim since the other direction $\scl_G(c_1+c_2)\ge \scl_{G_i}(c_i)$ follows by the monotonicity of scl under the projection $G\to G_i$, where $i=1,2$.
\end{proof}

\begin{example}[Brooks Quasimorphisms]\label{exmp:brooks} 
We describe a family of quasimorphisms on non-abelian free groups that certify a spectral gap in free groups. 
Let $F(\Scl)$ be the free group on a generating set $\Scl$ and let $w \in F(\Scl)$ be a reduced word. For an element $x \in F(\Scl)$, let $\nu_w(x)$ be the maximal number of times that $w$ is a subword of $x$ i.e.\ the maximal $n$ such that $x = x_0 w x_1 \cdots w x_n$, where $x_0, \ldots, x_n \in F(\Scl)$ and this expression is reduced. We define $\phi_w \col F(\Scl) \to \Z$ via $\phi_w \col x  \mapsto \nu_w(x) - \nu_{w^{-1}}(x)$. This map is called the \emph{Brooks quasimorphism for $w$}. The family of these maps were introduced by Brooks in \cite{brooks} to show that the vector space of quasimorphisms is infinite dimensional. We will generalize Brooks quasimorphisms from free groups to amalgamated free products and HNN extensions in Section \ref{subsec:no long pairings qm}. 
\end{example}

\subsection{Spectral Gaps in Stable Commutator Length} \label{subsec: gaps in scl}
We summarize some known methods and results on scl spectral gaps. 

\begin{definition}
	We say a group $G$ has a \emph{spectral gap} $C>0$ for elements (resp. integral chains) if 
	$\scl_G(c)\notin (0,C)$ for all elements (resp. integral chains) $c$ in $G$.
\end{definition}

The spectral gap property can be used to obstruct certain homomorphisms using monotonicity of scl (Proposition \ref{prop: mono and retract}).
A gap result for integral chains can also be used to estimate index of certain kinds of subgroups using the index formula (Proposition \ref{prop: index formula}).

There are two main approaches to prove spectral gap results in a group $G$ .

In light of Theorem \ref{thm:bavard} one approach is to construct for a given element $g$ (resp. chain $c$) a homogeneous quasimorphism $\phi_g$ (resp. $\phi_c$) of unit defect s.t.\ $\phi_g(g) \geq C$ (resp. $\phi_c(c) \geq C$) for a uniform $C > 0$. 
However, it is notoriously difficult to construct these maps which witness the optimal gap. For the free group, only two such constructions are available \cite{Heuer, CH:sclgap}.

The other approach is to give a uniform lower bound of the complexity of all admissible surfaces. This is usually done by first simplifying admissible surfaces (sometimes in the language of disk diagrams) into certain normal form and then making use of a particular structure of the normal form; See for instance \cite{DH91,Culler,Chen:sclfpgap,IK,RAAGgap2,CH:sclgap}.

Here we list some known spectral gap results for elements in Theorem \ref{thm: gap for elements summary} and for chains in Theorem \ref{thm: gap for chains summary}. The list is by no means extensive.

\begin{theorem}\label{thm: gap for elements summary}
	\leavevmode
	\begin{enumerate}
		\item (Calegari--Fujiwara \cite[Theorem A]{CF:sclhypgrp}) Any $\delta$-hyperbolic group with a generating set $S$ has a spectral gap $C=C(|S|,\delta)$ for elements. 
		Moreover, an element $g$ has $\scl_G(g)=0$ if and only if $g^n$ is conjugate to $g^{-n}$ for some $n\in\Z_+$.
		\item (Bestvina--Bromberg--Fujiwara \cite[Theorem B]{BBF}) Let $G$ be a finite index subgroup of the mapping class group $\mathrm{Mod}(\Sigma)$ of a possibly punctured closed orientable surface $\Sigma$. 
		Then $G$ has a spectral gap $C(G)$ for elements.
		\item (Chen--Heuer \cite[Theorem C]{CH:sclgap}) For any orientable $3$-manifold $M$, its fundamental group has a spectral gap $C(M)$ for elements.
		\item (Heuer \cite[Theorem 7.3]{Heuer}) Any (subgroup of a) RAAG has a spectral gap $1/2$ for elements. Moreover, any nontrivial element has positive scl. A new topological proof is given in \cite{CH:sclgap}. Weaker results are obtained in \cite{RAAGgap1} and \cite{RAAGgap2}.
		\item (Clay--Forester--Louwsma \cite[Theorem 6.9]{CFL16}) Let $\{G_v\}$ be a family of groups with a uniform gap for elements. Then their free product also has a spectral gap for elements.
		\item (Chen--Heuer \cite[Theorem F]{CH:sclgap}) Let $\{G_v\}$ be a family of groups without $2$-torsion such that they have a uniform gap for elements. Then their graph product also has a spectral gap for elements. The assumption on $2$-torsion is unnecessary by our Theorem \ref{thm: gap for graph products}.
	\end{enumerate}
\end{theorem}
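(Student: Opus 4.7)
The statement is a compilation of several known results from different papers, so a ``proof'' would really be a coordinated pointer to the literature together with a brief description of the two general strategies outlined just before the theorem. My plan is to structure a treatment around those two strategies (extremal quasimorphisms via Bavard duality, and direct surface/disk-diagram arguments), assigning each item to whichever strategy gives the most natural argument, and then leaving the quantitative details to the cited sources.

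\emph{First, the quasimorphism route.} For item (1), I would follow Calegari--Fujiwara: given a $\delta$-hyperbolic group $G$ with generating set $S$ and an element $g$ of infinite order, one constructs a ``small-overlap'' counting quasimorphism $\phi$ built out of copies of a power of $g$ along a quasi-geodesic axis, controls its defect in terms of $\delta$ and $|S|$, and evaluates it on $g$ to conclude $\scl_G(g)\ge C(|S|,\delta)$ by Bavard duality (Theorem \ref{thm:bavard}). The characterization of $\scl=0$ elements follows from the fact that such an axis exists whenever $g^n$ is not conjugate to $g^{-n}$. Item (2) goes through the Bestvina--Bromberg--Fujiwara projection-complex machinery: the target finite-index subgroup acts on a quasi-tree of metric spaces with the WPD condition, and the usual Brooks-type counting procedure adapted to that action yields a homogeneous quasimorphism of unit defect taking uniform values. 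Item (4) is handled similarly after embedding the RAAG (or its subgroup) equivariantly into the associated cube complex; Heuer's construction of ``median quasimorphisms'' on the cube complex produces the sharp value $1/2$. All three arguments follow the same template: locate a suitable negatively curved object the group acts on, feed it into a Brooks--type construction (compare Example \ref{exmp:brooks}), and apply Bavard duality.

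\emph{Second, the surface/combinatorial route.} For items (3), (5), and (6) I would take any admissible surface $(S,f)$ for the element and put it into a normal form adapted to a splitting of the ambient group. For item (5), the free product decomposition yields a Bass--Serre tree on which the surface maps in a controlled way; pulling back the midpoints of edges produces a system of arcs and simple closed curves on $S$ cutting it into pieces mapping to vertex groups, and the uniform gap in each vertex group gives a uniform lower bound on $-\chi^-(S)/2n(S,f)$ (this is the content of the Clay--Forester--Louwsma argument). Item (6) reduces to (5) by peeling off the commuting structure one vertex group at a time, using that a graph product is built iteratively from free products and direct products, and combining with Proposition \ref{prop:chains on direct products} to handle the direct product steps; the absence of $2$-torsion in the statement of \cite{CH:sclgap} can be removed using the sharper Theorem \ref{thm: gap for graph products} of the present paper. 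For item (3), I would invoke the JSJ/geometric decomposition of the $3$-manifold $M$, apply the hyperbolic case (item (1)) to each hyperbolic piece and known gap results to the Seifert fibered pieces, and then glue the pieces together via a graph-of-groups argument of the same flavor as (5)--(6).

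\emph{Where the real work lies.} The genuinely substantive steps, and the ones I would not attempt to redo, are (a) constructing the extremal quasimorphisms in items (1), (2), (4) with sharp constants (particularly the $1/2$ for RAAGs, which is the hardest case and where the quasimorphism is not a straight Brooks counting function), and (b) the normal-form / disk-diagram reduction in (5), (6) and the $3$-manifold case, which is where the combinatorics becomes delicate. In a self-contained treatment the main obstacle would be item (4), whose various proofs (\cite{Heuer}, \cite{RAAGgap1}, \cite{RAAGgap2}, \cite{CH:sclgap}) each require nontrivial geometric input from the cube complex; for the purposes of the present paper I would simply quote the result, since it serves as motivating background rather than as a step inside the paper's own arguments.
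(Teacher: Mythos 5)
The paper gives no proof of this theorem at all — it is a background summary whose individual items are simply quoted from the cited sources, which is exactly how you treat it, so your proposal matches the paper's approach (deferring to the literature, with the same observation that the $2$-torsion hypothesis in item (6) is removed by Theorem \ref{thm: gap for graph products}). One minor attribution quibble: Clay--Forester--Louwsma prove their free-product gap via quasimorphisms built from the action on the Bass--Serre tree rather than by a surface normal-form argument, but since the theorem is purely a citation compilation this does not affect the paper.
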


\begin{theorem}\label{thm: gap for chains summary}
	\leavevmode
	\begin{enumerate}
		\item (Calegari--Fujiwara \cite[Theorem A']{CF:sclhypgrp}) Any $\delta$-hyperbolic group with generating set $S$ has a spectral gap $C=C(|S|,\delta)$ for integral chains. 
		Moreover, an integral chain has zero scl if and only if it is equivalent to the zero chain. 
		The following families of hyperbolic groups have uniform gaps even though the numbers of generators are unbounded.
		\item (Tao \cite[Theorem 1.1]{tao}) Any free group has a spectral gap $C=1/8$ for integral chains.
		\item (Chen--Heuer \cite[Proposition 9.1]{CH:sclgap}) Free products of cyclic groups have a spectral gap $C=1/12$ for integral chains. This is sharp for $\Z/2\star\Z/3$.
		\item (Chen--Heuer \cite[Theorem 9.5]{CH:sclgap}) There is a uniform constant $C>0$ such that 
		the orbifold fundamental group of any closed hyperbolic $2$-dimensional orbifold has a spectral gap $C$ for integral chains.
	\end{enumerate}
\end{theorem}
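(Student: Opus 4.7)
Since Theorem \ref{thm: gap for chains summary} collects results previously established in the cited references, any proof must be an outline indicating, for each item, which technique in the corresponding paper produces the stated gap; the paper's ``proof'' should essentially consist of pointing to the four citations. I describe below how I would extract each statement.

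For items (1) and (2), the plan is to invoke Bavard's duality (Theorem \ref{thm:bavard}) with Bestvina--Fujiwara-style counting quasimorphisms on the Cayley graph of $G$. Given a chain $c$ in a $\delta$-hyperbolic group $G$ with finite generating set $S$, one first replaces each term by a cyclically reduced representative; for a suitable geodesic word $w$ appearing in one of the terms, the counting quasimorphism $\phi_w$ has defect bounded in terms of $|S|$ and $\delta$, while $\bar{\phi}_w(c)$ is bounded below by a positive constant depending only on the same data, giving the gap. Tao's refinement to the uniform constant $1/8$ on free groups uses the same strategy but exploits the tree-like Cayley graph to pin down both the defect and the counting function sharply. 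The delicate point, which corresponds to the ``moreover'' clause of (1), is to show that if $\bar{\phi}_w(c)=0$ for every admissible $w$ then $c$ is equivalent to the zero chain in the sense of Definition \ref{def: equivalent chains}.

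For items (3) and (4), I would argue surface-theoretically rather than dually. Take a sequence of admissible surfaces $(S_i,f_i)$ for $c$ with $-\chi^-(S_i)/(2\,n(S_i,f_i)) \to \scl_G(c)$ and put each into a normal form: a disk-diagram decomposition in the case of free products of cyclic groups, and an orbifold-with-cone-points analogue for $2$-dimensional orbifold fundamental groups. Once in normal form, one bounds the Euler characteristic below in terms of the number of boundary components, producing the uniform constant. The main obstacle in both cases is the normal-form reduction itself: one must perform finitely many simplifications of $(S_i,f_i)$ that do not increase $-\chi^-(S_i)/n(S_i,f_i)$ and terminate with a surface whose pieces are rigid enough to support the Euler estimate. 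The sharpness of the constant $1/12$ on $\Z/2\star\Z/3$ in (3) serves as a useful sanity check that the Euler estimate cannot be improved without further hypotheses, and it also pins down where the upper bound $\frac{1}{12\Delta(\Gamma)+24}$ in Theorem \ref{theorem: graph prod} comes from conceptually.
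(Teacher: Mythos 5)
The paper offers no proof of Theorem \ref{thm: gap for chains summary} at all: it is a summary of results established in the cited references, so your observation that the ``proof'' consists of pointing to the four citations matches exactly what the paper does. Your added sketches (quasimorphisms with controlled defect for items (1)--(2), normal-form surface arguments for (3)--(4)) are a reasonable description of the cited sources and harmless here, though they are not needed to justify the statement; only note that the constant $\frac{1}{12\Delta(\Gamma)+24}$ in Theorem \ref{theorem: graph prod} is a lower bound coming from the $\frac{1}{12N}$ estimate of Theorem \ref{thm: no long pairing} with $N=\Delta(\Gamma)+2$, not from the sharpness of $1/12$ for $\Z/2\star\Z/3$.
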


Note by Proposition \ref{prop:chains on direct products} that groups with spectral gaps for chains is closed under direct products. Corollary \ref{cor: gap pres by graph prod} generalizes this to graph products. 
The authors are unaware of any groups that were previously known to have a spectral gap for chains other than direct products of hyperbolic groups.

\subsection{Amalgamated free products}
Let $G = A \star_C B$ be the amalgamated free product of groups $A$ and $B$ over a subgroup $C$. For any $g \in G\setminus C$, we may write
\begin{equation}\label{eqn: ABalt}
	g = \wtt_1 \cdots \wtt_n
\end{equation}
where $\wtt_i \in A \setminus C$ or $\wtt_i \in B \setminus C$ for all $i \in \{1, \ldots, n \}$ such that the $\wtt_i$'s alternate between $A \setminus C$ and $B \setminus C$.

\begin{remark} \label{rmk:notation and vertex elements amalgamated free prod}
In an amalgamated free product $G = A \star_C B$ we use text font (e.g. $\att, \btt$) to denote elements of $A \setminus C$ or $B \setminus C$. We refer to those elements as \emph{vertex elements}.
Ordinary roman letters (e.g. $a,b$) denote generic elements in $G$.
\end{remark}

\begin{definition}[(cyclically) reduced form for amalgamated free products] \label{def:ab-alternating}
We say that for an element $g \in G \setminus C$
the expression (\ref{eqn: ABalt})
is the \emph{reduced form of $g$}. We define the \emph{length of $g$} as $n$ and denote it by $|g|$. 
Given the normal form (\ref{eqn: ABalt}) a 
\emph{prefix} of $g$ is an element $h \in G \setminus C$ with normal form $h
=\wtt_1 \cdots \wtt_m$ where $0\le m < n$.

If $\wtt_1$ and $\wtt_n$ as in the reduced form (\ref{eqn: ABalt}) lie in different sets $A \setminus C$ and $B \setminus C$ then we say that $g$ is \emph{cyclically reduced}.

For $x_1, \ldots, x_m \in G \setminus C$ 
we say that the expression 
$g= x_1 \cdots x_m$ is \emph{ a reduced decomposition of $g$} if there are reduced forms of each $x_i$ such that their concatenation is a reduced form of $g$.
Observe that $g$ is cyclically reduced if and only if the expression $g \cdot g$ is reduced. 
\end{definition} 

The reduced forms of an element are unique up to multiplication by $C$:
\begin{proposition}[Reduced form for amalgamated free products \cite{trees}] \label{prop:normal form amalg free prod}
Let $G = A \star_C B$ be an amalgamated free product and suppose that
$$
\wtt_1 \cdots \wtt_n = \wtt'_1 \cdots \wtt'_{n'}
$$
where all $\wtt$ terms alternate between $A \setminus C$ and $B \setminus C$.
Then $n = n'$ and there are elements $d_0, \ldots, d_n \in C$ with $d_0 = e = d_n$ such that $\wtt_i = d_{i-1} \wtt'_i d_i^{-1}$ for all $i \in \{1, \ldots, n \}$.
\end{proposition}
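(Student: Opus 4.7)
The plan is to invoke Bass--Serre theory. The amalgamated free product $G = A \star_C B$ acts on its Bass--Serre tree $T$: a bipartite tree with vertex set $G/A \sqcup G/B$, an edge joining $gA$ and $gB$ for each $g \in G$, vertex stabilizers conjugate to $A$ and $B$, and edge stabilizers conjugate to $C$.

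First I would interpret a reduced expression as a non-backtracking walk in $T$. Given $g = \wtt_1 \cdots \wtt_n$ reduced, define types $V_i \in \{A, B\}$ by setting $V_0$ to be the factor containing $\wtt_1$ and alternating thereafter, and set $v_i \defeq \wtt_1 \cdots \wtt_i \cdot V_i$ for $0 \leq i \leq n$. A direct check using $C \subset A \cap B$ shows that consecutive vertices $v_{i-1}, v_i$ are adjacent in $T$ via the edge $\wtt_1 \cdots \wtt_i \cdot C$, and that the reduced condition $\wtt_i \notin C$ is precisely what ensures no backtracking, i.e. $v_{i+1} \neq v_{i-1}$. Hence the walk $v_0, v_1, \ldots, v_n$ is a geodesic of length $n$ from $V_0$ to $g \cdot V_n$.

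Since geodesics in $T$ are unique, two reduced expressions for the same $g$ must trace the same geodesic. This forces $V_0 = V'_0$ (so $\wtt_1$ and $\wtt'_1$ lie in the same factor, and by alternation so do $\wtt_i, \wtt'_i$ for every $i$), $n = n'$, and $\wtt_1 \cdots \wtt_i \cdot V_i = \wtt'_1 \cdots \wtt'_i \cdot V_i$ for every $0 \leq i \leq n$. Setting $d_i \defeq (\wtt_1 \cdots \wtt_i)^{-1} \wtt'_1 \cdots \wtt'_i$, the vertex equalities give $d_i \in \mathrm{Stab}(V_i)$, which is either $A$ or $B$; moreover $d_0 = d_n = e$ is immediate, and a short telescoping computation directly yields the identity $\wtt_i = d_{i-1} \wtt'_i d_i^{-1}$.

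The remaining step is to upgrade the containment $d_i \in A$ or $B$ to $d_i \in C$. Let $X$ denote the factor containing both $\wtt_i$ and $\wtt'_i$, and $Y$ the other factor; the stabilizer constraints read $d_{i-1} \in X$ and $d_i \in Y$. Solving $d_i^{-1} = (\wtt'_i)^{-1} d_{i-1}^{-1} \wtt_i$ together with $\wtt_i, \wtt'_i, d_{i-1} \in X$ forces $d_i \in X$, so $d_i \in X \cap Y = C$; symmetric reasoning at the adjacent index yields $d_{i-1} \in C$. The most delicate step is this final upgrade, which crucially invokes the same relation at two consecutive indices to extract the intersection $A \cap B = C$; this is exactly the rigidity that distinguishes amalgamated free products from ordinary free products, where $C$ is trivial and no such upgrade is needed.
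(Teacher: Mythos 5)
Your overall strategy---reading a reduced expression as a non-backtracking edge path in the Bass--Serre tree and extracting the elements $d_i$ from coincidences of vertices---is sound, and the telescoping definition $d_i \defeq (\wtt_1\cdots\wtt_i)^{-1}\wtt'_1\cdots\wtt'_i$ together with the final upgrade $d_i\in A\cap B=C$ is carried out correctly. But there is a genuine gap at the pivotal step ``since geodesics in $T$ are unique, two reduced expressions for the same $g$ must trace the same geodesic; this forces $V_0=V'_0$.'' Uniqueness of geodesics applies only to two paths with the \emph{same} endpoints, and at that stage the endpoints of your two paths are not known to agree: the initial vertex is $eA$ or $eB$ according to the factor containing the first letter, and the terminal vertex is $gA$ or $gB$ according to the factor containing the last letter. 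That the two expressions begin (and end) in the same factor is part of what the proposition asserts---it is forced by the conclusion $\wtt_i=d_{i-1}\wtt'_i d_i^{-1}$ with $d_i\in C$---so deducing it from ``they are the same geodesic'' is backwards. The gap is repairable: first prove, using a single path from a single base vertex (where no endpoint matching is needed), that a nonempty alternating word with all letters outside $C$ is nontrivial in $G$; then a type mismatch at the start is excluded by applying this to the alternating word $\wtt_n^{-1}\cdots\wtt_1^{-1}\wtt'_1\cdots\wtt'_{n'}$, and similarly at the end (or one runs the usual induction on $n$, cancelling one letter at a time). Only after the endpoints are matched may uniqueness of geodesics be invoked, and then the rest of your argument goes through.

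A smaller caveat: you are taking the full Bass--Serre package as a black box. That the graph with vertices $G/A\sqcup G/B$ and edges $G/C$ is a \emph{tree}, with stabilizers exactly the conjugates of $A$, $B$, $C$ (which is what gives $A\cap B=C$, used both in your no-backtracking check and in the final step $d_i\in X\cap Y=C$), is in the cited source \cite{trees} deduced \emph{from} the normal form theorem you are proving, not the other way around. This does not invalidate the route---granting Bass--Serre theory, it is a legitimate alternative to Serre's direct argument via an action on reduced sequences---but you should flag that the proof is only as self-contained as your independent source for the tree's properties.
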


\begin{corollary} \label{corr:replacement}
Let $G = A \star_C B$ be an amalgamated free product. Suppose that
$$
x_1 \cdots x_n = x'_1 \cdots x'_n
$$
are two reduced decompositions such that $|x_i| = |x'_i|$ for all $i \in \{1, \ldots, n \}$. Then there are elements $d_0, \ldots, d_n \in C$ with $d_0 = e = d_n$ such that
$x_i = d_{i-1} x'_i d_i^{-1}$ for all $i \in \{1, \ldots, n \}$.
\end{corollary}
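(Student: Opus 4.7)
The plan is to reduce the corollary to Proposition \ref{prop:normal form amalg free prod} by expanding the factors $x_i$ and $x'_i$ into their underlying vertex-element reduced forms and tracking the conjugating elements of $C$ that appear at the ``boundary'' positions between consecutive factors.

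First, using the definition of a reduced decomposition (Definition \ref{def:ab-alternating}), I would write each $x_i = \wtt_{i,1} \cdots \wtt_{i,|x_i|}$ and $x'_i = \wtt'_{i,1} \cdots \wtt'_{i,|x'_i|}$ as alternating products of vertex elements, where concatenating these expressions over $i = 1, \ldots, n$ yields the reduced forms of the common product $x_1 \cdots x_n = x'_1 \cdots x'_n$. Since $|x_i| = |x'_i|$ for each $i$, the two resulting reduced forms have the same total length $N = \sum_{i=1}^n |x_i|$, so Proposition \ref{prop:normal form amalg free prod} applies.

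Next, let $N_i = |x_1| + \cdots + |x_i|$ for $0 \le i \le n$, so $N_0 = 0$ and $N_n = N$. Proposition \ref{prop:normal form amalg free prod} supplies elements $e_0, e_1, \ldots, e_N \in C$ with $e_0 = e_N = e$ such that each vertex letter in the first reduced form is $e_{j-1}$-to-$e_j^{-1}$ conjugate to the $j$-th vertex letter in the second reduced form. I would then set $d_i \defeq e_{N_i}$, so automatically $d_0 = d_n = e$. Substituting the vertex-by-vertex identities into $x_i = \wtt_{i,1} \cdots \wtt_{i,|x_i|}$ produces a telescoping product in which every internal $e_j^{-1} e_j$ cancels, leaving exactly $x_i = d_{i-1} x'_i d_i^{-1}$, as desired.

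There is no real difficulty beyond careful bookkeeping: the statement that the concatenated vertex expressions are genuinely reduced is built into the definition of a reduced decomposition, and the matching of lengths $|x_i| = |x'_i|$ ensures that the boundary positions $N_i$ align in both reduced forms, which is precisely what lets us take $d_i = e_{N_i}$ unambiguously. The only subtle point to watch is that the $e_j$ at a position interior to the block for $x_i$ must cancel with its neighbor, which is the telescoping observation above.
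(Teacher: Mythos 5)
Your argument is correct and is exactly the intended one: the paper states this corollary without proof as an immediate consequence of Proposition \ref{prop:normal form amalg free prod}, and your expansion into vertex-element reduced forms, alignment of the block boundaries $N_i$ (using $|x_i| = |x'_i|$), and telescoping of the interior conjugators is precisely the bookkeeping that justifies it.
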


We will also need the following result later.

\begin{proposition} \label{prop:triangles amalg}
Let $g, h \in G$ be two elements. Then there are elements $y_1, y_2, y_3 \in G$ in reduced form and vertex elements 
(see Remark \ref{rmk:notation and vertex elements amalgamated free prod}) $\xtt_1, \xtt_2, \xtt_3 \in G$ such that
\begin{eqnarray*}
g &=& y_1^{-1} \xtt_1 y_2 \\
h &=& y_2^{-1} \xtt_2 y_3 \\
(gh)^{-1} &=& y_3^{-1} \xtt_3 y_1
\end{eqnarray*}
as reduced decompositions, where possibly some $y_i$ (resp. $\xtt_i$) is the identity represented by the empty word (resp. letter).
\end{proposition}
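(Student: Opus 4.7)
My plan is to explicitly analyze the cancellation that occurs when forming the product $gh$ and to read off the decomposition from the resulting structure. Geometrically, this corresponds to the fact that the three points $v_0, g v_0, gh v_0$ in the Bass--Serre tree of $G = A \star_C B$ form a geodesic triangle, which in a tree is a tripod; the three legs of the tripod will correspond to the $y_i$'s, while the ``middle letters'' $\xtt_i$ account for the freedom of vertex-group elements at the median vertex.

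Concretely, I would write $g = g_1 \cdots g_n$ and $h = h_1 \cdots h_m$ in reduced form and define the \emph{cancellation depth} $k \ge 0$ to be the largest integer for which there exist $c_1, \dots, c_k \in C$ (with $c_0 = e$) satisfying $g_{n-i+1}\, c_{i-1}\, h_i = c_i$ for $i = 1, \dots, k$; this is well-defined by the uniqueness of normal forms (Proposition~\ref{prop:normal form amalg free prod}). Maximality terminates the cancellation at step $k+1$ for one of two reasons: either (Case~A, which by parity can occur only when $k = 0$) the letters $g_{n-k}$ and $h_{k+1}$ lie in different vertex groups, or (Case~B) they lie in the same vertex group but $g_{n-k}\, c_k\, h_{k+1} \notin C$. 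In Case~B I set
\[
\begin{aligned}
&y_1 = (g_1 \cdots g_{n-k-1})^{-1}, \quad y_2 = g_{n-k+1} \cdots g_n, \quad y_3 = h_{k+2} \cdots h_m,\\
&\xtt_1 = g_{n-k}, \quad \xtt_2 = c_k\, h_{k+1}, \quad \xtt_3 = (g_{n-k}\, c_k\, h_{k+1})^{-1}.
\end{aligned}
\]
The three identities $g = y_1^{-1} \xtt_1 y_2$, $h = y_2^{-1} \xtt_2 y_3$, and $(gh)^{-1} = y_3^{-1} \xtt_3 y_1$ then follow by direct substitution using the cancellation relations $h_1 \cdots h_k = (g_{n-k+1} \cdots g_n)^{-1} c_k$. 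Each $\xtt_i$ is a vertex element by the maximality of $k$ (which prevents any further absorption into $C$), and the three concatenations alternate between vertex groups thanks to the alternation of the original reduced forms of $g$ and $h$ together with the Case~B hypothesis that $g_{n-k}$ and $h_{k+1}$ lie in the same group. In Case~A (so $k = 0$ and no cancellation occurs) one sets $y_1 = g^{-1}$, $y_2 = \xtt_1 = e$, $\xtt_2 = h_1$, $y_3 = h_2 \cdots h_m$, $\xtt_3 = h_1^{-1}$, and verifies the three identities directly.

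The main bookkeeping obstacle will be tracking the boundary configurations cleanly: when $k = n$ or $k = m$ the elements $g$ or $h$ are exhausted by the cancellation, so some $y_i$ collapses to the empty word (and possibly $\xtt_1$ becomes empty), while if $gh$ itself lies in $C$ the decomposition degenerates further still. In each boundary case one sets the appropriate $y_i$ or $\xtt_i$ to the identity as permitted by the statement and carries out the substitution; the algebraic and alternation checks go through verbatim. The invariant that powers every verification is maximality of the cancellation depth $k$, which guarantees that each non-trivial ``middle letter'' $\xtt_i$ is a genuine vertex element lying outside~$C$.
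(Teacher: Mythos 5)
Your proof takes essentially the same route as the paper's: define the maximal cancellation depth between the tail of $g$ and the head of $h$, and read off the tripod decomposition, with the two pivot letters (and the inverse of their product, adjusted by the element of $C$ produced by the cancellation) serving as $\xtt_1,\xtt_2,\xtt_3$. If anything, you are slightly more careful than the paper, which leaves the $C$-adjustment of $\xtt_2$ implicit and folds your Case A (pivot letters in different factors, forcing the degenerate choice $\xtt_1=y_2=e$) into its single formula without comment.
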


\begin{proof}
Let $g = \vtt_1 \ldots \vtt_m$ and $h = \wtt_1 \cdots \wtt_n$ be reduced. Let $0\le i \leq \min \{m, n \}$ be the largest integer such that $\vtt_{m - i} \cdots \vtt_m \wtt_1 \cdots \wtt_i = c \in C$.

Set $y_1^{-1} = \vtt_1 \cdots \vtt_{m - i - 2}$, $y_2 = \vtt_{m-i} \cdots \vtt_m$ and $y_3 = \wtt_{i+2} \cdots \wtt_n$ and $\xtt_1 = \vtt_{m-i-1}$, $\xtt_2 = \wtt_{i+1}$ and $\xtt_3 = (\vtt_{m-i-1} c \wtt_{i+1} )^{-1}$.
By the minimality of $i$ we see that $\xtt_3 \not \in C$ unless $i=\min(m,n)$, in which case $\xtt_3=id$ is represented by the empty word. Thus all of the expressions 
\begin{eqnarray*}
g &=& y_1^{-1} \xtt_1 y_2 \\
h &=& y_2^{-1} \xtt_2 y_3 \\
(gh)^{-1} &=& y_3^{-1} \xtt_3 y_1
\end{eqnarray*}
are reduced. 
\end{proof}

\subsection{HNN extensions}\label{subsec: HNN}

Suppose $C$ and $C'$ are subgroups of a group $A$ and $\phi: C\to C'$ is an isomorphism.
Let $G = A \star_C$ be the associated HNN extension, obtained as the quotient of $A\star\langle t\rangle$ by relations $tct^{-1}=\phi(c)$ for all $c\in C$.
For any $g\in G\setminus \{ C, C' \}$, we may write
\begin{equation}\label{eqn: reduced}
	g = \wtt_1 \cdots \wtt_n	
\end{equation}
where 
\begin{enumerate}
	\item for each $i \in \{1, \ldots, n \}$, $\wtt_i$ takes one of the following types: 
	\begin{itemize}
	\item $a\in A\setminus C$,
	\item $t^{-1}a't$ with $a'\in A\setminus C'$,
	\item $at$ with $a\in A$, or
	\item $t^{-1}a$ with $a\in A$;
	\end{itemize}
	We call such types \emph{vertex elements} and denote them with text font e.g. $\att, \btt$.
	\item the possible types of any pair $(\wtt_i,\wtt_{i+1})$ 
	are indicated by the oriented edges in Figure \ref{fig: HNN}.
\end{enumerate}
Note that for any $c_1,c_2\in C$, the word $c_1\wtt_i c_2$ can be rewritten into one of the same type as $\wtt_i$, for instance, $c_1 \cdot t^{-1}a't \cdot c_2 = t^{-1} a'' t$ with $a''=\phi(c_1)a' \phi(c_2)$.

\begin{figure} 
	\centering
	\labellist
	\small \hair 2pt
	\endlabellist
	\includegraphics[scale=0.6]{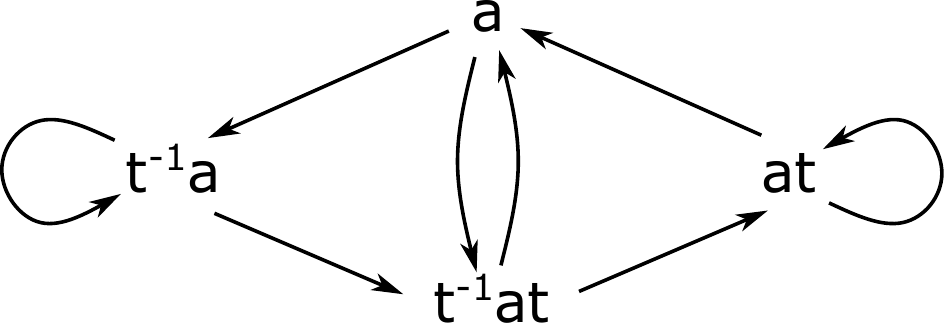}
	\caption{Possible concatenations of $\wtt_i$}
	\label{fig: HNN}
\end{figure}

\begin{definition}[(cyclically) reduced form for HNN extensions]\label{def:reduced form}
	We say an expression as in (\ref{eqn: reduced}) is a \emph{reduced form} of $g$. 
	Define the \emph{length of $g$} to be $n$ in (\ref{eqn: reduced}), denoted as $|g|$.
	Given the reduced form (\ref{eqn: reduced}) of $g$, a \emph{prefix of $g$} is some $h= \wtt_1 \cdots \wtt_m$ with $0\le m<n$.
	We say that $g$ is \emph{cyclically reduced} if the reduced expression $g =  \wtt_1 \cdots \wtt_n$ satisfies in addition that $(\wtt_n, \wtt_1)$ is as in Figure \ref{fig: HNN} and we call such an expression a \emph{cyclically reduced form}.
We say that $h$ is a cyclic conjugate of $g$ if the reduced form of $h$ is a cyclic permutation of the reduced form of $g$.

	For a reduced element $g$, we say an expression $g=x_1 \cdots x_m$ is a \emph{reduced decomposition} if there are reduced forms of each $x_i$ so that the concatenation is a reduced form of $g$.	
	Observe that $g$ is cyclically reduced if and only if $g \cdot g$ is a reduced decomposition.
\end{definition}

Reduced forms of a given element $g$ is essentially unique:
\begin{lemma}[Britton's lemma \cite{LyndonSchupp}] \label{lemma:normal form HNN}
	Let $G = A \star_C$ be an HNN extension and suppose that
	$$
	\wtt_1 \cdots \wtt_n = \wtt'_1 \cdots \wtt'_{n'}
	$$
	are two reduced forms of $g\in G\setminus C$.
	Then $n = n'$ and there are elements $d_0, \ldots, d_n \in C$ with $d_0 = e = d_n$ such that $\wtt_i = d_{i-1} \wtt'_i d_i^{-1}$ for all $i \in \{1, \ldots, n \}$.
\end{lemma}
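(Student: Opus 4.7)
The plan is to deduce the lemma from the classical normal form theorem for HNN extensions, which asserts that in a \emph{traditional} reduced expression $a_0 t^{\epsilon_1} a_1 t^{\epsilon_2} \cdots t^{\epsilon_k} a_k$ (no pinches $t^{-1}ct$ with $c \in C$ or $tc't^{-1}$ with $c' \in C'$), the element is trivial in $G$ only when $k=0$ and $a_0 = e$; moreover, if two such reduced expressions represent the same element, then $k$ is the same, the signs $\epsilon_i$ match, and the $A$-factors agree up to reabsorbing coset representatives of $C$ (resp.\ $C'$) moved across the $t^{\pm 1}$'s via $tct^{-1} = \phi(c)$. The first task is to spell out each of the four syllable types of the paper in the traditional form: $a \in A \setminus C$ contributes no $t$; $t^{-1}a't$ contributes the signature $(-1,+1)$; $at$ contributes $(+1)$; and $t^{-1}a$ contributes $(-1)$. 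One then checks that the adjacency constraints in Figure~\ref{fig: HNN} are designed precisely so that no concatenation of two syllable spellings introduces a pinch.

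Given this translation, I would apply the classical theorem to both sides of $\wtt_1 \cdots \wtt_n = \wtt'_1 \cdots \wtt'_{n'}$. The two spellings are reduced in the traditional sense, so their $t$-signatures coincide. Reading the matching signature back through the dictionary above determines the syllable types uniquely (positions of the $t$'s in the signature force which of the four types each $\wtt_i$ belongs to), which yields $n = n'$ and matches types of $\wtt_i$ and $\wtt'_i$ for each $i$. The classical theorem then says the $A$-parts between any two consecutive $t$'s agree up to left/right multiplication by coset representatives, and tracking these discrepancies position-by-position produces elements $d_0, d_1, \ldots, d_n$ with $d_0 = d_n = e$ such that $\wtt_i = d_{i-1} \wtt'_i d_i^{-1}$.

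The main obstacle is bookkeeping: one must check that the $d_i$ always lie in $C$ (rather than $C'$), and for this the crucial identity is $tct^{-1} = \phi(c) \in C'$ (equivalently $t^{-1}c't = \phi^{-1}(c') \in C$). Concretely, when $\wtt_i = t^{-1}a't$ and we verify $\wtt_i = d_{i-1} \wtt'_i d_i^{-1}$ with $d_{i-1}, d_i \in C$, one has $d_{i-1} t^{-1} a' t d_i^{-1} = t^{-1}\phi(d_{i-1}) a' \phi(d_i)^{-1} t$, so an apparent $C'$-conjugation on the inner $a'$ is in fact induced by $C$-conjugation on the outside — and analogously for the $at$ and $t^{-1}a$ syllables. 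This consistency is exactly what allows the local $C'$-adjustments from the classical normal form to be packaged as the global $d_i \in C$ stated in the lemma. An alternative route, avoiding the explicit syllable dictionary, is a direct induction on $n$ using Britton's lemma applied to the expression $\wtt_1 \cdots \wtt_n \wtt_{n'}'^{-1} \cdots \wtt_1'^{-1} = 1$: the absence of pinches away from the junction forces a cancellation at the boundary, yielding $\wtt_n \wtt_{n'}'^{-1} \in C$ (so $\wtt_n = \wtt_{n'}' d_{n-1}^{-1}$ for some $d_{n-1} \in C$ after re-examining types), and then iterating.
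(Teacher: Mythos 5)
The paper never proves this lemma: it is quoted from Lyndon--Schupp as Britton's lemma/the normal form theorem for HNN extensions, so your plan of spelling the four syllable types as traditional reduced words and invoking the classical uniqueness statement is exactly the route the citation presupposes, and your alternative induction via Britton's lemma applied to $\wtt_1\cdots\wtt_n(\wtt'_{n'})^{-1}\cdots(\wtt'_1)^{-1}=1$ is the standard proof of that classical statement anyway. In outline both routes work.

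One step is asserted too strongly: the $t$-signature does not by itself force the syllable types. It does force the interior structure --- a consecutive $(-1,+1)$ must be a single $t^{-1}a't$ syllable, an $A$-letter sitting between a $+1$ and a $-1$ must be its own syllable of type $a\in A\setminus C$, and an $A$-letter between two $+1$'s (resp.\ two $-1$'s) belongs to the following $at$ (resp.\ preceding $t^{-1}a$) syllable, because the admissible adjacencies of Figure~\ref{fig: HNN} never put an $A$-ending syllable before an $A$-starting one, nor a $t$-ending syllable before a $t^{-1}$-starting one. But when the signature begins with $-1$ (resp.\ ends with $+1$), the signature cannot detect whether there is a leading (resp.\ trailing) syllable of type $a\in A\setminus C$; that is decided by whether the extremal $A$-letter of the traditional spelling lies in $C$. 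The repair is that this membership is invariant under the coset moves $a_{i-1}t\,a_i=(a_{i-1}\phi(c))\,t\,(c^{-1}a_i)$ and $a_{i-1}t^{-1}a_i=(a_{i-1}\phi^{-1}(c'))\,t^{-1}\,(c'^{-1}a_i)$ --- which is precisely why the first syllable type excludes elements of $C$ --- so the two parses of the same element have equal length and matching types. Granting that, your concluding point is the correct reason the discrepancies land in $C$ rather than $C'$: every junction between consecutive syllables sits immediately to the right of a $t$ or immediately to the left of a $t^{-1}$, and at such positions the coset ambiguity is by $C$. (A small indexing slip in the inductive route: the boundary cancellation should give $\wtt_n=d_{n-1}\wtt'_{n'}$ with $d_{n-1}=\wtt_n(\wtt'_{n'})^{-1}\in C$, matching the lemma's normalization $d_n=e$.)
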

From this we see that $|g|$ does not depend on the choice of reduced forms, and a reduced decomposition $g=x_1 \cdots x_m$ does not depend on the choice of reduced forms of $x_i$'s.

\begin{corollary} \label{corr:replacement for HNN}
	Let $G = A \star_C$ be an HNN extension. Suppose that
	$$
	x_1 \cdots x_n = x'_1 \cdots x'_n
	$$
	are two reduced decompositions such that $|x_i| = |x'_i|$ for all $i \in \{1, \ldots, n \}$. Then there are elements $d_0, \ldots, d_n \in C$ with $d_0 = e = d_n$ such that
	$x_i = d_{i-1} x'_i d_i^{-1}$ for all $i \in \{1, \ldots, n \}$.
\end{corollary}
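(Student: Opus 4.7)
The plan is to mimic the proof of Corollary \ref{corr:replacement} (the amalgamated free product case), using Britton's lemma (Lemma \ref{lemma:normal form HNN}) in place of Proposition \ref{prop:normal form amalg free prod}. The whole content of the statement is that the ``correction elements'' supplied by Britton's lemma happen to land at the boundaries between consecutive $x_i$'s; this is forced by the hypothesis $|x_i|=|x_i'|$.

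Concretely, first I would choose reduced forms $x_i=\wtt_{i,1}\cdots \wtt_{i,k_i}$ and $x_i'=\wtt'_{i,1}\cdots \wtt'_{i,k_i}$, where $k_i=|x_i|=|x_i'|$. Because both $x_1\cdots x_n$ and $x_1'\cdots x_n'$ are reduced decompositions, concatenating these gives two reduced forms of the common element $g=x_1\cdots x_n$, both of total length $N:=k_1+\cdots+k_n$. Applying Britton's lemma to these two reduced forms of $g$, I obtain elements $e_0,\ldots,e_N\in C$ with $e_0=e_N=e$ such that $\wtt_{i,j}=e_{m(i,j)-1}\,\wtt'_{i,j}\,e_{m(i,j)}^{-1}$ for each $(i,j)$, where $m(i,j):=k_1+\cdots+k_{i-1}+j$ is the global position.

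Now I would define $d_i:=e_{k_1+\cdots+k_i}$ for $0\le i\le n$; in particular $d_0=e_0=e$ and $d_n=e_N=e$. For each block $i$, the internal correction elements telescope:
\[
x_i=\wtt_{i,1}\cdots \wtt_{i,k_i} = e_{m(i,0)}\,\wtt'_{i,1}\,e_{m(i,1)}^{-1}\cdot e_{m(i,1)}\,\wtt'_{i,2}\,e_{m(i,2)}^{-1}\cdots e_{m(i,k_i-1)}\,\wtt'_{i,k_i}\,e_{m(i,k_i)}^{-1},
\]
which collapses to $d_{i-1}\,x_i'\,d_i^{-1}$, as required.

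The only nontrivial point is that Britton's lemma does apply to our two reduced forms. This is guaranteed by the hypothesis that both factorizations are \emph{reduced decompositions} in the sense of Definition \ref{def:reduced form}, which ensures that the concatenation of the chosen reduced forms of the $x_i$ (respectively $x_i'$) is itself a reduced form of the product; so the uniqueness in Lemma \ref{lemma:normal form HNN} applies letter-by-letter, and the argument is essentially bookkeeping. No real obstacle arises beyond ensuring this compatibility of positions, which is exactly what the length equalities $|x_i|=|x_i'|$ provide.
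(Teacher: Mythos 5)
Your proof is correct and follows exactly the route the paper intends: the paper states this corollary without proof as an immediate consequence of Britton's lemma (Lemma \ref{lemma:normal form HNN}), and your argument --- concatenating witnessing reduced forms of the $x_i$ and $x_i'$, applying Britton's lemma letter by letter, and telescoping the correction elements within each block, with the hypothesis $|x_i|=|x_i'|$ ensuring the block boundaries coincide --- is precisely that omitted bookkeeping. No gaps.
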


\begin{proposition} \label{prop:triangles hnn}
Let $g, h \in G$ be two elements. Then there are elements $y_1, y_2, y_3 \in G$ and vertex elements $\xtt_1, \xtt_2, \xtt_3 \in G$ such that
\begin{eqnarray*}
g &=& y_1^{-1} \xtt_1 y_2 \\
h &=& y_2^{-1} \xtt_2 y_3 \\
(gh)^{-1} &=& y_3^{-1} \xtt_3 y_1
\end{eqnarray*}
as reduced expressions, where $y_i$ and $\xtt_i$ might be the identity.
\end{proposition}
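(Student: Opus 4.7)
The plan is to mirror the proof of Proposition \ref{prop:triangles amalg}, adapted to HNN extensions using Britton's lemma (Lemma \ref{lemma:normal form HNN}) to control reduced forms. First I would fix reduced forms $g = \vtt_1 \cdots \vtt_m$ and $h = \wtt_1 \cdots \wtt_n$, then identify the ``midpoint'' of the cancellation occurring when one computes $gh$.

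Concretely, let $i$ be the largest integer with $0 \le i \le \min(m,n)$ such that the product $\vtt_{m-i} \cdots \vtt_m \wtt_1 \cdots \wtt_i$ lies in $A$, say equalling some $a \in A$. Then I would set
\begin{align*}
y_1^{-1} &= \vtt_1 \cdots \vtt_{m-i-2}, & \xtt_1 &= \vtt_{m-i-1},\\
y_2 &= \vtt_{m-i} \cdots \vtt_m, & \xtt_2 &= \wtt_{i+1},\\
y_3 &= \wtt_{i+2} \cdots \wtt_n, & \xtt_3 &= (\vtt_{m-i-1} \cdot a \cdot \wtt_{i+1})^{-1}.
\end{align*}
The expressions $g = y_1^{-1}\xtt_1 y_2$ and $h = y_2^{-1}\xtt_2 y_3$ are reduced decompositions by construction, and since $gh = y_1^{-1}(\vtt_{m-i-1}\cdot a \cdot \wtt_{i+1}) y_3$, the expression $(gh)^{-1} = y_3^{-1}\xtt_3 y_1$ follows.

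The main obstacle is verifying that $\xtt_3$ is either trivial or a single vertex element of one of the four types listed in Section \ref{subsec: HNN}. The maximality of $i$ ensures that $\vtt_{m-i-1} \cdot a \cdot \wtt_{i+1}$ does not lie in $A$, for otherwise the cancellation would extend one step further, contradicting the choice of $i$. What remains is a case analysis on the types of $\vtt_{m-i-1}$ and $\wtt_{i+1}$, constrained by the permissible adjacencies in Figure \ref{fig: HNN}. By Britton's lemma, whenever an adjacent pair of vertex elements can be combined (i.e.\ when the combined word contains a pinch of the form $t^{-1} c t$ with $c \in C$ or $t c' t^{-1}$ with $c' \in C'$), the reduced outcome is a single vertex element of one of the four allowed types; the maximality of $i$ leaves no more pinches. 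A short case-by-case bookkeeping, together with the treatment of the degenerate extremes ($i=\min(m,n)$, or one of $y_1, y_3$ empty, in which case the corresponding $\xtt_j$ is taken to be the identity), completes the proof.
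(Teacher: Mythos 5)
Your overall strategy is the same as the paper's: the paper proves this proposition by declaring that the proof of Proposition \ref{prop:triangles amalg} works verbatim in the HNN setting, and you set out to carry out exactly that transfer. The trouble is that the one place where you deviate from a verbatim transfer is precisely where the argument breaks. You take $i$ maximal with $\vtt_{m-i}\cdots\vtt_m\wtt_1\cdots\wtt_i \in A$, whereas the amalgam proof (and its verbatim HNN analogue) stops the overlap when it lies in the \emph{associated subgroup} $C$. With your stopping rule, $\xtt_3=(\vtt_{m-i-1}\cdot a\cdot \wtt_{i+1})^{-1}$ need not be a vertex element, and no case-by-case bookkeeping can make it one. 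Concretely, take $g=b\,t\,a_1\,t$ and $h=t^{-1}a_2\,t^{-1}b'$ with $b,a_1,a_2,b'\in A$ and $a_1a_2\in A\setminus C$; their reduced forms are forced to be $(bt)(a_1t)$ and $(t^{-1}a_2)(t^{-1}b')$. Here your $i$ equals $1$, since $(a_1t)(t^{-1}a_2)=a_1a_2\in A$ while $bt(a_1a_2)t^{-1}b'\notin A$ (because $a_1a_2\notin C$), so your formula yields $\xtt_3^{-1}=b\,t\,(a_1a_2)\,t^{-1}\,b'$. This word contains no pinch -- exactly because $a_1a_2\notin C$ -- so Britton's lemma does not collapse it: it is a reduced word of syllable length three, not a single vertex element, and $(gh)^{-1}=y_3^{-1}\xtt_3 y_1$ is then not a reduced decomposition of the required shape. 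Maximality of your $i$ only rules out absorbing one more letter on each side into $A$; it does not rule out the overlap terminating in the configuration (letter ending in $t$)$\cdot$(element of $A\setminus C$)$\cdot$(letter beginning with $t^{-1}$), which is Britton-reduced and irreducibly of length three.

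The repair is to keep the paper's rule: take $i$ maximal so that the overlap lies in $C$, not merely in $A$. In the example above this gives $i=0$, and one gets $\xtt_1=a_1t$, $y_1^{-1}=bt$, $y_2=e$, $\xtt_2=t^{-1}a_2$, $y_3=t^{-1}b'$, and $\xtt_3=(a_1a_2)^{-1}$, a genuine vertex element, with all three expressions reduced. In general, when the merged overlap is an element of $A\setminus C$, it must itself become (up to the flanking letters, which go to $\xtt_1$ and $\xtt_2$) the middle letter $\xtt_3^{-1}$ of $gh$, rather than being treated as cancelled. The remaining corner cases you allude to -- no cancellation at all with end letters that neither merge nor pinch, or $i=\min(m,n)$ -- are handled as in the amalgam case by letting one of the $\xtt_j$ be trivial and shifting the corresponding letter into the adjacent $y_j$; those are genuinely routine, but the $A$-versus-$C$ issue above is not, and as written your construction does not prove the proposition.
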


\begin{proof}
The proof of Proposition \ref{prop:triangles amalg} works verbatim, interpreting vertex elements and reduced forms in the HNN extension context.
\end{proof}

\subsection{Graphs of Groups}
We briefly introduce graphs of groups to state the results of Sections \ref{sec:gaps from short overlaps} and \ref{sec:cm subgroups} more compactly. 
Graph of groups is a generalization of both amalgamated free products and HNN extensions discussed in the previous sections. We refer to \cite{trees} for details.

Let $\Gamma$ be an oriented connected graph with vertex set $V$ and edge set $E$. 
Each edge $e\in E$ is oriented with origin $o(e)$ and terminus $t(e)$. Denote the same edge with opposite orientation by $\bar{e}$, which provides an involution on $E$ satisfying $t(\bar{e})=o(e)$ and $o(\bar{e})=t(e)$. 

A \emph{graph of groups} with underlying graph $\Gamma$ is a collection of \emph{vertex groups} $\{G_v\}_{v\in V}$, \emph{edge groups} $\{G_e\}_{e\in E}$, and injections $t_e: G_e\inj G_{t(e)}$, such that $G_e=G_{\bar{e}}$.
Fix a pointed $K(G_v,1)$ space $X_v$ for each $v$ and a pointed $K(G_e,1)$ space $X_e$ for each $e$. Each injection $t_e$ determines a map $X_e\to X_{t(e)}$, based on which we can form a mapping cylinder $M_{e,t(e)}$, where we think of $X_e$ and $X_{t(e)}$ as the subspaces on its boundary.
Glue all such mapping cylinders along their boundary by identifying $X_v$ in all $M_{e,v}$ (with $t(e)=v$) and identifying $X_e$ with $X_{\bar{e}}$ in $M_{e,t(e)}$ and $M_{\bar{e},t(\bar{e})}$.

We refer to the resulting space $X$ as the graph of spaces associated to the graph of groups, where the image of each $X_e$ is called an edge space.
The fundamental group $\pi_1(X)$ is called the \emph{fundamental group of the graph of groups}. When there is no danger of ambiguity, we will simply refer to $G$ as the graph of groups.

\begin{theorem}[\cite{trees}] \label{thm:graph of groups}
Every fundamental group of a graph of groups can be written as a sequence of amalgamated free products and HNN extensions over the edge groups. 
\end{theorem}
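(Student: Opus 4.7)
The plan is to prove this structural decomposition by building up the graph of spaces $X$ one edge at a time and applying van Kampen's theorem, using a maximal spanning tree to separate the edges that contribute amalgamations from those that contribute HNN extensions.

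First I would choose a maximal spanning tree $T \subset \Gamma$. For each edge $e \in T$, one deformation-retracts the mapping cylinder $M_{e,t(e)}$ onto its vertex end $X_{t(e)}$, so the edge space $X_e$ embeds into the vertex space via $t_e \col G_e \hookrightarrow G_{t(e)}$ (and likewise for $\bar e$). The plan is to induct on $|E(T)|$ to prove that the fundamental group of the sub-graph-of-spaces $X_T \subset X$ supported on $T$ is obtained by iterated amalgamated free products. In the induction step, remove a leaf edge $e$ together with its leaf vertex $v = t(e)$ from $T$, giving a smaller subtree $T'$. Decompose $X_T$ as the union of $X_{T'}$ and the mapping cylinder together with the leaf vertex space $X_v$, which intersect precisely in $X_e$. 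By van Kampen applied along this decomposition,
\[
\pi_1(X_T) \;\cong\; \pi_1(X_{T'}) \star_{G_e} G_v,
\]
where the amalgamation uses the injection $G_e \hookrightarrow G_{t(\bar e)}$ on one side (coming from the inductive embedding of $G_{t(\bar e)}$ into $\pi_1(X_{T'})$) and $t_e$ on the other. This is an amalgamated free product over the edge group $G_e$, completing the induction.

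Next I would handle the edges not in $T$. Enumerate these remaining edges $e_1, \ldots, e_k$ (identifying each with its reverse $\bar e_i$), and for $j = 1, \ldots, k$ let $X_j$ be $X_T$ with the mapping cylinders $M_{e_i, t(e_i)}$ and $M_{\bar e_i, t(\bar e_i)}$ glued in for $i \le j$; then $X = X_k$. Each such edge $e_j$, having both endpoints already in $X_{j-1}$, adds a "loop" rather than connecting new pieces: up to homotopy equivalence, attaching its two mapping cylinders is the same as attaching $X_{e_j} \times [0,1]$ whose two ends are glued to $X_{j-1}$ along the two injections $t_{e_j}$ and $t_{\bar e_j}$. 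Van Kampen applied to this mapping torus-style attachment (or equivalently, the standard presentation computation) yields
\[
\pi_1(X_j) \;\cong\; \pi_1(X_{j-1}) \star_{G_{e_j}},
\]
the HNN extension along the isomorphism $t_{e_j}(G_{e_j}) \cong t_{\bar e_j}(G_{e_j})$ inside $\pi_1(X_{j-1})$. Iterating this gives $\pi_1(X)$ as a sequence of amalgamations followed by HNN extensions, all over edge groups of $\Gamma$.

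The only real subtlety is bookkeeping: one must verify that the edge subgroups appearing in each successive amalgamation or HNN extension really inject into the previously constructed group, i.e.\ that the maps $t_e$ remain injective after passage to $\pi_1(X_{T'})$ or $\pi_1(X_{j-1})$. This follows from the normal form theorems (Proposition \ref{prop:normal form amalg free prod} and Lemma \ref{lemma:normal form HNN}) applied inductively: injectivity at each stage is exactly the statement that the canonical map from each vertex group to the amalgamated/HNN-extended group is an embedding, which is the standard consequence of the reduced-form uniqueness already recalled above. This is the main technical point, and it is essentially the content of Bass--Serre theory; once this is in hand, the van Kampen computations above yield the desired iterated decomposition.
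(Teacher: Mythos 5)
The paper does not prove this statement at all: it is quoted directly from Serre's \emph{Trees} \cite{trees}, so there is no internal argument to compare against. Your sketch is the standard topological proof of that cited fact (spanning tree, amalgamations over tree edges, HNN extensions over the remaining edges, van Kampen at each step), and it is essentially correct. Two points deserve slightly more care than you give them. First, in the HNN step the attached piece $X_{e_j}\times[0,1]$ meets $X_{j-1}$ in \emph{two disjoint} copies of the edge space, so the ordinary (connected-intersection) van Kampen theorem does not apply verbatim; you need either the groupoid version of van Kampen, or the usual trick of adding an arc joining the two copies and computing the resulting presentation --- your parenthetical ``standard presentation computation'' is the right fix, but it is the actual content of that step rather than an aside. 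Second, your induction on $|E(T)|$ and the enumeration $e_1,\ldots,e_k$ tacitly assume the underlying graph is finite; for an infinite graph one must phrase the conclusion as a direct limit (or restrict to finite subgraphs, which is all the paper ever uses, e.g.\ in the proof of Theorem \ref{thm:CM gap}). The injectivity bookkeeping you flag at the end is handled correctly: the edge group injects into each vertex group by hypothesis, and vertex groups inject into the previously built group by the normal-form statements (Proposition \ref{prop:normal form amalg free prod}, Lemma \ref{lemma:normal form HNN}) applied inductively, so each pushout really is an amalgam or HNN extension over the edge group.
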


\section{Graph products of groups} \label{sec:graph product of groups} 
Graph products of groups generalize both right-angled Artin and right-angled Coxeter groups. They were introduced by Green in her thesis \cite{green}.
We go through some basic concepts and then establish the pure factor decomposition and the centralizer theorem (Theorem \ref{thm:centralizer}). 
We will need these results in Sections \ref{sec: gap in graph prod of groups} and \ref{sec:vertex chains}.

Let $\Gamma$ be a finite simplicial graph with vertex set $\Vrm(\Gamma)$ and edge set $\Erm(\Gamma)$ and let $\{ G_v \}_{v \in \Vrm(\Gamma)}$ be a family of groups. Then, the \emph{graph product} $\Gcl(\Gamma, \{ G_v \}_{v \in \Vrm(\Gamma)} )$ associated to this data is defined as the free product $\star_v G_v$ of the vertex groups subject to the relations $[g_v, g_w]$ for every $g_v \in G_v$, $g_w \in G_w$ with $(v,w) \in \Erm(\Gamma)$. If the family of groups $G_v$ is understood we will simply denote the group as $\Gcl(\Gamma)$.

When all $G_v=\Z$ (resp. $\Z/2$), we refer to $\Gcl(\Gamma)$ as the right-angled Artin (resp. Coxeter) group, denoted as $\Arm(\Gamma)$ (resp. $\Crm(\Gamma)$).

A normal form of elements is developed in \cite{green}.
Every element $g \in \Gcl(\Gamma)$ can be written as a product $g_1 \cdots g_n$ where each $g_i$ is in some vertex group. Following \cite[Definition 3.5]{green} we say that $n$ is the \emph{syllable length} in such an expression. There are three types of moves on the set of words representing the same element: 
\begin{itemize}
	\item (syllable shuffling) if there is a subsequence $g_i \cdots g_j$ with $1 \leq i < j \leq n$ and $g_j\in G_{v_j}$ such that every $g_k$ lies in a vertex group $G_{v_k}$ and $v_k$ is adjacent to $v_j$ for all $i<k<j$, then we can replace it by $g_i g_j g_{i+1}\cdots g_{j-1}$, and similarly if every $v_k$ is adjacent to $v_i$;
	\item (merging) if two consecutive letters $g_i,g_{i+1}$ lie in the same vertex group $G_v$, we can merge them into a single letter $g_i g_{i+1}\in G_v$;
	\item (deleting) if some $g_i=1$, then we can delete it.
\end{itemize}
Note that syllable shuffling preserves the syllable length while the other two moves reduce it.

We say that an expression $g_1 \cdots g_n$ is \emph{reduced} if 
\begin{itemize}
\item each $g_i$ is nontrivial, and
\item there is no subsequence $g_i \cdots g_j$ with $1 \leq i < j \leq n$ such that $g_i, g_j$ lie in the same vertex group $G_v$, and every $g_k$ lies in a vertex group $G_{v_k}$ with $v_k$ adjacent to $v$ for all $i<k<j$.
\end{itemize}

\begin{lemma}[\protect{\cite[Theorem 3.9]{green}}] \label{lemma: normal form for graph products}
Every element $g \in \Gcl(\Gamma)$ can be written as a reduced expression. This expression has minimal syllable length, and is unique up to syllable shufflings.
\end{lemma}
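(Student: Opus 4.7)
The plan is to establish existence by a normalization algorithm, and then to derive minimality of syllable length and uniqueness up to shuffling from a ``left-multiplication lemma'' describing how prepending a single syllable to a reduced expression changes its reduced form.

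For existence, start from any expression $g = g_1 \cdots g_n$ and iteratively apply the three moves. Each deletion or merge strictly decreases $n$, while a shuffle preserves $n$. If the current expression is not reduced, then by definition either some $g_i = 1$ (delete it) or there is a subsequence $g_i \cdots g_j$ with $g_i, g_j$ in a common vertex group $G_v$ and all intermediate letters in vertex groups adjacent to $v$; shuffle $g_j$ leftward past $g_{j-1}, \ldots, g_{i+1}$ and merge it with $g_i$, strictly decreasing the syllable length. Since $n$ cannot decrease forever, the process terminates in a reduced expression.

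The heart of the argument is the following auxiliary lemma, proved by induction on $n$: if $w = g_1 \cdots g_n$ is reduced and $x \in G_v \setminus \{1\}$, then a reduced expression for $x \cdot w$ is obtained by examining the smallest index $k$ such that $g_k \in G_v$ and every preceding $g_i$ lies in a vertex group adjacent to $v$ (call such a $k$ an \emph{absorbing position}). If no absorbing position exists, then $x g_1 \cdots g_n$ is already reduced and has syllable length $n+1$. If an absorbing position $k$ exists, shuffle $g_k$ to the front of $w$; then $x$ either merges with $g_k$ to give a reduced expression of length $n$ (if $x g_k \ne 1$) or cancels it to give a reduced expression of length $n-1$ (if $x g_k = 1$), where the fact that what remains is still reduced uses that $g_k$ occupied an absorbing position. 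The key technical point is to verify that removing the shuffled $g_k$ cannot create a new shuffleable same-vertex pair in the residual word, which is a short analysis based on the adjacency conditions characterizing absorbing positions.

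Given this lemma, minimality follows immediately: any expression $g = h_1 \cdots h_m$ can be built by prepending the $h_i$ one letter at a time to the identity, and the reduced syllable length grows by at most one per step, so it is at most $m$. For uniqueness, suppose $g_1 \cdots g_n = h_1 \cdots h_{n'}$ are two reduced expressions; minimality forces $n = n'$. Apply the auxiliary lemma with $x = g_1^{-1}$ to $h_1 \cdots h_n$: since $g_1^{-1}(h_1 \cdots h_n) = g_2 \cdots g_n$ has reduced length $n-1$, the ``cancellation'' case of the lemma must occur, so $h_1 \cdots h_n$ admits an absorbing position $k$ with $h_k = g_1$. This is precisely a syllable shuffling that brings $h_k$ to the front; after cancelling this leading $g_1$ on both sides, the inductive hypothesis on $n$ finishes the argument. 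The main obstacle is the careful case analysis inside the auxiliary lemma, particularly verifying that the residual word remains reduced after cancellation and that the absorbing position, when it exists, is uniquely determined up to shufflings --- without this precision, both minimality and uniqueness would fail.
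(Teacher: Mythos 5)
First, note that the paper does not prove this lemma at all --- it is quoted from Green's thesis --- so there is no in-paper argument to compare against; you are supplying a proof from scratch. Your existence argument and your left-multiplication (``absorbing position'') lemma are correct and are indeed the right first half of the standard proof: the case analysis (no absorbing position / merge / cancellation) is sound, the absorbing position is unique when it exists, and the residual word stays reduced for exactly the adjacency reasons you indicate.

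However, the way you extract minimality and uniqueness from that lemma is circular. Your lemma is purely syntactic: applied to one chosen reduced expression of an element it outputs another reduced expression of the new element, but it never lets you compare two \emph{different} expressions of the \emph{same} element. Yet both of your final steps presuppose exactly such a comparison: ``minimality forces $n=n'$'' assumes that every reduced expression of $g$ has the minimal length (your prepending argument only shows that \emph{some} reduced expression of length $\le m$ exists), and ``since $g_1^{-1}(h_1\cdots h_n)$ has reduced length $n-1$, the cancellation case must occur'' assumes that the reduced length of $g_1^{-1}g$ is a well-defined invariant of that element --- which is part of what is being proved. The induction also has no usable base: the case $n=0$ is the statement that a nonempty reduced word does not represent the identity, and nothing in your sketch addresses it. The missing idea is the van der Waerden--type well-definedness step: show that your prepending operators respect the defining relations of $\Gcl(\Gamma)$ on shuffle-classes of reduced words (prepending $x$ then $y$ from the same vertex group agrees with prepending $xy$, and prepending letters from adjacent vertex groups commutes, up to shuffles), so that they assemble into an action of $\Gcl(\Gamma)$ on shuffle-classes; evaluating this action on the empty word shows the shuffle-class (hence the length) of a reduced expression depends only on the group element, after which your minimality and uniqueness deductions go through verbatim. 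Alternatively one can induct on $|\Vrm(\Gamma)|$ using the splitting $\Gcl(\Gamma)=\Gcl(\st(v))\star_{\Gcl(\Lk(v))}\Gcl(\Gamma\setminus v)$ and the normal form for amalgamated products, or simply cite Green as the paper does.
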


The minimal syllable length of words representing $g$ is denoted $|g|$, which is achieved by a word representing $g$ if and only if the the word is reduced.

Similarly, a word is (proper) \emph{cyclically reduced} if every cyclic permutation of its letters is reduced.
\begin{lemma}[\protect{Proof of \cite[Theorem 3.24]{green}}] \label{lemma: cyclically reduced form}
	Every conjugacy class in $\Gcl(\Gamma)$ contains an element represented by a cyclically reduced word. Any two cyclically reduced words in the same conjugacy class differ by a cyclic permutation of the letters and syllable shuffling.
\end{lemma}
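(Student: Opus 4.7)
The plan is to establish the two parts separately, relying on the normal form (Lemma \ref{lemma: normal form for graph products}) and an induction on syllable length.

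For existence, I would start from any element $g$ in the conjugacy class and pick a reduced representative $w=g_1\cdots g_n$. If $w$ is cyclically reduced, we are done. Otherwise, by definition some cyclic permutation fails to be reduced, so there exist indices $i,j$ (in cyclic order) with $g_i,g_j$ in a common vertex group $G_v$ and every letter strictly between them (in cyclic order) in a vertex group adjacent to $v$. Since $w$ itself is reduced, such a violating pair must straddle the break, i.e.\ $j\le k<i$ for some $k$. Setting $x\defeq g_j\cdots g_k$ (which lies entirely in groups commuting with $v$ except for $g_j\in G_v$ itself), the conjugate $x^{-1}gx$ admits an expression in which, after syllable shuffling, $g_j$ and $g_i$ become adjacent and can be merged; hence the conjugate has strictly smaller syllable length. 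By induction on the minimal syllable length in the conjugacy class, we eventually reach a cyclically reduced representative.

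For uniqueness, let $w$ and $w'$ be two cyclically reduced words representing conjugate elements $g$ and $h=xgx^{-1}$. I would induct on $|x|$. The base case $|x|=0$ gives $w=w'$ as elements, and Lemma \ref{lemma: normal form for graph products} shows the two reduced forms differ only by syllable shufflings. For the inductive step, write $x=x_0 x'$ where $x_0$ is a single nontrivial syllable in some $G_v$; then it suffices to handle the case $|x|=1$. So suppose $h=x_0 w x_0^{-1}$ with $x_0\in G_v\setminus\{1\}$ and both $w$ and $h$ cyclically reduced. I would analyze the reduced form of $x_0wx_0^{-1}$: if no letter of $w$ lies in $G_v$ and at least one letter of $w$ fails to commute with $v$, then $x_0wx_0^{-1}$ is itself reduced of syllable length $n+2$, contradicting the fact that $h$ is cyclically reduced of syllable length $|h|\le n$ (conjugate elements have the same $|\cdot|$ on cyclically reduced representatives, since any cyclic rotation of a cyclically reduced word is cyclically reduced and has the same length). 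The remaining cases are that either every letter of $w$ commutes with $v$ (so $h=w$), or there is some $g_i\in G_v$ that can be shuffled to the front of $w$; then $x_0wx_0^{-1}$ rewrites as $(x_0 g_i)(g_1\cdots\widehat{g_i}\cdots g_n)x_0^{-1}$, and after possibly merging $x_0g_i$ with $g_{i+1}$-type material or shuffling $x_0^{-1}$ around letters commuting with $v$ to meet it, we obtain a cyclic rotation of $w$ modulo syllable shuffling. Applying Lemma \ref{lemma: normal form for graph products} to compare the resulting reduced form with $w'$ completes the step.

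The main obstacle is the case analysis in the uniqueness step: one must carefully keep track of how $x_0$ and $x_0^{-1}$ can be absorbed into $w$ via syllable shufflings without decreasing cyclic reducedness, and verify in each branch that the resulting $h$ is exactly a cyclic rotation of $w$ up to the shuffling relation. A clean way to manage this is to work with the graph of reduced words under syllable shuffling as a single combinatorial object (an equivalence class under the moves preserving syllable length) and observe that conjugation by a single syllable induces, on cyclically reduced classes, exactly the cyclic rotation move; then the general statement follows by induction on $|x|$.
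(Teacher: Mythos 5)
The paper offers no proof of its own here: it simply cites the proof of Theorem 3.24 in Green's thesis, where the argument runs by induction on the syllable length of the conjugating element, with the key step being that one can always \emph{shorten the conjugator}, possibly after replacing the cyclically reduced word by a cyclic permutation of itself. Your existence argument is essentially that standard argument and is fine, apart from garbled bookkeeping: the conjugator that lets you merge the violating pair $g_j,g_i$ should be the prefix $g_1\cdots g_j$ (whose letters other than $g_j$ are precisely the ones adjacent to $v$), not $g_j\cdots g_k$, whose intermediate letters are not constrained to commute with $v$.

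The uniqueness half has a genuine gap. The assertion that ``it suffices to handle the case $|x|=1$'' does not follow from your induction on $|x|$: writing $x=x_0x'$, the intermediate element $x'gx'^{-1}$ lies in the same conjugacy class but its reduced expression need not be cyclically reduced, so neither the induction hypothesis nor your single-syllable case (both of which require cyclically reduced words \emph{representing} the two elements being conjugated) applies to it. Your closing suggestion that conjugation by a single syllable acts on cyclically reduced classes by cyclic rotation founders on the same point, since conjugation by a syllable does not preserve cyclic reducedness. To salvage your reduction you would have to prove the nontrivial claim that if $x_0x'$ is reduced and both $w$ and $xwx^{-1}$ are cyclically reduced, then $x'wx'^{-1}$ is again cyclically reduced; proving this is essentially the same cancellation analysis as the correct inductive step, namely: if $|x|\ge 1$, then either a syllable of $x$ commutes with all of $w$ and can be deleted from the conjugator, or $w$ may be replaced by a cyclic permutation (up to shuffling) so that the conjugator strictly shortens, after which transitivity of the ``rotation plus shuffling'' relation finishes the proof. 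Two further, more local problems: inside your $|x|=1$ case you invoke ``conjugate elements have the same syllable length on cyclically reduced representatives,'' which is a consequence of the statement being proved and is not implied by the remark that cyclic rotations preserve length (the step can instead be repaired by noting that a reduced expression $x_0wx_0^{-1}$ of $h$ would contradict, via Lemma \ref{lemma: normal form for graph products}, the existence of the cyclically reduced reduced expression $w'$ of $h$); and your case list is incomplete, since $x_0wx_0^{-1}$ can be reduced even when $w$ contains letters of $G_v$, provided none of them can be shuffled to either end of $w$.
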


Given a reduced expression $g=g_1\cdots g_n$, its \emph{support} is the induced subgraph consisting of vertices $v$ such that some $g_i$ lies in $G_v$. Since syllable shuffling does not change the support, by Lemma \ref{lemma: normal form for graph products}, the support does not depend on the choice of reduced expressions. We denote it by $\supp(g)$.

For an element $g \in \Gcl(\Gamma)$ some conjugate $\bar{g}=p^{-1}gp$ is represented by a cyclically reduced word. By Lemma \ref{lemma: cyclically reduced form}, the support $\supp(\bar{g})$ does not depend on the choice of $p$ and we set $\Theta(g) := \supp(\bar{g})$. 

The process of putting a word into a cyclically reduced word in the same conjugacy class does not enlarge the support (see the Proof of \cite[Theorem 3.24]{green}), 
thus $\Theta(g)$ is the smallest support of elements in the conjugacy class of $g$.
\begin{lemma}\label{lemma: smallest support among conjugates}
	For any $g\in \Gcl(\Gamma)$, we have $\Theta(g)\subset \supp(g)$.
\end{lemma}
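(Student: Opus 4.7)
The plan is to show that some cyclically reduced representative of the conjugacy class of $g$ has support contained in $\supp(g)$; by the previous Lemma~\ref{lemma: cyclically reduced form}, the support of any cyclically reduced representative is well-defined up to the induced subgraph, and equals $\Theta(g)$ by definition, so this is enough.

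First, I would fix a reduced expression $w = g_1 \cdots g_n$ for $g$, so $\supp(g) = S$ is the set of vertices $v$ such that some $g_i$ lies in $G_v$. The aim is to run an iterative cyclic reduction algorithm on $w$ (essentially the one in Green's proof of her Theorem 3.24) and observe that each step produces a new word whose letters still lie in $\{G_v\}_{v\in S}$.

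The algorithm uses the three move types recalled just before Lemma~\ref{lemma: normal form for graph products} (syllable shuffling, merging, deleting) together with cyclic rotation, which corresponds to conjugation by a single end letter. Concretely, if $w$ is not cyclically reduced, then after a cyclic rotation there is a pair of letters $g_i, g_j$ lying in a common vertex group $G_v$ such that every letter between them lies in a vertex group adjacent to $v$; syllable shuffling brings $g_i$ and $g_j$ together, merging combines them, and deletion removes the result if it is trivial. None of these operations introduces a letter in a vertex group outside $S$: cyclic rotation and syllable shuffling only permute the existing letters, merging replaces two elements of some $G_v$ (with $v \in S$) by their product in the same $G_v$, and deleting only removes letters. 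Hence the support of the word can only shrink or stay the same after each step.

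Since each non-trivial step strictly decreases the syllable length, the process terminates after finitely many steps at a cyclically reduced word $w'$ representing a conjugate $\bar{g}$ of $g$, with $\supp(\bar{g}) = \supp(w') \subset S$. By Lemma~\ref{lemma: cyclically reduced form}, $\Theta(g) = \supp(\bar{g})$ is independent of the choice of cyclically reduced conjugate, so $\Theta(g) \subset \supp(g)$, as required. There is no genuine obstacle here: the content is entirely bookkeeping about how cyclic reduction is carried out by elementary moves that never enlarge the set of vertex groups involved, and the only care needed is to invoke the previous lemma to ensure the resulting support is indeed $\Theta(g)$.
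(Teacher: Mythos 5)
Your argument is essentially the paper's own: the paper justifies this lemma by the remark (citing Green's proof of her Theorem 3.24) that the process of converting a word into a cyclically reduced word in the same conjugacy class never enlarges the support, and you have simply spelled out that process via cyclic rotation, syllable shuffling, merging and deleting, none of which introduces letters from new vertex groups. The proof is correct and follows the same route, with the appeal to Lemma \ref{lemma: cyclically reduced form} correctly ensuring the resulting support is $\Theta(g)$.
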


Given two elements $g,h \in \Gcl(\Gamma)$, we can relate the normal form of $g \cdot h$ to the reduced expressions of $g,h$ as follows:
\begin{proposition} \label{prop:normal forms for products}
	For any elements $g,h\in \Gcl(\Gamma)$, there is a (possibly empty) clique $q=\{v_1,\ldots, v_k\}$ for some $k\ge0$ such that 
	we may write $g = g_0 q_g x$ and $h = x^{-1} q_h h_0$ as reduced expressions
	with $q_g = g_{1} \cdots g_{k}$ and $q_h = h_1 \cdots h_k$ with $g_i, h_i \in G_{v_i}$ and none of $g_i, h_i, g_i \cdot h_i$ is the identity for all $i \in \{ 1, \ldots, k \}$ such that 
	a reduced expression for $g h $ is given by 
	$$
	g \cdot h = g_0 \cdot q_{gh} \cdot h_0
	$$ where $q_{gh}$ is given by $q_{gh} = (g_{1} h_1) \cdots (g_{k} h_k)$.
\end{proposition}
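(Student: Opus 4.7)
The plan is to prove this by induction on the total syllable length $|g| + |h|$, using Lemma \ref{lemma: normal form for graph products} to manipulate reduced expressions via syllable shuffling. At each step I distinguish two cases: a \emph{cancellation case} (handled by peeling off one pair and invoking the inductive hypothesis) and a \emph{pure merge case} (handled directly by identifying a clique). Fix reduced expressions $g = a_1 \cdots a_m$ and $h = b_1 \cdots b_n$, and call $(i,j)$ a \emph{cancellation pair} if $a_i, b_j \in G_v$ for some vertex $v$ with $a_i b_j = 1$, and all letters $a_{i+1}, \ldots, a_m$ and $b_1, \ldots, b_{j-1}$ lie in vertex groups adjacent to $v$. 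If such a pair exists, I apply syllable shuffling to write $g = g' a_i$ and $h = b_j h'$ with $g', h'$ reduced of smaller total length, so $gh = g' h'$. The inductive hypothesis applied to $(g', h')$ produces a decomposition $g' = g'_0 q_{g'} x'$, $h' = (x')^{-1} q_{h'} h'_0$ with clique $q'$; I then set $x := x' a_i$ and $q := q'$, keeping the other data unchanged. This yields the required decomposition of $(g, h)$: reducedness of $g = g'_0 q_{g'} x' a_i$ holds because it is obtained from the reduced expression of $g$ by syllable shuffling, and $h = x^{-1} q_h h_0$ matches directly using $b_j = a_i^{-1}$.

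If no cancellation pair exists, I construct $q$ directly. Let $q = \{v_1, \ldots, v_k\}$ be those vertices $v$ for which there exist letters $g_v \in G_v$ in $g$ and $h_v \in G_v$ in $h$ each shufflable to the junction; by hypothesis $g_v h_v \neq 1$. These letters are unique for each $v$ (two distinct letters of $g$ in $G_v$ both shufflable to the end would force a non-reduced subword in $g$), and $q$ is a clique: for distinct $v, v' \in q$, the letters $g_v$ and $g_{v'}$ can both be shuffled to the end of $g$'s expression and hence commuted past each other, forcing $(v, v') \in \Erm(\Gamma)$. Set $x = 1$, $q_g := g_{v_1} \cdots g_{v_k}$, $q_h := h_{v_1} \cdots h_{v_k}$, and use syllable shuffling to write $g = g_0 q_g$ and $h = q_h h_0$. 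The identity $gh = g_0 \cdot q_{gh} \cdot h_0$ with $q_{gh} := (g_{v_1} h_{v_1}) \cdots (g_{v_k} h_{v_k})$ then follows by algebraic computation using the clique structure.

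The hard part will be verifying that $g_0 \cdot q_{gh} \cdot h_0$ is a \emph{reduced} expression. The danger is a non-reduced subword with endpoints in some common $G_w$, one in $g_0$ and one in $h_0$, with all intermediate letters (including those of $q_{gh}$) in vertex groups adjacent to $w$. Such endpoints would witness letters of $g$ and $h$ in $G_w$ shufflable to the junction; the no-cancellation hypothesis together with the definition of $q$ then gives $w \in q$, so these endpoints must coincide with $g_w, h_w$ already extracted into $q_g, q_h$, contradicting their placement in $g_0, h_0$. Non-reduced subwords internal to $g_0 q_{gh}$ or $q_{gh} h_0$ are precluded by reducedness of $g, h$ and the clique property of $q$. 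Once this reducedness is in hand, all remaining claims in the proposition are read off directly.
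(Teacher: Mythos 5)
Your argument is correct and follows essentially the same strategy as the paper's proof: split off the cancelling part $x$ at the junction, extract a clique's worth of mergeable letters into $q_g,q_h$, and rule out any further merging across the junction (which would exhibit another junction-shufflable letter) to certify that $g_0\cdot q_{gh}\cdot h_0$ is reduced. The only difference is organizational --- the paper chooses $x$ and then $q_g,q_h$ of maximal syllable length in one shot, whereas you build $x$ by induction on cancellation pairs and characterize the clique $q$ explicitly --- but the decomposition and the reducedness check are the same.
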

\begin{proof}
	Given $g,h \in \Gcl(\Gamma)$ as in the proposition, choose $x$ to be a word with the maximal syllable length such that $g = g' x$ and $h = x^{-1} h'$ are reduced expressions for some words $g',h'$.
	Given $g'$ and $h'$, choose $q_g$ and $q_h$ to be words with maximal syllable length such that the support of $q_g$ and $q_h$ is a clique $q = \{v_1,\ldots, v_k\}$
	and we can write $g'=g_0 q_g$, $h'=q_h h_0$ as reduced expressions for some words $g_0,h_0$. 
	Define $q_{gh}$ as in the proposition. By the maximality of $x$, none of the terms in $q_g$ and $q_h$ cancel and thus the support of $q_{gh}$ is also equal to $q$.
	Note that for the expression
	$$
	g_0 \cdot q_{gh} \cdot h_0,
	$$
	$g_0\cdot q_{gh}$ is reduced since $g'=g_0\cdot q_g$ is reduced and has the same support. 
	Similarly, $q_{gh} \cdot h_0$ is reduced.
	Finally, one cannot shuffle a letter in $g_0$ to merge with another in $h_0$ since this would contradict the maximality of $g_q$ and $h_q$ by Lemma \ref{lemma: normal form for graph products}. 
	Thus $g_0 \cdot q_{gh} \cdot h_0$ is a reduced expression for $g \cdot h$.  
\end{proof}

Next we introduce the pure factor decomposition. For a graph $\Gamma$, the \emph{opposite graph} $\Gamma^\opp$ is the graph with the same vertices as $\Gamma$ and where two vertices are adjacent if and only if they are not adjacent in $\Gamma$.

Let $C^*_1, \ldots, C^*_{\ell^*}$ be the connected components of $\Theta(g)^\opp$ each consisting of a single vertex and let $C_1,\ldots,C_\ell$ be the connected components of $\Theta(g)^\opp$ with more than one vertices. Letters of $\bar{g}$ in different components can be shuffled across. By combining letters in the same component via shuffling, we can write $\bar{g}$ as $\gamma^*_1 \cdots \gamma^*_{\ell^*} \cdot g_1\cdots g_\ell$ with $\supp(\gamma^*_i) = C^*_i$ and $\supp(g_i)=C_i$. Then it is easy to see that every $g_i$ is cyclically reduced.

Now write $g_i=\gamma_i^{e_i}$ such that $e_i \in \Z_+$ and $\langle \gamma_i\rangle$ is maximal cyclic. Such an expression exists by \cite[Corollary 47]{centralgraphprod}. 
We get 
\begin{equation}\label{eqn: pure factor}
	g=p \cdot \gamma_1^* \cdots \gamma^*_{\ell^*} \cdot \gamma_1^{e_1}\cdots \gamma_\ell^{e_\ell} \cdot  p^{-1}
\end{equation}

\begin{definition}[Pure factor decomposition, pure factors, and primitive pure factors] \label{def: pure factor decomp}
	For any element $g$, an expression (\ref{eqn: pure factor}) is called a \emph{pure factor decomposition} of $g$, where each $\gamma_i^*$ and $g_i=\gamma_i^{e_i}$ is called a \emph{pure factor} of $g$.
	
	If $g=g^{e_1}$ with $e_1=1$ is its own pure factor decomposition and $|g| \geq 2$, then $g$ is called a \emph{primitive pure factor}.
\end{definition}

\begin{lemma} \label{lemma: pure factor unique up to cyclic conj and permutat}
	Each pure factor of $g$ is unique up to cyclic conjugation. The set of pure factors of $g$ up to cyclic conjugation is uniquely determined by $g$.
\end{lemma}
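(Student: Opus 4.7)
The plan is to show that the data extracted from a cyclically reduced representative of the conjugacy class of $g$ is canonical modulo cyclic conjugation of each factor. First, $\Theta(g)$ is an invariant of $g$ by construction, so the partition of $\Theta(g)^{\opp}$ into its connected components $C_1^*, \ldots, C_{\ell^*}^*$ (singletons) and $C_1, \ldots, C_\ell$ (non-singletons) is determined by $g$; this provides the canonical indexing of the pure factors. By Lemma~\ref{lemma: cyclically reduced form}, any two cyclically reduced representatives of the conjugacy class of $g$ differ by a sequence of syllable shufflings and cyclic permutations, and I would track the effect of these moves on the pure factor decomposition.

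I would exploit the commutation structure implied by the component decomposition: if $v, w \in \Theta(g)$ lie in different connected components of $\Theta(g)^{\opp}$, then there is no edge between them in $\Theta(g)^{\opp}$, so there is an edge between them in $\Theta(g) \subseteq \Gamma$, and $G_v$ commutes with $G_w$ in $\Gcl(\Gamma)$. Consequently, in a cyclically reduced representative $\bar{g}$, letters whose supports lie in different components can be shuffled past each other freely. For a singleton component $\{v\} = C_i^*$, the vertex $v$ commutes with every other vertex of $\Theta(g)$, so any two letters of $\bar{g}$ lying in $G_v$ could be shuffled adjacent, and reducedness would force them to merge. Hence $\bar{g}$ contains exactly one letter in $G_v$, and this letter is $\gamma_i^*$. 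The same letter appears in every cyclically reduced representative (cyclic permutations and shufflings rearrange letters without altering them as elements of a vertex group), so $\gamma_i^*$ is uniquely determined by $g$.

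For a non-singleton component $C_i$, let $g_i$ denote the subword of $\bar{g}$ consisting of letters with support in $C_i$. Any syllable shuffling of $\bar{g}$ either shuffles within $g_i$ or shuffles a letter from another component past letters of $g_i$; the latter does not change $g_i$ as an element of $\Gcl(\Gamma)$ by the commutation observed above, and the former only permutes letters of $g_i$ among themselves. A cyclic permutation of $\bar{g}$ across a letter not in $C_i$ does not affect $g_i$; a cyclic permutation across a letter in $C_i$, after commuting the letters of the other factors back into position using the commutation above, descends to a cyclic permutation of $g_i$ alone. Hence $g_i$ is determined by $g$ up to cyclic conjugation, yielding both assertions of the lemma. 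The main obstacle is the bookkeeping for how a cyclic permutation of $\bar{g}$ descends to a cyclic permutation of a single $g_i$; once one has set up the commutation coming from the component structure of $\Theta(g)^{\opp}$, this reduces to a mechanical verification using Lemma~\ref{lemma: cyclically reduced form}.
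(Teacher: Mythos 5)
Your proposal is correct and follows essentially the same route as the paper, whose proof is a one-line appeal to Lemma \ref{lemma: cyclically reduced form} together with the fact that letters in different components of $\Theta(g)^{\opp}$ commute; you simply carry out in detail the bookkeeping (invariance of $\Theta(g)$, behavior of shuffles and cyclic permutations on each component's subword) that the paper leaves implicit.
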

\begin{proof}
	This directly follows from Lemma \ref{lemma: cyclically reduced form} and the fact that letters in different pure factors commute with each other.
\end{proof}

\subsection{Centralizers in Graph Products}
The goal of this subsection is to describe the centralizer of any element $g$ in a graph product $\Gcl(\Gamma)$.

Recall that $\Theta(g)$ is the support of any cyclically reduced representative of $g$, that $C^*_1, \ldots, C^*_{\ell^*}$ denote the connected components of $\Theta(g)^\opp$ that each consists of a single vertex and that $C_1, \ldots, C_\ell$ denote the connected components of $\Theta(g)^\opp$ containing more than one vertex. Finally let $D(g)$ be the subset of $\Vrm(\Gamma)\setminus\Vrm(\Theta(g))$ consisting of vertices which are adjacent to every vertex of $\Theta(g)$.

The following result fully characterizes the centralizer of an element in terms of the pure factors.
\begin{theorem}[Centralizer Theorem]\label{thm:centralizer} 
	Let $g \in \Gcl(\Gamma)$ be an element with pure factor decomposition
	$$
	g = p \cdot \gamma_1^* \cdots \gamma_{\ell^*}^* \cdot \gamma_1^{e_1} \cdots \gamma_\ell^{e_\ell} \cdot p^{-1},
	$$
	where $\supp(\gamma_i^*)=C^*_i$, $\supp(\gamma_i)=C_i$ and let $D(g)$ be defined as above. Then an element $h \in \Gcl(\Gamma)$ commutes with $g$ if and only if
	$$
	h = p \cdot \zeta_1^* \cdots \zeta_{\ell^*}^* \cdot \gamma_1^{f_1} \cdots \gamma_\ell^{f_\ell} \cdot z \cdot  p^{-1},
	$$
	where $\zeta_i^*$ lies in the centralizer $Z_{G^*_{i}}(\gamma^*_i)$, 
	where $G^*_{i}$ is the vertex group of $C^*_i$, $f_i\in\Z$, and $\supp(z) \subset D(g)$.
\end{theorem}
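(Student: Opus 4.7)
The plan is to establish the reverse (sufficiency) direction directly from commutation among pure factors and then attack the forward direction in three stages: reduce the support of $h$ into $\Theta(g) \cup D(g)$, decompose $h$ along the joined blocks of this support, and finally identify each block via the commutation relation.

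First I would reduce to the case $p = id$. Since $g$ and $h$ commute if and only if $p^{-1}gp$ and $p^{-1}hp$ do, by replacing $(g,h)$ with this pair we may assume $g = \gamma_1^* \cdots \gamma_{\ell^*}^* \cdot \gamma_1^{e_1} \cdots \gamma_\ell^{e_\ell}$ is cyclically reduced. The reverse direction is then immediate: letters supported in distinct connected components $C_i^*, C_j^*, C_i, C_j$ of $\Theta(g)^\opp$ commute in $\Gcl(\Gamma)$ because any two such vertices are adjacent in $\Gamma$ by definition; $\supp(z) \subset D(g)$ means each vertex of $\supp(z)$ is adjacent to every vertex of $\Theta(g)$, so $z$ commutes with each pure factor of $g$; and $\zeta_i^* \in Z_{G_{v_i}}(\gamma_i^*)$ combined with $[\gamma_i^{f_i},\gamma_i^{e_i}]=1$ yield the remaining commutations. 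Hence $gh = hg$.

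For the forward direction, the first step is a support lemma: if $h$ commutes with a cyclically reduced $g$, then $h$ admits a reduced expression whose support lies in $\Theta(g) \cup D(g)$. I would prove this by computing reduced expressions of $gh$ and $hg$ via Proposition~\ref{prop:normal forms for products} and invoking uniqueness of reduced expressions up to syllable shuffling (Lemma~\ref{lemma: normal form for graph products}); if some letter of $h$ lies at a vertex $v \notin \Theta(g) \cup D(g)$ then by definition of $D(g)$ there is some $w \in \Theta(g)$ not adjacent to $v$, and the equality $gh = hg$ in normal form would require the $v$-letter of $h$ and the $w$-letter of $g$ to shuffle past each other in $\Gamma$, contradicting non-adjacency. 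Once $\supp(h) \subset \Theta(g) \cup D(g)$, the fact that letters of $h$ supported in distinct blocks $C_i^*, C_j^*, C_i, C_j$ or in $D(g)$ pairwise commute lets me shuffle them apart into a decomposition $h = \zeta_1^* \cdots \zeta_{\ell^*}^* \cdot h_1 \cdots h_\ell \cdot z$ with $\zeta_i^* \in G_{v_i}$, $h_i \in \Gcl(C_i)$, and $\supp(z) \subset D(g)$. Cancelling $z$ from $gh = hg$ (since $z$ commutes with all of $g$) and comparing blocks indexed by $i$ reduces the problem to the equations $\zeta_i^* \gamma_i^* = \gamma_i^* \zeta_i^*$ in $G_{v_i}$ and $h_i \gamma_i^{e_i} = \gamma_i^{e_i} h_i$ in $\Gcl(C_i)$.

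The main obstacle is the second family of equations: I must show that if $\gamma \in \Gcl(C)$ is a primitive pure factor (so $|\gamma| \geq 2$, $\gamma$ is cyclically reduced, $\supp(\gamma) = \Vrm(C)$, $C^\opp$ is connected, and $\langle \gamma \rangle$ is maximal cyclic), then $Z_{\Gcl(C)}(\gamma^e) = \langle \gamma \rangle$ for every $e \neq 0$. The idea is to analyze both sides of $h \gamma^e = \gamma^e h$ in cyclically reduced form using Lemma~\ref{lemma: cyclically reduced form} together with the normal form of products: the connectedness of $C^\opp$ prevents any nontrivial splitting of $\gamma$ via syllable shuffling, so the repetition of $\gamma$ in $\gamma^e$ combined with $h\gamma^e = \gamma^e h$ forces $h$ to align with the periodic structure of $\gamma^e$, i.e.\ to be a cyclic conjugate of a power of $\gamma$; cyclic reducedness of $\gamma$ together with the maximality of $\langle \gamma \rangle$ then upgrades this to $h \in \langle \gamma \rangle$. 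This step is the graph-product analogue of Servatius's classical centralizer theorem for right-angled Artin groups; the connectedness of $C^\opp$ is essential, since without it $\gamma$ would itself split into further commuting pure factors and the centralizer would be strictly larger than $\langle \gamma \rangle$.
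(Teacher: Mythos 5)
Your skeleton coincides with the paper's: reduce to $p=id$, show $\supp(h)\subset\Theta(g)\cup D(g)$, split $h$ into commuting blocks matching the components of $\Theta(g)^\opp$ plus a $D(g)$ part, and finish by identifying each block from its commutation relation. The genuine gap is in the last stage. The statement you need there, that for a primitive pure factor $\gamma$ (cyclically reduced, $|\gamma|\ge 2$, full support with connected opposite graph, $\langle\gamma\rangle$ maximal cyclic) one has $Z_{\Gcl(C)}(\gamma^{e})=\langle\gamma\rangle$, is exactly the hard core of the centralizer theorem for graph products; the paper does not reprove it but quotes it as \cite[Theorem 53]{centralgraphprod} (Barkauskas) inside the proof of Lemma \ref{lemma: centralizer, special case}. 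Your sketch --- ``the repetition of $\gamma$ in $\gamma^{e}$ combined with $h\gamma^{e}=\gamma^{e}h$ forces $h$ to align with the periodic structure'' --- is a restatement of the conclusion, not an argument: nothing in the proposal explains why connectedness of $C^\opp$ plus cyclic reducedness rules out, say, $h$ of length incommensurable with $|\gamma|$, or how syllable shuffling interacts with the two reduced expressions of $h\gamma^{e}h^{-1}=\gamma^{e}$ when partial cancellations between $h$ and $\gamma^{e}$ occur. Either supply a genuine proof of this root/periodicity statement (it is a substantial induction on lengths in its own right) or cite it, as the paper does.

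A secondary, smaller issue is the support reduction. Your one-line justification (``$gh=hg$ in normal form would require the $v$-letter of $h$ and the $w$-letter of $g$ to shuffle past each other'') ignores that reduced expressions of $gh$ and $hg$ are not simply concatenations: letters of $g$ can cancel or merge with letters of $h$ at shared vertices, so matching the two normal forms letter-by-letter is not automatic. The paper needs real work here: it first conjugates so that the offending letter $h_v$ becomes the last letter of $h$ outside $\Theta(g)\cup D(g)$ (proof of Lemma \ref{lemma: centralizer support}), and then proves non-commutation by passing to the amalgam splitting $\Gcl(\Gamma)=\Gcl(\st(u))\star_{\Gcl(\Lk(u))}\Gcl(\Gamma\setminus\{u\})$ over a vertex $u\in\supp(g)$ not adjacent to $v$ and comparing reduced forms in that amalgam (Lemma \ref{lemma: noncommutative by support}). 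Your route via Proposition \ref{prop:normal forms for products} may well be salvageable, but as written it skips the cancellation bookkeeping that makes this step nontrivial. The reverse implication and the block decomposition step are fine and match the paper.
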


This generalizes several similar results: In the case where $g$ is itself a single pure factor, this is proved by Barkauskas \cite[Theorem 53]{centralgraphprod}. In the case of right-angled Artin groups this has been done by Droms--Servatius--Servatius \cite{servatius1989surface} and in the case of graph products of abelian groups this has been done by Corredor--Gutierrez \cite[Centralizer Theorem]{centregraphprod}.

\begin{lemma}\label{lemma: centralizer, special case}
	If $g$ is cyclically reduced with pure factor decomposition
	$$
	g = \gamma_1^* \cdots \gamma_{\ell^*}^* \cdot \gamma_1^{e_1} \cdots \gamma_\ell^{e_\ell},
	$$
	where $\supp(\gamma_i^*)=C^*_i$, $\supp(\gamma_i)=C_i$ and the set $D(g)$ is defined as above.
	If $h \in \Gcl(\Gamma)$ commutes with $g$ and is supported on $\Theta(g)\cup D(g)$, 
	then
	$$
	h = \zeta_1^* \cdots \zeta_{\ell^*}^* \cdot \gamma_1^{f_1} \cdots \gamma_\ell^{f_\ell} z,
	$$
	where $\zeta_i^* \in Z_{G_{i}^*}(\gamma^*_i)$, where $G_{i}^*$ is the vertex group of $C^*_i$,
	$f_i\in\Z$, and $\supp(z) \subset D(g)$.
\end{lemma}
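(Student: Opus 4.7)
The plan is to split $h$ and $g$ along the connected components of $\Theta(g)^\opp$ and then invoke Barkauskas's centralizer theorem \cite[Theorem~53]{centralgraphprod} for a single primitive pure factor inside each multi-vertex component.

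First I would reduce to the case when $h$ is supported on $\Theta(g)$. Every vertex of $D(g)$ is adjacent in $\Gamma$ to every vertex of $\Theta(g)$, so syllable shuffling (Lemma~\ref{lemma: normal form for graph products}) lets me write $h = h_\Theta \cdot z$ with $\supp(h_\Theta) \subset \Vrm(\Theta(g))$ and $\supp(z) \subset D(g)$. The same adjacency shows that $z$ commutes with $g$, hence $h_\Theta$ also commutes with $g$; moreover, $z$ is exactly the tail factor appearing in the asserted form.

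Next I would further decompose both $g$ and $h_\Theta$ along the components of $\Theta(g)^\opp$. For $i\neq j$, every vertex of $C_i^*$ (resp.\ $C_i$) is adjacent in $\Gamma$ to every vertex of $C_j^*$ or $C_j$, so letters from distinct components commute. Syllable shuffling therefore produces
$$
h_\Theta = \eta_1^* \cdots \eta_{\ell^*}^* \cdot h_1 \cdots h_\ell, \qquad \supp(\eta_i^*) \subset C_i^*,\ \supp(h_j) \subset C_j.
$$
Each $\eta_i^*$ and $h_j$ is uniquely determined by $h_\Theta$, since it is recovered as the image of $h_\Theta$ under the natural retraction from $\Gcl(\Gamma)$ onto the subgroup associated to an induced subgraph that kills all generators outside the chosen component. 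Expanding $h_\Theta g$ and $g h_\Theta$ with the same rearrangement (using that the pure factors of $g$ living on different components commute) and then comparing component by component via this uniqueness yields the local commutation relations
$$
\eta_i^* \gamma_i^* = \gamma_i^* \eta_i^* \text{ in } G_i^*, \qquad h_j \gamma_j^{e_j} = \gamma_j^{e_j} h_j \text{ inside the subgroup on } C_j.
$$

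The single-vertex case is then immediate: setting $\zeta_i^* \defeq \eta_i^*$ gives $\zeta_i^* \in Z_{G_i^*}(\gamma_i^*)$. For each multi-vertex component $C_j$ I would observe that $\gamma_j$ is a \emph{primitive} pure factor inside the subgroup generated by $\{G_v : v \in C_j\}$: it is cyclically reduced with support the full vertex set $C_j$, its support-opposite coincides with the connected component $C_j$ of $\Theta(g)^\opp$ and so is connected with more than one vertex, and $\langle \gamma_j\rangle$ is maximal cyclic by construction. Connectedness with more than one vertex also forces that no vertex of $C_j$ is adjacent in $\Gamma$ to all the others. Barkauskas's centralizer theorem applied in this subgroup then gives that the centralizer of $\gamma_j^{e_j}$ equals $\langle \gamma_j\rangle$, so $h_j = \gamma_j^{f_j}$ for some $f_j \in \Z$. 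Reassembling the pieces produces the stated form of $h$. The only delicate point is the uniqueness of the component-wise decomposition and the reduction to a single primitive pure factor inside the subgraph subgroup; once that is in place, Barkauskas's theorem does the substantive work.
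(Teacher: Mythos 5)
Your proposal is correct and follows essentially the same route as the paper's proof: split $h$ into pieces supported on the components of $\Theta(g)^\opp$ together with a factor supported on $D(g)$, reduce the commutation relation componentwise (the paper does this by comparing $hgh^{-1}$ with $g$ using disjointness of supports, you by retractions, which is an equivalent bookkeeping device), and invoke Barkauskas's centralizer theorem \cite[Theorem 53]{centralgraphprod} for each single pure factor. No substantive difference.
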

\begin{proof}
	Recall that every vertex in $D(g)$ is adjacent to all vertices in $\Theta(g)$ and that letters supported on different components of $\Theta(g)^\opp$ commute with each other. 
	So we can express $h$ as a reduced expression
	$h=h^*_1\cdots h^*_{\ell^*}h_1\cdots h_\ell z$, where each $h^*_i$ (resp. $h_i$) is a reduced word with $\supp(h^*_i)\subset C^*_i$ (resp. $\supp(h_i)\subset C_i$), and $z$ is a reduced word with $\supp(z)\subset D(g)$. Then 
	$$hgh^{-1}=\prod h^*_i\gamma_i^*(h^*_i)^{-1}\cdot \prod h_i\gamma_i^{e_i}h_i^{-1}.$$
	Since different factors have disjoint support, we observe that $hgh^{-1}=g$ if and only if
	$h^*_i\gamma_i^*(h^*_i)^{-1}=\gamma_i^*$ and $h_i\gamma_i^{e_i}h_i^{-1}=\gamma_i^{e_i}$ for each $i$.
	
	This reduces the problem to the case of a single pure factor. Hence by \cite[Theorem 53]{centralgraphprod}, we must have $h^*_i\in Z_{G^*_i}(\gamma^*_i)$ where $G^*_i$ is the group associated to the vertex $C^*_i$ and $h_i=\gamma_i^{f_i}$ for some $f_i\in \Z$.
\end{proof}

\begin{lemma}\label{lemma: noncommutative by support}
	Suppose $g$ and $h$ are reduced words where the last letter $h_v$ of $h$ lies in $G_v$ for some vertex $v\notin\supp(g)$ that is not adjacent to some $u\in\supp(g)$. 
	Then $g$ and $h$ do not commute.
\end{lemma}
\begin{proof}
	Express $\Gcl(\Gamma)$ as an amalgam $A\star_{C}B$, where $A=\Gcl(\st(u))$, $B=\Gcl(\Gamma\setminus\{u\})$, and $C=\Gcl(\Lk(u))$. 
	Here $\st(u)$ and $\Lk(u)$ denote the star and the link of $u$ in $\Gamma$ respectively. 
	For a reduced expression $g=g_1\cdots g_n$, we can pick out letters in $G_u$ to obtain $g_1\cdots g_n=b_0 g_{i_1} b_1\cdots b_{s-1} g_{i_s} b_s$, where
	each $g_{i_k}\in G_u$ and each $b_k$ is the product of letters outside $G_u$ sitting in between $g_{i_k}$ and $g_{i_{k+1}}$. 
	Note that $b_k\in B\setminus C$ for all $k\neq 0, s$ since we start with a reduced expression of $g$.
	If $b_0\in C$, then we can shuffle it across $a_1$.
	Thus we assume either $b_0\in B\setminus C$ or $b_0=id$.
	The same can be done for $b_s$, except for the case where $s=1$ and both $b_0,b_s\in C$, in which we may assume one of them to be the identity.
	
	In summary, this naturally expresses $g$ as a reduced word 
	$g=b_0a_1 b_1 \cdots b_{s-1} a_s b_s$
	in the amalgam $A\star_{C}B$, where $s\ge1$ and $a_k=g_{i_k}\in G_u\subset A\setminus C$ and $b_k\in B\setminus C$ for each $k$,
	except that possibly $b_0,b_s=id$, or one of them is the identity and the other lies in $C$ when $s=1$.
	
	Similarly we have $h=\beta_0\alpha_1\beta_1\cdots\alpha_t\beta_t$ for some $t\ge0$, where each $\alpha_i\in A\setminus C$ and $\beta_i\in B\setminus C$ except possibly $\beta_0=id$.
	Note that we must have $\beta_t\in B\setminus C$ since $h_v$ is the last letter of $h$ as a reduced word in the graph product and $v\notin \st(u)$.
	
	As words in the amalgam, we have
	$$gh=b_0\cdots a_1\cdots a_s (b_s\beta_0)\alpha_1\cdots \alpha_t\beta_t,$$
	$$hg=\beta_0\alpha_1\cdots \alpha_t(\beta_t b_0)\cdots a_1\cdots a_s b_s.$$
	Since $\beta_t$ as a reduced word in the graph product contains $h_v$ and $v\notin\supp(g)$, while $\supp(b_0)\subset \supp(g)$, we know $\beta_t\cdot b_0\in B\setminus C$.
	Thus $hg$ is a reduced word in the amalgam except that possibly $\beta_0,b_s=id$.
	
	If $gh=hg$, when written as reduced words in the amalgam they must have the same length and start and end on elements in the same factor groups (i.e. $A$ and $B$).
	There are eight cases depending on whether $b_0,b_s,\beta_0\in C$, but there are only two cases where $gh$ and $hg$ can be written as reduced words of the same type and length:
	\begin{enumerate}
		\item $b_0=\beta_0=id$ and $b_s\notin C$, or
		\item $b_0,b_s,\beta_0\notin C$, where $b_s\beta_0\notin C$.
	\end{enumerate}
	In both cases, $hg$ ends with $b_s$ and $gh$ ends with $\beta_t$ (or $b_s\beta_0$ when $t=0$). If $gh=hg$, then we must have $\beta_t\in Cb_sC$ (or $\beta_0\in b_s^{-1}Cb_sC$ when $t=0$).
	Any element in $Cb_sC$ (or $b_s^{-1}Cb_sC$) as a word in the graph product is supported on $\supp(g)\cup \st(u)$, however $\beta_t$ contains $h_v$ and $v\notin \supp(g)\cup \st(u)$.
	This is a contradiction. Hence $gh\neq hg$.
\end{proof}

\begin{lemma}\label{lemma: centralizer support}
	Suppose $g$ is cyclically reduced. Let $D(g)$ be the set of vertices outside $\supp(g)$ and adjacent to all those in $\supp(g)=\Theta(g)$ as above. 
	If $h \in \Gcl(\Gamma)$ commutes with $g$, then $h$ is supported on $\Theta(g)\cup D(g)$.
\end{lemma}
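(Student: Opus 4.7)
The plan is to argue by contradiction using the action of $\Gcl(\Gamma)$ on a Bass--Serre tree. Suppose $h$ commutes with $g$ yet $\supp(h)\not\subset\Theta(g)\cup D(g)=\supp(g)\cup D(g)$, and pick $v\in\supp(h)\setminus(\supp(g)\cup D(g))$. Since $v\notin D(g)$, there exists $u\in\supp(g)$ with $u\not\sim v$; as $v\notin\supp(g)$ also $u\neq v$. Consider the amalgam splitting $\Gcl(\Gamma)=A\star_C B$ with $A=\Gcl(\st(u))$, $B=\Gcl(\Gamma\setminus\{u\})$, $C=\Gcl(\Lk(u))$, and the associated Bass--Serre tree $T_u$.

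If $\supp(g)\subset\st(u)$ then $g\in A$, and since $u\in\supp(g)\setminus\Lk(u)$ one has $g\in A\setminus C$. Thus $g$ acts elliptically on $T_u$ fixing only the identity $A$-vertex. Then $h$ preserves this singleton fix set, so $h\in A$ and $\supp(h)\subset\st(u)$, contradicting $v\in\supp(h)$ together with $v\neq u$ and $v\not\sim u$.

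Otherwise $g\notin A$. The first technical step is to verify that cyclic reducedness of $g$ as a graph product word transfers to cyclic reducedness of its amalgam normal form; then $g$ acts hyperbolically on $T_u$ with axis $\ell_g$ and integer translation length $\tau_g\geq 2$. Commutation with $g$ forces $h$ to preserve $\ell_g$ and translate along it by an integer $\tau_h$, so $c:=h^{\tau_g}g^{-\tau_h}$ fixes $\ell_g$ pointwise. In particular $c$ lies in the stabilizer of any edge $e$ on $\ell_g$; taking $e$ to be the first edge of a fundamental domain, one may write $e=y\cdot e_0$ where $e_0$ is the identity edge of $T_u$ and $y$ is a prefix of $g$'s amalgam form, so the stabilizer is $yCy^{-1}$ with $\supp(y)\subset\supp(g)$. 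Hence $\supp(c)\subset\supp(y)\cup\Lk(u)\subset\supp(g)\cup\Lk(u)$, and the identity $h^{\tau_g}=c\cdot g^{\tau_h}$ yields $\supp(h^{\tau_g})\subset\supp(g)\cup\Lk(u)$.

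To transfer this back to $\supp(h)$ itself, I would produce an exponent $N\in\tau_g\Z_{>0}$ with $v\in\supp(h^N)$. Writing $h=p\bar h p^{-1}$ with $\bar h$ cyclically reduced and $|p|$ minimal, one has $h^N=p\bar h^N p^{-1}$ with $\supp(h^N)=\supp(p)\cup\supp(\bar h^N)$; for $N$ coprime to the orders of the (possibly torsion) letters of $\bar h$ and $p$, $\supp(\bar h^N)=\supp(\bar h)$ and $\supp(h^N)=\supp(h)\ni v$. Then $v\in\supp(h^N)\subset\supp(g)\cup\Lk(u)$, contradicting $v\notin\supp(g)$ and $v\not\sim u$. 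The main obstacle is precisely this last step: in the presence of torsion in the vertex groups the support of $h^n$ can shrink, so care is needed to choose $N$ avoiding all relevant torsion orders; the other technical checkpoints — cyclic-reducedness transfer from graph product to amalgam, and bounding $\supp(y)$ for the coset representative of the first axis edge — are routine amalgam normal form bookkeeping.
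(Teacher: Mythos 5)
Your tree-theoretic setup is a genuinely different route from the paper (which picks the last letter of $h$ lying outside $\Theta(g)\cup D(g)$ and reduces to the combinatorial non-commutation criterion of Lemma \ref{lemma: noncommutative by support}, proved by amalgam normal-form bookkeeping), and most of it is sound: in the elliptic case $A=G_u\times \Gcl(\Lk(u))$ makes $C$ normal in $A$, so the fixed set of $g\in A\setminus C$ is a single vertex and $h\in A$; in the hyperbolic case $g$ is indeed not conjugate into $A$ or $B$ (its minimal support $\Theta(g)=\supp(g)$ contains $u$ and is not contained in $\st(u)$), $h$ preserves the axis, and the bound $\supp(h^{\tau_g})\subset\supp(g)\cup\Lk(u)$ is correct.

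The genuine gap is the last step, transferring from $\supp(h^{N})$ back to $\supp(h)$. Your exponent $N$ is not free: the element $h^{N}g^{-M}$ fixes the axis only when $N\tau_h=M\tau_g$, so $N$ must be a multiple of $\tau_g/\gcd(\tau_g,\tau_h)$. In the Bass--Serre tree of an amalgam all translation lengths are even, and in the cases this paper cares most about (right-angled Coxeter groups, graph products with $2$-torsion) the letters you need to preserve can have order $2$; then no admissible $N$ is coprime to the relevant torsion orders, and $\supp(h^{N})=\supp(h)$ can fail exactly in the situation you are trying to rule out (the offending $v$-part of $h$ may be torsion and die in $h^{N}$). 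Since you are arguing by contradiction, you cannot assume this configuration does not occur. The fix is to avoid powers of $h$ altogether: $h$ preserves $\ell_g$, hence maps an edge $y\,e_0$ of the axis (with $y$ a prefix of the amalgam form of $g$, $\supp(y)\subset\supp(g)\cup\Lk(u)$) to another axis edge $g^{k}y'\,e_0$; since the stabilizer of $e_0$ is $C$, this gives $h\in g^{k}y'\,C\,y^{-1}$ and therefore $\supp(h)\subset\supp(g)\cup\Lk(u)$ directly, contradicting $v\in\supp(h)$ with $v\notin\supp(g)$ and $v\not\sim u$. With that replacement your argument closes; as written, the power trick does not.
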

\begin{proof}
	Write $h$ in a reduced expression. Denote $\supp(g)\cup D(g)$ by $\Delta$ and suppose $\supp(h)\not\subset \Delta$. 
	Let $h_v$ be the last letter in $h$ with the property that $h_v\in G_v$ for some $v\notin \Delta$. 
	Then $h_v$ cuts $h$ into a reduced expression $h_p h_v h_s$, where $\supp(h_s)\subset\Delta$. 
	As vertices in $D(g)$ are adjacent to all vertices in $\Theta(g)$, 
	by shuffling letters of $h_s$ in $D(g)$ to the end, we may represent $h_s=h'_s h_z$ as a reduced word so that $\supp(h'_s)\subset \Theta(g)$ and $\supp(h_z)\subset D(g)$.
	
	As $h_z$ commutes with $g$, we know $h'=h_p h_v h'_s$ also commutes with $g$, and $h_v$ is also the last letter in $h'$ supported outside $\Delta$.
	Then the conjugate $h'_sh_p h_v$ must commute with $g'=h'_s g (h'_s)^{-1}$. Note that $\supp(g')=\Theta(g)$ by Lemma \ref{lemma: smallest support among conjugates} since we know 
	$\supp(g')\subset\supp(g)=\Theta(g)$ as $\supp(h'_s)\subset \Theta(g)$. Applying Lemma \ref{lemma: noncommutative by support} to $h'_sh_ph_v$ and $g'$ we get a contradiction.
	Thus we must have $\supp(h) \subset \Delta$. 
\end{proof}

Now we prove Theorem  \ref{thm:centralizer}.
\begin{proof}[Proof of Theorem  \ref{thm:centralizer}]
	Since the centralizer of $p^{-1}gp$ is $p^{-1}Z_{\Gcl(\Gamma)}(g)p$, it suffices to prove the theorem assuming $g=\bar{g}$ is cyclically reduced, i.e. $p=id$. Then by Lemma \ref{lemma: centralizer support}, any $h$ commuting with $g$ must be supported in $\Theta(g)\cup D(g)$. Thus the result follows from Lemma \ref{lemma: centralizer, special case}.
\end{proof}

\begin{definition}[pure factor chain] \label{def:pure factor chain}
Suppose that  $g \in \Gcl(\Gamma)$ has an associated pure factor decomposition
$$
g=p \cdot \gamma_1^* \cdots \gamma^*_{\ell^*} \cdot \gamma_1^{e_1}\cdots \gamma_\ell^{e_\ell} \cdot  p^{-1}
$$
where $\gamma^*_i$ and $\gamma_i$ and $e_i$ are as in Equation (\ref{eqn: pure factor}).
Then we define the associated \emph{pure factor chain $g^{\pf}$ of $g$} initially as
$$
g^{\pf} = \gamma^*_1 + \cdots + \gamma^*_{\ell^*} + e_1 \gamma_1 + \cdots e_l \gamma_l,
$$
and then remove $e_i \gamma_i$ (resp. $\gamma^*_i$) if $\gamma_i$ (resp. $\gamma^*_i$) is conjugate to its inverse.

For an integral chain $c = \sum_{i=1}^n c_i$ we define the associated pure factor chain $c^{\pf}$ as follows: 
Set $c^1 = \sum_{i=1}^n c_i^{\pf}$. If there is a term $g_1^{-1}$ and $h_1 g_1 h_1^{-1}$ for some $g_1,h_1 \in \Gcl(\Gamma)$ in $c^1$, define $c^2$ as the chain $c^1$ without $g^{-1}$ and $h g h^{-1}$. 
If $c^i$ is defined but still has terms $g_i^{-1}$ and $h_i g_i h_i^{-1}$ for some $g_i,h_i \in \Gcl(\Gamma)$, define $c^{i+1}$ as $c^i$ without $g_i^{-1}$ and $h_i g_i h_i^{-1}$. 
Every such step reduces the number of terms by two, and thus, this process will eventually stop. We call the resulting chain the \emph{pure factor chain} $c^{\pf}$ associated to $c$.
Note that $c^{\pf}$ is equivalent to $c$. 
\end{definition}

By Lemma \ref{lemma: pure factor unique up to cyclic conj and permutat}, the pure factor chains for different pure factor decompositions are equivalent in the sense of Definition \ref{def: equivalent chains}. 

\begin{proposition} \label{prop: pure factor chain}
Let $c$ be an integral chain in $\Gcl(\Gamma)$ equivalent (Definition \ref{def: equivalent chains}) to a chain that consists of terms just supported on the vertex groups. 
Let $c^{\pf}$ be a pure factor chain. 
Then $c^{\pf}$ consists of terms which are just supported on vertex groups.
\end{proposition}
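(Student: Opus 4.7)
The approach is to construct a linear invariant $\mu \col C_1(\Gcl(\Gamma)) \to V$ that is insensitive to both the equivalence relation of Definition \ref{def: equivalent chains} and to the cancellation steps in forming $c^{\pf}$, and that detects precisely the non-vertex pure factor terms. Let $V$ be the free abelian group on the set of $\Gcl(\Gamma)$-conjugacy classes $[\gamma]$ of primitive pure factors with $|\supp(\gamma)| \geq 2$, modulo $[\gamma] + [\gamma^{-1}] = 0$; the latter relation is needed because, as in Definition \ref{def: pure factor decomp}, each primitive pure factor is only defined up to inversion. For $g \in \Gcl(\Gamma)$ with pure factor decomposition as in (\ref{eqn: pure factor}), define
$$\mu(g) := \sum_j e_j [\gamma_j] \in V,$$
the sum running over indices $j$ with $|\supp(\gamma_j)| \geq 2$, and extend $\mu$ to $C_1(\Gcl(\Gamma))$ by linearity. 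Well-definedness follows from Lemma \ref{lemma: pure factor unique up to cyclic conj and permutat}.

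The first step is to verify that $\mu$ vanishes on $E(\Gcl(\Gamma))$. The basis element $g^n - ng$ is handled using maximality of $\langle \gamma_j \rangle$, which implies $\gamma_j^{ne_j}$ still has primitive pure factor $\gamma_j$ with exponent $ne_j$; the element $hgh^{-1} - g$ is immediate. The delicate case is $gh - g - h$ with $g,h$ commuting. After conjugating so that $g$ is cyclically reduced, Theorem \ref{thm:centralizer} gives $h = \prod_i \zeta_i^* \prod_j \gamma_j^{f_j} z$ with $\supp(z) \subseteq D(g)$. Since every vertex of $D(g)$ is adjacent to every vertex of $\Theta(g)$, the factors $\prod_i(\gamma_i^*\zeta_i^*)$, $\prod_j \gamma_j^{e_j+f_j}$ and $z$ pairwise commute and their supports lie in distinct connected components of $\Theta(gh)^\opp$, so reading off the non-vertex primitive pure factors gives
$$\mu(gh) = \sum_j (e_j+f_j)[\gamma_j] + \mu(z) = \mu(g) + \mu(h).$$
This is the main technical step and relies essentially on the Centralizer Theorem.

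Next, $\mu$ vanishes on any chain supported on vertex groups since vertex elements have no pure factor of support $\geq 2$. Each cancellation step from $c^1$ to $c^{\pf}$ removes a pair $(g^{-1}, hgh^{-1})$ contributing $-[g] + [g] = 0$ in $V$, so $\mu(c^{\pf}) = \mu(c^1) = \mu(c)$. Hence, if $c$ is equivalent to a vertex chain $v$, then $\mu(c^{\pf}) = \mu(v) = 0$.

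Finally, I would argue that $\mu(c^{\pf}) = 0$ forces $c^{\pf}$ to contain no non-vertex terms. Regard $c^{\pf}$ as a multi-set of group elements by replacing each negative coefficient $ng$ with $|n|$ copies of $g^{-1}$. The removal rule in Definition \ref{def:pure factor chain} excludes any pure factor $\gamma$ with $\gamma \sim \gamma^{-1}$, and being fully cancelled means no two multi-set elements $a,b$ satisfy $b \sim a^{-1}$. Hence, for every unordered pair $\{[\alpha],[\alpha^{-1}]\}$ of distinct conjugacy classes, all remaining terms lie in one of the two classes (otherwise a representative in $[\alpha]$ and one in $[\alpha^{-1}]$ give a cancellable pair), so the coefficient of $[\alpha]$ in $\mu(c^{\pf})$ equals $\pm$ the number of remaining terms in that class. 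The vanishing $\mu(c^{\pf}) = 0$ therefore forces zero terms in every non-vertex class, completing the proof.
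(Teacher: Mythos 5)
Your proposal is essentially the paper's own argument: the paper defines, for each primitive pure factor $\gamma$ not conjugate to $\gamma^{-1}$, a scalar invariant $\sigma_\gamma$ counting the exponent of $\gamma$ in the pure factor decomposition, proves it is constant on equivalence classes (with the commuting-product case handled by the Centralizer Theorem \ref{thm:centralizer}, exactly as in your main step), notes it vanishes on vertex chains, and concludes with the same ``no further cancellation is possible, so all surviving terms in a class pair lie on one side'' count. Your $\mu$ is just the family $\{\sigma_\gamma\}$ bundled into a single homomorphism, so the two proofs differ only in packaging.

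One technical point in your write-up does not survive scrutiny as stated. In your group $V$, a class with $\gamma$ conjugate to $\gamma^{-1}$ satisfies $[\gamma]=[\gamma^{-1}]$, so the relation $[\gamma]+[\gamma^{-1}]=0$ only makes $[\gamma]$ a $2$-torsion element, not zero. This causes two problems: first, the equality $\mu(c^1)=\mu(c)$ is not justified, since forming $c_i^{\pf}$ discards terms $e_i\gamma_i$ with $\gamma_i$ conjugate to $\gamma_i^{-1}$, and dropping such a term changes $\mu$ by the $2$-torsion element $e_i[\gamma_i]$ when $e_i$ is odd; second, equivalence in Definition \ref{def: equivalent chains} means differing by an element of the \emph{real} subspace $E(G)$, so a homomorphism into a group with torsion cannot be ``extended by linearity'' to $C_1(\Gcl(\Gamma))$, and vanishing on the listed spanning elements does not by itself give $\mu(c)=\mu(v)$ for equivalent integral chains (e.g.\ $\gamma$ itself lies in $E(G)$ when $\gamma$ is conjugate to $\gamma^{-1}$, yet $\mu(\gamma)\neq 0$ in your $V$). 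The repair is immediate and is exactly what the paper builds into $\sigma_\gamma$: discard the self-inverse-conjugate classes altogether (equivalently, pass to the torsion-free quotient of $V$, or tensor with $\Q$). Your concluding argument only ever inspects coefficients of classes with $\gamma$ not conjugate to $\gamma^{-1}$, so nothing else in the proof is affected.
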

\begin{proof}
Let $\gamma \in \Gcl(\Gamma)$ be a primitive pure factor (Definition \ref{def: pure factor decomp}) so that $\gamma$ is not conjugate to $\gamma^{-1}$. 
For any element $g \in \Gcl(\Gamma)$ we define $\sigma_\gamma(g) = n$ if $\gamma^n$ up to cyclic conjugation is a pure factor of $g$ for some $n \in \Z\setminus\{0\}$. 
This is well defined by Lemma \ref{lemma: pure factor unique up to cyclic conj and permutat}.
The number $n$ is uniquely determined since $\gamma^n$ is cyclically reduced and has length $|n||\gamma|$, and $\gamma$ is not conjugate to $\gamma^{-1}$.
Set $\sigma_\gamma(g) = 0$ if no conjugate of $\gamma^n$ for any $n$ is a pure factor of $g$.

For a chain $c = \sum_{i \in I} \lambda_i c_i$ set $\sigma_\gamma(c) := \sum_{i \in I}\lambda_i \sigma_\gamma(c_i)$. 

\begin{claim}
If $c$ and $c'$ are equivalent chains (Definition \ref{def: equivalent chains}). Then $\sigma_\gamma(c) = \sigma_\gamma(c')$.
\end{claim}

\begin{proof}
It suffices to show that $\sigma_\gamma(c)=0$ for each basis element $c$ in $E(G)$ as in Definition \ref{def: equivalent chains}. Apparently $\sigma_\gamma(g)=\sigma_\gamma(pgp^{-1})$ since the pure factors of $g$ up to cyclic conjugation only depends on the conjugacy class of $g$. The fact that $\sigma_\gamma(g^n)=n\sigma_\gamma(g)$ for all $n\in\Z$ follows from the definition.
	
It remains to show that $\sigma_\gamma(x_1)+\sigma_\gamma(x_2)=\sigma_\gamma(x_1x_2)$ for two commuting elements $x_,x_2\in G$. 
If $\sigma_\gamma(x_1)=\sigma_\gamma(x_2) = \sigma_\gamma(x_1 \cdot x_2) = 0$ then the result trivially holds.

Without loss of generality assume that $\sigma_\gamma(x_1) \neq 0$. Let
$$
x_1 = p \cdot \gamma_1^* \cdots \gamma_{\ell^*}^* \cdot \gamma_1^{e_1} \cdots \gamma_\ell^{e_\ell} \cdot p^{-1}
$$
be the pure factor decomposition of $x_1$ with $\gamma_1 = \gamma$.  Then $x_2$ has to be of the form
$$
x_2 = p \cdot \zeta_1^* \cdots \zeta_{\ell^*}^* \cdot \gamma_1^{f_1} \cdots \gamma_\ell^{f_\ell} \cdot z \cdot  p^{-1}
$$
by Theorem \ref{thm:centralizer}. Note by the definition of $D(g)$ that $\supp(z)$ is disjoint from the support of any $\gamma^*_i$ and $\gamma_i$.
Thus $z$ does not contribute to $\sigma_\gamma(x_2)$ and hence $\sigma_\gamma(x_2)=f_1$.
For the same reason, we have $\sigma_\gamma(x_1x_2)=e_1+f_1$ from the expression
$$
x_1 x_2 = p \cdot (\gamma_1^* \zeta_1^* ) \cdots (\gamma_{\ell^*}^* \zeta_{\ell^*}^*) \cdot \gamma_1^{e_1+f_1} \cdots \gamma_\ell^{e_\ell + f_\ell} \cdot z \cdot  p^{-1}.
$$
Thus $\sigma_\gamma(x_1 x_2) = e_1 + f_1 = \sigma_\gamma(x_1) + \sigma_\gamma(x_2)$. This shows the claim.
\end{proof}

To conclude the proof of Proposition \ref{prop: pure factor chain}, Let $c$ be an integral chain which is equivalent to a chain $c'$ where every term is supported on a vertex. 
Let $c^{\pf}$ be a pure factor chain associated to $c$.
If $c^{\pf}$ has a term $\gamma$ which is not supported on vertices, then it is not conjugate to its inverse as such terms are removed in the beginning of the construction of $c^{\pf}$. This term gives us a primitive pure factor $\gamma$ such that $\sigma_\gamma(c^{\pf}) \neq 0$ since the number of terms in $c^{\pf}$ cannot be further reduced. 
On the other hand, we have $|\gamma| \geq 2$, since $\gamma$ is not supported in a vertex. Thus $\sigma_\gamma(c') = 0$. This contradicts the above claim since $c$ and $c'$ are equivalent.
\end{proof}

We show in Corollary \ref{cor: RACG gap for elements} that an element $g$ in a RACG has $\scl(g)=0$ if and only if $g$ is equivalent to the zero chain. Jing Tao asked us if this can be characterized more explicitly in the following form. We confirm this explicit characterization.

\begin{proposition}\label{prop: element zero chain in RACG}
	For a RACG $\Crm(\Gamma)$, an element $g$ is equivalent to the zero chain if and only if $g$ is conjugate to $g^{-1}$. Moreover, this is equivalent to $g=ab$ with $a^2=b^2=id$.
\end{proposition}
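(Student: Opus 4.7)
The plan is to establish the three-way equivalence by the cycle $(\text{iii})\Rightarrow(\text{ii})\Rightarrow(\text{i})\Rightarrow(\text{iii})$, where the first two implications are direct computations and the last requires the bulk of the work. For (iii)$\Rightarrow$(ii), if $g=ab$ with $a^2=b^2=\id$, then $aga^{-1}=a(ab)a=a^2ba=ba=(ab)^{-1}=g^{-1}$. For (ii)$\Rightarrow$(i), assuming $hgh^{-1}=g^{-1}$, relation \ref{item: conj equiv} of Definition \ref{def: equivalent chains} gives $g\sim g^{-1}$; combined with the identification $g^{-1}=-g$ in $C_1(G)$ this yields $2g\sim 0$, and since $C_1(G)$ is a real vector space, $g\sim 0$.

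For the main implication (i)$\Rightarrow$(iii), I would first conjugate $g$ to a cyclically reduced representative (both conditions being conjugation-invariant), and then apply the pure factor decomposition $g=\gamma_1^*\cdots\gamma_{\ell^*}^*\cdot\gamma_1^{e_1}\cdots\gamma_\ell^{e_\ell}$ from \eqref{eqn: pure factor}. Each $\gamma_i^*$ lies in a $\Z/2$ vertex group and is already an involution. Since $g\sim 0$ is trivially a vertex chain, Proposition \ref{prop: pure factor chain} forces the pure factor chain $g^{\pf}$ to be vertex-supported; by construction each $\gamma_i^*$ is removed from $g^{\pf}$ (being self-inverse), and any $\gamma_j$ with $|\gamma_j|\ge 2$ cannot contribute to a vertex chain, so each $\gamma_j$ must likewise have been removed, meaning $\gamma_j$ is conjugate to $\gamma_j^{-1}$.

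The next step is to upgrade each conjugation to an inversion by an involution. By the retraction $\Crm(\Gamma)\twoheadrightarrow\Crm(C_j)$ killing generators outside $C_j$, the conjugator descends so that $\gamma_j$ is conjugate to $\gamma_j^{-1}$ inside $\Crm(C_j)$. By Lemma \ref{lemma: cyclically reduced form}, $\gamma_j^{-1}$ is obtained from the $k$-th cyclic permutation of $\gamma_j$ by syllable shuffling for some $k$, and the $k$-th cyclic shift is realized by conjugation by the prefix $x_j=w_{k-1}\cdots w_1$ of $\gamma_j$. The key combinatorial observation is that the shuffle reverses the relative order of the letters $w_1,\ldots,w_{k-1}$, which forces them to pairwise commute (since adjacent commuting-pair swaps preserve the relative order of any non-commuting pair); the same pairwise commutations give $x_j^2=\id$. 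Thus $x_j$ is an involution inverting $\gamma_j$, and $\gamma_j^{e_j}=x_j\cdot(x_j\gamma_j^{e_j})$ with $(x_j\gamma_j^{e_j})^2=x_j^2=\id$ using $x_j\gamma_j^{e_j}=\gamma_j^{-e_j}x_j$.

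To assemble $g$, observe that pure factors are supported on distinct components of $\Theta(g)^\opp$, and any two such components are pairwise adjacent in $\Gamma$, so the involutions $x_j\in\Crm(C_j)$, $\gamma_i^*$, and the $x_j\gamma_j^{e_j}$ from different components commute pairwise. Regrouping
\[
g=\Bigl(\prod_j x_j\cdot\prod_i\gamma_i^*\Bigr)\cdot\prod_j(x_j\gamma_j^{e_j})
\]
exhibits $g$ as a product of two commuting involutions, completing (i)$\Rightarrow$(iii). The main obstacle is the combinatorial claim that the cyclic-shift prefix conjugator can be chosen to be an involution; this reduces to showing that reversing the relative order of $w_1,\ldots,w_{k-1}$ via shuffling forces all pairs among them to commute, which is precisely the condition making $x_j^2=\id$.
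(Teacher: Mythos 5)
Your easy implications, the reduction via the pure factor decomposition and Proposition \ref{prop: pure factor chain} (each $\gamma_j$ of length $\ge 2$ must be conjugate to $\gamma_j^{-1}$), and the final assembly of commuting involutions from distinct components all parallel the paper's argument and are fine. The gap is in the one step that carries all the weight: your ``key combinatorial observation'' that the shuffle reversing the relative order of $w_1,\ldots,w_{k-1}$ forces these letters to pairwise commute is false. The trace-theoretic fact you invoke (commutation moves preserve the relative order of a non-commuting pair) only controls occurrences of letters once a matching of occurrences is fixed, and when the prefix contains repeated generators there is no canonical matching between the occurrences in the cyclic shift and in the reversed word; precisely in that situation the reversed prefix can represent the same group element as the prefix without any commutations at all (palindromes such as $aba$).

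Concretely, take $\Gamma$ to be three isolated vertices, so $\Crm(\Gamma)=\Z/2\star\Z/2\star\Z/2=\langle a,b,c\rangle$, and let $g=abcbac$, which is cyclically reduced and not a proper power, hence its own pure factor. Then $g^{-1}=cabcba$ is exactly the cyclic shift of $g$ starting at the last letter, so $k=6$ and the prefix is $p=w_1\cdots w_5=abcba$, whose letters certainly do not pairwise commute; nevertheless $p^2=\id$ and indeed $g=p\cdot(pg)$ with $pg=c$, so the \emph{conclusion} you need ($x_j=p^{-1}$ is an involution inverting $\gamma_j$) holds while your proof of it does not. What is actually required is exactly the statement the paper isolates as the Claim inside its proof: if $u_1\cdots u_n$ and $u_n\cdots u_1$ are both reduced expressions and $u_1\cdots u_n\cdot u_n\cdots u_1=\id$, then every $u_i$ is an involution (applied with $n=2$ to the prefix/suffix splitting coming from Lemma \ref{lemma: cyclically reduced form}); the paper proves this by a delicate induction on the total length, tracking which letter cancellations are possible in a reduced product, and no pairwise-commutation shortcut replaces it. To repair your write-up you would need to supply such an argument (or cite/adapt the paper's Claim) in place of the relative-order observation; the surrounding structure of your proof can then stand as is.
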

\begin{proof}
	In any group, if $g=ab$ with $a^2=b^2=id$, then $g^{-1}=ba$ is conjugate to $g=ab$. It is also clear that if $g$ is conjugate to $g^{-1}$ then $g$ is equivalent to the zero chain.
	
	Let $g$ be any element in a RACG $\Crm(\Gamma)$ with pure factorization 
	$$
	g=p \cdot \gamma_1^* \cdots \gamma^*_{\ell^*} \cdot \gamma_1^{e_1}\cdots \gamma_\ell^{e_\ell} \cdot  p^{-1}.
	$$
	As each $\gamma^*_{i}$ necessarily has order two as it lies in a vertex group $\Z/2$, the pure factor chain $g^\pf=\sum e_i \gamma_i$, where the summation runs over $i$ such that $\gamma_i$ that is not conjugate to $\gamma_i^{-1}$. By Proposition \ref{prop: pure factor chain}, if $g$ is equivalent to the zero chain, then $g^\pf$ is literally the zero chain, so $\gamma_i$ is conjugate to $\gamma_i^{-1}$ for all $1\le i\le \ell$. As $\gamma_i$'s and $\gamma^*_i$'s all commute, it follows that $g$ and $g^{-1}$ are conjugate. 
	
	Moreover, to see that $g=ab$ for some $a^2=b^2=id$, it suffices to show this for each $\gamma_i$ due to the commutativity. Each $\gamma_i$ is written as a cyclically reduced word $w$, and reversing the order of the letters gives a word $\bar{w}$ representing $\gamma_i^{-1}$, which must also be cyclically reduced. As these two cyclically reduced words represent the same conjugacy class since $\gamma_i$ is conjugate to $\gamma_i^{-1}$, by Lemma \ref{lemma: cyclically reduced form}, we know up to syllable shuffling $w$ and $\bar{w}$, they differ by a cyclic permutation. That is, there is a reduced expression $uv$ equivalent to $w$ so that $vu$ is equivalent to $\bar{w}$, where $u,v$ are reduced words. It has the property that $uvvu=\gamma_i\cdot \gamma_i^{-1}=id$. By the following claim (with $n=2$), we conclude that $u^2=v^2=id$ as desired.
	\begin{claim}
		For any $n\ge1$, suppose both $u_1u_2\cdots u_n$ and $u_n\cdots u_2 u_1$ are reduced expressions in $\Crm(\Gamma)$, where each $u_i$ is a reduced word. If $u_1u_2\cdots u_n\cdot u_n\cdots u_2u_1=id$, then $u_i^2=id$ for all $i$.
	\end{claim}
	\begin{proof}
		We proceed by induction on the total length $\sum_i|u_i|$ of the word $u_1\cdots u_n$. The result is immediate if the total length is $1$. Suppose the result holds when the total length is at most $L-1$ for $L\ge2$, and consider such an expression with total length $L$. The expression $u_1u_2\cdots u_n\cdot u_n\cdots u_2u_1$ must be reducible by assumption. 
		Each $u_i$ appears twice in the expression, we distinguish them by denoting the copy on the right as $u'_i$ to avoid confusion. 
		
		As $u_1u_2\cdots u_n\cdot u'_n\cdots u'_2u'_1$ is the product of two reduced expressions, it must be the case that some letter $g_v$ in some $u_i$ can be shuffled all the way to merge with a letter $g'_v$ in some $u'_j$, where $v$ is a vertex in $\Gamma$, and $1\le i,j\le n$. We necessarily have $g_v=g'_v$ as the vertex group $G_v=\Z/2$.
		
		We first show $i=j$. If $i<j$, then $u_j$ sits in between $u_i$ and $u'_j$, so the $g_v$ in $u_i$ can be shuffled across $u_j$ which also contains a copy of $g_v$, contradicting that $u_1\cdots u_n$ is reduced. If $i>j$, then the $g_v$ in $u'_j$ can be shuffled across $u'_i$ which contains a copy of $g_v$, contradicting that $u'_n\cdots u'_1$ is reduced.
		
		Now given $i=j$, suppose $u_i=g_1\cdots g_k$ as a reduced word, where $g_i$ is the generator of the vertex group $G_{v_i}$ of some vertex $v_i$. Then for some $1\le s,t\le k$, the letter $g_s=g_v$ in $u_i$ can be shuffled across $(g_{s+1}\cdots g_k)u_{i+1}\cdots u_n\cdot u'_n\cdots u'_{i+1} (g_1\cdots g_{t-1})$ to cancel with $g_t=g'_v$ inside $u'_i$, where $v=v_s=v_t$. We must have $s\ge t$ as otherwise $g_s$ can be shuffled across $g_{s+1}\cdots g_{t-1}$ inside $u_i$ to cancel $g_t$, contradicting that $u_i$ is reduced.
		
		First consider the case $s>t$. Then $g_s$ can be shuffled to the end of $u_i$ and $g_t$ can be shuffled to the head of $u'_i=u_i$, so $u_i$ is equivalent to $xv_ix$ as a reduced expression for some reduced word $v_i$, where $x=g_s=g_t$. It follows that 
		$$u_1\cdots u_n=u_1\cdots u_{i-1} xv_ix u_{i+1}\cdots u_n=u_1\cdots u_{i-1}xv_i u_{i+1}\cdots u_n x$$
		as words equivalent up to syllable shuffling. It follows that the subword $u_1\cdots u_{i-1}xv_i u_{i+1}\cdots u_n $ of total length $L-1$ is reduced, as part of the last reduced expression.
		Similarly, 
		$$u'_n\cdots u'_1=u'_n\cdots u'_{i+1} xv'_i x u'_{i-1}\cdots u'_1=xu'_n\cdots u'_{i+1} v'_i x u'_{i-1}\cdots u'_1$$
		as equivalent reduced words, and $u'_n\cdots u'_{i+1} v'_i x u'_{i-1}\cdots u'_1$ is reduced, where $v'_i=v_i$.
		Hence by the induction hypothesis, as
		\begin{align*}
			(u_1\cdots u_{i-1}xv_i u_{i+1}\cdots u_n)(u'_n\cdots u'_{i+1} v'_i x u'_{i-1}\cdots u'_1)&=(u_1\cdots u_{i-1}xv_i u_{i+1}\cdots u_n x)(xu'_n\cdots u'_{i+1} v'_i x u'_{i-1}\cdots u'_1)\\
			&=u_1u_2\cdots u_n\cdot u_n\cdots u_2u_1=id,
		\end{align*}
		where we think of $x$ and $v_i$ both as reduced words in the reduced expression, we must have $u_j^2=id$ for all $j\neq i$ and $v_i^2=id$, which implies $u_i^2=xv_i^2 x=x^2=id$.
		
		Now consider the remaining case $s=t$. In this case $g_s=g_t$ commutes with all remaining letters in $u_i$ as well as those in $u_j$ for $j>i$ by the same analysis as above. So we can write $u_i$ equivalently as $v_ix$ and $xv_i$, which are reduced expressions where $x=g_s$ commutes with $v_i$. The by the same argument, $u_1\cdots u_{i-1}v_i u_{i+1}\cdots u_n$ is a reduced word of total length $L-1$ as a subword of $u_1\cdots u_{i-1}v_i u_{i+1}\cdots u_nx$ and similarly $u'_n\cdots u'_{i+1}v_i u_{i-1}\cdots u_1$ is reduced. As the product of them is the identity, the induction hypothesis implies $u_j^2=id$ for all $j\neq i$ and $v_i^2=id$, which implies $u_i^2=v_i^2 x^2=x^2=id$.
	\end{proof}
	
\end{proof}

\section{Gaps from short overlaps} \label{sec:gaps from short overlaps}
Let $G$ be a group splitting over a subgroup $C$, that is, $G$ is either an amalgam $A\star_{C} B$ or an HNN extension $A\star_C$. 
In either case, $G$ is a graph of groups with a unique edge group $C$, realized as a graph of spaces $X$ with a single edge space. 

Consider an integral chain $d=\sum g(i)$, where each $g(i)=\wtt_1(i)\cdots \wtt_{L_i}(i)$ is a cyclically reduced word and does not lie in the vertex groups. For any integral chain $d+d'$, where $d'$ is a sum of elements in vertex groups, any admissible surface $S$ of degree $n$ for $d+d'$ can be considered as an admissible surface for $d$ of the same degree with extra boundary components representing curves in vertex groups. This is called an admissible surface for $d$ \emph{relative to the vertex groups}.

Then $S$ can be simplified into the \emph{simple normal form} in the sense of \cite[Section 3.2]{Chen:sclBS}, which does not increase $-\chi^-(S)$ and does not change the degree. This means that $S$ is obtained by gluing \emph{pieces} together, where each piece is a polygon possibly containing a hole in the interior, with $2k$ sides alternating between \emph{arcs} and \emph{turns} for some $k\in\Z_+$; see Figure \ref{fig: simple}. Topologically, each piece is either a disk or an annulus.
Turns are places that these pieces glue along, and arcs are part of $\partial S$. They carry labels that we describe as follows.

In the case of an amalgam, each piece is either supported in $A$ or $B$. If a piece is supported in $A$, then each arc is labeled by some $\wtt_i(k)\in A\setminus C$, and each turn is labeled by some element $c\in C$, which we refer to as the winding number of the turn. The product of labels on the polygonal boundary of each piece supported in $A$ (resp. $B$) defines a conjugacy class in $A$ (resp. $B$), which is $id$ if and only if the piece is a disk (i.e. has no hole inside).

In the case of an HNN extension, each piece is supported in the vertex group $A$. Each arc is labeled by some $\wtt_i(k)\in A\setminus C$, and each turn is labeled by some element $c\in C$, the winding number of the turn. Recall from Section \ref{subsec: HNN} that each $\wtt_i(k)$ falls into one of four types. If a turn travels from some $\wtt_i(k)$ to $\wtt_j(\ell)$, then the possible types of $(\wtt_i(k),\wtt_j(\ell))$ are
$$(at\text{ or } a, t^{-1}a \text{ or } a)\quad \text{and} \quad (t^{-1}a \text{ or } t^{-1}at, at \text{ or } t^{-1}at).$$
It follows that the product of labels on the polygonal boundary defines a conjugacy class in $A$. The conjugacy class is $id$ if and only if the piece is a disk.

In both cases, each disk piece has at least two turns since each $\wtt_i(k)\notin C$.

\begin{figure} 
	\centering
	\labellist
	\small \hair 2pt
	\pinlabel $\wtt_{i_1}(k_1)$ at -30 55
	\pinlabel $c_1$ at 100 55
	
	\pinlabel $\wtt_{i_2}(k_2)$ at 210 75
	\pinlabel $c_2$ at 140 55
	\pinlabel $\wtt_{i_3}(k_3)$ at 210 35
	\pinlabel $c_3$ at 270 55
	
	\pinlabel $\wtt_{i_4}(k_4)$ at 430 30
	\pinlabel $c_6$ at 375 -10
	\pinlabel $\wtt_{i_6}(k_6)$ at 325 30
	\pinlabel $c_5$ at 310 95
	\pinlabel $\wtt_{i_5}(k_5)$ at 375 110
	\pinlabel $c_4$ at 440 95
	\endlabellist
	\includegraphics[scale=0.5]{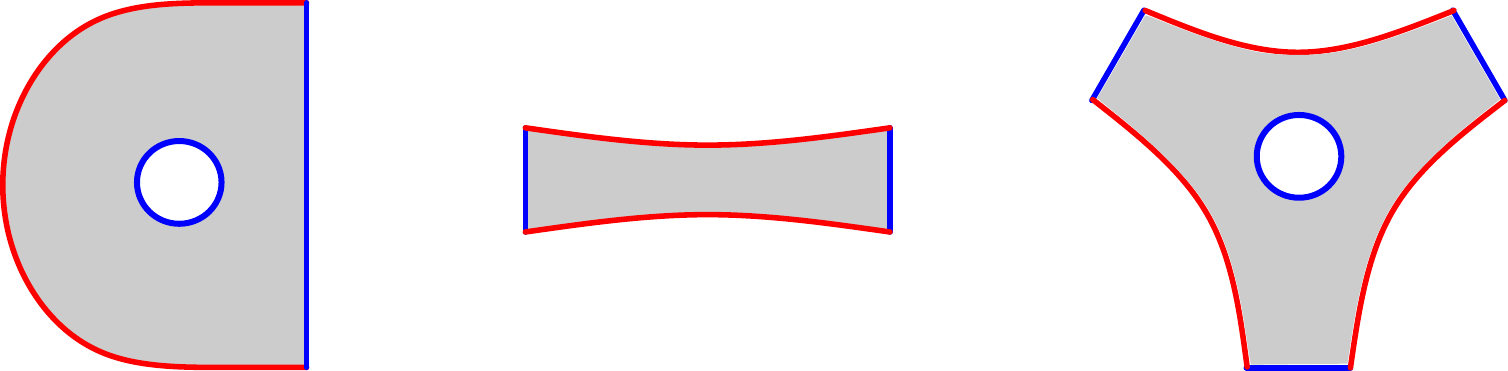}
	\caption{Pieces with or without a hole in the interior}
	\label{fig: simple}
\end{figure}

Pieces are glued together along \emph{paired turns} to form $S$. Here a turn from $\wtt_i(k)$ to $\wtt_j(\ell)$ with winding number $c\in C$ is uniquely paired with a turn from $\wtt_{j-1}(\ell)$ to $\wtt_{i+1}(k)$ with winding number $c^{-1}$. The gluing guarantees that each boundary component of $S$ is labeled by a conjugate of $g(i)^k$ for some $k\in\Z_+$.
The way we glue pieces together is encoded by the \emph{gluing graph} $\Gamma_S$, 
where each vertex corresponds to a piece and each edge corresponds to a gluing of two paired turns. 
For each vertex $v$, let $d(v)$ be its valence in $\Gamma_S$, and let $\delta(v)=1$ if the corresponding piece is a disk and $\delta(v)=0$ otherwise (i.e. for an annulus piece).

If we cap off the hole in each annulus piece in $S$, then the surface deformation retracts to the graph $\Gamma_S$. Recall that the Euler characteristic $\chi(\Gamma_S)$ can be computed as $\sum_v [1-d(v)/2]$, so we have 
\begin{equation}\label{eqn: comput chi}
	-\chi(S)=-\chi(\Gamma_S)+|V_A|=\sum_v [d(v)/2-\delta(v)],
\end{equation}
where $|V_A|$ is the number of annuli pieces.
Note that $d(v)/2-\delta(v)\ge 0$, and the equality holds if and only if $v$ is a disk piece with two turns (i.e. $v$ has valence $2$ in $\Gamma_S$).

\begin{figure}
	\labellist
	\small \hair 2pt
	
	\pinlabel $\wtt_i(k)$ at 43 20
	\pinlabel $\wtt_j(\ell)$ at 43 90
	\pinlabel $\wtt_{j-1}(\ell)$ at 93 90
	\pinlabel $\wtt_{i+1}(k)$ at 93 20
	\pinlabel $c$ at 48 55
	\pinlabel $c^{-1}$ at 93 55
	
	\endlabellist
	\centering
	\includegraphics[scale=0.7]{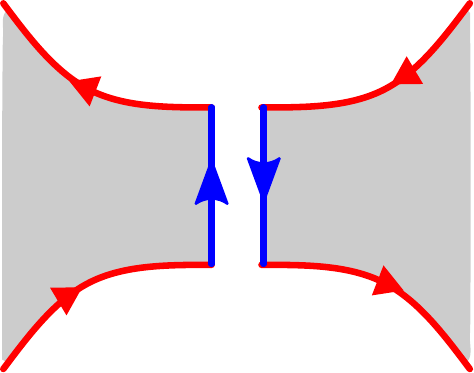}
	\caption{Two paired turns}\label{fig: pair}
\end{figure}
\begin{theorem}\label{thm: no long pairing}
Suppose $G$ is a group that splits over a subgroup $C$. Let $c = \sum_{i=1}^n g(i)$
be an integral chain in $G$ where each term either lies in a vertex group or is cyclically reduced.

Fix an integer $N \in \Z_+$. Then either
$$
\scl_G(c) \geq \frac{1}{12 N}
$$
or for any cyclically reduced $g=g(i)$, $i \in \{1, \ldots, n \}$, we have
$$
g^N =  h^k h' d,
$$
where 
\begin{itemize}
\item $h$ is a cyclically reduced word conjugate to $g(j)^{-1}$ for some $j \in \{1, \ldots, n \}$, and $k\in\Z_{\ge0}$,
\item $h'$ is a prefix of $h$ and 
\item $d \in C$.
\end{itemize}
\end{theorem}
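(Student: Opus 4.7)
The plan is to analyze nearly extremal admissible surfaces for $c$ in the simple normal form set up above, and to show that if $\scl_G(c)<1/(12N)$ then such a surface must contain a very long chain of bigon pieces, which then translates into the required algebraic identity. Concretely, I would take an admissible surface $S$ for the cyclically reduced part of $c$, relative to the vertex groups, of some degree $\nu$ with $-\chi^-(S)/(2\nu)<1/(12N)$, and put it in simple normal form. Using \eqref{eqn: comput chi} I would check that every non-bigon piece $v$ contributes at least $d(v)/6$ to $-\chi(S)$: a disk with $d(v)\ge 3$ satisfies $d(v)/2-1\ge d(v)/6$, while an annulus satisfies $d(v)/2\ge d(v)/6$. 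Writing $O$ for the total number of arcs lying in non-bigon pieces, this gives $O\le 6(-\chi(S))\le 6(-\chi^-(S))<\nu/N$. Since $\partial S$ has $\nu L$ arcs with $L=\sum_{i=1}^{n}L_i$ (summed over the cyclically reduced terms), the remaining $B=\nu L-O>\nu(L-1/N)$ arcs belong to bigons, giving $B/2$ bigon pieces.

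Next I would partition the bigons into maximal chains along the gluing graph $\Gamma_S$. A non-cyclic chain has its two endpoint-turns paired with non-bigon pieces, so the number of such chains is at most $O/2$. A pigeonhole argument then forces some non-cyclic chain to contain at least $(\nu L-O)/O>NL-1$, hence at least $NL$, bigons. Cyclic bigon chains bound annular subsurfaces whose two boundary circles are cyclic conjugates of $g(i)^{\nu_i}$ and $g(j)^{-\nu_j}$ modulo $C$, so they already witness a relation at least as strong as the theorem asks for; thus one may assume the long chain is linear.

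To finish, I would decode the chain $B_1,\ldots,B_M$ with $M\ge NL$. The arcs $\alpha_1(B_1),\ldots,\alpha_1(B_M)$ form a consecutive segment of (a cyclic conjugate of) $g(i)^{\nu_i}$ along one boundary component of $S$, while $\alpha_2(B_M),\ldots,\alpha_2(B_1)$ form a consecutive segment of some $g(j)^{\nu_j}$ along another. Each bigon contributes the local relation $\alpha_1(B_k)\,\tau_1(B_k)\,\alpha_2(B_k)\,\tau_2(B_k)=1$ in the relevant vertex group, and consecutive bigons share a turn with $\tau_2(B_k)^{-1}=\tau_1(B_{k+1})$; telescoping these along the chain collapses the intermediate $C$-labels and produces an identity in $G$ of the form
$$\alpha_1(B_1)\cdots\alpha_1(B_M)=d_0\bigl(\alpha_2(B_1)\cdots\alpha_2(B_M)\bigr)^{-1}d_1$$
for some $d_0,d_1\in C$. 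Since $M\ge NL\ge NL_i$, the left-hand side contains a cyclic conjugate of $g(i)^N$ as a reduced subword, and the right-hand side, being an oriented segment of the cyclic word $g(j)^{-\nu_j}$, equals $h^k h'$ for some cyclic conjugate $h$ of $g(j)^{-1}$ and a prefix $h'$ of $h$. Conjugating by the appropriate prefix of $g(i)^{\nu_i}$ (which only cyclically permutes $g(i)^N$, since $g(i)$ is cyclically reduced) rewrites the identity as $g^N=h^kh'd$ with $d\in C$.

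The main obstacle is this final decoding step: tracking orientations and the winding-number elements $c_k\in C$ carefully enough to produce the precise normal form in the theorem, and handling edge cases (the two sides of the chain lying on the same boundary component, possibly with $i=j$, and the alignment of cyclic shifts between $g(i)^{\nu_i}$ and $g(j)^{-\nu_j}$). The passage to simple normal form, the $d(v)/6$ estimate, and the pigeonhole are routine bookkeeping.
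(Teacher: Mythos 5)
Your overall strategy is the same as the paper's (simple normal form, an Euler-characteristic count showing that when $\scl<\frac{1}{12N}$ almost all pieces are rectangles, and long strips of rectangles decoding into the algebraic identity), but as written there is a genuine gap in the quantifiers. The theorem asserts the identity $g^N=h^kh'd$ for \emph{every} cyclically reduced term $g(i)$ once $\scl_G(c)<\frac{1}{12N}$; equivalently, one must show that if the identity fails for one \emph{prescribed} term, then the scl bound holds. Your global arc count and pigeonhole only produce one long bigon chain somewhere in the surface, and the identity you decode is for whichever term happens to border that chain --- you have no control over which $i$ this is. This weaker ``there exists $i$'' statement is not enough; indeed the application in the proof of Theorem \ref{thm:CM gap} invokes the present theorem for the \emph{longest} term $g_1$ in order to conclude $|g|\ge|h|$, which your version cannot deliver. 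The paper avoids this by fixing the term $g(1)$ for which the identity fails, cutting each boundary component over $g(1)$ into segments labeled exactly by $g(1)^N$, and showing each such segment must meet a non-rectangular piece, which contributes at least $1/6$ after dividing $d(v)/2-\delta(v)$ by the valence $d(v)$; the whole count is thus localized to $g(1)$'s boundary.

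Two further points in your decoding are not innocuous. First, boundary components can wind around a term fewer than $N$ times; in particular a cyclic bigon chain is an annulus component of $S$, and if it winds only $m<N$ times it yields a relation of the shape $\tilde g^{m}=c_1\tilde h^{m'}c_2$, which does \emph{not} imply $g^N=h^kh'd$, because when you raise it to a power the interleaved $C$-elements do not collapse; so the claim that cyclic chains ``already witness a relation at least as strong'' is unjustified (and when $O=0$ your pigeonhole divides by zero and only cyclic chains exist). The paper handles this by first passing to finite covers so that every relevant boundary component winds around $g(1)$ essentially a multiple of $N$ times. Second, alignment: your chain gives $\tilde g^N=h^kh'd$ for a cyclic conjugate $\tilde g=p^{-1}gp$ with $p$ a prefix of $g$, and conjugating back yields $g^N=p\,h^kh'd\,p^{-1}$, which is not of the required form since $p\notin C$; a chain of only $NL$ bigons need not contain an \emph{aligned} copy of $g^N$ (for that you would need about $NL_i+L_i-1$ consecutive arcs on the $g(i)$ side). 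In the paper this alignment comes for free because the segments are cut so as to begin at the first letter of $g(1)$.
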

\begin{proof}
	Without loss of generality, assume $g(1)$ is cyclically reduced and no such equations hold for $g=g(1)$. We show $\scl_G(c) \geq \frac{1}{12 N}$.
	
	Start with any admissible surface $S$ for $c$ without sphere or disk components. 
	For any large integer $M$, there is a finite normal cover $\tilde{S}$ of $S$ where each component of $\partial \tilde{S}$ covers some component of $\partial S$ with degree greater than $M$. 
	In particular, this shows that, up to taking finite covers, any boundary component of $S$ winding around $g(1)$ represents $g(1)^{qN+r}$ for some $q,r\in\Z_+$ where the remainder $r$ is negligible compared to $q$. 
	Thus in the following estimate, we will assume for simplicity that whenever a boundary component of $S$ winds around $g(1)$, it actually winds around $g(1)$ some $N$-multiple of times.
	
	Remove elements in $c$ supported on vertex groups to obtain an integral chain $c_0$.
	Then as explained above, we can think of $S$ as an admissible surface for $c_0$ relative to the vertex groups.
	Up to homotopy and compression, we can put $S$ into the simple normal form, which does not affect the boundary; see \cite[Lemma 3.7]{Chen:sclBS}. 
	For each boundary component representing $g(1)^{qN}$, cut it into $q$ segments, so that each segment is labeled by the cyclically reduced word representing $g(1)^N$. 
	Each segment consists of $L_1\cdot N$ distinct arcs (some with same labels) and thus witnesses $L_1\cdot N$ pieces, 
	some of which might be counted multiple times (since some arcs might lie on the same piece). 
	
	We claim that at least one of these pieces witnessed along a segment is represented by a vertex $v$ in the gluing graph $\Gamma_S$ such that $d(v)/2-\delta(v)>0$. 
	If not, then each such a piece is a disk with two turns. 
	Such rectangles glue to a long strip (see Figure \ref{fig: strip}), whose boundary shows that $g(1)^N c_1 w c_2=id$ for some $c_1,c_2\in C$, 
	where $w$ is the word on the opposite side of $g(1)^N$ and must be a reduced subword of some $g(j)^m$ ($g(j)$ represents the loop that the boundary component on the opposite side of the strip maps onto). 
	In algebraic terms, this implies an equation that should not exist by our assumption.
	
	Therefore, for each segment $\sigma$ as above, we can choose a piece $v(\sigma)$ witnessed by $\sigma$ so that $d(v)/2-\delta(v)>0$. It is possible that $v(\sigma)=v(\sigma')$ for distinct segments $\sigma,\sigma'$. Thinking of such pieces as vertices on $\Gamma_S$, each $v=v(\sigma)$ either has $d(v)\ge 3$ or has $d(v)\le2$ and $\delta(v)=0$. In the former case, such a vertex is witnessed by at most $d(v)$ segments, and hence each segment witnessing $v$ contributes at least $\frac{1}{d(v)}[d(v)/2-\delta(v)]\ge \frac{d(v)-2}{2d(v)}\ge 1/6$ to the right-hand side of equation (\ref{eqn: comput chi}). In the latter case, such a vertex has $\delta(v)=0$, and hence each segment witnessing $v$ contributes at least $\frac{1}{d(v)}[d(v)/2-\delta(v)]= 1/2$ to the right-hand side of (\ref{eqn: comput chi}). Thus in any case, each segment contributes at least $1/6$ to $-\delta(S)$, and the total number of such segments is $n/N$, where $n$ is the degree of $S$.
	
	\begin{figure} 
		\centering
		\labellist
		\small \hair 2pt
		\pinlabel $g(1)^N$ at 220 -10
		\pinlabel $w$ at 225 63
		
		\pinlabel $c_2$ at -5 40
		\pinlabel $\wtt_1(1)$ at 25 20
		\pinlabel $\wtt_2(1)$ at 70 20
		\pinlabel $\cdots$ at 110 40
		
		\pinlabel $\wtt_{L_1-1}(1)$ at 145 20
		\pinlabel $\wtt_{L_1}(1)$ at 195 20
		\pinlabel $\wtt_1(1)$ at 235 20
		\pinlabel $\wtt_2(1)$ at 280 20
		\pinlabel $\cdots$ at 320 40
		
		\pinlabel $\wtt_{L_1-1}(1)$ at 355 20
		\pinlabel $\wtt_{L_1}(1)$ at 400 20
		\pinlabel $c_1$ at 435 40
		\endlabellist
		\includegraphics[scale=0.7]{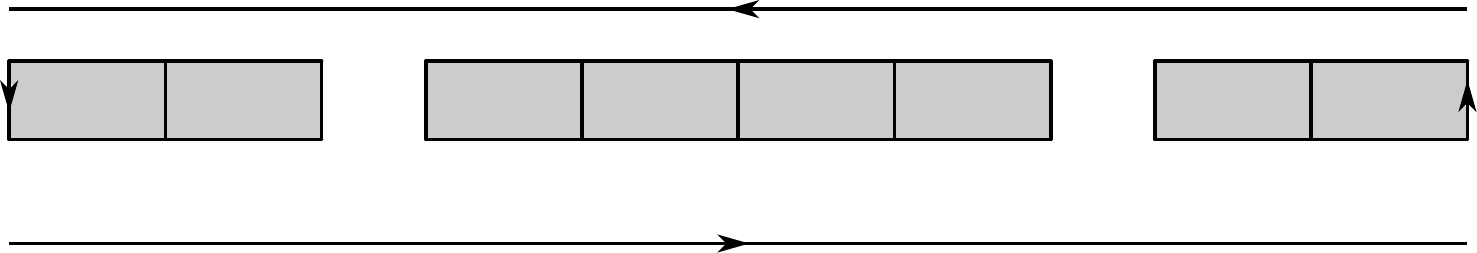}
		\caption{Rectangular pieces glue to a long strip.}
		\label{fig: strip}
	\end{figure}
	
	Hence we obtain
	$$\frac{-\chi(S)}{2n}\ge \frac{1}{6}\cdot \frac{n}{N} \cdot \frac{1}{2n}=\frac{1}{12N}.$$
	Since $S$ is arbitrary, this gives the desired estimate.
	
\end{proof}

\subsection{Proof of Theorem \ref{thm: no long pairing} using quasimorphisms} \label{subsec:no long pairings qm}
In this section we will give an alternative proof to Theorem \ref{thm: no long pairing} using explicit quasimorphisms. 
The quasimorphisms will be similar to the counting quasimorphisms discovered by Brooks \cite{brooks}; see also Example \ref{exmp:brooks}. For amalgamated free products, this is also similar to \cite{CFL16}.

Let $G$ be an amalgamated free product or HNN extension which splits over a group $C$. Let $w \in G$ be a cyclically reduced element. Then we define $\nu_{w} \col G \to \N$ as follows. For any $g \in G$ let $\nu_w(g)$ be the largest integer $n$ such that $g$ has reduced decomposition
$$
g = g_0 w_1 g_1 \cdots w_n g_n,
$$
where $g_i \in G$ is possibly the empty word and $w_i \in C w C$. 
We define
$$
\phi_w = \nu_w - \nu_{w^{-1}}.
$$

\begin{proposition}
The map $\phi_w \col G \to \R$ is a quasimorphism with defect $D(\phi_w) \leq 3$. 
\end{proposition}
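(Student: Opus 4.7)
The strategy is a Brooks-style argument controlled by the triangle decomposition (Propositions \ref{prop:triangles amalg} and \ref{prop:triangles hnn}) which simultaneously expresses $g$, $h$, and $gh$ through a common trio of ``legs.'' First I would observe that $\phi_w$ is antisymmetric: inverting any reduced decomposition $g = g_0 w_1 g_1 \cdots w_n g_n$ with $w_i \in CwC$ produces a reduced decomposition of $g^{-1}$ with factors in $Cw^{-1}C$ (the fact that $w^{-1}$ is again cyclically reduced makes this legitimate), so $\nu_w(g^{-1}) = \nu_{w^{-1}}(g)$ and hence $\phi_w(g^{-1}) = -\phi_w(g)$.

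The key local ingredient is the following estimate: whenever $g = u \xtt v$ is a reduced decomposition with $\xtt$ a single vertex element (possibly the empty word), then
\[
\nu_w(g) = \nu_w(u) + \nu_w(v) + \delta, \qquad \delta \in \{0, 1\}.
\]
The lower bound is immediate by concatenating optimal decompositions of $u$ and of $v$ into one of $g$. For the upper bound, the $w$-chunks $w_1,\ldots,w_n$ in any reduced decomposition of $g$ are pairwise disjoint, so each one either sits entirely inside the $u$-portion, sits entirely inside the $v$-portion, or else uses the single vertex letter $\xtt$; since only one such chunk can occupy $\xtt$, the remaining chunks contribute at most $\nu_w(u)+\nu_w(v)$. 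Subtracting the analogous identity for $w^{-1}$ yields
\[
\phi_w(g) = \phi_w(u) + \phi_w(v) + \Delta, \qquad \Delta \in \{-1, 0, 1\}.
\]

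Applying the triangle decomposition to $g$ and $h$ gives reduced expressions $g = y_1^{-1} \xtt_1 y_2$, $h = y_2^{-1} \xtt_2 y_3$, and $(gh)^{-1} = y_3^{-1} \xtt_3 y_1$. Combining the local estimate with antisymmetry $\phi_w(y_i^{-1}) = -\phi_w(y_i)$ yields
\[
\phi_w(g) = -\phi_w(y_1) + \phi_w(y_2) + \Delta_1, \qquad \phi_w(h) = -\phi_w(y_2) + \phi_w(y_3) + \Delta_2,
\]
\[
\phi_w(gh) = -\phi_w\bigl((gh)^{-1}\bigr) = \phi_w(y_3) - \phi_w(y_1) - \Delta_3,
\]
with each $\Delta_i \in \{-1, 0, 1\}$. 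The $y_i$-terms telescope and one is left with
\[
\phi_w(g) + \phi_w(h) - \phi_w(gh) = \Delta_1 + \Delta_2 + \Delta_3,
\]
whose absolute value is at most $3$, giving $D(\phi_w)\le 3$.

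The main obstacle is the local estimate — in particular the at-most-one-straddling bound. Care is needed because reduced decompositions are unique only up to $C$-multiplications at the junctions (Proposition \ref{prop:normal form amalg free prod}, Britton's Lemma \ref{lemma:normal form HNN}), so one must check that $w$-chunks can be cleanly classified as ``inside $u$,'' ``inside $v$,'' or ``using $\xtt$'' independently of the choice of reduced form, and in the HNN case one must additionally track the four admissible types of vertex letters listed in Section \ref{subsec: HNN}. The degenerate cases $\xtt=\mathrm{id}$, $|w|=1$, or some $y_i=\mathrm{id}$ all fit into the same scheme with $\delta=0$ (or $\delta\in\{0,1\}$) and cause no trouble.
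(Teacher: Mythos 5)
Your proposal is correct and follows essentially the same route as the paper: the same local estimate $\nu_w(y\xtt y') - \nu_w(y) - \nu_w(y') \in \{0,1\}$ (proved by concatenation for the lower bound and by the replacement corollaries, Corollaries \ref{corr:replacement} and \ref{corr:replacement for HNN}, for the at-most-one-straddling upper bound), combined with the triangle decompositions of Propositions \ref{prop:triangles amalg} and \ref{prop:triangles hnn} and the identity $\nu_{w^{-1}}(g)=\nu_w(g^{-1})$ to telescope the three error terms into the bound $3$. The only cosmetic difference is that you package the cancellation via antisymmetry of $\phi_w$, whereas the paper keeps the $\nu_w(y_i^{\pm1})$ terms explicit and invokes $\nu_{w^{-1}}(g)=\nu_w(g^{-1})$ at the end — the same computation.
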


\begin{proof}
We need the following claim for the proof.

\begin{claim}\label{claim:counting subwords}
Let $y \xtt y'$ be a reduced expression where $\xtt$ is a vertex element. Then
$$
\nu_w(y \xtt y') - \nu_w(y) - \nu_w(y') \in \{ 0, 1 \}.
$$
\end{claim}
\begin{proof}
Suppose $\nu_w(y) = n$ with $y = y_0 w_1 y_1 \cdots w_n y_n$ and $\nu_w(y')=n'$ with $y' = y'_0 w'_1 y'_1 \cdots w'_{n'} y'_{n'}$, where $w_i,w'_i \in C w C$.
Then
$$
y \xtt y' = y_0 w_1 y_1 \cdots w_n (y_n \xtt y'_0) w'_1 y'_1 \cdots w'_{n'} y'_{n'}
$$
is a reduced decomposition and thus $\nu_w (y \xtt y' ) \geq \nu_w(y) + \nu_w(y')$.

On the other hand, suppose that $\nu_w(y \xtt y' ) = m$ and we have a reduced decomposition of $y \xtt y'$ that contains $m$ disjoint copies of words in $CwC$. We also have a reduced expression of $y \xtt y'$ induced from arbitrary reduced words representing $y$ and $y'$. By chopping up the second reduced expression so that subwords have lengths matching the first reduced decomposition, it follows from Corollaries \ref{corr:replacement} and \ref{corr:replacement for HNN} that all the subwords in the first expression representing elements in $CwC$ give disjoint subwords of $y$ or $y'$, except when the subword intersects $\xtt$, which can occur for at most one subword. Thus
$$
\nu_w (y \xtt y' ) \leq \nu_w(y) + \nu_w(y')+1,
$$
which shows the claim.
\end{proof}

Let $g,h \in G$. Using Propositions \ref{prop:triangles amalg} and \ref{prop:triangles hnn} we see that there are elements $y_1,y_2,y_3 \in G$ and vertex elements $\xtt_1,\xtt_2,\xtt_3$ such that
\begin{eqnarray*}
g &=& y_1^{-1} \xtt_1 y_2 \\
h &=& y_2^{-1} \xtt_2 y_3 \\
(gh)^{-1} &=& y_3^{-1} \xtt_3 y_1
\end{eqnarray*}
as reduced expressions.

Using Claim \ref{claim:counting subwords} we see that
\begin{eqnarray*}
|\phi_w(g)-\nu_w(y_1^{-1}) - \nu_w(y_2) + \nu_{w^{-1}}(y_1^{-1}) + \nu_{w^{-1}}(y_2) | & \leq & 1 \\
|\phi_w(h)-\nu_w(y_2^{-1}) - \nu_w(y_3)  + \nu_{w^{-1}}(y_2^{-1}) + \nu_{w^{-1}}(y_3)| & \leq & 1 \\
|\phi_w(gh)-\nu_w(y_1^{-1}) - \nu_w(y_3) +  \nu_{w^{-1}}(y_1^{-1}) + \nu_{w^{-1}}(y_3)| & \leq & 1.
\end{eqnarray*}
Using that $\nu_{w^{-1}}(g) = \nu_w(g^{-1})$ for any $g$ we obtain
$$
|\phi_w(g) + \phi_w(h) - \phi_w(gh)| \leq 3,
$$
which shows the proposition.
\end{proof}

We can now prove Theorem \ref{thm: no long pairing} using quasimorphisms:

\begin{proof}[Proof of Theorem \ref{thm: no long pairing}]
Let $G$ be a group which splits over $C$ and let $\sum_{i=1}^n g(1)$ be some integral chain where every term either lies in a vertex group or is cyclically reduced and let $N \in \Z_+$ be some integer.
Suppose for some $g=g(i)$ cyclically reduced, the equation $g^N=h^kh'd$ as in the theorem does not hold. Then for $w=g^N$, we know $\nu_{w^{-1}}(g(j)^m)=0$ for all $j$ and $m\in\Z_+$. It follows that
$$
\phi_w(g(j)^m) \geq 0 
$$
for any $j \in \{1, \ldots, n\}$.
Moreover,
$$
\phi_{w}(g(i)^m) \ge \left\lfloor \frac{m}{N} \right\rfloor
$$
and thus $\bar{\phi}_w(g(i)) \ge \frac{1}{N}$ for the homogenization. 

We conclude that
$$
\sum_{j=1}^n \bar{\phi}_w(g(j)) \geq \frac{1}{N}.
$$
On the other hand, we have that $D(\phi_w) \leq 3$ and thus $D(\bar{\phi}_w) \leq 6$ by Proposition \ref{prop:homog rep}. By Bavard's Duality Theorem (Theorem \ref{thm:bavard}) we obtain
$$
\scl_G(\sum_{i=1}^n g(i)) \geq \frac{1}{12 N},
$$
which completes the proof of Theorem \ref{thm: no long pairing}.
\end{proof}

\section{Central/Malnormal Subgroups} \label{sec:cm subgroups}

In this section we will use Theorem \ref{thm: no long pairing} to give a criterion for chains in certain amalgamated free products and HNN extensions to have a gap in stable commutator length. 

In order to apply Theorem \ref{thm: no long pairing} we need to solve the following equation
for some fixed integer $N \in \N$
\begin{eqnarray} \label{equn: main}
g^N = h^k h' c,
\end{eqnarray}
where both sides are reduced decompositions (see Definitions \ref{def:ab-alternating} and \ref{def:reduced form}),  where $|g| \geq |h|$, $h'$ is a prefix of $h$, $c \in C$ and $k \geq N$. 

In order to solve Equation (\ref{equn: main}), we define and study BCMS-$D$ subgroups $H$ of a group $G$ for an integer $D$ (Definition \ref{def:bcmsm}).
Central subgroups are BCMS-$0$ and malnormal subgroups are BCMS-$1$. 
As a key example, if $\Lambda \subset \Gamma$ is an induced subgraph of a graph $\Gamma$ then 
the associated subgroup $\Arm(\Lambda)$ of the RAAG $\Arm(\Gamma)$ is a BCMS-$D$ subgroup for some $D$; see Lemma \ref{lemma: Lambda has property CM}.

If the subgroup $C$ that $G$ splits over is BCMS-$D$, then for $N = D+2$ we solve Equation (\ref{equn: main}) as follows:
\begin{itemize}
\item if $|g|=|h|$ then Equation (\ref{equn: main}) reduces to $g^N = h^N c$ for some $c \in C$. We show that there is some element $z \in C$ which commutes with $g$ such that $g = h z$, so that $c = z^N$; see Proposition \ref{prop:gmhm}.
\item if $|g| > |h|$ then Equation (\ref{equn: main}) implies that there is some element $x \in G$ and an element $c \in C$ which commutes with $x$ such that $g = x^m c$ for some $m \geq 2$; see Proposition \ref{prop:sol to main equn}.
\end{itemize}

In both cases, equation (\ref{equn: main}) only holds when $g$ can be replaced by a simpler equivalent integer chain. This way we show:
\begin{theorem} \label{thm:CM gap}
Let $G$ be the fundamental group of a graph of groups such that the embedding of every edge group $C \le G$ has property BCMS-$D$.  Let $c$ be an integral chain in $G$. Then either $c$ is equivalent (Definition \ref{def: equivalent chains}) to an integral chain $\tilde{c}$ such that every term lies in a vertex group or
$$
\scl_G(c) \geq \frac{1}{12 (D+2)}.
$$
\end{theorem}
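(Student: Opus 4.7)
The plan is to reduce the general statement to a single splitting of $G$, then proceed by induction on a complexity of the chain $c$ inside that splitting. By Theorem \ref{thm:graph of groups}, $G$ can be written as an iterated sequence of amalgamated free products and HNN extensions over the edge groups, so it suffices to handle one such splitting $G = A \star_C B$ (the HNN case being analogous). Given an integral chain $c$ in $G$, I would define $\ell(c) = \sum_i |g_i|$, the sum of the reduced lengths (in the sense of Definitions \ref{def:ab-alternating} and \ref{def:reduced form}) of the terms of $c$ after cyclically reducing each of them; rule (\ref{item: conj equiv}) of Definition \ref{def: equivalent chains} allows cyclic reduction freely within the equivalence class.

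The inductive step is the following dichotomy. If every term of $c$ lies in a vertex group, $c$ is already supported on vertex groups and we are done. Otherwise, at least one term is cyclically reduced of length $\geq 2$ in the splitting, and I apply Theorem \ref{thm: no long pairing} with $N = D+2$. Either we obtain the desired bound $\scl_G(c) \geq \tfrac{1}{12(D+2)}$, or some cyclically reduced term $g$ of $c$ admits a reduced decomposition $g^{D+2} = h^k h' d$, where $h$ is a cyclic conjugate of $g(j)^{-1}$ for some term $g(j)$ of $c$, $h'$ is a prefix of $h$, and $d \in C$.

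This is where the BCMS-$D$ hypothesis enters through Propositions \ref{prop:gmhm} and \ref{prop:sol to main equn} (to be established in this section from the formal definition of BCMS-$D$). They assert that the equation $g^{D+2} = h^k h' d$, combined with the BCMS-$D$ property of the embedding $C \hookrightarrow G$, forces one of two structural outcomes. When $|g| = |h|$, one gets $g = h z$ with $z \in C$ commuting with $g$, so $d = z^{D+2}$. When $|g| > |h|$, one gets $g = x^m d'$ with $m \geq 2$, an element $d' \in C$, and $d'$ commuting with $x$. In the first case, the pair $\{g, g(j)\}$ can be merged using rules (\ref{item: conj equiv}) and (\ref{item: comm equiv}) so that the length-$|g|$ portions cancel, reducing $\ell(c)$. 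In the second case, rule (\ref{item: linear equiv}) replaces $g$ by $m \cdot x + d'$ after splitting off the commuting factor via (\ref{item: comm equiv}), and $|x| < |g|$ strictly decreases $\ell(c)$.

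Either reduction drops $\ell(c)$ by a strictly positive amount, so finitely many iterations either produce the gap at some stage or reduce $c$ to a chain equivalent to one supported on vertex groups. The main obstacle will be the derivation of the two structural dichotomies in Propositions \ref{prop:gmhm} and \ref{prop:sol to main equn} from the formal BCMS-$D$ definition: this is where the intrinsic content of the condition is used, and it is the bridge from the combinatorial output of Theorem \ref{thm: no long pairing} to a clean algebraic rewrite of the chain. A secondary subtlety is the careful bookkeeping that every substitution remains within the equivalence class of Definition \ref{def: equivalent chains}; in particular, the centrality hypothesis in rule (\ref{item: comm equiv}) must hold exactly, which is precisely what the commuting conclusion of the structural propositions supplies.
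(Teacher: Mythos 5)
Your outline follows the paper's own route: one splitting at a time, Theorem \ref{thm: no long pairing} with $N=D+2$, Propositions \ref{prop:gmhm} and \ref{prop:sol to main equn} to convert the bad equation into an algebraic rewrite, and a complexity count on total reduced length. The paper packages your induction differently — it chooses, once and for all, an equivalent chain of minimal complexity (Lemma \ref{lemma: reduced form for main thm}) and derives a contradiction with the properties of that normal form, rather than iterating reductions — but that is only bookkeeping. Two steps in your plan, however, are genuinely missing. First, Proposition \ref{prop:gmhm} requires the term $g$ to be \emph{CM-reduced} (Definition \ref{def: CM reduced elts}), not merely cyclically reduced; you never arrange this. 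The paper builds it into its normal form (condition (\ref{item: CM_reduced}) of Lemma \ref{lemma: reduced form for main thm}) via Proposition \ref{prop: cm reduced}, which replaces each term $g_i$ by an equivalent $h_i+c_i$ with $h_i$ CM-reduced, $c_i\in C$, and $|h_i|=|g_i|$, so the fix is compatible with your complexity $\ell$, but without it the equal-length case cannot be closed.

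Second, your dichotomy ``$|g|=|h|$ or $|g|>|h|$'' silently assumes $|g|\ge|h|$, which is false for an arbitrary term: $h$ is a cyclic conjugate of some \emph{other} term's inverse, so it may well be longer than $g$, and then neither Proposition \ref{prop:gmhm} nor Proposition \ref{prop:sol to main equn} applies. You quote the conclusion of Theorem \ref{thm: no long pairing} in its weak form (``some cyclically reduced term $g$ admits \dots''), but the theorem actually gives the equation for \emph{every} cyclically reduced term, and the paper exploits exactly this by running the argument on a term of maximal length, which forces $|g|\ge|h|$. You need that choice (and the stronger reading of the theorem) for the case analysis to be exhaustive. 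Two smaller points to tidy up: your opening reduction ``it suffices to handle one splitting'' is really an induction on the number of edges in the support of the chain (after one edge is treated, the surviving terms lie in vertex groups of that splitting, which are themselves graphs of groups, and one must note an edge group stays BCMS-$D$ inside them); and in the equal-length case your claim that $\ell$ strictly drops needs care when the index $j$ returned by the theorem is the term $g$ itself, since replacing $g$ by $-g+z$ does not shorten anything — the cancellation genuinely uses a second copy or a separate term, which is why the paper phrases the obstruction as condition (\ref{item: thm no inverses}) of its normal form rather than as a length decrease.
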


This section is organized as follows. 
In Sections \ref{subsec:property cm} and \ref{subsec:bcmsm} we define CM-subgroups and BCMS-$D$ subgroups respectively.
In Sections \ref{subsec:normal forms for elements in BCMS groups} and \ref{subsec:equations in bcms amalgamated free prod} we prove properties of BCMS-$D$ subgroups related to Equation (\ref{equn: main}). Then we solve Equation (\ref{equn: main}) in Section \ref{subsec:the equation gm hkhpc in bcms amalg free prod} and prove Theorem \ref{thm:CM gap} in Section \ref{subsec:proof of the cm gap thm}.

\subsection{CM-subgroups} \label{subsec:property cm}

In this section we introduce \emph{central/malnormal subgroups (CM-subgroups)}.
CM-subgroups are generalizations of two very different types of subgroups: central subgroups and malnormal subgroups. Recall that a subgroup $H \le G$ is \emph{central}, if for every element $g \in G$ and every element $h \in H$ we have that $g h g^{-1} = h$.
On the other hand, a subgroup $H \le G$ is \emph{malnormal}, if for  every element $g \in G \setminus H$ and every element $h \in H$ we have that $g h g^{-1} \notin H$.

We say that an element $g \in G$ is a \emph{CM-representative for $H \le G$}, if for every $h \in H$ either
\begin{itemize}
\item[(i)] $g h g^{-1} = h$, or
\item[(ii)] $g h g^{-1} \notin H$.
\end{itemize}

For a subset $S$ of $G$, let $Z_H(S)$ be the subgroup of elements in $H$ commuting with all elements of $S$. When $S=\{g\}$, we simply denote it as $Z_H(g)$. Then $g$ is a CM-representative for $H$ if and only if $gHg^{-1}\cap H=Z_H(g)$.

\begin{proposition}[Uniqueness of CM-representatives]\label{prop: CM-representatives unique up to conjugation}
Let $g$ be a CM-representative for $H\le G$. Then $g' \in H g H$ is a CM-representative if and only if there are elements 
$h \in H$, $z \in Z_H(Z_H(g))$ such that $g' = h z g h^{-1}$.
In this case, we have that $Z_H(g') = h Z_H(g) h^{-1}$.
\end{proposition}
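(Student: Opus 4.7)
The plan is to reformulate the CM condition set-theoretically as
\[
g\,H\,g^{-1}\cap H \;=\; Z_H(g)
\]
(the nontrivial inclusion uses condition (ii) in the definition: if $ghg^{-1}\in H$ then $ghg^{-1}=h$), and then to verify both directions of the biconditional by directly computing $g'Hg'^{-1}\cap H$ from an arbitrary factorization of $g'$ in $HgH$. The byproduct computation will give the centralizer formula at the end.

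\textbf{The ``if'' direction.} Assume $g'=hzgh^{-1}$ with $h\in H$ and $z\in Z_H(Z_H(g))$. Since $hz\in H$, conjugation by $hz$ preserves $H$, so conjugating $gHg^{-1}\cap H=Z_H(g)$ by $hz$ gives
\[
g'Hg'^{-1}\cap H \;=\; hz\bigl(gHg^{-1}\cap H\bigr)z^{-1}h^{-1} \;=\; hz\,Z_H(g)\,z^{-1}h^{-1} \;=\; h\,Z_H(g)\,h^{-1},
\]
where the last step uses that $z$ centralizes $Z_H(g)$ pointwise. A direct one-line check, using that $z$ commutes both with $x\in Z_H(g)$ and with $g$ through $x$, shows that $hxh^{-1}$ commutes with $g'=hzgh^{-1}$, giving $hZ_H(g)h^{-1}\subseteq Z_H(g')$. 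Combined with the trivial inclusion $Z_H(g')\subseteq g'Hg'^{-1}\cap H$, this forces equality throughout, so $g'$ is a CM-representative with $Z_H(g')=hZ_H(g)h^{-1}$.

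\textbf{The ``only if'' direction.} Conversely, suppose $g'\in HgH$ is a CM-representative and write $g'=h_1 g h_2$. The same computation as above (the hypothesis $z\in Z_H(Z_H(g))$ was used only to rewrite $hzZ_H(g)z^{-1}h^{-1}$ as $hZ_H(g)h^{-1}$) yields unconditionally
\[
g'Hg'^{-1}\cap H \;=\; h_1\,Z_H(g)\,h_1^{-1}.
\]
Since $g'$ is a CM-representative, this set equals $Z_H(g')$. Setting $h_3=h_2h_1$ and expanding the condition $(h_1 x h_1^{-1})g'=g'(h_1 x h_1^{-1})$ for an arbitrary $x\in Z_H(g)$, the factors of $h_1$ and $g$ cancel and leave $xh_3=h_3 x$. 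Hence $h_3\in Z_H(Z_H(g))$. Finally, the rewriting
\[
g' \;=\; h_1 g h_2 \;=\; h_2^{-1}(h_2 h_1)\,g\,h_2 \;=\; h_2^{-1}\,h_3\,g\,h_2
\]
exhibits $g'$ in the required form with $h=h_2^{-1}\in H$ and $z=h_3\in Z_H(Z_H(g))$.

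The argument is really just careful bookkeeping. The only mildly subtle points are: (i) it is $h_3=h_2h_1$, not $h_1h_2$ or $h_2$ alone, that is forced into $Z_H(Z_H(g))$; and (ii) among the ``obvious'' rewritings of $g'=h_1 g h_2$, the one that puts $z$ inside $H$ is $g'=h_2^{-1}(h_3 g)h_2$, whereas the alternative $g'=h_1(gh_3)h_1^{-1}$ would require the strictly stronger condition $h_3\in Z_H(g)$.
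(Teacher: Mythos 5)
Your proof is correct and follows essentially the same route as the paper's: the set-theoretic reformulation $gHg^{-1}\cap H=Z_H(g)$ is already noted in the paper just before the proposition, your "if" direction is the same conjugation computation (packaged at the level of sets rather than element-by-element case analysis), and your "only if" direction, like the paper's, comes down to showing that $h_2h_1$ must centralize $Z_H(g)$, differing only in that you work directly with $g'=h_1gh_2$ instead of first conjugating to reduce to the form $hg$.
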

\begin{proof}

First assume that $g$ is a CM-representative and let $g' = h z g h^{-1}$ for some $h \in H$ and $z \in Z_H(Z_H(g))$. We show that $g'$ is a CM-representative.
For any $x\in H$, we have $g'x g'^{-1}=hzg(h^{-1}xh)g^{-1}z^{-1}h^{-1}$. Since $h^{-1}xh\in H$ and $g$ is a CM-representative, either $g(h^{-1}xh)g^{-1}\notin H$ or $g(h^{-1}xh)g^{-1}=h^{-1}xh$. In the former case we have $g'x g'^{-1}\notin H$ since $hz\in H$, while in the latter case we have $h^{-1}xh\in Z_H(g)$ and $g'x g'^{-1}=hz(h^{-1}xh)z^{-1}h^{-1}=h(h^{-1}xh)h^{-1}=x$.
Thus $g'$ is a CM-representative, and the calculation shows that $x\in Z_H(g')$ if and only if $h^{-1}xh\in Z_H(g)$, i.e. $x\in hZ_H(g)h^{-1}$.

Conversely, if $g'=h_1 g h_2$ is a CM-representative for some $h_1,h_2\in H$, then by what we proved above, so is $g''=hg$, where $h=h_2 h_1$. 
Then for any $x\in Z_H(g)$, we have $g''x g''^{-1}=hgxg^{-1}h^{-1}=hxh^{-1}\in H$. Since $g''$ is a CM-representative, we must have $hxh^{-1}=g''xg''^{-1}=x$ for all $x\in Z_H(g)$.
Hence $h\in Z_H(Z_H(g))$.
\end{proof}

\begin{definition}[CM-subgroups and CM-choice] \label{def:cm subgroup}
We say that $H \le G$ is a \emph{CM-subgroup of $G$}, if
for every $g \in G$ there is an element $\bar{g} \in H g H$ such that $\bar{g}$ is a CM-representative for $H$.

A \emph{CM-choice} for a CM-subgroup $H\le G$ is a choice of one CM-representative for each double coset $H g H$ with $g \in G$.
\end{definition}

Every central or malnormal subgroup $H \le G$ is a CM-subgroup.
The motivating example for CM-subgroups come from right-angled Artin groups:
We will see that for any induced subgraph $\Lambda \subset \Gamma$ the associated right-angled Artin group $\Arm(\Lambda)$ is a CM-subgroup of $\Arm(\Gamma)$ (Lemma \ref{lemma: Lambda has property CM}). We will have this application in mind throughout this section.

\begin{example} \label{exmp: first example of CM subgroup}
Consider the graph $\Delta_1$ with vertex set $\{ v_0, v_1 \}$ and empty edge set and the graph $\Delta_2$ with vertex set $\{ v_0, v_1, v_2 \}$ and a single edge $(v_0, v_2)$.
The associated right-angled Artin groups are $\Arm(\Delta_1) \cong \Z \star \Z$ and $\Arm(\Delta_2) \cong \Z \star \Z^2$.

The subgroup $\Arm(\Delta_1)$ arises naturally as a subgroup of $\Arm(\Delta_2)$ and is neither 
central nor malnormal, but it is a CM-subgroup by Lemma \ref{lemma: Lambda has property CM}. 
Not every element of $\Arm(\Delta_2) \setminus \Arm(\Delta_1)$ is a CM-representative, such as $v_1 v_2 \in \Arm(\Delta_2) \setminus \Arm(\Delta_1)$: For $v_0 \in \Arm(\Delta_1)$ we have that
$(v_1 v_2) v_0 (v_1 v_2)^{-1} = v_1 v_0 v_1^{-1} \in \Arm(\Delta_1)$, but $(v_1 v_2) v_0 (v_1 v_2)^{-1} \neq v_0$.
However, $v_2 \in \Arm(\Delta_1) (v_1 v_2) \Arm(\Delta_1)$ \emph{is} a CM-representative. 

We will see that for every double coset $\Arm(\Delta_1) g \Arm(\Delta_1)$, an element with the shortest word length in the double coset is a CM-representative (Lemma \ref{lemma: Lambda has property CM}). This yields a natural CM-choice.
\end{example}

\begin{proposition}[Inheritance properties of CM-subgroups] \label{prop:inheritance CM subgroups}
Let $K \le H \le G$ be nested subgroups.
\begin{itemize}
\item If $K \le G$ is a CM-subgroup then $K \le H$ is a CM-subgroup.
\item If $K \le H$ is a CM-subgroup and $H\le G$ is a CM-subgroup then $K\le G$ is a CM-subgroup.
\end{itemize}
\end{proposition}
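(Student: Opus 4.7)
The first claim is essentially tautological. Given $h \in H$, view $h$ as an element of $G$ and apply the CM hypothesis for $K \le G$ to obtain a CM-representative $\bar{h} \in KhK$. Since $K \le H$ and $h \in H$, one has $\bar{h} \in H$, and the double coset computed in $H$ coincides with the one computed in $G$. The defining dichotomy for a CM-representative (conjugations of elements of $K$ by $\bar{h}$ are either fixed or leave $K$) makes no reference to the ambient group, so $\bar{h}$ also serves as a CM-representative for $K \le H$.

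For the second claim the plan is to stack the two given CM structures to produce a CM-representative for $K \le G$ inside $KgK$. Given $g \in G$, I would first apply the CM hypothesis for $H \le G$ to produce a CM-representative $g_0 = h_1 g h_2$ for $H$ in $G$, where $h_1, h_2 \in H$; equivalently $g = h_1^{-1} g_0 h_2^{-1}$. Next I would apply the CM hypothesis for $K \le H$ to the element $h_1^{-1} h_2^{-1} \in H$ to obtain a CM-representative $y = k_1 h_1^{-1} h_2^{-1} k_2$ for $K$ in $H$, with $k_1, k_2 \in K$. The candidate CM-representative for $K \le G$ is then
\[
x \defeq k_1 h_1^{-1} g_0 h_2^{-1} k_2,
\]
which lies in $KgK$ by construction since $g = h_1^{-1} g_0 h_2^{-1}$.

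To verify that $x$ is a CM-representative for $K \le G$, I would expand $xkx^{-1}$ for an arbitrary $k \in K$; setting $a \defeq h_2^{-1}(k_2 k k_2^{-1}) h_2 \in H$, the expression collapses to $k_1 h_1^{-1} (g_0 a g_0^{-1}) h_1 k_1^{-1}$. The CM property of $g_0$ now splits into two cases. If $g_0 a g_0^{-1} \notin H$, then conjugation by $k_1 h_1^{-1} \in H$ preserves $G \setminus H$, so $xkx^{-1} \notin H$ and in particular $xkx^{-1} \notin K$. If instead $g_0 a g_0^{-1} = a$, a direct cancellation yields $xkx^{-1} = yky^{-1}$, and the CM property of $y$ for $K \le H$ provides the required dichotomy.

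The only point requiring care---and the main, if minor, obstacle---is choosing the form of $x$ so that the ``fixed'' branch of $g_0$'s CM property collapses precisely onto $yky^{-1}$, allowing the CM property of $K \le H$ to be invoked on $y$; once $x$ is recorded in this form everything reduces to routine manipulation of the defining formulas.
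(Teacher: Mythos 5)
Your proof is correct and follows essentially the same strategy as the paper's: produce a candidate in $KgK$ by stacking a CM-representative for $H\le G$ with one for $K\le H$, then verify the dichotomy by a two-step case analysis (first invoking the CM property of the $H$-level representative, then that of the $K$-level one). The only real difference is organizational: the paper first normalizes both representatives to one-sided form ($\bar g = gh$, $\bar h = kh$) via Proposition \ref{prop: CM-representatives unique up to conjugation} and takes the candidate $g' = \bar g\,\bar h^{-1}$, whereas you keep two-sided coset representatives and apply the $K\le H$ hypothesis to $h_1^{-1}h_2^{-1}$, which bypasses that proposition and also avoids the paper's implicit use of the fact that the inverse of a CM-representative again satisfies the dichotomy.
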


\begin{proof}
The first item is immediate. For the second item, for any $g \in G \setminus K$ we need to find a CM-representative in $K g K$. As $H \le G$ is a CM-subgroup, there is a CM-representative in $H g H$ for $H\le G$. By Proposition \ref{prop: CM-representatives unique up to conjugation} there is a CM-representative of the form $\bar{g}=gh$ for some $h \in H$. Similarly, since $K\le H$ is a CM-subgroup, we have a CM-representative $\bar{h}=kh$ for $h$ with $k\in K$.

Then $g'=gk^{-1}=\bar{g}\bar{h}^{-1}$ is a CM-representative in $KgK$. Indeed, for any $k_0\in K$, we have
$$g'k_0 g'^{-1}=\bar{g}\bar{h}^{-1} k_0 \bar{h}\bar{g}^{-1}.$$
As $g'k_0 g'^{-1}$ is the conjugate of $\bar{h}^{-1} k_0 \bar{h}\in H$ by $\bar{g}$,
it is either outside $H$ and hence outside $K$ or equal to $\bar{h}^{-1} k_0 \bar{h}$. In the latter case, either $\bar{h}^{-1} k_0 \bar{h}\notin K$ or $\bar{h}^{-1} k_0 \bar{h}=k_0$ since $\bar{h}$ is a CM-representative.
\end{proof}

\subsection{BCMS-$D$ subgroups} \label{subsec:bcmsm}
Given a proper CM-subgroup $H$ of a group $G$ and a CM-representative $g \in G \setminus H$ the centralizer $Z_{H}(g)$ measures how much the subgroup $H<G$ fails to be malnormal for the element $g$. It has an interesting structure in the motivating example of RAAGs.

\begin{example}\label{example: RAAG CM-subgroup}
Let $\Delta_1$ and $\Delta_2$ be the graphs defined in Example \ref{exmp: first example of CM subgroup}. We have seen that $v_2 \in \Arm(\Delta_2) \setminus \Arm(\Delta_1)$ is a CM-representative. Here $Z_{\Arm(\Delta_1)}(v_2) = \Arm(\Delta_0)$ where $\Delta_0$ is the graph with single vertex $v_0$.

More generally, we will see that if we choose CM-representatives to be elements in each double coset of minimal length then every such centralizer is the right-angled Artin group on an induced subgraph of the defining graph (Lemma \ref{lemma: Lambda has property CM}) and thus it is again a CM-subgroup.

On the other hand, if we choose the CM-representatives in a different way, the centralizers may not have this structure, but they only differ by conjugations according to Proposition \ref{prop: CM-representatives unique up to conjugation}
\end{example}

Let $H_0$ be a group and let $H_1$ be a proper CM-subgroup of $H_0$. Let $h_0\in H_0\setminus H_1$ be a CM-representative. Then $H_2 := Z_{H_1}(h_0)$ is a subgroup of $H_1$. There are three cases:
\begin{enumerate}[(i)]
	\item if $H_2 = H_1$ then $H_1$ lies in the centralizer of the element $h_0$,
	\item if $H_2 = \{ e \}$, then $H_1$ behaves like a malnormal subgroup with respect to the element $h_0$, or
	\item $\{ e \} \neq H_2 < H_1$ is a proper nontrivial subgroup. \label{item: H_2 case 3}
\end{enumerate}
If $h_0$ is as in case (\ref{item: H_2 case 3}) and $H_2$ is a CM-subgroup of $H_1$ then we may continue this process: Given a CM-representative $h_1 \in H_1 \setminus H_2$, define $H_3 = Z_{H_2}(h_1)$. 

Informally, if this process always yields CM-subgroups and eventually stops (in about $D$ steps), then we say the subgroup $H_1$ has \emph{bounded CM-subgroup sequence of depth $D$}, which we abbreviate as BCMS-$D$. We make this precise in the following definitions.

\begin{definition}[CM-subgroup sequence]
	In a group $H$, a CM-subgroup sequence of length $m+1$ is a sequence of nested subgroups $H=H_0> H_1>\cdots >H_{m+1}\ge H_{m+2}$ such that $H_{i+1}$ is a proper CM-subgroup of $H_i$ for all $0\le i\le m$ (\emph{not} including $i=m+1$) and $H_{i+2}=Z_{H_{i+1}}(g_i)$ for some $g_i\in H_i\setminus H_{i+1}$.
\end{definition}
For any CM-subgroup sequence $H_{m+2} \le \cdots \le H_0$, if $H_1$ is central we must have $H_2=H_1$, which forces $m=0$. If $H_1$ is malnormal, then we have $H_2=\{e\}=H_3$, forcing $m\le 1$. Note that not every nested sequence of proper CM-subgroups is a CM-subgroup sequence due to the requirement $H_{i+2}=Z_{H_{i+1}}(g_i)$. For instance, $\{e\}\le \Z \le \Z^2$ is a nested sequence of proper CM-subgroups, but $Z_{\Z}(g)\neq \{e\}$ for all $g\in \Z^2$.

It is important to note that, in the definition of CM-subgroup sequences, the only requirement on $H_{m+2}$ is that $H_{m+2}=Z_{H_{m+1}}(g_m)$ for some $g_m\in H_m\setminus H_{m+1}$, and in general it may not be a CM-subgroup of $H_{m+1}$. It is part of the definition of BCMS subgroups below that $H_{m+2}$ is required to be a proper CM-subgroup of $H_{m+1}$ except when $H_{m+2}=H_{m+1}$.

\begin{definition}[BCMS-$D$] \label{def:bcmsm}
	Let $D \in \Z_{\ge0}$ be an integer, and let $H_0$ be a group. 
	We say that a subgroup $H_1\le H_0$ (or really the pair $(H_0,H_1)$) has \emph{bounded CM-subgroup sequences of depth $D$ (BCMS-$D$)} 
	if $H_1$ is a CM-subgroup and for every CM-subgroup sequence $H_{m+2} \le \cdots \le H_0$ we have that either $H_{m+2} = H_{m+1}$ or that $H_{m+2} < H_{m+1}$ is a proper CM-subgroup. Moreover we require that every CM-subgroup sequence has length at most $D+1$, i.e. if $H_{m+2} \le \cdots \le H_0$ is a CM-subgroup sequence then $m \leq D$. We also say $H_1$ is a BCMS-$D$ subgroup.
\end{definition}

We see that central subgroups have BCMS-$0$ and malnormal subgroups have BCMS-$1$. Note that by definition a subgroup $H$ has BCMS-$D$ then it also has BCMS-$D'$ if $D'\ge D$, i.e. we do not require $D$ to be the optimal upper bound.

In general, verifying whether a CM-subgroup $H\le G$ has BCMS-$D$ requires one to check all CM-subgroup sequences. As we saw in Example \ref{example: RAAG CM-subgroup}, certain choices of CM-representatives have centralizers that are easier to study in some cases. We will show that one can restrict attention to some special families of CM-subgroup sequences corresponding to nice choices of CM-representatives to verify whether a CM-subgroup has BCMS-$D$.

We incorporate the choice of CM-representatives into the following notion.
\begin{definition}[CM-subgroup-choice] \label{def:cm-subgroup-choice}
A \emph{CM-subgroup-choice} $\Icl(G)$ is a CM-choice (Definition \ref{def:cm subgroup}) for every proper CM-subgroup $H < G$. 
\end{definition}

Given a CM-subgroup-choice $\Icl(G)$, whenever we have a chain of proper subgroups $K< H< G$ such that $K < H$ and $H < G$ are CM-subgroups. Then $K < G$ is also a CM-subgroup by Proposition \ref{prop:inheritance CM subgroups}. For any element $h \in H \setminus K$ the CM-subgroup-choice $\Icl(G)$ gives us a CM-representative of $h$ for $K < G$ which is also a CM-representative for $K < H$.

\begin{definition}[CM-sequence]
	Given a proper CM-subgroup $H_1<H_0$ and a CM-subgroup-choice $\Icl(H_0)$, a \emph{CM-sequence} of length $m+1$ is a sequence of elements $(h_0, \ldots, h_m)$ in $G$ such that 
	there is a CM-subgroup sequence $H_{m+2}\le H_{m+1}<\cdots<H_0$ satisfying
	\begin{itemize}
		\item $h_i\in H_i\setminus H_{i+1}$ is the CM-representative for $H_{i+1}$ provided by $\Icl(G)$ for all $0\le i\le m$, and
		\item $H_{i+2}=Z_{H_{i+1}}(h_i)$ for all $0\le i\le m$.
	\end{itemize}
	
	Given a \emph{CM-sequence} $(h_0, \ldots, h_m)$, it uniquely determines the CM-subgroup sequence $H_{m+2}\le H_{m+1}<\cdots<H_0$ by the relation $H_{i+2}=Z_{H_{i+1}}(h_i)$, which we refer to as the associated CM-subgroup sequence.
\end{definition}

Apparently, if $H_1$ is a BCMS-$D$ subgroup, then any CM-sequence $(h_0, \ldots, h_m)$ has length at most $D+1$, i.e. $m\le D$. Conversely, given a CM-subgroup-choice $\Icl(G)$, not every CM-subgroup sequence appears as one associated to some CM-sequence $(h_0, \ldots, h_m)$. However, it suffices to consider CM-subgroup sequences associated to CM-sequences to show that $H_1$ is a BCMS-$D$ subgroup.

\begin{proposition} \label{prop: bcms enough to check for subgroup choices}
	Fix a CM-subgroup-choice $\Icl(H_0)$. Suppose $H_1$ is a proper CM-subgroup of $H_0$, and for the CM-subgroup sequence $H_{m+2} \le \cdots \le H_0$ associated to any CM-sequence $(h_0, \ldots, h_m)$, we have that either $H_{m+2} = H_{m+1}$ or that $H_{m+2} < H_{m+1}$ is a proper CM-subgroup. Then $H_1$ has BCMS-$D$ if and only if every CM-sequence $(h_0, \ldots, h_m)$ has $m\le D$.
\end{proposition}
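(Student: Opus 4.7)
My plan for the proof is as follows. The forward direction is immediate: every CM-sequence $(h_0,\ldots,h_m)$ comes with an associated CM-subgroup sequence of length $m+1$, so the length bound in BCMS-$D$ applies directly. The substantive content lies in the converse, where I must transfer the assumed CM-sequence hypotheses (both on length and on the bottom step) to arbitrary CM-subgroup sequences.

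For the converse, my plan is to take any CM-subgroup sequence $H_{m+2}\le H_{m+1}<\cdots<H_0$ with $H_{i+2}=Z_{H_{i+1}}(g_i)$ and inductively build a CM-sequence $(h_0,\ldots,h_m)$ of the same length whose associated CM-subgroup sequence $H'_{m+2}\le H'_{m+1}<\cdots<H'_0$ is related to the original by $H'_{j+1}=c_jH_{j+1}c_j^{-1}$ for some $c_j\in H_1$, with $c_0=e$ (so that $H'_0=H_0$ and $H'_1=H_1$). In the inductive step I plan to proceed as follows. Given $c_i$, a short calculation using $c_i=p_ic_{i-1}$ with $p_i\in H'_i$ (the inductive data) will show $c_iH_ic_i^{-1}=p_iH'_ip_i^{-1}=H'_i$, so the element $g'_i:=c_ig_ic_i^{-1}$ lies in $H'_i\setminus H'_{i+1}$ and satisfies $Z_{H'_{i+1}}(g'_i)=c_iH_{i+2}c_i^{-1}$. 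I will then take $h_i$ to be the CM-representative in the double coset $H'_{i+1}g'_iH'_{i+1}$ provided by $\Icl(H_0)$; the discussion following Definition \ref{def:cm-subgroup-choice} shows that $h_i$ is also a CM-representative for $H'_{i+1}<H'_i$. Proposition \ref{prop: CM-representatives unique up to conjugation} will then give $Z_{H'_{i+1}}(h_i)=p_{i+1}(c_iH_{i+2}c_i^{-1})p_{i+1}^{-1}$ for some $p_{i+1}\in H'_{i+1}\le H_1$, and setting $c_{i+1}:=p_{i+1}c_i\in H_1$ yields $H'_{i+2}=c_{i+1}H_{i+2}c_{i+1}^{-1}$, completing the inductive step.

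The key structural observation I will exploit is that for each $j$ the inclusion $H'_{j+1}\hookrightarrow H'_j$ factors as the isomorphism $H_j\to H'_j$ given by conjugation by $c_{j-1}$ followed by an inner automorphism of $H'_j$ (conjugation by $p_j$). Both operations preserve the relations ``equals'' and ``is a proper CM-subgroup of'', so the fact that $H_{j+1}<H_j$ is a proper CM-subgroup (which holds for $0\le j\le m$ by definition of CM-subgroup sequence) implies the same for $H'_{j+1}<H'_j$. This in turn guarantees that the CM-choices needed at each inductive step actually exist, so the construction produces a valid CM-sequence of length $m+1$. By the hypothesis, this CM-sequence will then satisfy $m\le D$, and the bottom inclusion $H'_{m+2}\le H'_{m+1}$ will be either equality or a proper CM-subgroup. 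The same conjugation correspondence at the bottom will transfer both properties back to $H_{m+2}\le H_{m+1}$, yielding BCMS-$D$ for $H_1$.

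The hard part will be the careful bookkeeping of the conjugating elements $c_j$ and $p_j$ across levels and verifying that the two operations (outer conjugation by $c_{j-1}$ followed by inner conjugation by $p_j\in H'_j$) compose correctly; this will be essential both to ensure that $g'_i$ lands in the correct double coset at each step and to confirm that all relevant CM-properties and the length bound transfer cleanly from the constructed CM-sequence back to the original CM-subgroup sequence.
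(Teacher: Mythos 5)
Your proof is correct and follows essentially the same route as the paper: at each level you replace the given element by the $\Icl(H_0)$-chosen CM-representative of its double coset, invoke Proposition \ref{prop: CM-representatives unique up to conjugation} to see that the deeper subgroups change only by conjugation by an element of the relevant subgroup, and transfer the length bound and the bottom-level dichotomy back through these conjugation isomorphisms. The only differences are bookkeeping ones — you carry cumulative conjugators $c_j$ level by level, whereas the paper inducts on the prefix length of the CM-sequence and conjugates the whole tail of the subgroup sequence at each step — and, like the paper's own argument, you implicitly read the definition of a CM-subgroup sequence as supplying a CM-representative $g_i$, which is what makes the application of Proposition \ref{prop: CM-representatives unique up to conjugation} legitimate.
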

\begin{proof}
	Given any CM-subgroup sequence $H_{m+2} \le \cdots \le H_0$, we claim that for any $0\le k\le m$, there is a CM-subgroup sequence $H'_{m+2}\le \cdots \le H'_0$ with $H'_1=H_1$ and $H'_0=H_0$ such that
	\begin{itemize}
		\item $H_{m+2}=H_{m+1}$ if and only if $H'_{m+2}=H'_{m+1}$, and $H_{m+2} \le H_{m+1}$ is a proper CM-subgroup if and only if $H'_{m+2} \le H'_{m+1}$ is a proper CM-subgroup;
		\item there is a CM-sequence $(\bar{h}_0,\cdots,\bar{h}_k)$ whose associated CM-subgroup is $H'_{k+2}\le \cdots \le H'_0$. 
	\end{itemize}
	The claim with $k=m$ together with our assumption shows that, whenever we have a CM-subgroup sequence $H_{m+2} \le \cdots \le H_0$, we have that either $H_{m+2} = H_{m+1}$ or that $H_{m+2} \le H_{m+1}$ is a proper CM-subgroup. Moreover, there is a CM-sequence $(\bar{h}_0,\cdots,\bar{h}_m)$ of the same length, which proves the proposition.
	
	Thus it suffices to prove this claim, which we show by induction on $k$. For the base case $k=0$, by definition there is a CM-representative $h_0$ for $H_1<H_0$ (not necessarily from $\Icl(H_0)$) such that $Z_{H_1}(h_0)=H_2$. Let $\bar{h}_0$ be the CM-representative in $H_1 h_0 H_1$ chosen by $\Icl(H_0)$. By Proposition \ref{prop: CM-representatives unique up to conjugation}, there is some $h\in H_1$ and $z\in Z_{H_1}(H_2)$ such that $\bar{h}_0=h z h_0 h^{-1}$. In this case, 
	$hH_{m+2}h^{-1}\le \cdots\le h H_2 h^{-1} \le  H_1\le H_0$ is a CM-subgroup sequence where $h H_2 h^{-1} \le  H_1 \le  H_0$ is the CM-subgroup sequence associated to the CM-sequence $(\bar{h}_0)$ since $Z_{H_1}(\bar{h}_0)=h H_2 h^{-1}$ by Proposition \ref{prop: CM-representatives unique up to conjugation}.
	
	Suppose the claim holds for some $0\le k<m$, i.e. there is a CM-subgroup sequence $H'_{m+2}\le \cdots \le H'_0$ with $H'_1=H_1$ and $H'_0=H_0$, such that the relation between $H'_{m+2}$ and $H'_{m+1}$ corresponds to the relation between $H_{m+2}$ and $H_{m+1}$, and there is a CM-sequence $(\bar{h}_0,\cdots,\bar{h}_k)$ whose associated CM-subgroup is $H'_{k+2}\le \cdots \le H'_0$. Since $k<m$, there is a CM-representative $h_{k+1}\in H'_{k+1}\setminus H'_{k+2}$ such that $Z_{H'_{k+2}}(h_{k+1})=H'_{k+3}$. Let $\bar{h}_{k+1}=h z h_{k+1} h^{-1}$ be the CM-representative in $H'_{k+2} h_{k+1} H'_{k+2}$, where $h\in H'_{k+2}$ and $z\in Z_{H'_{k+2}}(H'_{k+3})$. Then 
	$hH'_{m+2}h^{-1}\le \cdots\le h H'_{k+3} h^{-1} \le  H'_{k+2}\le \cdots \le  H'_0$ is a CM-subgroup sequence where
	$h H'_{k+3} h^{-1} \le H'_{k+2}\le \cdots \le H'_0$ is the CM-subgroup sequence associated to the CM-sequence
	$(\bar{h}_0,\cdots,\bar{h}_k,\bar{h}_{k+1})$ since $Z_{H'_{k+2}}(\bar{h}_{k+1})=h H'_{k+3} h^{-1}$ by Proposition \ref{prop: CM-representatives unique up to conjugation}. This completes the induction and proves the proposition.
\end{proof}

In what follows, we will use the proposition above as an alternative definition of BCMS-$D$ subgroups since it is easier to check. In practice, only certain subgroups arise as $H_i$ in some CM-subgroup sequence associated to a CM-subgroup, and thus one only needs to fix the CM-subgroup-choice for these CM-subgroups of $H_0$. See the example below.

For every $D \in \Z_+$ there is a BCMS-$D$ subgroup of a group which is not a BCMS-$(D-1)$-subgroup.

\begin{example}
For $n \in \N$, let $\Delta_n$ be the graph with vertex and edge set
\begin{eqnarray*}
V(\Delta_n) &=& \{ v_0, \ldots, v_n \} \mbox{ and} \\
E(\Delta_n) &=& \{ (v_i,v_j) \mid |i-j| \geq 2 \}.
\end{eqnarray*}
 For $n \in \N$ and $i \in \{1, \ldots, n \}$ let $\Delta^i_n$ be the induced subgraph of $\Delta_n$ with vertex set
 $$
 V(\Delta^i_n) = \{ v_i, \ldots, v_n \}.
 $$
By Lemma \ref{lemma: Lambda has property CM} we have that $\Arm(\Delta^1_n) < \Arm(\Delta_n)$ is a CM-subgroup and that $v_0$ is a CM-representative.
We compute that $Z_{\Arm(\Delta^1_n)}(v_0) = \Arm(\Delta^2_n)$. 
More generally we will see that $\Arm(\Delta^{i}_n)$ is a CM-subgroup of $\Arm(\Delta^{i-1}_n)$, that $v_{i-1}$ is a CM-representative and 
that $Z_{\Arm(\Delta^{i}_n)}(v_{i-1}) = \Arm(\Delta^{i+1}_n)$ for $1 \leq i \leq n-1$.

Thus $(v_0, \ldots, v_n)$ is a CM-sequence of length $n+1$ and the associated CM-subgroup sequence is 
$$
\{ e \} \le \{ e \} \le \Arm(\Delta_n^n) \le \cdots \le \Arm(\Delta_n^1) \le \Arm(\Delta_n).
$$ 
We will see that those are, in some sense, the longest CM-sequence for subgroups associated to induced subgraphs on RAAGs (Lemma \ref{lemma: CM length and delta subgraphs}).
\end{example}

\subsection{Normal forms for elements in BCMS-$D$ subgroups}
\label{subsec:normal forms for elements in BCMS groups}

If $H_1$ is a BCMS-$D$ subgroup of $H_0$, given a CM-subgroup-choice $\Icl(H_0)$, then we may write every element as a product of CM-representatives
up to conjugation as follows:

\begin{proposition}[Normal form for elements] \label{prop:normal forms for elts}
Let $H_1$ be a BCMS-$D$ subgroup of $H_0$ with a CM-subgroup-choice $\Icl(H_0)$, and let $g \in H_0 \setminus H_1$ be an element.

Then there is $n \leq D$, a CM-sequence $(h_0, \ldots, h_n)$
with associated CM-subgroup sequence $H_{n+2} \le \cdots \le H_0$,
and a conjugate $g'$ of $g$ by an element of $H_1$ such that
$$
g' = h_0 \cdots h_n e_{n}
$$
with $e_n \in H_{n+2}$. Moreover, the integer $n$ and the CM-sequence $(h_0,\cdots, h_n)$ are uniquely determined by $g$, and $e_n$ is unique up to conjugation in $H_{n+2}$. 
\end{proposition}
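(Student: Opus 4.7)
The plan is to proceed by induction on the BCMS-depth $D$, using the CM-representative chosen by $\Icl(H_0)$ at each stage to peel off one factor $h_i$, and then passing to the smaller pair $H_{i+2} \le H_{i+1}$.

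For the base case $D = 0$, the BCMS-$0$ condition forces $H_2 = Z_{H_1}(h_0) = H_1$ for every CM-representative $h_0$: if some $H_2$ were a proper CM-subgroup of $H_1$, one could extend to a CM-subgroup sequence of length $2$, contradicting BCMS-$0$. Given $g \in H_0 \setminus H_1$, let $h_0 \in H_1 g H_1$ be the CM-representative specified by $\Icl(H_0)$ and write $g = a h_0 b$ with $a,b \in H_1$. Conjugation by $a^{-1} \in H_1$ yields $g' = h_0 (ba)$ with $e_0 = ba \in H_1 = H_2$, giving the normal form with $n = 0$.

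For the inductive step, assume the proposition holds for BCMS-$(D-1)$. Starting from $g$, pick $h_0$ and form $g' = h_0 e$ with $e \in H_1$ exactly as above. If $e \in H_2$, we are done. Otherwise a key observation is that $H_2$ is itself a BCMS-$(D-1)$ subgroup of $H_1$: any CM-subgroup sequence $H_{m+2} \le \cdots \le H_2$ in $H_1$ prepends to the CM-subgroup sequence $H_{m+2} \le \cdots \le H_2 \le H_1 \le H_0$ of length $m+2$, so the BCMS-$D$ assumption on $H_1 \le H_0$ forces $m \le D-1$. Apply the inductive hypothesis to $e \in H_1 \setminus H_2$ using the restriction of $\Icl(H_0)$ to $H_1$: there is $k \in H_2$ such that $\tilde{e} = k e k^{-1} = h_1 \cdots h_n e_n$ with $n-1 \le D-1$ and $e_n \in H_{n+2}$. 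Since $k \in H_2 = Z_{H_1}(h_0)$, $k$ commutes with $h_0$, and hence $k g' k^{-1} = h_0 \tilde{e} = h_0 h_1 \cdots h_n e_n$. The total conjugating element $k a^{-1}$ lies in $H_1$, as required.

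For uniqueness, suppose $g_1',g_2'$ are two conjugates of $g$ by $H_1$-elements, each in normal form. Both double cosets $H_1 g_j' H_1$ equal $H_1 g H_1$, so the uniqueness of the CM-choice gives $h_0 = h_0'$, and we write $g_j' = h_0 e_j$ with $e_j \in H_1$. If $g_2' = \ell g_1' \ell^{-1}$ with $\ell \in H_1$, then $h_0 e_2 = (\ell h_0 \ell^{-1})(\ell e_1 \ell^{-1})$, so $h_0^{-1} \ell h_0 \ell^{-1} \in H_1$; since $h_0$ is a CM-representative, this forces $\ell \in H_2$ and hence $e_2 = \ell e_1 \ell^{-1}$. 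Thus the remainder $e$ is well-defined up to $H_2$-conjugation, and the inductive uniqueness statement for $H_2 \le H_1$ (which is invariant under replacing $e$ by any $H_2$-conjugate) yields that $(h_1,\ldots,h_n)$ is uniquely determined and $e_n$ is unique up to conjugation in $H_{n+2}$. Combined with $h_0 = h_0'$, this gives the full uniqueness claim.

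The main obstacle is the uniqueness of the remainder $e$ up to $H_2$-conjugation: it requires using the CM-representative property of $h_0$ to control the conjugating element $\ell$ and show $\ell$ must actually lie in $H_2$. This is the step where the hypothesis that $H_1$ is a CM-subgroup (rather than merely a subgroup with some centralizer data) is essential, and it is what allows the induction to propagate cleanly down the tower of subgroups $H_1 > H_2 > \cdots > H_{n+2}$.
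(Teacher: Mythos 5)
Your argument is correct, and at its core it runs on the same two manipulations as the paper's proof: peel off the CM-representative $h_0$ of $H_1gH_1$ chosen by $\Icl(H_0)$, and use the CM-property of $h_0$ to force any ambiguity (the conjugator $\ell$, or the paper's $h$) into $Z_{H_1}(h_0)=H_2$, whose elements commute with $h_0$ and can therefore be pushed past it. The difference is organizational: the paper fixes the pair $(H_0,H_1)$ and inducts on the number of factors already peeled off, proving a two-case claim and letting the BCMS-$D$ bound terminate the process, with uniqueness obtained by repeating the conjugator computation level by level; you instead induct on the depth $D$, recursing to the pair $(H_1,H_2)$, which requires the auxiliary observation — implicit but not stated in the paper — that $H_2=Z_{H_1}(h_0)$ is BCMS-$(D-1)$ in $H_1$ (your prepending argument is correct, including the terminal-pair condition). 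What this buys you is that the bound $n\le D$, the uniqueness of $(h_1,\ldots,h_n)$, and the conjugacy class of $e_n$ all come for free from the inductive hypothesis rather than from a fresh iteration. One point you should make explicit: each subgroup appearing in a CM-sequence below $H_2$ is a CM-subgroup of $H_0$ by Proposition \ref{prop:inheritance CM subgroups}, so the ``restriction of $\Icl(H_0)$ to $H_1$'' is actually defined on every double coset your recursion consults and its representatives coincide with those of $\Icl(H_0)$; this is exactly what guarantees that the concatenated tuple $(h_0,h_1,\ldots,h_n)$ is a CM-sequence for $H_1<H_0$ with respect to $\Icl(H_0)$, as the statement requires, and that the uniqueness you import from the pair $(H_1,H_2)$ is uniqueness with respect to the same choice.
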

\begin{proof}
We inductively prove the following statement:
\begin{claim}
For every $m \geq 0$, there is a conjugate $g'$ of $g$ by an element of $H_1$ such that either
\begin{enumerate}[(i)]
\item
$g' = h_{0} \cdots h_m e_m$ for some $e_m \in H_{m+1}$, 
where $(h_0, \ldots, h_m)$ is a CM-sequence with $H_{m+2} \le \cdots \le H_0$ as the associated CM-subgroup sequence, or \label{item: normal form claim case 1}
\item
$g' = h_{0} \cdots h_j e_j$ for some $e_j \in H_{j+2}$ and $j \leq m$, where
$(h_0, \ldots, h_j)$ is a CM-sequence with $H_{j+2} \le \cdots \le H_0$ as the associated CM-subgroup sequence. \label{item: normal form claim case 2}
\end{enumerate}
\end{claim}

\begin{proof}
Let $h_{0}=hgh'$ be the CM-representative in $H_1gH_1$ provided by $\Icl(H_0)$, where $h,h'\in H_1$. 
Then $h_0=g' hh'$ for $g'\defeq hgh^{-1}$, and thus $g' = h_{0} e_{0}$ with $e_{0}\defeq (hh')^{-1} \in H_1$. This shows the claim for $m = 0$.

Suppose the claim is true for some $m \geq 0$.
If statement (\ref{item: normal form claim case 2}) holds for $m$ then it also holds for $m+1$ and we are done. Thus assume that $g' =  h_{0} \cdots h_m e_m$ where $g'$ is a conjugate of $g$ by
some element in $H_1$, $(h_0, \ldots, h_m)$ is a CM-sequence with associated CM-subgroup sequence $H_{m+2}\le \cdots \le H_0$, and $e_m \in H_{m+1}$. 

If $e_m \in H_{m+2}$ we are done as in case (\ref{item: normal form claim case 2}) as well. Otherwise, let $h_{m+1} \in H_{m+1} \setminus H_{m+2}$ be the CM-representative in $H_{m+2}e_m H_{m+2}$ given by $\Icl(H_0)$. 
Then $h^l h_{m+1} h^r =  e_m $ for some $h^l, h^r \in H_{m+2}$. Thus
$$
g' = h_{0} \cdots h_m h^l h_{m+1} h^r = h^l h_{0} \cdots h_m h_{m+1} h^r
$$
as $h^l$ commutes with all $h_0, \ldots, h_m$ by the definition of $H_{m+2}$. Conjugating both sides of the equation above by $h^l$ and setting $e_{m+1} = h^r h^l$ proves the claim. 
\end{proof}

Now as $H_1<H_0$ has property BCMS-$D$ we will arrive at item (\ref{item: normal form claim case 2}) of the claim eventually (if $m \geq D$). 

The uniqueness can be observed in the inductive construction above as follows. Note that $h_0$ is uniquely determined as the CM-representative in $H_1gH_1$ since $\Icl(H_0)$ is fixed. Next we show that $e_0$ is uniquely determined up to conjugation by an element in $H_2$. Suppose there is a different choice $e'_0\in H_1$ such that $h_0e'_0= h h_0 e_0 h^{-1}$ for some $h\in H_1$, then $h^{-1}=h_0 [e_0 h^{-1} (e'_0)^{-1}]h_0^{-1}$, which forces $h^{-1}=e_0 h^{-1} (e'_0)^{-1}$ as $h_0$ is a CM-representative for $H_1$. Thus $h^{-1}e'_0 h =e_0$, so $h_0e'_0=h h_0 e_0 h^{-1}=h h_0 h^{-1} e'_0$, which implies $h_0=h h_0 h^{-1}$, i.e. $h\in Z_{H_1}(h_0)=H_2$. This proves that $e'_0$ differs from $e_0$ via conjugation by some $h\in H_2$. In particular, the double coset $H_2 e_0 H_2$ is uniquely determined and so is $h_1$. Continuing this process, one can observe that each $h_i$ in the expression is uniquely determined, each element $e_i$ is unique up to conjugation by an element of $H_{i+2}$, and the integer $n$ is characterized as the first $n$ such that $e_n\in H_{n+2}$ (which is not ambiguous by the uniqueness up to conjugation).
\end{proof}

\begin{definition}[CM-reduced element] \label{def: CM reduced elts}
Suppose that $H_1$ is a BCMS-$D$ subgroup of $H_0$ with CM-subgroup-choice $\Icl(H_0)$. For any $g \in H_0 \setminus H_1$, we say that 
$g$ is \emph{CM-reduced} if we have $e_n = 1$ when $g$ is written as in the normal form given by Proposition \ref{prop:normal forms for elts} and no conjugation is involved. That is, $g=h_0\cdots h_n$ for a CM-sequence $(h_0,\cdots, h_n)$ and some $0\le n\le D$.
\end{definition}

\begin{proposition} \label{prop: cm reduced}
Let $H_1$ be a BCMS-$D$ subgroup of $H_0$ with CM-subgroup-choice $\Icl(H_0)$. Let $g \in H_0$ be an element and let $g'$, $e_n$ and $(h_0,\ldots, h_n)$ be as in the normal form from Proposition \ref{prop:normal forms for elts}. 
Then $g$ is conjugate to $he_n$ by an element of $H_1$ and equivalent to $h+e_n$ as a chain, where $h = h_0 \cdots h_n$ is CM-reduced.
\end{proposition}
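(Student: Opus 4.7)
My plan is to deduce all three parts of the statement directly from the normal form supplied by Proposition \ref{prop:normal forms for elts}, combined with the defining structure of a CM-sequence.

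The conjugacy assertion and the CM-reducedness of $h$ are essentially immediate. Proposition \ref{prop:normal forms for elts} produces $g' = h_0 \cdots h_n\, e_n = h\, e_n$ where $g'$ is a conjugate of $g$ by some element of $H_1$, so $g$ is conjugate to $h e_n$ by the inverse of that element. Moreover, since $(h_0, \ldots, h_n)$ is by construction a CM-sequence and $h = h_0 \cdots h_n$ is written literally as such a product with no trailing factor and no conjugation, $h$ is CM-reduced in the sense of Definition \ref{def: CM reduced elts}.

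The substance is the chain equivalence $g \sim h + e_n$. I would first use the conjugation move (\ref{item: conj equiv}) of Definition \ref{def: equivalent chains} to replace $g$ by $g' = h\, e_n$, reducing the problem to $h e_n \sim h + e_n$. By the commuting-product move (\ref{item: comm equiv}), this in turn reduces to showing that $e_n$ commutes with $h$; and since $h$ is a product of the $h_i$, it suffices that $e_n$ commute with each $h_i$ individually. At this point I would invoke the nested structure of the associated CM-subgroup sequence $H_{n+2} \le H_{n+1} \le \cdots \le H_0$: by the definition of a CM-sequence, $H_{i+2} = Z_{H_{i+1}}(h_i)$ for every $0 \le i \le n$, and since $i \le n$ forces $H_{n+2} \le H_{i+2}$, every element of $H_{n+2}$ commutes with $h_i$. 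In particular $e_n \in H_{n+2}$ commutes with every $h_i$ and hence with $h$.

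The argument is essentially bookkeeping; there is no real obstacle. The only point that requires any thought is the observation that the deepest group $H_{n+2}$ in the CM-subgroup sequence is contained in each centralizer $Z_{H_{i+1}}(h_i)$ appearing along the way, which is precisely what legitimizes the commuting-product move and allows one to split $h e_n$ into $h + e_n$.
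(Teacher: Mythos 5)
Your proof is correct and follows the same route as the paper, whose proof is just the one-line observation that the statement follows from Proposition \ref{prop:normal forms for elts} and Definition \ref{def: equivalent chains} once one notes that $e_n$ commutes with $h_0,\ldots,h_n$. Your spelled-out justification of that commutation via $H_{n+2}\le H_{i+2}=Z_{H_{i+1}}(h_i)$ is exactly the intended (and correct) reasoning.
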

\begin{proof}
This follows immediately from Proposition \ref{prop:normal forms for elts} and Definition \ref{def: equivalent chains} noting that $e_n$ commutes with $h_0, \ldots, h_n$.
\end{proof}

\subsection{Equations in amalgamated free products or HNN extensions}
\label{subsec:equations in bcms amalgamated free prod}

In the rest of Section \ref{sec:cm subgroups}, we will consider a group $G$ that splits over a BCMS-$D$ subgroup $C$. Note that if $g\in G\setminus C$ can be written as a cyclically reduced word in the sense of Definitions \ref{def:ab-alternating} or \ref{def:reduced form}, then naturally any element $cgc'\in CgC$ also has this property. In particular, in this case, any CM-representative in $CgC$ with respect to the CM-subgroup $C$ can be written as a cyclically reduced word. 

Similarly, if $g$ is CM-reduced with $g=c_0\cdots, c_m$ for a CM-sequence $(c_0,\cdots, c_m)$, then $g$ is cyclically reduced if and only if $c_0$ is. So we say $g$ is cyclically reduced and CM-reduced (e.g. in Proposition \ref{prop:gmhm} below) if $g$ can be written this way with $c_0$ cyclically reduced.

We will need the following proposition to compare terms in certain expressions in a group $G$ that splits over a BCMS-$D$ subgroup $C$. This is similar to Corollaries \ref{corr:replacement} and \ref{corr:replacement for HNN}.

\begin{proposition} \label{prop: normal form cm amalgamated free prod} 
Let $G$ be a group that splits over a BCMS-$D$ subgroup $C$. Let $\Icl(G)$ be a CM-subgroup-choice. For some $m \leq D$ let $(c_0, \ldots, c_m)$ is a CM-sequence and let $C_{m+2} \le \cdots \le C_1:= C \le C_0 := G$ be the associated CM-subgroup-sequence.

Suppose $c_0 \in G \setminus C$ (or equivalently $g$) can be written as a cyclically reduced word. 
Let $n \geq m+2$ and
suppose there are elements $x_1, \ldots, x_{n-1}, x_1', \ldots, x_{n-1}' \in C_{m+1}$ and $x_0, x_0', x_n, x_n' \in C_1$ such that
$$
x_0 c^{(m)} x_1 \cdots c^{(m)} x_n = x'_0 c^{(m)} x'_1 \cdots c^{(m)} x'_{n},
$$
where $c^{(m)} = c_{0} \cdots c_{m}$.
Then there are $d_1, \ldots, d_{n-m} \in C_{m+2}$ such that
$$
d_{i-1} x_i' d_{i}^{-1} = x_i
$$
for all $2 \leq i \leq n-m$.
\end{proposition}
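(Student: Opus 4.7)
The plan is to induct on $m \geq 0$. For the base case $m = 0$, I would decompose both sides into $n$ reduced blocks of length $|c_0|$: take $B_1 = x_0 c_0 x_1$ and $B_i = c_0 x_i$ for $2 \leq i \leq n$ (and analogously $B_i'$ on the right), each of length $|c_0|$ in the amalgam/HNN sense since $c_0$ is cyclically reduced. Applying Corollary \ref{corr:replacement} (or its HNN analogue \ref{corr:replacement for HNN}) produces $d_0 = \id,\, d_1, \ldots, d_{n-1},\, d_n = \id$ in $C$ with $B_i = d_{i-1} B_i' d_i^{-1}$. For each $2 \leq i \leq n$, rearranging the block equation gives $x_i = (c_0^{-1} d_{i-1} c_0)\, x_i' d_i^{-1}$; since $x_i \in C_1$ and $c_0$ is a CM-representative for $C \leq G$, the conjugate $c_0^{-1} d_{i-1} c_0$ must lie in $C$, which forces $d_{i-1} \in Z_C(c_0) = C_2$. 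Consequently $c_0^{-1} d_{i-1} c_0 = d_{i-1}$, and the equation simplifies to $d_{i-1} x_i' d_i^{-1} = x_i$.

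For the inductive step, I would use the factorization $c^{(m)} = c^{(m-1)} c_m$ and set $y_i \defeq c_m x_i$ for $1 \leq i \leq n$. The original equation then rewrites as
\[
x_0 c^{(m-1)} y_1 c^{(m-1)} \cdots c^{(m-1)} y_n = x_0' c^{(m-1)} y_1' \cdots c^{(m-1)} y_n'.
\]
Since $c_m \in C_m$ and $x_i \in C_{m+1} \leq C_m$, each $y_i$ lies in $C_m = C_{(m-1)+1}$ for $1 \leq i \leq n-1$, while $y_0 = x_0$ and $y_n = c_m x_n$ remain in $C_1$; thus the rewritten equation meets the hypothesis for the IH with $m-1$ and the CM-sequence $(c_0, \ldots, c_{m-1})$. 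The IH supplies $\tilde{d}_1, \ldots, \tilde{d}_{n-m+1} \in C_{m+1}$ satisfying $\tilde{d}_{i-1} y_i' \tilde{d}_i^{-1} = y_i$ for $2 \leq i \leq n-m+1$.

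Substituting $y_i = c_m x_i$ and rearranging yields
\[
(c_m^{-1} \tilde{d}_{i-1} c_m)\, x_i'\, \tilde{d}_i^{-1} = x_i.
\]
For the indices $i$ where $x_i \in C_{m+1}$, both $x_i$ and the factor $x_i' \tilde{d}_i^{-1}$ lie in $C_{m+1}$, forcing $c_m^{-1} \tilde{d}_{i-1} c_m \in C_{m+1}$. Since $c_m$ is a CM-representative for $C_{m+1} \leq C_m$, the intersection $C_{m+1} \cap c_m C_{m+1} c_m^{-1}$ equals $Z_{C_{m+1}}(c_m) = C_{m+2}$, so $\tilde{d}_{i-1} \in C_{m+2}$. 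With $\tilde{d}_{i-1}$ then commuting with $c_m$, the equation collapses to $\tilde{d}_{i-1} x_i' \tilde{d}_i^{-1} = x_i$. Setting $d_j = \tilde{d}_j$ for the relevant range gives the claim.

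The main technical obstacle will be the boundary analysis at the right end of the index range: the IH supplies information up to $i = n-m+1$, but at this endpoint the term $y_{n-m+1}$ involves $x_n \in C_1$ rather than $C_{m+1}$ (particularly when $m=1$), so the $C_{m+1}$-forcing argument on $c_m^{-1} \tilde{d}_{i-1} c_m$ does not apply directly. Ensuring $d_{n-m} \in C_{m+2}$ will require a careful treatment of this boundary, likely by combining the IH equation at $i = n-m+1$ with the already-established fact that $\tilde{d}_{n-m-1} \in C_{m+2}$ and using the CM-representative property of $c_m$ one more time to transfer the constraint onto $\tilde{d}_{n-m}$.
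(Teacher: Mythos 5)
Your induction on $m$ is essentially the paper's proof reorganized rather than a different route: the paper applies Corollaries \ref{corr:replacement} and \ref{corr:replacement for HNN} once to the $c^{(m)}$-blocks to fix the interpolating elements $d_i$, and then runs an inner induction on $j$ that peels off $c_0,c_1,\ldots,c_m$ and pushes the $d_i$ into $C_{j+2}$; your recursion peels $c_m$ off and re-invokes the proposition for $(c_0,\ldots,c_{m-1})$, so after unwinding, the replacement corollaries are used once at the bottom and the CM-property of $c_0,\ldots,c_m$ is used in the same order. Your base case and your forcing step (from $c_m^{-1}\tilde{d}_{i-1}c_m\in C_{m+1}$ and $\tilde{d}_{i-1}\in C_{m+1}$ conclude $\tilde{d}_{i-1}\in Z_{C_{m+1}}(c_m)=C_{m+2}$) are correct.

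The boundary problem you flag is genuine, but your proposed patch cannot succeed, because the conclusion at the top index is actually false in general: the hypotheses only place $x_n,x_n'$ in $C_1$, and nothing forces the last interpolating element into $C_{m+2}$. Concretely, let $G=\Arm(\Delta_3)$ split over $C=\Arm(\Lk(v_0))=F(v_2,v_3)$ (a BCMS-$3$ subgroup), with the canonical CM-choice, $c_0=v_0v_1$ (cyclically reduced), so $C_2=Z_{C}(c_0)=\langle v_3\rangle$, $c_1=v_2$, $C_3=Z_{C_2}(c_1)=\{e\}$, $m=1$, $c^{(1)}=v_0v_1v_2$. Take $n=3$, $x_0=x_0'=x_1=x_1'=x_2=x_3'=e$, $x_2'=v_3$, $x_3=v_2^{-1}v_3v_2$. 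All membership hypotheses hold, and both sides of the equation equal $(v_0v_1v_2)^2\,v_0v_1v_3v_2$ since $v_3$ commutes with $v_0v_1$; but the conclusion would require $d_1x_2'd_2^{-1}=x_2$ with $d_1,d_2\in C_3=\{e\}$, i.e.\ $v_3=e$. So no treatment of the endpoint, however careful, will give $d_{n-m}\in C_{m+2}$. (The paper's own proof has the same lacuna: in its inner induction at $j=1$ the range $i\in\{2,\ldots,n-j+1\}$ includes $i=n$, where the asserted membership $x_n,x_n'\in C_{j+1}$ is not implied by the hypotheses.)

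The statement you can and should prove trims the top index: there exist $d_1,\ldots,d_{n-m}\in C_{m+1}$ with $d_i\in C_{m+2}$ for $i\le n-m-1$ and $d_{i-1}x_i'd_i^{-1}=x_i$ for $2\le i\le n-m$. Your induction establishes exactly this and is self-sustaining: at level $m$ you only need from the inductive hypothesis the memberships $\tilde{d}_i\in C_{m+1}$ and the equations for $i\le n-m$, which the weakened statement at level $m-1$ supplies, and the forcing argument applies for all $2\le i\le n-m$ because such $i$ satisfy $i\le n-1$. This weaker conclusion is all that is used downstream: in the proof of Proposition \ref{prop:sol to main equn} the indices needed are at most $n_g+n_h+1<Nn_g-m$.
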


\begin{proof}
We observe that by Corollaries \ref{corr:replacement} and \ref{corr:replacement for HNN} there are elements $d_0, \ldots, d_n \in C_1$ with $d_0 = e = d_n$ such that
$x_0 c^{(m)} x_1 = d_0 x_0' c^{(m)} x_1' d_1^{-1}$ and
$c^{(m)} x_i = d_{i-1} c^{(m)} x'_i d_i^{-1}$ for all $i \in \{ 2, \ldots, n \}$.
\begin{claim}
For every $j \in \{0, \ldots, m \}$ we have that $d_i \in C_{j+2}$ for all $i \in \{1, \ldots, n-j \}$.
\end{claim}

\begin{proof}
We proceed by induction. For $j = 0$ we write $c^{(m)} = c_0 c''$ with $c'' = c_1 \cdots c_m$.
Then we obtain
$$
c_0^{-1} d_{i-1} c_0 = c'' x_i d_i {x'_i}^{-1} {c''}^{-1}
$$
for all $i \in \{2, \ldots, n \}$ from $c^{(m)} x_i = d_{i-1} c^{(m)} x'_i d_i^{-1}$.
Observe that both $d_{i-1} \in C_1$ and $c'' x_i d_i {x'_i}^{-1} {c''}^{-1} \in C_1$. Since $c_0$ is a CM-representative for $C_1 < C_0$ we have $d_{i-1} \in C_2=Z_{C_1}(c_0)$. 
Since $d_n = e$ we conclude that  $d_i \in C_2$ for all $i \in \{ 1, \ldots, n \}$.

Suppose the claim is true for some $j-1 \in \{0, \ldots, m-1\}$. We wish to show that it is true for $j$ as well. 
We may write $c^{(m)} = c' c_j c''$ for $c' = c_0 \cdots c_{j-1}$ and $c'' = c_{j+1} \cdots c_m$.
By the induction hypothesis we have that $d_i \in C_{j+1}$ for all $i \in \{ 1, \ldots, n-j+1 \}$, so all such $d_i$ commute with $c'$.
As above we obtain
$$
c_j^{-1} d_{i-1} c_j = c'' x_i d_i {x'_i}^{-1} {c''}^{-1}
$$
for all $i \in \{ 2, \ldots, n-j+1 \}$ from $c^{(m)} x_i = d_{i-1} c^{(m)} x'_i d_i^{-1}$.
Note that $d_{i-1}, d_i, x_i, x'_i, c'' \in C_{j+1}$ for all $i \in \{2, \ldots, n-j + 1 \}$.
Thus, as $c_j$ is a CM-representative, we see that $d_{i} \in C_{j+2}$ for all $i \in \{ 1, \ldots, n-j \}$.
This completes the induction. 
\end{proof}

For $j=m$ the claim implies that $d_i \in C_{m+2}$ for all $i \in \{1, \ldots, n-m \}$ and thus all such $d_i$ commute with $c^{(m)}$.
Hence from $c^{(m)} x_i = d_{i-1} c^{(m)} x'_i d_i^{-1}$ we have
$$
x_i = d_{i-1}  x'_i d_i^{-1}
$$
for all $i \in \{2, \ldots, n-m \}$. This finishes the proof.
\end{proof}

\subsection{Solutions to Equation (\ref{equn: main})}\label{subsec:the equation gm hkhpc in bcms amalg free prod}

\begin{proposition} \label{prop:gmhm} 
Let $G$ be a group that splits over a BCMS-$D$ subgroup $C$, and let $\Icl(G)$ be a CM-subgroup choice. Suppose $g \in G$ is cyclically reduced and CM-reduced. 
Suppose there is a cyclically reduced word $h \in G$ with $g^{N} = h^{N} c$ for some $c \in C$ and $N \geq D+2$.
Then there is an element $z$ which commutes with $g$ such that $g = h z$.
\end{proposition}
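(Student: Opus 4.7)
The plan is to use the reduced-decomposition machinery to produce a candidate $z \in C$ with $g = hz$, and then exploit the CM-sequence $(c_0,\ldots,c_m)$ underlying $g$'s CM-reduced form together with the hypothesis $N \ge D+2$ to show $z \in C_{m+2}$, which by definition centralizes every $c_j$ and hence all of $g$.

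Since $g$ and $h$ are cyclically reduced, $g^N = g \cdot g \cdots g$ ($N$ factors) and $h^N c = h \cdots h \cdot (hc)$ are reduced decompositions, and $|hc| = |h|$ because $c \in C$ only alters the final vertex letter of $h$. Comparing lengths forces $|g|=|h|$, so Corollary~\ref{corr:replacement} (or~\ref{corr:replacement for HNN} in the HNN case) produces $d_0,\ldots,d_N \in C$ with $d_0 = d_N = e$ satisfying
\[
g = d_{i-1}\, h\, d_i^{-1} \quad (i=1,\ldots,N-1), \qquad g = d_{N-1}\, h\, c.
\]
Setting $z \defeq d_1^{-1}$, the $i=1$ identity gives $g = hz$. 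Defining $a_i \defeq d_i d_{i-1}^{-1}$ for $i=1,\ldots,N-1$, consecutive equations combine to the recursion $a_{i+1} = g^{-1} a_i g$, so $a_i = g^{-(i-1)} z^{-1} g^{i-1}$, and each $a_i$ lies in $C_1$.

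The core of the argument is the inductive claim that $a_1,\ldots,a_{N-k} \in C_k$ for $k = 1,2,\ldots,m+2$. The base $k=1$ is immediate. For the step from level $k$ to level $k+1$ (available when $k \le m+1$, so that $c_{k-1}$ is a member of the CM-sequence), $a_i \in C_k \subseteq C_{j+2} = Z_{C_{j+1}}(c_j)$ for every $j \le k-2$, so $a_i$ commutes with $c_0,\ldots, c_{k-2}$; moreover $c_k,\ldots,c_m \in C_k$, so conjugation by these preserves $C_k$. Hence
\[
a_{i+1} = g^{-1} a_i g = c_m^{-1}\cdots c_{k-1}^{-1}\, a_i\, c_{k-1}\cdots c_m
\]
lies in $C_k$ if and only if $c_{k-1}^{-1} a_i c_{k-1} \in C_k$. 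Since $c_{k-1}$ is a CM-representative for $C_k < C_{k-1}$, the CM-dichotomy forces $c_{k-1}^{-1} a_i c_{k-1} = a_i$, that is, $a_i \in Z_{C_k}(c_{k-1}) = C_{k+1}$.

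If $m < D$, then $N \ge D+2 \ge m+3$ lets the induction reach $k=m+2$, giving $a_1 \in C_{m+2}$ directly. If $m = D$, BCMS-$D$ forbids $C_{m+2}$ from being a proper CM-subgroup of $C_{m+1}$ (else $(c_0,\ldots,c_m)$ could be extended to a CM-sequence of length $D+2$, violating BCMS-$D$), so $C_{m+2}=C_{m+1}$; reaching level $k=m+1$, which only needs $N \ge m+2$, then suffices. In either case $z = a_1^{-1} \in C_{m+2}$, so $z$ commutes with every $c_j$ and therefore with $g$. The main obstacle is the combinatorics of the inductive step: one must recognize that exactly the single conjugation by $c_{k-1}$ is ``active'' at level $k$, with the earlier $c_j$'s ($j \le k-2$) killed by membership of $a_i$ in successively deeper centralizers, and the later ones ($j \ge k$) acting trivially on the ambient group $C_k$.
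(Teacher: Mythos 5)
Your proof is correct and follows essentially the same route as the paper's: apply Corollary \ref{corr:replacement} (resp.\ Corollary \ref{corr:replacement for HNN}) to $g^N=h^Nc$ to produce $d_0,\dots,d_N\in C$, then induct down the CM-subgroup sequence $C_{m+2}\le\cdots\le C_1=C$ using the CM-representative dichotomy at each level, concluding that $d_1$ (up to inverse, this is your $z$) lies deep enough to centralize $g$. The only genuine difference is bookkeeping: the paper shows the differences $d_i d_{i-1}^{-1}$ lie in $C_{j+2}$ and then telescopes from $d_0=e$ to get $d_i\in C_{j+2}$ itself, which already gives $d_1\in C_{m+2}$ for $N\ge m+2$ and so needs no case distinction; your version tracks only the conjugates $a_i=g^{-(i-1)}d_1g^{\,i-1}$, loses one index per level, and hence needs the boundary case $m=D$, which you handle correctly by noting that BCMS-$D$ forces $C_{m+2}=C_{m+1}$ (otherwise the CM-subgroup sequence could be extended to length $D+2$). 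That maximality argument is valid but is rendered unnecessary by the paper's telescoping.
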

\begin{proof} 
By our assumption, we have that $g = c_{0} \cdots c_m$ 
where $c_0$ is cyclically reduced and $(c_0, \ldots, c_m)$ is a CM-sequence with associated CM-subgroup sequence $C_{m+2} \le \cdots \le C_0$, where $C_0=G$ and $C_1=C$. Note that $m \leq D$ since $C$ is a BCMS-$D$ subgroup.

By Corollaries \ref{corr:replacement} and \ref{corr:replacement for HNN} there are $d_i \in C$ for $0 \leq i \leq N$ with $d_0=e=d_{N}$ such that $g = d_{i-1} h d_i^{-1}$ for $1 \leq i \leq N-1$ and $g = d_{N-1} hc d_N^{-1}$.
Redefining $d_N^{-1}$ to be $c d_N^{-1}$ we get that 
$g = d_{i-1} h d_i^{-1}$ for $1 \leq i \leq N$ and thus
\begin{eqnarray} \label{eqn: dis}
d_{i-1}^{-1} g d_i = d_i^{-1} g d_{i+1}
\end{eqnarray}
for all $1 \leq i \leq N-1$.
\begin{claim}
For every $0 \leq j \leq m+1$ we have that 
$d_{i} \in C_{j+1}$ for all $0 \leq i \leq N-j$.
\end{claim}

\begin{proof}
We proceed by induction. For $j=0$ the claim is immediate as all terms are in $C_1 = C$.

Suppose the claim is true for some $0 \leq j \leq m$.
Write $g = c_0 \cdots c_m = c' c_j c''$ for $c' = c_0 \cdots c_{j-1}$ and $c'' = c_{j+1} \cdots c_m$.
Observe that by the induction hypothesis, $c'$ commutes with $d_i$ for all $0 \leq i \leq N-j$.
Thus for all $1 \leq i \leq N-j-1$, we deduce from equation (\ref{eqn: dis}) that
$$
c_j^{-1}  \left( d_i d_{i-1}^{-1} \right)  c_j = {c''} d_{i+1} d_{i}^{-1} {c''}^{-1}.
$$

By the induction hypothesis, ${c''} d_{i+1} d_{i}^{-1} {c''}^{-1} \in C_{j+1}$ for all such $i$. Thus
$d_i d_{i-1}^{-1} \in C_{j+2}$ for all $1 \leq i \leq N-j-1$ since $c_j$ is a CM-representative.
Recall that  $d_0 = e$. Thus for every $i \in \{ 1, \ldots, N-j-1 \}$ we have that
$$
d_{i} = d_i d_0^{-1} = (d_i d_{i-1}^{-1}) (d_{i-1} d_{i-2}^{-1}) \cdots (d_1 d_0^{-1}) \in C_{j+2}. 
$$
This shows the claim.
\end{proof}

In particular for $j=m+1$ the claim implies that $d_i \in C_{m+2}$ for all $0 \leq i \leq N-m-1$. Since $m \leq D$ and $D+2 \leq N$, we have that $d_1 \in C_{m+2}$. 
Thus $d_1$ commutes with $g$. This concludes the proof of Proposition \ref{prop:gmhm} as $g =d_0^{-1}h d_1=h d_1$.
\end{proof}

\begin{proposition} \label{prop:sol to main equn}
Let $G$ be a group that splits over a BCMS-$D$ subgroup $C$, and let $\Icl(G)$ be a CM-subgroup-choice.
Let $g,h\in G$ be cyclically reduced words with $|g| > |h|$ and let $h'$ be a prefix of $h$.
Suppose
$$
g^N = h^k h' c
$$
for some $c \in C$ and $N \geq D+2$.

Then there is a cyclically reduced element $x \in G$ such that $g = x^{n_g} c$ for some $n_g \geq 2$ and $c \in C$ that commutes with $x$.
\end{proposition}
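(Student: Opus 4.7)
The plan is to combine a Fine--Wilf-type periodicity argument on the syllable decompositions of $g^N$ with the BCMS-$D$ machinery of Proposition \ref{prop:gmhm} to extract a cyclically reduced root $x$ of length $e := \gcd(|g|, |h|)$ and then upgrade the accumulated $C$-corrections to a single element commuting with $x$.

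First I would normalize $g$: by Proposition \ref{prop:normal forms for elts}, after conjugating by an element of $C$ (which transforms $h$, $h'$, $c$ accordingly while preserving cyclic reducedness and the length relation), write $g = \bar{g} \cdot e_n$ with $\bar{g}$ CM-reduced, $e_n \in C_{n+2}$ commuting with $\bar{g}$, and $|\bar{g}| = |g|$. The equation $g^N = h^k h' c$ then rearranges as $\bar{g}^N = h^k h' (c e_n^{-N})$, so I may replace $g$ by $\bar{g}$ and $c$ by $c e_n^{-N}$, and assume $g = c_0 c_1 \cdots c_m$ is CM-reduced.

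Next I would extract the internal period. Let $a = |g|$, $b = |h|$, and $e = \gcd(a,b)$. Comparing the two reduced decompositions of $g^N$—as $N$ copies of $g$ and as $h^k h' c$—refined to the level of individual syllables via Corollaries \ref{corr:replacement} and \ref{corr:replacement for HNN}, the sequence of $C$-double-cosets of the $Na$ syllables of $g^N$ has period $a$ (from the $g$-structure) and period $b$ (from the $h$-structure, extending to the entire range since $h'$ is a prefix of $h$ and absorbing $c \in C$ does not change the final $C$-double-coset). Because $Na \geq 2a > a + b - e$, the Fine--Wilf lemma forces this $C$-double-coset sequence to have period $e$ throughout. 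Choosing $x$ to be the reduced word formed by the first $e$ syllables of $g$—which is cyclically reduced by a parity argument using that $e \mid a$ and $g$ is cyclically reduced—and performing a Euclidean-algorithm-style descent on $(a,b)$ that iteratively peels off the shorter of $g, h$ from the longer while carefully tracking the $C$-element corrections, I would produce an equation $g^N = (x^{n_g})^N c'$ for some $c' \in C$, where $n_g := a/e \geq 2$ (this inequality follows from $a > b$ combined with $\gcd(a/e, b/e) = 1$).

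Finally, Proposition \ref{prop:gmhm} applied to the pair $(g, x^{n_g})$—both cyclically reduced of length $a$, with $g$ CM-reduced and $N \geq D+2$—yields an element $z \in C_{m+2}$ commuting with $g$ such that $g = x^{n_g} z$. The remaining step upgrades the commutation of $z$ with $g$ (equivalently, with $x^{n_g}$) to commutation with the root $x$ itself: since $z$ lies in the deepest centralizer of $g$'s CM-subgroup sequence, repeatedly applying the CM-representative property at each level of the sequence forces any $C$-element that conjugates $x^{n_g}$ trivially to also conjugate $x$ trivially. Setting $c := z$ then gives $g = x^{n_g} c$ with $c \in C$ commuting with $x$ and $n_g \geq 2$. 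The main obstacle will be executing the Euclidean descent step so that the many $C$-corrections accumulated through the iterated peelings assemble into the single element $c' \in C$ required to invoke Proposition \ref{prop:gmhm}; the hypothesis $N \geq D+2$ is used precisely here, providing enough depth in the CM-sequence for both the descent and the final commutativity upgrade to succeed.
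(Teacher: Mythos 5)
Your opening move --- matching the two reduced decompositions of $g^N$ block-by-block and extracting a period of length $e=\gcd(|g|,|h|)$ at the level of $C$-double cosets, with $N\ge D+2$ feeding the BCMS machinery --- is sound and parallels the first step of the paper's proof (which gets the same conclusion from the coprimality of $n_g=|g|/e$ and $n_h=|h|/e$ rather than Fine--Wilf). But the heart of the argument is missing. Double-coset periodicity only gives $g=z_0\, x\, z_1\, x\cdots x\, z_{n_g}$ with corrections $z_i\in C$ sitting \emph{between} the blocks; the intermediate statement you aim for, $g^N=(x^{n_g})^N c'$ with a single $c'\in C$ on the right, is essentially equivalent to the desired conclusion $g=x^{n_g}c$, and it cannot be reached by a Euclidean descent that merely ``tracks'' the corrections. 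The entire content of the paper's proof is the mechanism for absorbing them: replace $x$ by the CM-representative $c_0$ of its double coset, apply Proposition \ref{prop: normal form cm amalgamated free prod} to push the $z_i$ into $C_2=Z_C(c_0)$, and iterate down the CM-subgroup sequence (reusing coprimality at each level, with $N\ge D+2$ guaranteeing $2n_g<Nn_g-m$ so the matching range stays long enough) until the corrections land in some $C_{n+2}$ that commutes with the period word $c^{(n)}=c_0\cdots c_n$. Only then does $g=x^{n_g}c$ follow, with $x=d_g^{-1}c^{(n)}d_g$. In particular your candidate root, literally the first $e$ syllables of $g$, is in general the wrong element: the true root lies only in its double coset and must be corrected by $C$-elements produced by this iteration.

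Two further asserted steps are unjustified. First, after invoking Proposition \ref{prop:gmhm} you obtain $z\in C$ commuting with $g$, hence with $x^{n_g}$, and you claim this upgrades to commuting with $x$; but $n_g$ may be as small as $2<D+2$, so no variant of Proposition \ref{prop:gmhm} applies to the exponent $n_g$, and nothing in the BCMS axioms forces a $C$-element centralizing a proper power to centralize its root. (In the paper this issue never arises: the commuting element is a product of $z_i\in C_{n+2}$, which commute with $c^{(n)}$ by construction.) Second, your initial normalization proves a statement about a CM-reduced $\bar g$ with $g$ conjugate to $\bar g e_n$; transferring back gives $g=(d^{-1}xd)^{n_g}\,d^{-1}c'e_n d$, and $e_n$ is only known to commute with $\bar g$, not with $x$, so the conclusion for $g$ again hinges on the same unproved power-to-root upgrade. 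Note also that the proposition does not assume $g$ is CM-reduced and the paper's proof never needs it --- it builds the CM-sequence from scratch --- so the detour through Proposition \ref{prop:gmhm} is avoidable; as written, the proposal defers exactly the step that constitutes the proof.
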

\begin{proof}
Let $C_0=G$ and $C_1=C$. We inductively prove the following claim:
\begin{claim}
There are two coprime integers $n_g, n_h \in \Z_+$ and $0\le n_h'<n_h$ such that for every $m \geq 0$ either
\begin{enumerate}[(i)]
\item
there is a CM-sequence $(c_{0}, \cdots, c_m)$ with the associated CM-subgroup sequence $C_{m+2} \le \cdots \le C_0$ and
 elements $d_g, d_h \in C$ such that 
\begin{eqnarray*}
d_g g d_g^{-1} &=& c^{(m)} z_1  \cdots c^{(m)} z_{n_g} \mbox{, and} \\
d_h h d_h^{-1}  &=& c^{(m)} {z'}_1 \cdots c^{(m)} z_{n_h}',
\end{eqnarray*}
for $c^{(m)} = c_{0} \cdots c_{m}$ and $z_i, z'_i \in C_{m+1}$, or\label{item: solve eqn claim case 1}
\item
there is an $n \leq m$ and a CM-sequence $(c_{0}, \ldots, c_n)$ with the associated CM-subgroup-sequence $C_{n+2} \le \cdots \le C_0$ and elements $d_g, d_h \in C$ such that 
\begin{eqnarray*}
d_g g d_g^{-1} &=& c^{(n)} z_1  \cdots c^{(n)} z_{n_g} \mbox{, and} \\
d_h h d_h^{-1}  &=& c^{(n)} {z'}_1  \cdots c^{(n)} z_{n_h}',
\end{eqnarray*}
for $c^{(n)} = c_{0} \cdots c_{n}$ and $z_i, z'_i \in C_{n+2}$.\label{item: solve eqn claim case 2}
\end{enumerate}
\end{claim}

\begin{proof}
We first show that the claim is true for $m = 0$. Let $d$ be the greatest common divisor of $|g|$ and $|h|$. Note that $c$ lies in $C$, which is the subgroup that $G$ splits over, so it can be ignored whenever we measure the length of a reduced word. Since both $g$ and $h$ are cyclically reduced and $h'$ is a prefix of $h$, we have $N|g|=|g^N|=|h^k h'|=k|h|+|h'|$. Hence $d$ also divides $|h'|$.
Thus we can write $g = g_1 \cdots g_{n_g}$, $h = h_1 \cdots h_{n_h}$ and $h' = h_1 \cdots h_{n_h'}$, where $n_g = |g|/d$, $n_h = |h|/d$, $n_h'=|h'|/d$ and all the $g_i$ and $h_i$ are reduced words of length $d$. Note that $n_g>n_h\ge 1$ since $|h|<|g|$.

Then we have reduced decompositions
$$
(g_1 \cdots g_{n_g})^N = (h_1 \cdots h_{n_h})^k h_1 \cdots h_{n'_h-1} (h_{n'_h} c).
$$
By Corollaries \ref{corr:replacement} and \ref{corr:replacement for HNN} there are elements $d_0, \ldots, d_{N n_g} \in C_1 = C$ with $d_0 = e$ and $d_{N n_g}=c^{-1}$ such that
$g_{i}  = d_{i-1} h_{i} d_i^{-1}$ for all $1 \leq i \leq  N n_g$, where the index $i$ in $g_i$ and $h_i$ is taken mod $n_g$ and $n_h$ respectively.
Thus for all $1 \leq i \leq n_g$, we have
$$
g_i = d_{i-1} h_i d_i^{-1} = d_{i-1} h_{i+n_h} d_i^{-1} = d_{i-1} d_{i+n_h-1}^{-1} g_{i+n_h} d_{i+n_h} d_{i}^{-1},
$$
and hence $g_i \in C g_{i+n_h} C$.
As $n_h$ and $n_g$ are coprime we see that $g_i \in C g_1 C$ for all $1 \leq i \leq  n_g$, and by $g_{i}  = d_{i-1} h_{i} d_i^{-1}$ we have $h_i \in C g_1 C$ for all $1 \leq i \leq  n_h$.
Let $c_{0} \in C$ be the CM-representative of $C g_1 C$ provided by $\Icl(G)$. Then the above calculations show that
\begin{eqnarray*}
g &=& z_0 c_{0} z_1 \cdots c_{0} z_{n_g} \mbox{, and} \\
h &=& z'_0 c_{0}  z_1' \cdots c_{0} z_{n_h}',
\end{eqnarray*}
for some ${z_i},z_i' \in C=C_1$.
Conjugating $g$ and $h$ by $z_0$ and $z'_0$ respectively and possibly changing ${z_{n_g}}$ and $z_{n_h}'$ we achieve case (\ref{item: solve eqn claim case 1}) of the claim with $m=0$.
Note that $c_0$ is cyclically reduced by the expression above since $g$ is cyclically reduced and $|g|=n_g|g_1|=n_g|c_0|$.

Now suppose that the claim is true for some $m \geq 0$. 
We prove it for $m+1$.
If item (\ref{item: solve eqn claim case 2}) of the claim holds for $m$ then clearly it holds for $m+1$ and we are done. Thus suppose that item (\ref{item: solve eqn claim case 1}) holds for $m$. 
We will argue similarly as in the case of $m = 0$.
By the induction hypothesis we have that
\begin{eqnarray*}
g  &=& d_g^{-1} c^{(m)} z_1  \cdots c^{(m)} z_{n_g} d_g \mbox{, and} \\
h  &=& d_h^{-1} c^{(m)} z_1'  \cdots c^{(m)} z_{n_h} d_h,
\end{eqnarray*}
for some $d_g, d_h \in C_1$, $c^{(m)}  = c_{0} \cdots c_m$, and $z_i,z_i'\in C_{m+1}$. Since $h'$ is a prefix of $h$, we have a reduced decomposition $h=h'\cdot h''$ for some reduced word $h''$. Comparing it to the reduced decomposition
$$h=\left( d_h^{-1} c^{(m)} z_1'  \cdots c^{(m)} z_{n_h'}\right) \left( c^{(m)} z_{n_h'+1} c^{(m)} z_{n_h} d_h\right),$$
by Corollaries \ref{corr:replacement} and \ref{corr:replacement for HNN} we observe that $h'=d_h^{-1} c^{(m)} z_1'  \cdots c^{(m)} z_{n_h'} d_{h'}$ for some $d_{h'}\in C_1$.
Thus
$$
d_g^{-1} \left( c^{(m)} z_1 \cdots c^{(m)}  z_{n_g} \right)^N d_g =  d_h^{-1} \left( c^{(m)}  {z'}_1  \cdots c^{(m)}  {z'}_{n_h} \right)^k  \left( c^{(m)}  {z'}_1 \cdots c^{(m)}  {z'}_{n_h'} \right) d_{h'} c.
$$
Applying Proposition \ref{prop: normal form cm amalgamated free prod} to this equation with $n=N\cdot n_g$, we obtain 
elements $d_1, \cdots, d_{N n_g - m} \in C_{m+2} = Z_{C_{m+1}}(c_{m})$ such that
$z_{i}  = d_{i-1} z'_{i} d_i^{-1}$ for all $i \in \{2, \ldots, N n_g - m \}$, where the index $i$ in $z_i$ and $z'_i$ is taken mod $n_g$ and $n_h$ respectively.

Note that $m \leq D$ since $(c_0, \ldots, c_m)$ is a CM-sequence, and thus $m+2\le D+2\le N$. It follows that $(N-2) n_g \ge m\cdot n_g > m$ since $n_g\ge 2$. That is, we have $2 n_g < N n_g - m$ and
thus $n_g+1+n_h \leq N n_g - m$ as $|h|<|g|$.

Hence
$$
z_i = d_{i-1} z_i' d_i^{-1} = d_{i-1} z_{i+n_h}' d_i^{-1} = d_{i-1} d_{i+n_h-1}^{-1} z_{i+n_h} d_{i+n_h} d_{i}^{-1}
$$
for all $2 \leq i \leq n_g+1$
where indices in $z_i$ are taken mod $n_g$.
As $n_h$ and $n_g$ are coprime we see that $z_i \in C_{m+2} z_1 C_{m+2}$ for all $1 \leq i \leq n_g$. Combining with $z_i = d_{i-1} z_i' d_i^{-1}$ we have $z'_i \in C_{m+2} z_1 C_{m+2}$
for all $1 \le i \le n_h$.

If $z_1\in C_{m+2}$ then all $z_i, z_i' \in C_{m+2}$ and we achieve item (\ref{item: solve eqn claim case 2}) of the claim with $n = m$ and thus we are done.

Otherwise, let $c_{m+1} \in C_{m+1}\setminus C_{m+2}$ be the CM-representative of $C_{m+2} z_1 C_{m+2}$ provided by $\Icl(G)$. Using the fact that elements in $C_{m+2}$ commute with $c^{(m)}$, it follows that there are $y_i, y'_i \in C_{m+2}$ such that
\begin{eqnarray*}
d_g g d_g^{-1} &=& y_0 c^{(m+1)} y_1 \cdots c^{(m+1)} y_{n_g} \mbox{, and} \\
d_h h d_h^{-1}  &=& y'_0 c^{(m+1)} y'_1 \cdots c^{(m+1)} y'_{n_h}
\end{eqnarray*}
for $c^{(m+1)} = c^{(m)} c_{m+1}$. 

Conjugating $d_g g d_g^{-1}$ and $d_h h d_h^{-1}$ by $y_0$ and $y'_0$ respectively, we achieve item (\ref{item: solve eqn claim case 1}) of the claim for $m+1$ and thus the result follows.
\end{proof}

As $C<G$ is a BCMS-$D$ subgroup, by the claim above, there is some $n \leq D$, $d_g \in C$, and a CM-sequence $(c_0, \ldots, c_n)$ such that
$$
d_g g d_g^{-1} = c^{(n)} z_1  \cdots c^{(n)} z_{n_g}
$$ 
with $z_i \in C_{n+2}$ and $c^{(n)}=c_0\cdots c_n$. Thus all $z_i$ commute with $c^{(n)}$ and we have
$$
d_g g d_g^{-1} = \left( c^{(n)} \right)^{n_g} z
$$
with $z =  z_1 \cdots z_{n_g}$.
Let $x = d_g^{-1} c^{(n)} d_g$ and $c = d_g^{-1} z d_g$.
Then $g = x^{n_g} c$ and $c$ commutes with $x$. By construction we have $|x|=|c_0|=|g_1|=|g|/n_g$ and $|x^{n_g}|=|g|$, thus $x$ is cyclically reduced.
This finishes the proof of Proposition \ref{prop:sol to main equn}. 
\end{proof}

\subsection{Proof of Theorem \ref{thm:CM gap}}
\label{subsec:proof of the cm gap thm}

We use the following reduced form of integral chains to prove Theorem \ref{thm:CM gap}.
\begin{lemma}\label{lemma: reduced form for main thm}
	Let $G$ be a group that splits over a BCMS-$D$ subgroup $C$. Any integral chain $d$ is equivalent to a chain $d' = d_1 + d_2$ where
	\begin{enumerate}
		\item \label{item: ab alter} $d_1 = \sum_{i=1}^n g_i$ for some $n\ge 0$, where every $g_i$ is cyclically reduced (see Definitions \ref{def:ab-alternating} and \ref{def:reduced form}) and does not conjugate into any vertex group,
		\item \label{item: in vertex group} every term of $d_2$ lies in some vertex group,
		\item \label{item: thm no inverses} there is no $1 \leq i \leq j \leq n$ such that $g_i = {g'} c$ where $g'$ is a conjugate of $g_j^{-1}$ and $c \in C$ commutes with $g'$,
		\item \label{item: no power} there is no $1 \leq i \leq n$ such that $g_i = x^m c$ for some $m>1$, $x \in G$, and $c \in C$ so that $x$ and $c$ commute, and
		\item \label{item: CM_reduced} for every $1 \leq i \leq n$ we have that $g_i$ is CM-reduced (Definition \ref{def: CM reduced elts}).
	\end{enumerate}
\end{lemma}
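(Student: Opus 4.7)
The plan is to reduce $d$ via iterated application of the equivalence moves in Definition \ref{def: equivalent chains}, terminating by strict decrease of a lexicographic complexity measure on chains --- for instance, the pair (maximum syllable length among non-vertex terms, total number of non-vertex terms).

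I first address (\ref{item: ab alter}) and (\ref{item: in vertex group}) by conjugating each term of $d$, via the conjugation equivalence in Definition \ref{def: equivalent chains}(\ref{item: conj equiv}), to a cyclically reduced representative in the sense of Definitions \ref{def:ab-alternating} or \ref{def:reduced form}. Terms whose conjugates lie in a vertex group are placed in $d_2$; the others are collected in $d_1$. To secure (\ref{item: CM_reduced}), I apply Proposition \ref{prop: cm reduced} to each $g_i$ in $d_1$, writing it equivalently as $h_i + e_i$ with $h_i$ CM-reduced and $e_i \in C$, and move $e_i$ into $d_2$. To guarantee that $h_i$ remains cyclically reduced, I fix in advance a CM-subgroup-choice $\Icl(G)$ in which, for every double coset $CgC$ whose elements can be represented by cyclically reduced words, the chosen representative is itself cyclically reduced; this is possible because left- and right-multiplication by $C$ preserves cyclic reducedness for such cosets.

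For (\ref{item: thm no inverses}), whenever indices $i < j$ satisfy the stated relation $g_i = g'c$ with $g'$ a conjugate of $g_j^{-1}$ and $c \in C$ commuting with $g'$, the three equivalence moves give
$$
g_i + g_j \;\sim\; g' + c + g_j \;\sim\; g_j^{-1} + g_j + c \;\sim\; c,
$$
so I delete $g_i$ and $g_j$ from $d_1$ and add $c$ to $d_2$, strictly decreasing the number of terms of $d_1$. The diagonal case $i = j$ forces $g_i \sim -g_i + c$; here I replace the term $g_i$ in $d_1$ by $g_i^{-1}$ (its equivalent under the sign identification), move $c$ to $d_2$, and use a fixed choice of representative in each pair $\{g, g^{-1}\}$ in $d_1$ to prevent oscillation. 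Finally, for (\ref{item: no power}), whenever $g_i = x^m c$ with $m \geq 2$ and $x, c$ commuting, the power and commutation moves give $g_i \sim m \cdot x + c$; I replace $g_i$ by $m$ copies of a cyclically reduced conjugate of $x$, place $c$ into $d_2$, and reapply the previous steps to the new terms.

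Because $|x| < |g_i|$ in the (\ref{item: no power}) step and all other reductions preserve or decrease the maximum syllable length of terms in $d_1$, the lexicographic complexity strictly decreases with each pass and the procedure must terminate at a chain $d_1 + d_2$ satisfying all five conditions. The main obstacle will be verifying that the reductions genuinely compose --- that, for example, reapplying CM-reduction after a (\ref{item: no power})-step does not re-introduce a violation of (\ref{item: thm no inverses}) or (\ref{item: no power}) --- which I will handle by re-running the conditions in a fixed cyclic order until no further reduction applies; termination of this outer loop is again controlled by the same lexicographic measure.
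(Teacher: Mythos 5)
Your overall strategy (reduce by the moves of Definition \ref{def: equivalent chains} and terminate via a complexity measure) is the paper's, but two steps fail as written. First, the termination argument breaks at the step treating condition (\ref{item: no power}). With your measure (maximum syllable length of a non-vertex term, number of non-vertex terms), replacing $g_i = x^m c$ by $m$ copies of a cyclically reduced conjugate of $x$ lowers the first coordinate only when $g_i$ was the \emph{unique} term of maximal length; otherwise the maximum is unchanged and the number of terms grows by $m-1$, so the lexicographic pair strictly \emph{increases}. (Total length counted with multiplicity is no better: it is unchanged, since $m|x| = |g_i|$.) So the claim that the complexity strictly decreases with each pass is false, and termination of your outer loop is not established. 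The paper avoids iteration altogether: writing $d_1 = \sum_j k_j h_j$ with multiplicities, it minimizes $n(d') = \sum_j |h_j|$, the total length over \emph{distinct} non-vertex terms, among all equivalent chains of this shape; any violation of (\ref{item: thm no inverses}) or (\ref{item: no power}) would give an equivalent expression with strictly smaller $n(d')$ (the power step trades the distinct term of length $m|x|$ for the single distinct term $x$ of length $|x|$), and the final CM-reduction via Proposition \ref{prop: cm reduced} preserves $n(d')$ because $h_i \in C g_i C$ has $|h_i| = |g_i|$, so minimality, and with it (\ref{item: thm no inverses}) and (\ref{item: no power}), survives that step. Your loop can be repaired by adopting that measure, or the multiset of term lengths with the multiset order, but not with the pair you chose.

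Second, your treatment of the diagonal case $i = j$ of (\ref{item: thm no inverses}) does not achieve the condition. If $g_i = g' c$ with $g'$ conjugate to $g_i^{-1}$ and $c \in C$ commuting with $g'$, then also $g_i^{-1} = (g')^{-1} c^{-1}$, where $(g')^{-1}$ is conjugate to $g_i = (g_i^{-1})^{-1}$ and $c^{-1}$ commutes with $(g')^{-1}$: the condition is symmetric under inversion, so after your sign flip the new term violates (\ref{item: thm no inverses}) exactly as before, and fixing a representative in each pair $\{g, g^{-1}\}$ merely halts the oscillation at a chain that still fails the condition. The correct resolution, implicit in the paper, is that such a term satisfies $2 g_i - c \in E(G)$, hence $g_i$ is equivalent to $\tfrac{1}{2} c$ because $E(G)$ is a linear subspace of $C_1(G)$; since the lemma imposes no integrality on $d_2$, every copy of $g_i$, including a single leftover one, can be absorbed into $d_2$, strictly decreasing the complexity. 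A minor remark: the bespoke CM-choice you introduce to keep representatives cyclically reduced is unnecessary, since any element of $C g_i C$ is automatically representable by a cyclically reduced word of the same length as $g_i$ (multiplication by $C$ preserves the reduced letter types), which is exactly how the paper preserves (\ref{item: ab alter}) through the CM-reduction.
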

\begin{proof}
	Given an expression $d'=d_1 + d_2$ of integral chains, where $d_1 = \sum_{j=1}^{m} k_j h_j$ with cyclically reduced words $h_j \in G$ and $k_j\in\Z_+$, and every term of $d_2$ lies in some vertex group, associate a complexity $n(d') = \sum_{j=1}^m | h_j |$.
	
	There exists a chain equivalent to $d$ that admits such an expression by replacing elements in $d$ by suitable conjugates so that they are either cyclically reduced or in a vertex group.
	
	Let $d' = d_1 + d_2$ with $d_1=\sum_{i=1}^n k_i g_i $ be an expression of this form for a chain equivalent to $d$ where $n(d')$ is minimal among such equivalent chains.
	We claim that $d'$ satisfies the conditions (\ref{item: ab alter})--(\ref{item: no power}).
	Each $g_i$ is cyclically reduced by our requirement, and the first two conditions are easy to verify.
	If there are $1 \leq i \leq j \leq n$ such that $g_i = g' c$ where $c$ commutes with $g'$ and $g'$ is conjugate to $g_j^{-1}$, then $g_i$ is equivalent to the chain $g' + c$ by (\ref{item: comm equiv}) of Definition \ref{def: equivalent chains} and equivalent to $-g_j + c$ by equivalence (\ref{item: linear equiv}) and (\ref{item: conj equiv}) of Definition \ref{def: equivalent chains}. Thus we may cancel $g_i$ and $g_j$ at the cost of changing $d_2$ until one term has coefficient zero to reduce $n(d')$. Similarly we see that if $g_i = x^m c$ where $m>1$ and $c$ commutes with $x$, then we may replace $k_i g_i$ by $mk_i x+c$, which has smaller complexity since $|x| < m|x|=|x^m| = |g_i|$.
	
	Finally we can always make the chain $d'$ above further satisfy (\ref{item: CM_reduced}): by Proposition \ref{prop: cm reduced} we may replace every (cyclically reduced) $g_i$ by $h_i + c_i$ where $h_i$ is CM-reduced, $c_i \in C$ lies in the edge group (and thus in a vertex group). Moreover, Proposition \ref{prop: cm reduced} shows that $g_i$ is conjugate to $h_i c_i$ by an element of $C$, so $h_i\in Cg_i C$ must be represented by a cyclically reduced word as $g_i$ is, and we have $|h_i|=|g_i|$. This operation does not affect the complexity of the expression and thus the chain $d'$ admits a desired expression.
\end{proof}

We can now prove Theorem \ref{thm:CM gap}:
\begin{reptheorem}{thm:CM gap}
Let $G$ be a graph of groups where each edge group is a BCMS-$D$ subgroup of $G$. Let $c$ be an integral chain in $G$. Then either $c$ is equivalent (Definition \ref{def: equivalent chains}) to a chain $\tilde{c}$ such that every term lies in a vertex group or
$$
\scl_G(c) \geq \frac{1}{12 (D+2)}.
$$
\end{reptheorem}
\begin{proof}
Fix a CM-subgroup choice $\Icl(G)$. Assume first that the graph of groups is either an amalgamated free product or an HNN extension over a BCMS-$D$ subgroup $C$.

Let $c' = c_1 + c_2$ be a chain equivalent to $c$ as in Lemma \ref{lemma: reduced form for main thm} with $c_1 = \sum_{i=1}^n g_i $.

Suppose $n>0$ and without loss of generality assume that $g_1$ has the longest length.
Set $N = D+2$ and suppose that
$$
\scl_G(c) < \frac{1}{12 N}.
$$
By Theorem \ref{thm: no long pairing} there is some $1 \leq j \leq n$ and a cyclic conjugate $h$ of $g_j^{-1}$ such that 
$$
g^N = h^k h' c,
$$
where $h'$ is a prefix of $h$ and $c \in C$.
Since $|g_1|$ is maximal among all $g_i$ we conclude that $|g| \geq |h|$.
Now consider two cases:
\begin{itemize}
\item $|g| = |h|$. Since all of $g$, $h$ and $h'$ are cyclically reduced, we must have $g^N = h^N c$ in this case. Since $g$ is CM-reduced, by Proposition \ref{prop:gmhm} there is some $z \in C$ which commutes with $g$ such that $g = h z$. This contradicts (\ref{item: thm no inverses}) of Lemma \ref{lemma: reduced form for main thm}.
\item $|g| > |h|$. In this case, Proposition \ref{prop:sol to main equn} implies that there is some $x \in G$, $m \geq 2$ and $c \in C$ such that $g = x^m c$. This contradicts  (\ref{item: no power}) of Lemma \ref{lemma: reduced form for main thm}.
\end{itemize}

Therefore we must have
$$
\scl_G(c) \geq \frac{1}{12 N} = \frac{1}{12(D+2)},
$$
unless $c$ is equivalent to a chain where all terms lie in vertex groups.

When $G$ is a general graph of groups, the chain is supported on a finite subgraph, so we can proceed by induction on the number of edges in the support. At each step, any chosen edge group $C$ splits the group as  an amalgamated free product or an HNN extension over $C$, depending on whether the edge separates the graph.
Note that any BCMS-$D$ edge subgroup of $G$ lying in a subgroup $H$ is also a BCMS-$D$ subgroup of $H$. 
Thus either at some stage what we have shown above implies the desired gap, or we can keep replacing the chain by equivalent ones supported in subgraphs with strictly smaller number of edges until every term lies in vertex groups.
\end{proof}

\section{Gaps for Graph Products of Groups} \label{sec: gap in graph prod of groups}

In this section we apply Theorem \ref{thm:CM gap} from the previous section to obtain gap results for graph products.
We will use basic notions and properties of graph products in Section \ref{sec:graph product of groups}.

The lower bounds of scl for integral chains depends on the existence of certain induced subgraphs. 
Let $\Delta_n$ be the simplicial graph with vertex set $\Vrm(\Delta_n) = \{v_0, \ldots, v_n \}$ and edge set $\Erm(\Delta_n) = \{ (v_i, v_j): |i-j| \geq 2 \}$. 
We call this graph the \emph{opposite path of length $n$}. 
For any simplicial graph $\Gamma$ we define 
$$
\Delta(\Gamma) \defeq \max \{ n \mid \Delta_n \mbox{ is an induced subgraph of } \Gamma \}.
$$
The lower bound we establish has size determined by $\Delta(\Gamma)$. 
The bound applies to all integral chains except for those equivalent (Definition \ref{def: equivalent chains}) to \emph{vertex chains}.
\begin{definition}
	A vertex chain is a chain of the form $c = \sum_{v \in \Vrm} c_v$, where each $c_v$ is a chain in the vertex group $G_v$.
\end{definition}

\begin{theorem}[Gaps for Graph Products of Groups] \label{thm: gap for graph products}
	Let $\Gcl(\Gamma)$ be a graph product and let $c$ be an integral chain of $\Gcl(\Gamma)$. Then either
	$$
	\scl_{\Gcl(\Gamma)}(c) \geq \frac{1}{12 (\Delta(\Gamma) + 2)},
	$$
	or one of the following equivalent statements holds:
	\begin{itemize}
		\item[(i)] $c$ is equivalent (Definition \ref{def: equivalent chains}) to a vertex chain,
		\item[(ii)] the pure factor chain $c^{\pf}$ (Definition \ref{def:pure factor chain}) is a vertex chain.
	\end{itemize} 
\end{theorem}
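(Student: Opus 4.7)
The plan is to deduce this theorem from Theorem~\ref{thm:CM gap} by decomposing $\Gcl(\Gamma)$ as an amalgamated free product over an induced-subgraph subgroup and iterating via an induction on $|\Vrm(\Gamma)|$. First, conditions (i) and (ii) are equivalent: (ii)$\Rightarrow$(i) is immediate since $c\sim c^{\pf}$ by Definition~\ref{def:pure factor chain}, and for (i)$\Rightarrow$(ii) Proposition~\ref{prop: pure factor chain} yields that every term of $c^{\pf}$ is supported on a vertex group, while by Definition~\ref{def: pure factor decomp} each primitive pure factor $\gamma_i$ in $c^{\pf}$ has $|\gamma_i|\ge 2$ and so cannot lie in a single vertex group; only the $\gamma^*_i$ (vertex-element) summands survive, so $c^{\pf}$ is a vertex chain.

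For the main dichotomy I would induct on $|\Vrm(\Gamma)|$. The base case $|\Vrm(\Gamma)|=1$ is trivial, and if $\Gamma$ is complete then $\Gcl(\Gamma)=\prod_v G_v$ and iterated use of Proposition~\ref{prop:chains on direct products} rewrites $c$ as a vertex chain, so (i) holds. Otherwise, pick a vertex $v$ with $\st(v)\neq\Gamma$; this gives the nontrivial amalgam
$$
\Gcl(\Gamma)\;=\;\Gcl(\st(v))\;\star_{\Gcl(\Lk(v))}\;\Gcl(\Gamma\setminus\{v\}),
$$
whose edge subgroup $\Gcl(\Lk(v))$ corresponds to an induced subgraph and is therefore BCMS-$\Delta(\Gamma)$ in $\Gcl(\Gamma)$ by the structural results of Section~\ref{sec:cm subgroups}. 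Theorem~\ref{thm:CM gap} with $D=\Delta(\Gamma)$ then provides either the desired bound $\scl_{\Gcl(\Gamma)}(c)\ge \frac{1}{12(\Delta(\Gamma)+2)}$, or a chain $\tilde c\sim c$ each of whose terms lies in one of the two vertex factors of the amalgam.

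In the second case, since $\st(v)$ is a cone on $\Lk(v)$ we have $\Gcl(\st(v))=G_v\times\Gcl(\Lk(v))$, and Proposition~\ref{prop:chains on direct products} lets me split each $\Gcl(\st(v))$-term of $\tilde c$ into a $G_v$-part and a $\Gcl(\Lk(v))$-part, with the latter lying in $\Gcl(\Gamma\setminus\{v\})$. Hence $c\sim c_v+c'$ with $c_v$ supported in $G_v$ and $c'\in\Gcl(\Gamma\setminus\{v\})$. Applying the inductive hypothesis to $c'$ and using $\Delta(\Gamma\setminus\{v\})\le\Delta(\Gamma)$, either $c'$ is equivalent to a vertex chain (and so is $c_v+c'\sim c$, giving (i)), or $\scl_{\Gcl(\Gamma\setminus\{v\})}(c')\ge \frac{1}{12(\Delta(\Gamma)+2)}$, in which case the retraction $\Gcl(\Gamma)\twoheadrightarrow\Gcl(\Gamma\setminus\{v\})$ killing $G_v$ sends $c_v+c'$ to $c'$ and Proposition~\ref{prop: mono and retract} transfers the bound back to $\scl_{\Gcl(\Gamma)}(c)$. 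The hard part will be the implicit structural claim that induced-subgraph subgroups of a graph product are BCMS-$\Delta(\Gamma)$: one needs to select CM-representatives well (for example, shortest representatives in each double coset) and then show, by analyzing the nested centralizers arising in any CM-sequence, that they match an induced opposite path $\Delta_m\subset\Gamma$, forcing $m\le\Delta(\Gamma)$. Once this input is in place, the reduction above is a mostly formal iteration of the amalgam-of-groups gap machinery.
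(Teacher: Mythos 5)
Your proposal follows essentially the same route as the paper: the paper proves exactly this induction on $|\Vrm(\Gamma)|$ (as Lemma \ref{lemma:small chains equiv to zero}), splitting off a vertex $v$, using the amalgam $\Gcl(\st(v))\star_{\Gcl(\Lk(v))}\Gcl(\Gamma\setminus\{v\})$ together with Theorem \ref{thm:CM gap} and the fact that induced-subgraph subgroups are BCMS-$\Delta(\Gamma)$ (Proposition \ref{prop: BCMS-M by opposite length}, proved via the canonical shortest-double-coset CM-choice and Lemma \ref{lemma: CM length and delta subgraphs}, just as you sketch), and then deduces the equivalence of (i) and (ii) from Proposition \ref{prop: pure factor chain}. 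The only cosmetic differences are that the paper handles the case $\st(v)=\Gamma$ via the direct-product max formula rather than isolating complete graphs, and it first reduces to the finite induced subgraph on the support of $c$ by retraction — a routine step worth adding if $\Gamma$ is not assumed finite.
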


We will study vertex chains in detail in Section \ref{sec:vertex chains}. In particular, we prove the following theorem that computes the stable commutator length of a vertex chain $c = \sum_{v \in \Vrm} c_v$ in terms of $\scl_{G_v}(c_v)$ and the structure of the defining graph.
\begin{theorem}[Vertex chains]\label{thm: scl of vertex chains, beginning of sec}
	Let $\Gcl(\Gamma)$ be a graph product of groups and let $c=\sum_{v\in \Vrm(\Gamma)} c_v$ be a vertex chain, where each $c_v$ is a chain in the vertex group $G_v$.
	Then $\scl_{\Gcl(\Gamma)}(c)$ can be computed as a linear programming problem if each $\scl_{G_v}(c_v)$ is known, and it is rational if each $\scl_{G_v}(c_v)$ is.
	Moreover, 
	$$\scl_{\Gcl(\Gamma)}(c)\ge \scl_{G_v}(c_v)$$ 
	for any vertex $v$.
\end{theorem}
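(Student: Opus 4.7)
The plan is to prove the three assertions separately.

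For the lower bound $\scl_{\Gcl(\Gamma)}(c) \ge \scl_{G_v}(c_v)$, I would use the retraction $r_v \colon \Gcl(\Gamma) \to G_v$ defined by sending every vertex group $G_w$ with $w \ne v$ to the identity. This is a well-defined homomorphism because the defining commutation relations trivialize in $G_v$, and it satisfies $r_v \circ \iota_v = \id_{G_v}$ for the inclusion $\iota_v \colon G_v \hookrightarrow \Gcl(\Gamma)$. Applying the monotonicity part of Proposition \ref{prop: mono and retract} to $r_v$ yields $\scl_{\Gcl(\Gamma)}(c) \ge \scl_{G_v}(r_v(c))$; since $r_v(c_w) = 0$ as a chain for $w \ne v$ while $r_v(c_v) = c_v$, we have $r_v(c) = c_v$ after discarding identity terms, establishing the inequality.

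For the LP computation, my plan is to induct on $|\Vrm(\Gamma)|$ using the amalgamation decomposition $\Gcl(\Gamma) = \Gcl(\Gamma \setminus \{v\}) \star_{\Gcl(\Lk(v))} \Gcl(\st(v))$, where $\Gcl(\st(v)) = G_v \times \Gcl(\Lk(v))$. A vertex chain $c$ splits accordingly into $c_v$ in $G_v$ and $\sum_{w \ne v} c_w$ in $\Gcl(\Gamma \setminus \{v\})$. I would establish an amalgamation formula, in the spirit of \cite[Theorem 6.2]{CH:sclgap} (the ingredient behind Proposition \ref{prop: vertex chains on free products}), expressing the scl of the combined chain as the minimum over admissible ``routings'' of weight through the edge group $\Gcl(\Lk(v))$. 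A routing introduces auxiliary chains in each factor valued in the edge group, and the scl contribution in the $\Gcl(\st(v))$ factor reduces via Proposition \ref{prop:chains on direct products} to a maximum of scls in $G_v$ and in $\Gcl(\Lk(v))$. Iterating, the global problem becomes a linear program whose variables encode the routings at each amalgamation step, whose objective aggregates the vertex-group scls, and whose constraints encode null-homology at each step, yielding the desired LP. The rationality statement then follows from standard LP theory: if all input values $\scl_{G_v}(c_v)$ are rational, the LP has rational data and its optimum is attained at a rational vertex of the feasible polytope.

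The main obstacle will be establishing the amalgamation formula for vertex chains over the nontrivial edge group $\Gcl(\Lk(v))$. The upper bound follows from surface gluing: admissible surfaces in the two factors whose boundary contributions in the edge space cancel can be glued into an admissible surface for $c$. The lower bound, however, requires starting from an optimal admissible surface for $c$, cutting it along the preimage of the edge space, and controlling the drop in $-\chi^-$ while identifying which routings are actually realizable; this is the delicate step that drives the entire LP construction and the verification that at each inductive stage the problem remains within the class of vertex chains.
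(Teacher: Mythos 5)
Your retraction argument for the inequality $\scl_{\Gcl(\Gamma)}(c)\ge \scl_{G_v}(c_v)$ is correct and is a perfectly good (slightly different) route: the paper obtains this inequality instead as a by-product of its linear program, by evaluating the LP objective at the stable measure that is the indicator of the single vertex $v$. That part of your proposal stands.

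The main part, however, has a genuine gap. Your plan is to iterate the amalgamation formula of \cite[Theorem 6.2]{CH:sclgap} along the splittings $\Gcl(\Gamma)=\Gcl(\Gamma\setminus\{v\})\star_{\Gcl(\Lk(v))}\Gcl(\st(v))$ and read off a finite linear program whose variables are the ``routings''. But in that formula the infimum is taken over \emph{all} chains $d$ in the edge group $\Gcl(\Lk(v))$ — an infinite-dimensional family, not a finite list of LP variables — and after one step the chain $-d+\sum_{w\neq v}c_w$ fed into $\Gcl(\Gamma\setminus\{v\})$ is no longer a vertex chain, so the induction does not stay inside the class of chains the theorem is about. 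You flag exactly this (``the verification that at each inductive stage the problem remains within the class of vertex chains'') as the delicate step, but you give no idea for how to reduce the routings to finitely many rational parameters, and without that there is neither an LP nor the rationality conclusion (``standard LP theory'' only applies once a finite rational LP has actually been produced). The paper avoids this entirely by a different global argument: it writes down a concrete finite LP on the graph $\Gamma$ itself — maximize $\sum_v \mu_v\,\scl_{G_v}(c_v)$ over stable measures $\mu$, whose dual is the weighted clique cover problem — and proves it computes $\scl_{\Gcl(\Gamma)}(c)$ by (i) an upper bound from any weighted clique cover, using monotonicity of scl of vertex chains in their coefficients (this is where \cite[Theorem 6.2]{CH:sclgap} and Proposition \ref{prop:chains on direct products} enter, but only to prove monotonicity, not to parameterize an optimization), subadditivity, and the direct-product formula; and (ii) a lower bound from any stable measure, by combining extremal antisymmetric quasimorphisms of the vertex groups into a quasimorphism of $\Gcl(\Gamma)$ whose defect is controlled by the clique sums of the weights, and applying Bavard duality; strong LP duality then closes the gap between the two bounds. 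Your proposal contains no counterpart of either the clique-cover upper bound or the quasimorphism construction, and these are the essential content; as written, the cutting-and-gluing plan would not deliver a finite LP or the rationality statement.
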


See the end of Section \ref{subsec:computation by linear program} for a proof.

Combining with Theorem \ref{thm: gap for graph products}, we have:
\begin{corollary}\label{cor: gap pres by graph prod}
	Let $G=\Gcl(\Gamma)$ be a graph product of groups over a finite graph $\Gamma$, where each vertex group $G_v$ has a spectral gap $C_v>0$ for integral chains. Then $G$ also has a gap $C=\min\{\frac{1}{12 (\Delta(\Gamma) + 2)}, C_v\}$ for integral chains.
\end{corollary}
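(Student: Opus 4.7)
The plan is to combine the dichotomy of Theorem \ref{thm: gap for graph products} with the vertex chain bound of Theorem \ref{thm: scl of vertex chains, beginning of sec}. Fix an integral chain $c$ in $G=\Gcl(\Gamma)$ with $\scl_G(c)>0$; the goal is to show $\scl_G(c)\ge C$.

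First I would invoke Theorem \ref{thm: gap for graph products} to split into two cases. If $\scl_G(c)\ge\frac{1}{12(\Delta(\Gamma)+2)}$, then $\scl_G(c)\ge C$ by the definition of $C$, and we are done. Otherwise $c$ is equivalent to a vertex chain $\tilde c=\sum_{v\in\Vrm(\Gamma)} c_v$ with each $c_v$ in the vertex group $G_v$, and Proposition \ref{prop: equivalent chains have the same scl} gives $\scl_G(\tilde c)=\scl_G(c)>0$.

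The main (and only nontrivial) step is to show that some $\scl_{G_v}(c_v)$ is strictly positive. I would argue by contradiction: suppose $\scl_{G_v}(c_v)=0$ for every $v$. Each vertex group $G_v$ is a retract of $\Gcl(\Gamma)$ via the homomorphism sending every generator outside $G_v$ to the identity (which preserves the defining commutation relations, as they all become trivial). Proposition \ref{prop: mono and retract} therefore gives $\scl_G(c_v)=\scl_{G_v}(c_v)=0$ for each $v$, and the semi-norm property of Proposition \ref{prop:scl is norm} yields
$$
\scl_G(\tilde c)\;\le\;\sum_{v\in\Vrm(\Gamma)}\scl_G(c_v)\;=\;0,
$$
contradicting $\scl_G(\tilde c)>0$.

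Finally, choose a vertex $v$ with $\scl_{G_v}(c_v)>0$. The spectral gap hypothesis on $G_v$ gives $\scl_{G_v}(c_v)\ge C_v\ge C$, and the lower bound in Theorem \ref{thm: scl of vertex chains, beginning of sec} yields $\scl_G(\tilde c)\ge \scl_{G_v}(c_v)\ge C$. All steps except the retract/subadditivity argument are direct appeals to previously established results; the only subtlety is ruling out the degenerate case in which $c$ is equivalent to a vertex chain whose components all have vanishing scl in their own vertex groups.
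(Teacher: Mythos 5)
Your proof is correct and follows the paper's intended route: the corollary is stated there as a direct combination of the dichotomy in Theorem \ref{thm: gap for graph products} with the bound $\scl_{\Gcl(\Gamma)}(c)\ge\scl_{G_v}(c_v)$ of Theorem \ref{thm: scl of vertex chains, beginning of sec}, which is exactly what you do, and your monotonicity/subadditivity step correctly fills in the degenerate case (all $\scl_{G_v}(c_v)=0$) that the paper leaves implicit. The only detail to add is that the equivalent vertex chain can be taken \emph{integral} (use part (ii) of Theorem \ref{thm: gap for graph products} via the pure factor chain, or Lemma \ref{lemma:small chains equiv to zero}), since the gap hypothesis on $G_v$ is only assumed for integral chains, so applying $\scl_{G_v}(c_v)\ge C_v$ requires each $c_v$ to be integral.
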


In particular, we have a gap theorem for RAAGs and RACGs; see Theorem \ref{thm: gap theorem for RAAGs and RACGs}.

We can also construct integral chains with small stable commutator length.
\begin{theorem}[Chains with small stable commutator length] \label{thm:small scl graph prod}
	Let $\Gcl(\Gamma)$ be a graph product of groups and let $\Delta(\Gamma)$ be as above. Then there is an explicit integral chain $\delta$ in $\Gcl(\Gamma)$ such that
	$$
	\frac{1}{12(\Delta(\Gamma) + 2)} \le \scl_\Gamma(\delta) \leq \frac{1}{\Delta(\Gamma)}.
	$$
\end{theorem}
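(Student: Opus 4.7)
The strategy is to work inside the induced opposite-path subgraph $\Delta_m \hookrightarrow \Gamma$, where $m = \Delta(\Gamma)$. Label the vertices of $\Delta_m$ by $v_0,\ldots,v_m$ and choose a nontrivial $g_i \in G_{v_i}$ for each $i$. The natural projection $\Gcl(\Gamma) \twoheadrightarrow \Gcl(\Delta_m)$ that kills every $G_v$ with $v \notin \Vrm(\Delta_m)$ is a retraction of the inclusion $\Gcl(\Delta_m) \hookrightarrow \Gcl(\Gamma)$, so by Proposition \ref{prop: mono and retract}, $\scl$ of a chain supported in $\Gcl(\Delta_m)$ agrees with $\scl$ computed in $\Gcl(\Gamma)$. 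Thus I construct $\delta$ entirely inside $\Gcl(\Delta_m)$.

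For the lower bound $\scl(\delta) \geq 1/(12(m+2))$, it suffices to verify that $\delta$ is not equivalent to a vertex chain and invoke Theorem \ref{thm: gap for graph products} in $\Gcl(\Delta_m)$. By Proposition \ref{prop: pure factor chain} this reduces to checking that the pure factor chain $\delta^{\pf}$ (Definition \ref{def:pure factor chain}) contains a term whose support is strictly larger than a single vertex. The candidate chain $\delta$ will contain a term whose cyclically reduced representative has support equal to all of $\Vrm(\Delta_m)$ (in particular, length $m+1 \geq 2$), making this verification immediate.

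For the upper bound $\scl(\delta) \leq 1/m$, I will exhibit an explicit admissible surface. The chain $\delta$ will be of the form $w - \sum_i n_i g_i$, where $w$ is a product of the $g_i$ exploiting the partial commutation relations of $\Delta_m$: non-adjacent $g_i, g_j$ with $|i-j| \geq 2$ commute, while consecutive pairs $g_i, g_{i+1}$ do not. I then assemble an admissible surface by gluing pieces in the simple normal form of Section \ref{sec:gaps from short overlaps}, where each piece encodes one commutation relation along its turns. The gluing pattern threads the letters $g_0,\ldots,g_m$ around the boundary so that the resulting surface has degree $n \asymp m$ while $-\chi^-(S) \leq 2$. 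The Euler characteristic formula $-\chi(S) = \sum_v (d(v)/2 - \delta(v))$ from the proof of Theorem \ref{thm: no long pairing} then yields $\scl(\delta) \leq -\chi^-(S)/(2n) \leq 1/m$.

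The main technical obstacle is the explicit gluing pattern: the pieces must be threaded so that every non-commuting consecutive pair $(g_i,g_{i+1})$ of $\Delta_m$ contributes to the boundary exactly once, forcing the degree to be linear in $m$ while the Euler characteristic remains bounded. This is precisely what the opposite-path structure of $\Delta_m$ allows and why the bound $1/m$ cannot be improved for general $\Gamma$, explaining the nonuniformness of the spectral gap across graph products.
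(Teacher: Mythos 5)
There is a genuine gap: your proposal defers exactly the two things the theorem requires — the explicit chain and the certificate for the upper bound. You never pin down $w$ or the coefficients $n_i$, and the admissible surface with degree $n\asymp m$ and $-\chi^-(S)\le 2$ is announced as "the main technical obstacle" rather than constructed, so the bound $\scl(\delta)\le 1/m$ is not proved. Worse, the shape you commit to, $\delta = w-\sum_i n_i g_i$ with $w$ a product of the $g_i$'s, runs into a concrete obstruction: consecutive vertices $v_i,v_{i+1}$ are \emph{non-adjacent} in $\Delta_m$, so the retraction of $\Gcl(\Delta_m)$ onto the induced subgraph on $\{v_i,v_{i+1}\}$ lands in the free product $G_{v_i}\star G_{v_{i+1}}$, and by Proposition \ref{prop: mono and retract} the image chain bounds $\scl(\delta)$ from below. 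In the RAAG case with $w=g_0\cdots g_m$ and $n_i=1$ this image is $g_0g_1-g_0-g_1$, whose scl is $1/2$, so $\scl(\delta)\ge 1/2$ no matter how large $m$ is; more general choices of $w$ and $n_i$ face the same constant-size lower bounds unless the retracted chains degenerate, which your outline does not arrange. So the "threading" picture cannot be salvaged without changing the form of the chain.

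The paper's construction is different in both respects. It takes $\delta_m = g_{0,m}-g_{0,m-1}-g_{1,m}+g_{1,m-1}$ with $g_{i,j}=g_i\cdots g_j$, an alternating difference of four long words (note that this chain is killed by every retraction onto a pair of consecutive vertices, which is exactly how it evades the obstruction above). The upper bound is purely algebraic: Lemma \ref{lemma: the c_m} gives $g_{i,m}^m = g_{i,m-1}^m c_m$ with the \emph{same} $c_m$ for $i=0,1$, so the pants chain bound (Lemma \ref{lemma: pants chain}) applied twice plus the triangle inequality yields $\scl(m\cdot\delta_m)\le 1$, i.e.\ $\scl(\delta_m)\le 1/m$ — no surface gluing or Euler-characteristic bookkeeping is needed. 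Your lower-bound and retraction steps (work in $\Gcl(\Delta_m)$, check the chain is not equivalent to a vertex chain, apply Theorem \ref{thm: gap for graph products}) do match the paper, but they too are only as good as the chain you construct, which is missing.
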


This shows that the estimate in Theorem \ref{thm: gap for graph products} is accurate up to a scale of $12$.

This section is organized as follows. In Section \ref{subsec: canonical choice} we define the canonical CM-subgroup choice in a graph product $\Gcl(\Gamma)$ and show the nice behavior of CM-subgroup sequences with respect to this choice. 
In Section \ref{subsec:opposite paths delta m}, we show that the subgroup $\Gcl(\Lambda)$ associated to any induced subgraph $\Lambda\subset \Gamma$ has BCMS-$\Delta(\Gamma)$. 
In Section \ref{subsec:scl in opposite paths} we will see that opposite paths are sources of integral chains with small stable commutator length. 
Then we prove Theorems \ref{thm: gap for graph products} and \ref{thm:small scl graph prod} in Section \ref{subsec: proofs graph prod}. 
In Section \ref{subsec: gaps for RAAGs and RACGs} we deduce the gap results in the special case of RAAGs and RACGs.
Finally as applications, we construct groups with interesting scl spectra in Section \ref{subsec: scl spec on a delta inf}. 

\subsection{Canonical CM-choice}\label{subsec: canonical choice}
Let $\Gcl(\Gamma)$ be a graph product of groups. Every induced subgraph $\Lambda \subset \Gamma$ 
induces a subgroup $\Gcl(\Lambda) < \Gcl(\Gamma)$. We find nice CM-representatives with respect to such subgroups.

\begin{lemma} \label{lemma: Lambda has property CM}
	Let $\Lambda \subset \Gamma$ be an induced subgraph of $\Gamma$, let $g \in \Gcl(\Gamma) \setminus \Gcl(\Lambda)$ and let $\bar{g}$ be the element with the shortest length among all elements in $\Gcl(\Lambda) g \Gcl(\Lambda)$.
	Then
	\begin{enumerate}
		\item $\bar{g}$ is a CM-representative, and \label{item: shortest is CM}
		\item the centralizer $Z_{\Gcl(\Lambda)}(\bar{g}) = \Gcl(\Theta)$ where $\Theta$ is the induced subgraph of $\Lambda$ that consists of all vertices of $\Lambda$ adjacent to all vertices in the support of $\bar{g}$.\label{item: centralizer}
	\end{enumerate}
\end{lemma}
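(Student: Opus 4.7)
I plan to prove both parts simultaneously by establishing the following core claim: for any $h \in \Gcl(\Lambda)$ with $h' := \bar{g} h \bar{g}^{-1} \in \Gcl(\Lambda)$, one has $\supp(h) \subset \Theta$ and $h' = h$. Statement (1) is then immediate; for (2), the inclusion $\Gcl(\Theta) \subseteq Z_{\Gcl(\Lambda)}(\bar{g})$ follows directly since every generator at a vertex of $\Theta$ commutes with every letter of $\bar{g}$, and the reverse inclusion is the specialization $h = h'$ of the core claim.

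The first step in proving the core claim is to use minimality of $\bar{g}$ together with Proposition \ref{prop:normal forms for products} to rule out cancellation in $\bar{g} \cdot h$. Applying that proposition to $(\bar{g}, h)$ produces reduced decompositions $\bar{g} = g_0 q_g x$ and $h = x^{-1} q_h h_0$ with $\supp(q_g) = \supp(q_h)$ equal to a clique $q$. Since $\supp(h) \subseteq \Lambda$ we have $x \in \Gcl(\Lambda)$, $q \subseteq \Lambda$, and $q_g \in \Gcl(\Lambda)$; then $\bar{g}(q_g x)^{-1} \in \Gcl(\Lambda)\bar{g}\Gcl(\Lambda)$ has length $|\bar{g}| - |q_g| - |x|$, forcing $x = e$ and $q_g = e$ by minimality. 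Hence $\bar{g} \cdot h$ is already a reduced expression with $|\bar{g} h| = |\bar{g}| + |h|$, and the analogous application to $(h', \bar{g})$ yields that $h' \cdot \bar{g}$ is reduced with $|h| = |h'|$.

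The second step is a heap argument. By Lemma \ref{lemma: normal form for graph products} the reduced expressions $\bar{g} \cdot h$ and $h' \cdot \bar{g}$ of the same element differ only by syllable shuffles, so they represent the same trace. For each $w \in \supp(\bar{g})$ and $u \in \supp(h)$ sitting at non-adjacent or equal vertices, a $w$-letter and a $u$-letter would be comparable in the heap; but the two reduced expressions force opposite relative positions (in $\bar{g}\cdot h$ the $w$-letter precedes the $u$-letter, while in $h'\cdot \bar{g}$ the $u$-letter precedes the $w$-letter), a contradiction unless $u \neq w$ and $u, w$ are adjacent in $\Gamma$. Thus $\supp(h) \cap \supp(\bar{g}) = \emptyset$ and every vertex of $\supp(h)$ is adjacent to every vertex of $\supp(\bar{g})$; combined with $\supp(h) \subseteq \Lambda$ this gives $\supp(h) \subset \Theta$, whence every letter of $h$ commutes with every letter of $\bar{g}$, so $\bar{g} h = h \bar{g}$ and $h' = h$. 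The delicate point is the case $u = w \in \supp(\bar{g}) \cap \supp(h)$, where the identification of ``$\bar{g}$-letters'' differs between the two reduced expressions; this is the main obstacle, and I expect to handle it either by a direct shuffling argument exploiting that $v := u = w \in \Lambda$ would allow the offending $v$-letter of $h$ to be absorbed into a length-reducing $\Gcl(\Lambda)$-multiplication, contradicting minimality, or by an induction on $|h|$ reducing to a single-letter case.
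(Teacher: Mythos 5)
Your skeleton (core claim implying both parts) and your first step are sound: the use of Proposition \ref{prop:normal forms for products} together with minimality of $\bar{g}$ in its double coset correctly shows that $\bar{g}\cdot h$ and $h'\cdot\bar{g}$ are reduced decompositions and that $|h'|=|h|$. The gap is in the heap step, and it is not the peripheral ``delicate point'' you describe but the crux of the lemma. Your order-comparison only works for a vertex pair $(w,u)$ when the $w$-letters come exclusively from $\bar{g}$ and the $u$-letters exclusively from $h$ (and $h'$). As soon as a vertex $v$ lies in $\supp(\bar{g})\cap\supp(h)$ (equivalently, your $u=w$ case, which also contaminates the non-adjacent $u\neq w$ case whenever both vertices occur in both supports), the occurrence bijection between the two reduced expressions may match a letter of $h$ in $\bar{g}\cdot h$ with a letter of $\bar{g}$ in $h'\cdot\bar{g}$; all the order invariant then gives is the conjugate-word equation $AB=B'A$ on the subsequence of letters at the offending vertices, and this equation has plenty of solutions (e.g.\ $A,B,B'$ powers of a common block), so no contradiction follows. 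Moreover your two proposed repairs do not close this: minimality of $\bar{g}$ only bites on a letter of $\bar{g}$ that can be shuffled to one of its ends (so that it can be stripped off by a $\Gcl(\Lambda)$-multiplication), and nothing in your setup shows the relevant $v$-letter of $\bar{g}$ has this property -- absorbing a $v$-letter of $h$ costs nothing, since $|\bar g h|$ is not the quantity being minimized; and an induction on $|h|$ founders because the hypothesis does not pass to pieces of $h$: for a single letter $y$ of $h$, $\bar{g}y\bar{g}^{-1}$ need not lie in $\Gcl(\Lambda)$.

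For comparison, the paper avoids this issue by working with the three-part word $h_2\bar{g}h_1=\bar{g}$ (with $h_1=h$, $h_2=h'^{-1}$) and analyzing which letters can merge during its reduction: if a letter of $h_1$ or $h_2$ merged with a letter of $\bar{g}$, that letter of $\bar{g}$ would commute with everything in between, hence could be shuffled to an end of $\bar{g}$ and cancelled by an element of $\Gcl(\Lambda)$, giving a strictly shorter element of $\Gcl(\Lambda)g\Gcl(\Lambda)$ -- this is exactly the mechanism that lets minimality control letters of $\bar{g}$, which your two-part comparison $\bar{g}h=h'\bar{g}$ never produces. Consequently every letter of $h_1$ must cancel against a letter of $h_2$ across all of $\bar{g}$, which yields $h'=h$ and that every letter of $h$ commutes with every letter of $\bar{g}$, i.e.\ $\supp(h)\subset\Theta$, proving (1) and (2) simultaneously. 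To salvage your route you would need to first establish $\supp(h)\cap\supp(\bar{g})=\emptyset$ by an argument of this cancellation/minimality type (or by invoking Theorem \ref{thm:centralizer} after independently proving $h'=h$), at which point your clean heap argument finishes; as written, the essential step is missing.
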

\begin{proof}
	Let $\gb$ be a word of minimal syllable length in $\Gcl(\Lambda) g \Gcl(\Lambda)$. 
	Then $\gb$ is in particular reduced by Lemma \ref{lemma: normal form for graph products}.
	
	Suppose that there are some $h_1,h_2 \in \Gcl(\Lambda)$ such that $\bar{g} h_1 \bar{g}^{-1} = h_2^{-1}$. Then $h_2 \bar{g} h_1 = \bar{g}$. We may assume that $h_1, h_2$ are written as reduced words. By Lemma \ref{lemma: normal form for graph products} there are three cases:
	\begin{itemize}
		\item some letter in $h_2$ merges with another in $\gb$ and commutes with all the letters in between; 
		\item some letter in $h_1$ merges with another in $\gb$ and commutes with all the letters in between; or
		\item some letter in $h_2$ merges with another in $h_1$ and commutes with all the letters in between.
	\end{itemize}
	The first two cases can not occur by our choice of $\gb$ as we can remove the letter that merges with $h_1$ or $h_2$ in $\gb$. Thus we should keep having the last case until the word
	$h_1 \gb h_2$ reduces to $\gb$. The process implies that $h_1=h_2^{-1}$ and both commute with every letter of $\gb$. This shows that $\gb$ is a CM-representative.
	
	The observation above also implies that a reduced word $h \in \Gcl(\Lambda)$ commutes with $\gb$ if and only if every letter in it commutes with all those in $\gb$. This shows $Z_{\Gcl(\Lambda)}(\gb)= \Gcl(\Theta)$ as in (\ref{item: centralizer}). 
\end{proof}

As the minimal representatives in the double cosets yields nice and controlled centralizers, we always use them as our CM-choice in what follows. It is not important for our purposes but the method above shows that $\bar{g}$ is the unique element of minimal length in $\Gcl(\Lambda) g \Gcl(\Lambda)$.

\begin{definition}[canonical CM-choice] \label{def: canonical cm}
	Let $\Gamma$ be a simplicial graph and let $\Gcl(\Gamma)$ be a graph product of groups. We define the canonical CM-subgroup choice $\Icl(\Gcl(\Gamma))$ as follows:
	For any induced subgraph $\Lambda \subset \Gamma$ and any $g \in \Gcl(\Gamma)$ we choose $\gb$ a CM-representative of $g$ for $\Gcl(\Lambda) \le \Gcl(\Gamma)$ as an element with the smallest syllable length in $\Gcl(\Lambda) g \Gcl(\Lambda)$.
	For any other CM-subgroups we choose the CM-representatives arbitrarily.
\end{definition}

Note that for any induced subgraph $\Lambda$ of $\Gamma$, item (\ref{item: shortest is CM}) of Lemma \ref{lemma: Lambda has property CM} shows that $\Gcl(\Lambda) \le \Gcl(\Gamma)$ is a CM-subgroup. Moreover, under the canonical choice all CM-subgroup sequences have the form
$$
\Gcl(\Lambda_{n+2}) \le \Gcl(\Lambda_{n+1}) \le \dots \le \Gcl(\Lambda_1) \le \Gcl(\Gamma),
$$
where $n\ge0$ and $\Lambda_{n+2} \subset \cdots \subset\Lambda_1 = \Lambda$ is a proper nested sequence of induced subgraphs except that possibly $\Lambda_{n+2}=\Lambda_{n+1}$.
Thus either $\Gcl(\Lambda_{n+2}) = \Gcl(\Lambda_{n+1})$ or $\Gcl(\Lambda_{n+2})$ is a proper CM-subgroup by Lemma \ref{lemma: Lambda has property CM}.

Thus to show that $\Gcl(\Lambda) \le \Gcl(\Gamma)$ is a BCMS-$D$ subgroup, we need to control the length of CM-sequences with respect to the canonical choice. This is what we do in the next subsection.



\subsection{The opposite paths $\Delta_m$ and lengths of CM-sequences} \label{subsec:opposite paths delta m}
Now we find the maximal length of CM-sequences in a given graph product on a graph $\Gamma$ with respect to the canonical CM-choice.
Then we show that the subgroup associated to any induced subgraph of $\Gamma$ is BCMS-$D$ for $D=\Delta(\Gamma)$.

Recall that for a graph $\Gamma$, we define $\Delta(\Gamma)$ to be the largest number $m\in\Z_+$ such that $\Delta_m$ is an induced subgraph of $\Gamma$. 
The only graphs where $\Delta_1$ does not embed as an induced subgraph are complete graphs (including the graph with a single vertex).
We set $\Delta(\Gamma) = 0$ if $\Gamma$ is a complete graph. 
If all $\Delta_m$ are induced subgraphs of $\Gamma$, then $\Gamma$ is necessarily infinite, and we set $\Delta(\Gamma)=\infty$.

For example we see that $\Delta(\Delta_m)=m$. Observe also that $\Delta(\Gamma) \leq |\Gamma|-1$. 
We will see that $\Delta(\Gamma)$ controls the length of the longest CM-sequence in subgroups of $\Gcl(\Gamma)$ associated to induced subgraphs.

On the one hand, for arbitrary nontrivial vertex groups, a graph product on the graph $\Delta_n$ has a CM-subgroup sequence of length $n+1$.
For $n \in \Z_+$ and $i \in \{1, \ldots, n \}$ let $\Delta^i_n$ be the induced subgraph of $\Delta_n$ with vertex set
$$
V(\Delta^i_n) = \{ v_i, \ldots, v_n \}.
$$
For arbitrary nontrivial elements $g_i\in G_{v_i}$, we have a CM-sequence $(g_0, \ldots, g_n)$ of length $n+1$, and the associated CM-subgroup sequence is 
$$
\{ e \} \le \{ e \} \le \Gcl(\Delta_n^n) \le \cdots \le \Gcl(\Delta_n^1) \le \Gcl(\Delta_n).
$$

On the other hand, we can find an induced subgraph isomorphic to some $\Delta_m$ from a CM-sequence. 

\begin{lemma}\label{lemma: CM length and delta subgraphs}
	Let $\Gamma_0$ be a graph and let $\Gamma_1 \subset \Gamma_0$ be an induced proper subgraph. Fix arbitrary nontrivial vertex groups to form a graph product $\Gcl(\Gamma_0)$. 
	For the canonical CM-choice, let $(c_0, \ldots, c_{m})$ be a CM-sequence with respect to $\Gcl(\Gamma_1) < \Gcl(\Gamma_0)$ of length $m+1$, 
	and let $C_{m+2} \le \cdots \le C_0$ be the associated CM-subgroup sequence. 
	Then there is an induced subgraph $\Delta_m$ of $\Gamma$.
\end{lemma}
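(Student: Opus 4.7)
The proof plan is induction on $m$ with a strengthened hypothesis: the $m+1$ vertices of the induced $\Delta_m$ may be chosen with $v_0\in\supp(c_0)\setminus\Gamma_1$ and $v_i\in\supp(c_i)$ for $i\ge 1$. The base case $m=0$ follows by picking any $v_0\in\supp(c_0)\setminus\Gamma_1$, which is nonempty since $c_0\notin\Gcl(\Gamma_1)$.

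For the inductive step, the tail sequence $(c_1,\ldots,c_m)$ is itself a CM-sequence of length $m$ with respect to $\Gcl(\Lambda_2)<\Gcl(\Lambda_1)$, whose associated CM-subgroup sequence is the tail $\Lambda_1\supsetneq\Lambda_2\supsetneq\cdots\supsetneq\Lambda_{m+1}$ of the original one. By the induction hypothesis there exist vertices $u_0\in\supp(c_1)\setminus\Lambda_2$ and $u_i\in\supp(c_{i+1})$ for $1\le i\le m-1$ forming an induced $\Delta_{m-1}$ in $\Gamma_1$. I would then prepend a suitable $v_0\in\supp(c_0)\setminus\Gamma_1$ to produce the candidate vertex set $\{v_0,u_0,\ldots,u_{m-1}\}$ of an induced $\Delta_m$ in $\Gamma_0$.

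The edges from $v_0$ to the $u_i$ with $i\ge 1$ are automatic: by Lemma \ref{lemma: Lambda has property CM} applied to the canonical CM-representative $c_0$, the subgraph $\Lambda_2$ consists precisely of the vertices of $\Gamma_1$ adjacent to every vertex of $\supp(c_0)$. Since $u_i\in\supp(c_{i+1})\subset\Lambda_{i+1}\subset\Lambda_2$ for $i\ge 1$, each such $u_i$ is adjacent to $v_0\in\supp(c_0)$; and the induced $\Delta_{m-1}$-structure on $\{u_0,u_1,\ldots,u_{m-1}\}$ carries over unchanged from the inductive conclusion. Distinctness of $v_0$ from all $u_i$ is automatic, since $u_i\in\Gamma_1$ while $v_0\notin\Gamma_1$.

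The main obstacle is verifying the non-edge $(v_0,u_0)$. Since $u_0\in\supp(c_1)\setminus\Lambda_2$, some vertex of $\supp(c_0)$ is non-adjacent to $u_0$; the delicate point is to coordinate the choices of $u_0$ (output by the induction) and $v_0$ (to be prepended) so that a common witness lies in $\supp(c_0)\setminus\Gamma_1$ rather than inside $\Gamma_1$. My plan is to strengthen the induction further to allow specifying $u_0$ within a prescribed subset of $\supp(c_1)\setminus\Lambda_2$, and to invoke the canonical CM-representative property of $c_0$ (minimality of syllable length in $\Gcl(\Gamma_1)\,c_0\,\Gcl(\Gamma_1)$) to rule out the degenerate configuration in which every non-adjacency witness in $\supp(c_0)$ for $u_0$ is trapped inside $\Gamma_1$. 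Managing this joint choice of $v_0$ and $u_0$, by a syllable-shuffling argument in the style of Lemma \ref{lemma: Lambda has property CM}, is the step I expect to be the most delicate.
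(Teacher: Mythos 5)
Your overall skeleton (peel off $c_0$, apply the inductive hypothesis to the tail CM-sequence $(c_1,\ldots,c_m)$ for $\Gcl(\Gamma_2)<\Gcl(\Gamma_1)$, then prepend a vertex of $\supp(c_0)$, using that everything in $\Gamma_2$ is adjacent to all of $\supp(c_0)$) is the same recursion as the paper's, and the easy verifications (automatic adjacencies, distinctness) are fine. The gap is that your strengthened induction hypothesis --- one vertex $v_i\in\supp(c_i)$ for every $i\ge 1$ together with $v_0\in\supp(c_0)\setminus\Gamma_1$ --- is not just unproven but false, and the rescue you propose (that minimality of the canonical representative $c_0$ rules out the configuration in which every non-adjacency witness for $u_0$ inside $\supp(c_0)$ lies in $\Gamma_1$) fails. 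Concretely, let $\Gamma_0$ have vertices $x,y,z$ and the single edge $(x,z)$, let $\Gamma_1$ be the induced subgraph on $\{y,z\}$, and take the RAAG $\Arm(\Gamma_0)$. Then $c_0=xyx$ is the (unique) shortest element of its double coset $\Gcl(\Gamma_1)c_0\Gcl(\Gamma_1)$ (a $z$-syllable commutes past $x$ but not past $y$, a $y$-syllable does not commute past $x$), so it is the canonical CM-representative; $\supp(c_0)=\{x,y\}$, and $C_2=Z_{C_1}(c_0)=\Gcl(\emptyset)=\{e\}$ by Lemma \ref{lemma: Lambda has property CM}. Taking $c_1=z$ gives a valid CM-sequence $(c_0,c_1)$ of length $2$ (so $m=1$) with associated sequence $\{e\}\le\{e\}<\Gcl(\Gamma_1)<\Gcl(\Gamma_0)$. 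Your statement would require $v_0\in\supp(c_0)\setminus\Gamma_1=\{x\}$ and $v_1\in\supp(c_1)=\{z\}$ to be non-adjacent, but $x$ and $z$ are adjacent. (The lemma itself is of course fine here: $\{x,y\}$ is a non-edge.) This is exactly the ``degenerate configuration'' you flag: the only witness $y$ for the non-adjacency of $z$ with $\supp(c_0)$ is trapped in $\Gamma_1$, and no amount of freedom in choosing $u_0$ helps since $\supp(c_1)\setminus\Gamma_2$ is a single vertex.

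The paper avoids this by proving a weaker per-level statement (Lemma \ref{lemma:delta subgraph of gamma}): only the \emph{first} vertex is pinned to lie in $\supp(c_{m-i})\setminus\Gamma_{m-i+1}$, while the remaining vertices are merely required to lie in $\Gamma_{m-i+1}$, not in the supports of the individual $c_j$'s. In the hard case --- when no vertex of $\supp(c_0)\setminus\Gamma_1$ is non-adjacent to the current first vertex --- the minimality of the canonical representative is used quite differently from what you propose: one extracts from the letters of $c_0$ a whole chain $w_0,\ldots,w_k$ (Claim \ref{claim:existence of W}), each chosen as the last letter not commuting with its predecessor, which forms an opposite path inside $\supp(c_0)$ that eventually escapes $\Gamma_1$; this chain is concatenated with the previously built sequence and then truncated to length $m+1$. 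After truncation several vertices of the final $\Delta_m$ may come from the support of a single $c_j$ and other supports may be skipped entirely, which is precisely why the one-vertex-per-support strengthening cannot be maintained. To repair your argument you would need to replace your inductive statement by one of this weaker form and add the multi-vertex chain-plus-truncation mechanism in the prepending step; the single-vertex prepend you describe cannot close the induction.
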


To prove Lemma \ref{lemma: CM length and delta subgraphs}, we first observe some basic relationship between the graphs defining the subgroups $C_i$ and those supporting $c_i$. Recall that $C_{i+2}= Z_{C_{i+1}}(c_i)$ for all $0\le i\le m$.

\begin{lemma}\label{lemma: properties of Lambda and Gamma}
	In the setting of Lemma \ref{lemma: CM length and delta subgraphs}, there are induced subgraphs $\Gamma_{m+2} \subset \cdots \subset \Gamma_1 \subset \Gamma_0$ 
	such that for each $0 \leq i \leq m$ there is an induced subgraph $\Lambda_i \subset \Gamma_i$ with the following properties:
	\begin{enumerate}
		\item $\Lambda_i$ is the induced subgraph on the support of $c_i$ for all $0\le i\le m$, \label{item: induct Lambda}
		\item For each $0\le i\le m$, $\Gamma_{i+2}$ is the induced subgraph consisting of vertices in $\Gamma_{i+1}$ adjacent to all those in $\Lambda_i$, \label{item: induct Gamma}
		\item $C_{i} =\Gcl(\Gamma_{i})$ for all $0\le i\le m+2$, \label{item:induct Gamma property}
		\item $\Lambda_i\setminus \Gamma_{i+1}\neq\emptyset$ for any $0\le i\le m$, and \label{item:induct Lambda property}
		\item $\Lambda_i \subset \Gamma_i \setminus \Gamma_{i+2}$ for every $0 \leq i \leq m$. \label{item: lambda_i not in gamma i+2}
	\end{enumerate}
\end{lemma}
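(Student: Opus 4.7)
The plan is to prove everything by induction on $i$, building the graphs $\Gamma_i$ and $\Lambda_i$ step by step and checking the five properties along the way. The key tool at each step will be Lemma \ref{lemma: Lambda has property CM}, which identifies both the CM-representatives (as minimal length elements in their double cosets) and the centralizer of such a representative as the graph product on an explicit induced subgraph.

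For the base case $i=0$, the graphs $\Gamma_0$ and $\Gamma_1$ are already given and $C_0=\Gcl(\Gamma_0)$, $C_1=\Gcl(\Gamma_1)$ by hypothesis. Set $\Lambda_0 \defeq \supp(c_0)$; this is an induced subgraph of $\Gamma_0$, and since $c_0 \in C_0\setminus C_1$, we have $\Lambda_0\setminus \Gamma_1 \neq \emptyset$, giving (\ref{item:induct Lambda property}). Define $\Gamma_2$ as in (\ref{item: induct Gamma}). Since the canonical CM-choice makes $c_0$ the element of smallest syllable length in $\Gcl(\Gamma_1)\,c_0\,\Gcl(\Gamma_1)$, Lemma \ref{lemma: Lambda has property CM}(\ref{item: centralizer}) (applied with $\Lambda=\Gamma_1$ in the ambient graph product $\Gcl(\Gamma_0)$) yields $C_2 = Z_{\Gcl(\Gamma_1)}(c_0) = \Gcl(\Gamma_2)$, which is (\ref{item:induct Gamma property}).

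For the inductive step, assume $\Gamma_0,\ldots,\Gamma_{i+1}$ and $\Lambda_0,\ldots,\Lambda_{i-1}$ have been constructed so that properties (\ref{item: induct Lambda})--(\ref{item: lambda_i not in gamma i+2}) hold for all smaller indices, in particular $C_{i+1}=\Gcl(\Gamma_{i+1})$ with $\Gamma_{i+1}$ an induced subgraph of $\Gamma_0$. Set $\Lambda_i \defeq \supp(c_i)$; since $c_i\in C_i = \Gcl(\Gamma_i)$ we get $\Lambda_i\subset \Gamma_i$, and since $c_i\notin C_{i+1}=\Gcl(\Gamma_{i+1})$ we get $\Lambda_i \setminus \Gamma_{i+1}\neq\emptyset$. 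Define $\Gamma_{i+2}$ by (\ref{item: induct Gamma}). By Definition \ref{def:cm-subgroup-choice} of the canonical CM-choice, $c_i$ has minimal syllable length in $\Gcl(\Gamma_{i+1})\,c_i\,\Gcl(\Gamma_{i+1})$, so another application of Lemma \ref{lemma: Lambda has property CM}(\ref{item: centralizer}) gives $C_{i+2} = Z_{\Gcl(\Gamma_{i+1})}(c_i) = \Gcl(\Gamma_{i+2})$, as required.

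Property (\ref{item: lambda_i not in gamma i+2}) is essentially free in each step: $\Lambda_i\subset \Gamma_i$ has been established; on the other hand, if some vertex $v\in\Lambda_i$ were also in $\Gamma_{i+2}$, then by the definition of $\Gamma_{i+2}$ it would have to be adjacent to every vertex of $\Lambda_i$, including itself, contradicting the fact that $\Gamma$ is simplicial. So $\Lambda_i\cap\Gamma_{i+2}=\emptyset$, and combined with $\Lambda_i\subset\Gamma_i$ this gives $\Lambda_i\subset\Gamma_i\setminus\Gamma_{i+2}$. There is no real obstacle here: once one observes that the canonical CM-choice makes each $c_i$ a minimal-length double-coset representative, Lemma \ref{lemma: Lambda has property CM} turns every centralizer in the CM-subgroup sequence into a graph product on an explicit induced subgraph, and the induction essentially writes itself. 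The only thing to watch is that the ambient graph product in which Lemma \ref{lemma: Lambda has property CM} is invoked at step $i$ can be taken to be $\Gcl(\Gamma_0)$ throughout, since $\Gamma_{i+1}$ is an induced subgraph of $\Gamma_0$ by the inductive hypothesis.
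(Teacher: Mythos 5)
Your construction and your verification of items (1)--(4) track the paper's own proof: take $\Lambda_i=\supp(c_i)$, define $\Gamma_{i+2}$ by item (2), deduce item (3) from Lemma \ref{lemma: Lambda has property CM} applied to $c_i$ as the minimal-length element of $\Gcl(\Gamma_{i+1})\,c_i\,\Gcl(\Gamma_{i+1})$ inside the ambient group $\Gcl(\Gamma_0)$ (your closing remark that the ambient group can be taken to be $\Gcl(\Gamma_0)$ throughout is the right observation), and deduce item (4) from $c_i\notin C_{i+1}$. The one place where you diverge from the paper is item (5), and that is precisely the one step of this lemma where something actually has to be proved.

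Your argument for (5) is that a vertex $v\in\Lambda_i\cap\Gamma_{i+2}$ would have to be adjacent to itself, which is impossible in a simplicial graph. This only works if you read ``adjacent to all those in $\Lambda_i$'' as including self-adjacency, i.e.\ it makes (5) a vacuous consequence of the wording rather than of any property of $c_i$. Under the natural reading (adjacent to all \emph{other} vertices of $\Lambda_i$ --- compare how the paper defines $D(g)$ by explicitly restricting to vertices outside $\Theta(g)$), nothing formal prevents a support vertex $v\in\Lambda_i\cap\Gamma_{i+1}$ from being adjacent to every other vertex of $\Lambda_i$, and your argument gives nothing. What actually rules this out is the canonical CM-choice: if a reduced expression of $c_i$ contained a letter in $G_v$ for some $v\in\Gamma_{i+2}\subset\Gamma_{i+1}$, that letter would commute with every other letter of $c_i$ (since $v$ is adjacent to all of $\Lambda_i$), so it could be shuffled to the end of $c_i$ and absorbed into $\Gcl(\Gamma_{i+1})$, producing a strictly shorter element of the double coset $\Gcl(\Gamma_{i+1})\,c_i\,\Gcl(\Gamma_{i+1})$ and contradicting the minimality required by Definition \ref{def: canonical cm}. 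This shuffling-plus-minimality argument is the paper's proof of (5), and it is the idea missing from your writeup; it is also the fact that makes the centralizer description in Lemma \ref{lemma: Lambda has property CM} consistent with your literal reading in the first place, so it cannot really be avoided.
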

\begin{proof}
	Bullet (\ref{item:induct Gamma property}) holds for $i \in \{0,1 \}$ by definition. Now we consider $i\ge 2$.
	Inductively from $i=2$ to $i=m+2$, we take bullet (\ref{item: induct Lambda}) as the definition of $\Lambda_i$, based on which we define $\Gamma_{i+2}$ as in bullet (\ref{item: induct Gamma}). Then 
	$\Lambda_i\subset \Gamma_i$ and $\Gamma_{i+2}\subset \Gamma_{i+1}$ by definition, and bullet (\ref{item:induct Gamma property}) follows from Lemma \ref{lemma: Lambda has property CM}. Then bullet
	(\ref{item:induct Lambda property}) holds since $c_i\notin C_{i+1}$ (as a CM-representative).
	
	To see bullet (\ref{item: lambda_i not in gamma i+2}) recall that every vertex of $\Gamma_{i+2}$ is adjacent to all vertices in $\Lambda_i$. If a reduced expression of $c_i$ contains a letter in $G_v$ for some $v\in \Gamma_{i+2}$, then we can shuffle it to the end of $c_i$, contradicting to the choice of $c_i$.
\end{proof}

Lemma \ref{lemma: CM length and delta subgraphs} follows from the case $i=m$ in Lemma \ref{lemma:delta subgraph of gamma} below, which is stated in a way to suit its proof by induction. In below, we say the induced subgraph of $\Gamma_0$ on a sequence of (distinct) vertices $(v_0,\dots, v_i)$ is isomorphic to $\Delta_i$ as \emph{labeled graphs} if $v_j$ and $v_k$ are adjacent in $\Gamma_0$ if and only if $|j-k|\ge2$. 

\begin{lemma}\label{lemma:delta subgraph of gamma}
	In the setup of Lemmas \ref{lemma: CM length and delta subgraphs} and Lemma \ref{lemma: properties of Lambda and Gamma}, for each $1\le i\le m$, there is a sequence of distinct vertices $V_i =(v_0, \ldots, v_i)$ of $\Gamma_0$ such that
	\begin{itemize}
		\item $v_1, \ldots, v_i \in \Gamma_{m-i+1}$,
		\item $v_0 \in \Lambda_{m-i} \setminus \Gamma_{m-i+1}$,
	\end{itemize}
	and the induced subgraph of $\Gamma_0$ on $V_i$ is isomorphic to $\Delta_i$ as labeled graphs.
\end{lemma}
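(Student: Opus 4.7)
The plan is to prove the lemma by induction on $i$, where $V_{i+1}$ is obtained from $V_i$ by prepending a single new leading vertex $v_0'\in\Lambda_{m-i-1}\setminus\Gamma_{m-i}$.

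The driving observation is that by the definition of $\Gamma_{m-i+1}$ in bullet~(2) of Lemma~\ref{lemma: properties of Lambda and Gamma}, every vertex of $\Gamma_{m-i+1}$ is adjacent to every vertex of $\Lambda_{m-i-1}$. Hence any $v_0'\in\Lambda_{m-i-1}$ is automatically adjacent to each of $v_1,\ldots,v_i\in\Gamma_{m-i+1}$, and, combined with the adjacency pattern on $(v_0,\ldots,v_i)$ inherited from $V_i$, the induced subgraph on $V_{i+1}$ is $\Delta_{i+1}$ as long as we can choose $v_0'$ so that (a) $v_0'\in\Lambda_{m-i-1}\setminus\Gamma_{m-i}$ and (b) $v_0'$ is not adjacent to $v_0$. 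This reduces the entire argument to a selection problem at each stage.

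For the base case $i=1$, I would invoke bullet~(4) to pick $v_1\in\Lambda_m\setminus\Gamma_{m+1}\subset\Gamma_m$; then $v_1\notin\Gamma_{m+1}$ produces some $v_0\in\Lambda_{m-1}$ not adjacent to $v_1$, and the remaining task is to ensure $v_0\in\Lambda_{m-1}\setminus\Gamma_m$. For the inductive step, the hypothesis $v_0\in\Lambda_{m-i}\setminus\Gamma_{m-i+1}$ gives (via bullet~(2)) a witness $w\in\Lambda_{m-i-1}$ with $v_0\not\sim w$, while bullet~(4) applied one level lower guarantees $\Lambda_{m-i-1}\setminus\Gamma_{m-i}\neq\emptyset$.

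The main obstacle is reconciling conditions (a) and (b) at each stage: the non-adjacency witness from bullet~(2) and the ``outside $\Gamma_{m-i}$'' witness from bullet~(4) may a priori be distinct vertices of $\Lambda_{m-i-1}$, while we need a single $v_0'$ inhabiting both sets. I expect to resolve this by invoking the minimal syllable length of the CM-representative $c_{m-i-1}$ inside the double coset $\Gcl(\Gamma_{m-i})\,c_{m-i-1}\,\Gcl(\Gamma_{m-i})$: if every vertex of $\Lambda_{m-i-1}$ non-adjacent to $v_0$ lay in $\Gamma_{m-i}$, then the corresponding letter in a reduced form of $c_{m-i-1}$ could be shuffled across the commuting letters and absorbed into the surrounding $\Gcl(\Gamma_{m-i})$ factor, contradicting minimality. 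The same minimality trick handles the analogous requirement $v_0\in\Lambda_{m-1}\setminus\Gamma_m$ in the base case.
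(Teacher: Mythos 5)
Your reduction to a ``selection problem'' is where the argument breaks. In the inductive step you need a single vertex $v_0'\in\Lambda_{m-i-1}\setminus\Gamma_{m-i}$ that is also non-adjacent to $v_0$, and you propose to force its existence by minimality of $c_{m-i-1}$ in the double coset $\Gcl(\Gamma_{m-i})\,c_{m-i-1}\,\Gcl(\Gamma_{m-i})$: if every non-neighbor of $v_0$ in $\Lambda_{m-i-1}$ lay in $\Gamma_{m-i}$, you claim its letter could be shuffled out and absorbed. This is a non sequitur. Whether a letter of $c_{m-i-1}$ on a vertex $x\in\Gamma_{m-i}$ can be shuffled to an end of the word depends on $x$ being adjacent to all vertices of the letters between $x$ and that end of $c_{m-i-1}$; the hypothesis that $x$ is non-adjacent to $v_0$ gives no such information, since $v_0\in\Lambda_{m-i}=\supp(c_{m-i})$ is not a letter of $c_{m-i-1}$ and plays no role in that double coset. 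Indeed such a single vertex need not exist at all: it can happen that every vertex of $\Lambda_{m-i-1}\setminus\Gamma_{m-i}$ is adjacent to $v_0$, and then no one-vertex prepend works. The same issue already afflicts your base case, where you fix $v_1\in\Lambda_m\setminus\Gamma_{m+1}$ first and then hope for a non-neighbor in $\Lambda_{m-1}\setminus\Gamma_m$; the paper instead, in the hard subcase, changes which vertex plays the role of $v_1$ (the last letter of $c_{m-1}$ supported in $\Gamma_m$), so that the later non-commuting letter is automatically outside $\Gamma_m$.

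The paper's actual use of minimality is different and is what makes the hard case go through: a letter of $c_{m-i-1}$ on a vertex \emph{inside} $\Gamma_{m-i}$ cannot commute with everything after it (else the double-coset representative could be shortened), so one can iterate ``take the last later letter not adjacent to the current one'' to build a sequence $w_0,w_1,\ldots,w_k$ in $\Lambda_{m-i-1}$ whose induced graph is an opposite path, which terminates only when it exits $\Gamma_{m-i}$; here $w_0\not\sim v_0$ but the exit vertex $w_k$ is allowed to be adjacent to $v_0$. One then concatenates $(w_k,\ldots,w_0,v_0,\ldots,v_i)$ and truncates to $i+2$ vertices. This concatenate-and-truncate move is precisely why the lemma only asks the non-leading vertices to lie in the progressively larger graph $\Gamma_{m-i+1}$; your strictly ``prepend one vertex and keep all of $V_i$'' induction cannot exploit this and, as stated, cannot be completed.
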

\begin{proof}
	We show this lemma by induction on $i$. First consider the base case $i=1$. 
	Let $u$ be an arbitrary vertex in $\Lambda_m\setminus\Gamma_{m+1}$, which exists by bullet (\ref{item:induct Lambda property}) of Lemma \ref{lemma: properties of Lambda and Gamma}. 
	There are two possibilities:
	\begin{itemize}
		\item If $\Lambda_{m-1}\cap \Gamma_{m}=\emptyset$, there is some $v_0\in \Lambda_{m-1}$ not adjacent to $u$ since $u\notin \Gamma_{m+1}$; See bullet (\ref{item: induct Gamma}) of Lemma \ref{lemma: properties of Lambda and Gamma}. 
		Then $v_0\in \Lambda_{m-1}\setminus \Gamma_m=\Lambda_{m-1}$ and 
		$V_1=(v_0,u)$ satisfies the desired properties.
		
		\item If $\Lambda_{m-1}\cap \Gamma_{m}\neq \emptyset$, write $c_{m-1}$ as a reduced word and let $g_{v_1}$ be the
		last letter in $c_{m-1}$ that is supported on some $v_1\in \Gamma_m$. Then there must be some letter $g_{v_0}$ in $c_{m-1}$ supported on $v_0\in \Lambda_{m-1}$
		appearing after $g_{v_1}$ such that $v_0$ and $v_1$ are not adjacent, since otherwise we can shuffle $g_{v_1}$ all the
		way to the end of $c_{m-1}$ contradicting the fact that $c_{m-1}$ has the shortest syllable length in
		$C_m c_{m-1} C_m$ and $C_m=\Gcl(\Gamma_m)$. Note that $v_0\notin \Gamma_{m}$ since $g_{v_1}$ is the last
		letter on a vertex in $\Gamma_m$. Thus $V_1=(v_0,v_1)$ satisfies the desired properties.
	\end{itemize}
	
	Suppose the lemma holds for some $1\le i\le m-1$ with a sequence of vertices $V_i =( v_0, \ldots, v_i)$. The simplest attempt to obtain $V_{i+1}$ is to add a suitable vertex $w_0$ at the beginning of $V_i$.
	Since $v_0 \notin \Gamma_{m-i+1}$, there is some vertex $w_0$ in $\Lambda_{m-i-1}$ that is not adjacent to $v_0$.
	Note that $v_0\notin \Lambda_{m-i-1}$ since otherwise it must be adjacent to all vertices in $\Gamma_{m-i+1}$ and in 
	particular to $v_1$, contradicting the induction hypothesis. 
	Combining with $v_s\in \Gamma_{m-i+1}$ for $s\ge1$ and $\Gamma_{m-i+1}\cap \Lambda_{m-i-1}=\emptyset$ 
	by bullet (\ref{item: lambda_i not in gamma i+2}) of Lemma \ref{lemma: properties of Lambda and Gamma},
	all vertices $w_\ell\in \Lambda_{m-i-1}$ we construct below are distinct from those in $V_i$.
	
	Ideally we would like to choose $w_0$ above so that it lies in $\Lambda_{m-i-1} \setminus \Gamma_{m-i}$, in which case $V_{i+1}\defeq (w_0, v_0,v_1,\ldots,v_i)$ is a desired sequence:
	Observe that $w_0\in \Lambda_{m-i-1}$ is adjacent to all $v_1, \ldots, v_i \in \Gamma_{m-i+1}$ but not to $v_0$.
	
	The remaining (harder) case is when every vertex in $\Lambda_{m-i-1} \setminus \Gamma_{m-i}$ is adjacent to $v_0$. In this case we show the following claim to construct another sequence $W$ of vertices so that the concatenated sequence $(W,V)$ has the desired properties once we cut it down to have exactly $i+1$ vertices by removing some vertices in the tail. The vertices in $W$ are listed in reverse order to reflect the order they appear in the inductive process below.
	
	\begin{claim} \label{claim:existence of W}
		There is a sequence
		$W=(w_k,\ldots,w_0)$ of vertices for some $k\ge1$ such that
		\begin{itemize}
			\item $w_0,\ldots,w_{k-1}\in \Lambda_{m-i-1}\cap\Gamma_{m-i}$, 
			\item $w_k\in \Lambda_{m-i-1} \setminus \Gamma_{m-i}$, 
			\item the induced subgraph on $W$ is isomorphic to $\Delta_k$ as labeled graphs, that is, for $0 \leq s < t \leq k$ the vertices $w_s$ and $w_t$ are adjacent in $\Gamma_0$ if and only if $|s-t| \geq 2$,
			\item $v_0$ is adjacent to $w_\ell$ iff $\ell> 0$.
		\end{itemize}
	\end{claim}
	\begin{proof}[Proof of Claim \ref{claim:existence of W}]
		By our assumption, there is some $w_0\in \Lambda_{m-i-1}\cap \Gamma_{m-i}$ not adjacent to $v_0$. 
		Choose $g_{w_0}$ to be the last letter on $c_{m-i-1}$ supported on a vertex $w_0$ with this property. 
		Now inductively we can find letters $g_{w_1},\ldots, g_{w_k}$ of $c_{m-i-1}$ supported on vertices $w_1,\ldots,w_k \in \Lambda_{m-i-1}$ such that 
		\begin{itemize}
			\item for each $1\le \ell\le k$, $g_{w_\ell}$ is the last letter on $c_{m-i-1}$ after $g_{w_{\ell-1}}$ such that $w_\ell$ is not adjacent to $w_{\ell-1}$, 
			\item $w_\ell\in\Lambda_{m-i-1}\cap\Gamma_{m-i}$ for all $\ell<k$, and
			\item $w_k\in \Lambda_{m-i-1} \setminus \Gamma_{m-i}$.
		\end{itemize}
		We are guaranteed to end up with some $w_k \notin \Gamma_{m-i}$: if $w_k\in \Gamma_{m-i}$, $g_{w_k}$ cannot
		commute with all letters after it on $c_{m-i-1}$ by the minimality of $c_{m-i-1}$, so we
		can continue the sequence by adding the last letter $g_{w_{k+1}}$ on $c_{m-i-1}$ after $g_{w_{k}}$ with the property that $w_{k+1}$ is not adjacent to $w_k$.
		
		Then by construction $W=(w_k,\ldots,w_0)$ consists of distinct vertices and the corresponding induced subgraph in $\Gamma_0$ is isomorphic to $\Delta_{k}$ as labeled graphs. 
		By our choice of $w_0$, we see $w_\ell$ is adjacent to $v_0$ iff $\ell>0$.
		This constructs the desired sequence $W$ in Claim \ref{claim:existence of W}.
	\end{proof}
	
	Now we finish the proof of Lemma \ref{lemma:delta subgraph of gamma}.
	By Claim \ref{claim:existence of W}, for all $0\le \ell\le k$, $w_\ell$ is adjacent to $v_1, \ldots, v_i$ as $w_\ell \in \Lambda_{m-i-1}$ and $v_1, \ldots, v_i \in \Gamma_{m-i+1}$.
	Then for the concatenated sequence $\widetilde{V}_{i+1}\defeq(W,V)$, its corresponding induced subgraph of $\Gamma_0$ is isomorphic to $\Delta_{i+k+1}$ as labeled graphs, and all vertices lie in $\Gamma_{m-i}$ except that the first vertex $w_k$ lies in $\Lambda_{m-i-1}\setminus\Gamma_{m-i}$.
	Thus by taking the first $i+2$ vertices in the sequence $\widetilde{V}_{i+1}$ as our $V_{i+1}$, this finishes the inductive proof of Lemma \ref{lemma:delta subgraph of gamma}.
\end{proof}
	
Now we deduce Lemma \ref{lemma: CM length and delta subgraphs} from Lemma \ref{lemma:delta subgraph of gamma}.
\begin{proof}[Proof of Lemma \ref{lemma: CM length and delta subgraphs}]
	The case of $i=m$ in Lemma \ref{lemma:delta subgraph of gamma} implies that the induced subgraph of $\Gamma_0$ with vertex set $V_m = (v_0, \ldots, v_m)$ is $\Delta_m$.
\end{proof}

\begin{proposition}\label{prop: BCMS-M by opposite length}
	Let $\Gamma$ be a simplicial graph where $D\defeq \Delta(\Gamma)<\infty$. Let $\Gcl(\Gamma)$ be a graph product on $\Gamma$ with arbitrary fixed nontrivial vertex groups.
	Then for any induced subgraph $\Lambda$ of $\Gamma$, the subgroup $\Gcl(\Lambda)$ is has property BCMS-$D$.
\end{proposition}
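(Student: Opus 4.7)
The plan is to invoke Proposition \ref{prop: bcms enough to check for subgroup choices} with respect to the canonical CM-subgroup-choice (Definition \ref{def: canonical cm}), so that I only need to verify the two required conditions for CM-sequences arising from that particular choice rather than for all CM-subgroup sequences. Lemma \ref{lemma: Lambda has property CM}(\ref{item: shortest is CM}) already tells me that $\Gcl(\Lambda)\le\Gcl(\Gamma)$ is a CM-subgroup, which is the input needed to talk about CM-sequences at all.

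First, I would check the structural hypothesis of Proposition \ref{prop: bcms enough to check for subgroup choices}: for any CM-sequence $(c_0, \ldots, c_m)$ starting from $\Gcl(\Lambda)$ with associated CM-subgroup sequence
$$
C_{m+2} \le C_{m+1} \le \cdots \le C_1 = \Gcl(\Lambda) \le C_0 = \Gcl(\Gamma),
$$
the subgroup $C_{i+2}$ is either equal to $C_{i+1}$ or is a proper CM-subgroup of $C_{i+1}$. This follows by induction on $i$: by Lemma \ref{lemma: Lambda has property CM}(\ref{item: centralizer}) applied to the canonical CM-representative $c_i$ of the inclusion $C_{i+1}\le C_i$, the centralizer $C_{i+2} = Z_{C_{i+1}}(c_i)$ is of the form $\Gcl(\Theta_i)$ for an induced subgraph $\Theta_i$ of the graph defining $C_{i+1}$; hence Lemma \ref{lemma: Lambda has property CM}(\ref{item: shortest is CM}) shows $C_{i+2}$ is itself a CM-subgroup of $C_{i+1}$ whenever the inclusion is proper. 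Thus the structural hypothesis of Proposition \ref{prop: bcms enough to check for subgroup choices} is satisfied and the same inductive description shows all members of the sequence are subgroups attached to induced subgraphs of $\Gamma$, placing us precisely in the setting of Lemma \ref{lemma: CM length and delta subgraphs}.

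Next, I would bound the length of any such CM-sequence. Given $(c_0,\ldots,c_m)$ as above, Lemma \ref{lemma: CM length and delta subgraphs} produces an induced copy of the opposite path $\Delta_m$ inside $\Gamma$. By the definition of $\Delta(\Gamma)$ this forces $m \le \Delta(\Gamma) = D$, so every CM-sequence has length at most $D+1$. Applying Proposition \ref{prop: bcms enough to check for subgroup choices} then yields BCMS-$D$.

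I do not expect any serious obstacle here, since the heavy combinatorial work has already been carried out in Lemma \ref{lemma: CM length and delta subgraphs}. The only subtlety to be careful about is that Proposition \ref{prop: bcms enough to check for subgroup choices} requires the CM-subgroup-choice to be fixed in advance, and the structural hypothesis to be verified just for CM-sequences coming from that choice; I would explicitly note that the canonical choice in Definition \ref{def: canonical cm} is defined coherently on all induced-subgraph subgroups encountered in a CM-subgroup sequence starting from $\Gcl(\Lambda)$, so no hidden compatibility issue arises.
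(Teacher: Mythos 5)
Your proposal is correct and follows essentially the same route as the paper: the paper likewise invokes Proposition \ref{prop: bcms enough to check for subgroup choices} with the canonical CM-choice, uses Lemma \ref{lemma: Lambda has property CM} (as spelled out at the end of Section \ref{subsec: canonical choice}) to see that all subgroups in a CM-subgroup sequence are of the form $\Gcl(\Theta)$ for induced subgraphs and hence CM-subgroups, and then bounds the length of CM-sequences by $\Delta(\Gamma)$ via Lemma \ref{lemma: CM length and delta subgraphs}. Your explicit verification of the structural hypothesis is exactly the content the paper delegates to that earlier discussion, so there is no gap.
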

\begin{proof}
	It is enough to check the BCMS-$D$ property using the canonical CM-subgroup choice by Proposition \ref{prop: bcms enough to check for subgroup choices}.
	As we explained at the end of Section \ref{subsec: canonical choice}, 
	it suffices to control the length of CM-sequences.
	By Lemma \ref{lemma: CM length and delta subgraphs}, for any CM-sequence $(c_0,c_1,\cdots,c_m)$, there is an induced subgraph of $\Gamma$ isomorphic to $\Delta_m$. Thus by definition $D=\Delta(\Gamma)\ge m$. Hence $\Gcl(\Lambda)$ is a BCMS-$D$ subgroup.
\end{proof}

\subsection{Stable commutator length in opposite paths} \label{subsec:scl in opposite paths}
Let $\Delta_m$ be the opposite path on the vertices $\{ v_0, \ldots, v_m \}$ as described above.
In this section we will see that for any (nontrivial) vertex groups $(G_v)_{v \in \Vrm(\Delta_m)}$ the associated graph product $\Gcl(\Delta_m)$ has an integral chain with small stable commutator length.
Choose a nontrivial element $g_i \in G_{v_i}$ for every vertex $v_i$ of $\Delta_m$. 
For any $m\ge2$, define a chain $\delta_m$ in $\Gcl(\Delta_m)$ as
$$
\delta_m \defeq g_{0,m} - g_{0,m-1} - g_{1,m} + g_{1,m-1}.
$$
where $g_{i,j} \defeq g_i \cdots g_j$.

The following computation leads to an upper bound of $\scl(\delta_m)$.
\begin{lemma} \label{lemma: the c_m}
	Given $m\ge2$ and $0\le i\le m$, for every $1 \leq j \leq m-i+1$ we have
	$$
	g_{i,m}^j = g_{i,m-1}^j c_j,
	$$
	where $c_j$ is recursively defined as follows: $c_1 = g_m$ and for $1 \leq j \leq m-i$
	$$
	c_{j+1} := g_{m-j,m-1}^{-1} c_j g_{m-j, m }.
	$$
\end{lemma}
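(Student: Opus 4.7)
The plan is to proceed by induction on $j$, with the whole argument turning on a single structural observation about where $c_j$ is supported.

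The base case $j=1$ reads $g_{i,m} = g_{i,m-1} g_m$, which is immediate from the definition of $g_{i,m}$.

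For the inductive step, I would first prove a side claim: for every $j$ with $1 \le j \le m-i+1$, the element $c_j$ can be written as a word in the letters $g_{m-j+1}, g_{m-j+2}, \ldots, g_m$ alone. This follows by a secondary induction on $j$: $c_1 = g_m$ trivially, and if $c_j$ uses only $g_{m-j+1},\ldots,g_m$, then the recursion $c_{j+1} = g_{m-j,m-1}^{-1} c_j g_{m-j,m}$ uses only $g_{m-j},g_{m-j+1},\ldots,g_m$. Since in $\Delta_m$ two vertices $v_s,v_t$ are adjacent precisely when $|s-t|\ge 2$, every generator $g_k$ with $k \le m-j-1$ commutes with every generator appearing in $c_j$. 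Consequently $c_j$ commutes with the word $g_{i,m-j-1} = g_i g_{i+1}\cdots g_{m-j-1}$ (interpreted as the identity when $j=m-i$, so that this segment is empty).

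Granted the claim, assume $g_{i,m}^j = g_{i,m-1}^j c_j$ for some $1 \le j \le m-i$. Factor $g_{i,m-1} = g_{i,m-j-1}\, g_{m-j,m-1}$ and compute
\begin{align*}
g_{i,m}^{j+1} &= g_{i,m-1}^j\, c_j\, g_{i,m-1}\, g_m
= g_{i,m-1}^j\, g_{i,m-j-1}\, c_j\, g_{m-j,m-1}\, g_m \\
&= g_{i,m-1}^j\, g_{i,m-j-1}\, c_j\, g_{m-j,m},
\end{align*}
using the commutation in the second equality and $g_{m-j,m}=g_{m-j,m-1}g_m$ in the third. On the other side,
\begin{align*}
g_{i,m-1}^{j+1} c_{j+1} &= g_{i,m-1}^j\, g_{i,m-j-1}\, g_{m-j,m-1}\, g_{m-j,m-1}^{-1}\, c_j\, g_{m-j,m} \\
&= g_{i,m-1}^j\, g_{i,m-j-1}\, c_j\, g_{m-j,m},
\end{align*}
matching the expression above and closing the induction.

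There is no real obstacle here; the proof is pure bookkeeping once one recognises that $c_j$ lives in the subword on the last $j$ generators, which by the opposite-path adjacency pattern means it commutes with the initial block $g_i\cdots g_{m-j-1}$. The only mild care needed is the edge case $j = m-i$, where the initial block is empty and both sides degenerate to $c_j\, g_{m-j,m}$.
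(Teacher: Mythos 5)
Your proof is correct and follows essentially the same route as the paper: induction on $j$, using the observation that $c_j$ is a word in $g_{m-j+1},\ldots,g_m$ and therefore commutes with the initial block $g_i\cdots g_{m-j-1}$, followed by the same bookkeeping computation. The only difference is cosmetic — you isolate the support claim as a separate secondary induction, whereas the paper states it in a single sentence.
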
 
\begin{proof}
	We proceed by induction.
	For $j=1$ the result is obvious.
	Suppose the conclusion holds for some $j \in \{1, \ldots, m-i \}$. 
	Then
	$$
	g_{i,m}^{j+1} = g_{i,m}^j \cdot g_{i,m} = g_{i,m-1}^j c_j g_{i,m}
	$$
	Since $c_j$ commutes with all $g_i, \ldots, g_{m-j-1}$ as it is a product of terms $g_k$ for $k \geq m-j+1$,
	we see that
	\begin{eqnarray*}
		g_{i,m}^{j+1} &=& g_{i,m-1}^j c_j \cdot g_{i,m}\\
		&=& g_{i,m-1}^j g_{i,m-j-1} c_j g_{m-j,m} \\
		&=&
		g_{i,m-1}^{j+1} g_{m-j,m-1}^{-1}  c_j g_{m-j, m}  \\
		&=&
		g_{i,m-1}^{j+1} c_{j+1}.
	\end{eqnarray*}
\end{proof}

\begin{proposition} \label{prop: delta_m has low scl}
	Let $m\ge2$ and $\delta_m$ be the chain in $\Gcl(\Delta_m)$ defined as above. Then
	$$
	\frac{1}{12(m+2)} \leq \scl_{\Gcl(\Delta_m)}(\delta_m) \leq \frac{1}{m}.
	$$
\end{proposition}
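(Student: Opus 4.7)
The proof naturally splits into independent upper and lower bounds. For the upper bound, I plan to apply Lemma~\ref{lemma: the c_m} simultaneously at $i=0$ and $i=1$ with $j=m$, and observe that the recursion $c_{j+1} = g_{m-j,m-1}^{-1}\, c_j\, g_{m-j,m}$ (starting from $c_1 = g_m$) depends only on $j$ and not on the starting index $i$; since reaching $c_m$ requires $m-j$ to run only through $\{1,\ldots,m-1\}$, the element $c_m$ is genuinely the same in both instances. Hence
\[
g_{0,m-1}^{-m}\, g_{0,m}^{m} \;=\; c_m \;=\; g_{1,m-1}^{-m}\, g_{1,m}^{m},
\]
which yields the single word relation
\[
g_{0,m}^{m}\, g_{1,m}^{-m}\, g_{1,m-1}^{m}\, g_{0,m-1}^{-m} \;=\; 1
\]
in $\Gcl(\Delta_m)$. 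Interpreted via the fundamental group, this relation defines a continuous map from a four-holed sphere $\Sigma$ to a classifying space for $\Gcl(\Delta_m)$ that sends the four boundary loops to conjugates of $g_{0,m}^{m}$, $g_{1,m}^{-m}$, $g_{1,m-1}^{m}$, and $g_{0,m-1}^{-m}$. This is an admissible surface for $\delta_m$ of degree $n=m$ with $-\chi(\Sigma)=2$, so Definition~\ref{defn: scl via euler charac} gives $\scl_{\Gcl(\Delta_m)}(\delta_m) \le \tfrac{2}{2m} = \tfrac{1}{m}$.

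For the lower bound, I plan to invoke Theorem~\ref{thm: gap for graph products} with $\Gamma = \Delta_m$ and $\Delta(\Delta_m) = m$. The inequality $\scl_{\Gcl(\Delta_m)}(\delta_m) \ge \tfrac{1}{12(m+2)}$ will follow as soon as $\delta_m$ is not equivalent to a vertex chain, which by the characterization in that theorem together with Proposition~\ref{prop: pure factor chain} amounts to showing the pure factor chain $\delta_m^{\pf}$ is not a vertex chain. Each term $g_{i,j}$ for $(i,j)\in\{(0,m),(0,m-1),(1,m),(1,m-1)\}$ is a cyclically reduced word of length $j-i+1 \ge 2$ in which every supporting vertex contributes exactly one letter, so $g_{i,j}$ cannot be a proper power; since its support $\{v_i,\ldots,v_j\}$ has connected opposite graph (a subpath of $\Delta_m^{\opp}$), $g_{i,j}$ is a single primitive pure factor. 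The four supports are pairwise distinct subgraphs, so no two of the terms of $\delta_m$ are conjugate in $\Gcl(\Delta_m)$; hence $\delta_m^{\pf}$ coincides with $\delta_m$ after possibly removing individual terms that happen to be conjugate to their own inverses.

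To finish the lower bound I plan to argue that for any reasonable choice of nontrivial $g_i$, at least one surviving term of $\delta_m^{\pf}$ still has length $\ge 2$. Concretely, if some vertex group $G_{v_k}$ contains an element not conjugate to its inverse (which is automatic in the motivating RAAG case, where every $G_{v_k}\cong\Z$), then applying the retraction $\Gcl(\Delta_m)\to G_{v_k}$ (collapsing the other generators to the identity) and using monotonicity of conjugacy shows that $g_{0,m}$ is not conjugate to $g_{0,m}^{-1}$ in $\Gcl(\Delta_m)$, so this term persists in $\delta_m^{\pf}$. The main obstacle I anticipate is the degenerate case in which every $g_i$ is self-inverse and every $g_{i,j}$ also ends up conjugate to its inverse: there $\delta_m^{\pf}$ may collapse to zero, matching precisely the alternative in Theorem~\ref{thm: gap for graph products}, so the explicit $g_i$ in the statement must be chosen to avoid this pathology.
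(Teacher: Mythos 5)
Your upper bound is the paper's argument in different packaging: the paper likewise applies Lemma \ref{lemma: the c_m} at $i=0,1$ with $j=m$, notes that $c_m$ is independent of $i$, and then bounds $\scl(m\cdot\delta_m)\le 1$ by two applications of Lemma \ref{lemma: pants chain} plus the triangle inequality. Your four-holed sphere of degree $m$ with $-\chi=2$ is exactly those two pairs of pants glued along the $c_m$ boundary, so this part is correct and equivalent.

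For the lower bound you take the same route (Theorem \ref{thm: gap for graph products} with $\Delta(\Delta_m)=m$), but you supply a verification the paper omits: the paper only remarks that $\delta_m$ ``is already a pure factor chain that is not a vertex chain.'' Your checks that each $g_{i,j}$ with $j>i$ is a primitive pure factor and that the pairwise distinct supports rule out cross-cancellation in Definition \ref{def:pure factor chain} are correct (small slip: for $m=2$ the term $g_{1,m-1}=g_1$ has length $1$, not $\ge 2$, but that is harmless). More importantly, the ``degenerate case'' you flag is genuine and not an artifact of your method. In the right-angled Coxeter case each $G_{v_i}=\Z/2$, so the only nontrivial choice makes every $g_i$ an involution, and then every term of $\delta_m$ is conjugate to its own inverse: already for $m=2$ one has $g_0g_1g_2\sim g_2g_0g_1=(g_0g_2)g_1$ in $\Crm(\Delta_2)$, a product of two involutions, and likewise $g_0g_1\sim g_1g_0$, $g_1g_2\sim g_2g_1$; in general $g_{i,j}$ is a Coxeter element of a path diagram and is conjugate to (product of even-indexed letters)(product of odd-indexed letters), cf.\ Proposition \ref{prop: element zero chain in RACG}. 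Then $2\delta_m\in E(\Gcl(\Delta_m))$, so $\scl(\delta_m)=0$, the pure factor chain collapses to a (zero) vertex chain, and the stated lower bound fails for that choice of $g_i$; this also touches the RACG case of Theorem \ref{thm:small scl graph prod}. So the non-degeneracy you ask for is exactly the missing ingredient, and with it your retraction argument closes the proof; just phrase the hypothesis about the chosen elements themselves (some $g_k$ not conjugate to $g_k^{-1}$ in $G_{v_k}$, automatic for RAAGs), since the retraction $\Gcl(\Delta_m)\to G_{v_k}$ sends $g_{0,m}$ to $g_k$, and the existence of some other non-self-inverse element of $G_{v_k}$ would not suffice.
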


\begin{proof}  
	By Lemma \ref{lemma: the c_m} we have $g_{i,m}^m = g_{i,m-1}^m c_m$ for $i \in \{0,1\}$, and $c_m$ does not depend on $i$.
	Thus by Lemma \ref{lemma: pants chain} we have
	$$
	\scl(g_{i,m}^m - g_{i,m-1}^m - c_m) \leq \frac{1}{2}
	$$
	for $i \in \{0,1\}$. Therefore,
	\begin{eqnarray*}
		\scl (m \cdot \delta_m) &=& \scl \left( g_{0,m}^m - g_{0,m-1}^m - g_{1,m}^m + g_{1,m-1}^m \right) \\
		&\leq & \scl \left( (g_{0,m}^m - g_{0,m-1}^m - c_m) - (g_{1,m}^m - g_{1,m-1}^m - c_m) \right) \leq 1
	\end{eqnarray*}
	by the triangle inequality. Hence we conclude that 
	$$
	\scl(\delta_m) \leq \frac{1}{m}.
	$$
	
	On the other hand we see that $\Delta(\Delta_m)=m$, and thus by Theorem \ref{thm: gap for graph products} (proved below) 
	we have $\scl (\delta_m) \geq \frac{1}{12(m+2)}$, since $\delta_m$ is already a pure factor chain that is not a vertex chain. 
	This finishes the proof.
\end{proof}

\subsection{Proofs of Theorems \ref{thm: gap for graph products} and Theorem \ref{thm:small scl graph prod}} \label{subsec: proofs graph prod}

We now prove Theorems \ref{thm: gap for graph products} and \ref{thm:small scl graph prod}.

We first prove the following lemma dealing with an essential part of Theorem \ref{thm: gap for graph products}.
\begin{lemma} \label{lemma:small chains equiv to zero}
	Fix an integer $D \geq 1$. If a graph $\Gamma$ satisfies $\Delta(\Gamma) \leq D$, then every integral chain $c$ in $\Gcl(\Gamma)$ either has $\scl_{\Gcl(\Gamma)}(c) \geq \frac{1}{12(D+2)}$ or is equivalent to an integral vertex chain.
\end{lemma}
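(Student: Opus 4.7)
The plan is to proceed by induction on $|\Vrm(\Gamma)|$. The base case $|\Vrm(\Gamma)|=1$ is immediate since $\Gcl(\Gamma)$ is then a single vertex group and every chain is already a vertex chain. For the inductive step I will pick any vertex $v \in \Vrm(\Gamma)$ and use the standard splitting
\begin{equation*}
\Gcl(\Gamma) = \Gcl(\Gamma \setminus \{v\}) \star_{\Gcl(\Lk(v))} \Gcl(\st(v))
\end{equation*}
as an amalgamated free product. Since $\Lk(v)$ is an induced subgraph of $\Gamma$ and $\Delta(\Gamma) \le D$, Proposition \ref{prop: BCMS-M by opposite length} ensures that the edge group $\Gcl(\Lk(v))$ is a BCMS-$D$ subgroup of $\Gcl(\Gamma)$. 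Theorem \ref{thm:CM gap} then yields a dichotomy: either $\scl_{\Gcl(\Gamma)}(c) \ge \frac{1}{12(D+2)}$ (in which case the lemma holds), or $c$ is equivalent to $c_1 + c_2$ with $c_1 \in \Gcl(\Gamma \setminus \{v\})$ and $c_2 \in \Gcl(\st(v))$.

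Assuming the latter, I will exploit the product structure $\Gcl(\st(v)) = G_v \times \Gcl(\Lk(v))$. Proposition \ref{prop:chains on direct products} rewrites $c_2$ equivalently as $c_{2,v} + c_{2,L}$ with $c_{2,v}$ supported on $G_v$ and $c_{2,L}$ supported on $\Gcl(\Lk(v)) \le \Gcl(\Gamma \setminus \{v\})$. Absorbing $c_{2,L}$ into $c_1$ produces a chain $c' \defeq c_1 + c_{2,L}$ in $\Gcl(\Gamma \setminus \{v\})$ such that $c$ is equivalent to $c' + c_{2,v}$. Since $|\Vrm(\Gamma \setminus \{v\})| < |\Vrm(\Gamma)|$ and $\Delta(\Gamma \setminus \{v\}) \le D$, the inductive hypothesis applies to $c'$: either $c'$ is equivalent to a vertex chain in $\Gcl(\Gamma \setminus \{v\})$, in which case $c$ is equivalent to a vertex chain in $\Gcl(\Gamma)$ and we are done, or $\scl_{\Gcl(\Gamma \setminus \{v\})}(c') \ge \frac{1}{12(D+2)}$.

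The main obstacle is this last case, since the passage from $c$ to $c'$ drops the $G_v$-piece and in principle modifying by $c_{2,v}$ could decrease scl. To transfer the lower bound from $c'$ back to $c$, the plan is to use a retract-and-pull-back argument: the homomorphism $r \colon \Gcl(\Gamma) \twoheadrightarrow \Gcl(\Gamma \setminus \{v\})$ killing $G_v$ is a retract. By Bavard duality (Theorem \ref{thm:bavard}) together with Proposition \ref{prop:extremal qm}, there is an extremal homogeneous quasimorphism $\bar\phi_0$ on $\Gcl(\Gamma \setminus \{v\})$ realizing $\scl(c')$. The composite $\phi \defeq \bar\phi_0 \circ r$ is a homogeneous quasimorphism on $\Gcl(\Gamma)$ of the same defect that vanishes on $G_v$. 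Since homogeneous quasimorphisms are additive on conjugate, commuting, and power operations (in particular by Lemma \ref{lemma: quasimorphism for commuting elements}) and are therefore constant on chain equivalence classes of Definition \ref{def: equivalent chains}, evaluating at $c$ gives $\phi(c) = \phi(c') + \phi(c_{2,v}) = \bar\phi_0(c')$. Bavard's duality then closes the induction via
\begin{equation*}
\scl_{\Gcl(\Gamma)}(c) \ge \frac{\phi(c)}{2 D(\phi)} = \frac{\bar\phi_0(c')}{2 D(\bar\phi_0)} = \scl_{\Gcl(\Gamma \setminus \{v\})}(c') \ge \frac{1}{12(D+2)}.
\end{equation*}
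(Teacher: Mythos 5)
Your proof is correct and is essentially the paper's argument: induct on the number of vertices, split $\Gcl(\Gamma)$ as an amalgam over $\Gcl(\Lk(v))$, invoke Proposition \ref{prop: BCMS-M by opposite length} and Theorem \ref{thm:CM gap}, split the $\Gcl(\st(v))$-part using the direct product structure (Proposition \ref{prop:chains on direct products}), and push the bound for $c'$ back up to $\Gcl(\Gamma)$. The only differences are minor: the paper obtains your quasimorphism-pullback step in one line from monotonicity of $\scl$ under the retraction $\Gcl(\Gamma)\to\Gcl(\Gamma\setminus\{v\})$ (Proposition \ref{prop: mono and retract}), it treats the degenerate case $\Gamma=\st(v)$ separately via the direct-product formula rather than through the (then vacuous) amalgam dichotomy, and it first replaces $\Gamma$ by the finite support of $c$ so that the induction on $|\Vrm(\Gamma)|$ is legitimate even for infinite graphs --- a one-line reduction you should add, since your induction implicitly assumes $\Gamma$ is finite.
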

\begin{proof}
	For any integral chain $c=\sum_i g_i$, define its support $\supp(c)$ to be the union of $\supp(g_i)$. 
	Let $\Lambda$ be the induced subgraph of $\Gamma$ on $\supp(c)$, which is finite and $\Delta(\Lambda)\le \Delta(\Gamma)$ by definition. 
	We may reduce the assertion to the case $\Gamma=\Lambda$ as follows.
	Note that $\scl_{\Gcl(\Lambda)}(c)=\scl_{\Gcl(\Gamma)}(c)$ since $\Gcl(\Lambda)$ is a retract of $\Gcl(\Gamma)$.
	If $c$ is not equivalent to a vertex chain in $\Gcl(\Gamma)$, neither is it as a chain in $\Gcl(\Lambda)$.
	Hence it suffices to prove the lemma assuming $\Gamma$ to be a finite graph.
	
	We proceed by induction on the size $|\Gamma|$.
	The assertion trivially holds when $|\Gamma|=1$ since $c$ must be a vertex chain in this case.
	
	Suppose for some $n\ge1$ the assertion holds for all integral chains $c$ in any graph product $\Gcl(\Gamma)$ with $|\Gamma|\le n$. 
	Consider an integral chain $c$ in some graph product $\Gcl(\Gamma)$ with $|\Gamma|=n+1$ and $\Delta(\Gamma)\le D$ such that $c$ is not equivalent to a vertex chain.
	We need to show 
	$$\scl_{\Gcl(\Gamma)}(c)\ge\frac{1}{12(D+2)}.$$

	Pick any vertex $v$ in $\Gamma$. If $\Gamma = \st(v)$, where $\st(v)$ denotes the star of $v$, then $\Gcl(\Gamma) = G_v \times \Gcl(\Lk(v))$, where $\Lk(v)$ denotes the link of $v$. 
	Then by Proposition \ref{prop:chains on direct products}, $c$ is equivalent to a sum of integral chains $c_v + c'$,
	where $c_v$ is supported on $G_v$ and $c'$ is supported on $\Lk(v)$. 
	Here $c'$ cannot be equivalent to a vertex chain since $c$ is not.
	Note that $\Delta(\Lk(v))\le \Delta(\Gamma)\le D$ since $\Lk(v)$ is an induced subgraph of $\Gamma$.
	Thus by the induction hypothesis and Proposition \ref{prop:chains on direct products}, 
	we have $\scl_{\Gcl(\Gamma)}(c)\ge \scl_{\Gcl(\Lk(v))}(c')\ge \frac{1}{12(D+2)}$. 
	
	Now assume $\Gamma \neq \st(v)$. Then $\Gcl(\Gamma)$ splits non-trivially as an amalgam $\Gcl(\Gamma) = \Gcl(\st (v)) \star_{\Gcl(\Lk(v))} \Gcl(\Gamma \setminus v)$.
	We know that $\Gcl(\Lk(v)) < \Gcl(\Gamma)$ is a BCMS-$\Delta(\Gamma)$ subgroup by Proposition \ref{prop: BCMS-M by opposite length}. 
	Thus, by Theorem \ref{thm:CM gap}, it suffices to consider the case where $c$ is equivalent to an integral chain $\tilde{c}$ 
	such that every term of $\tilde{c}$ lies in $\Gcl(\st(v))$ or $\Gcl(\Gamma \setminus v)$. 
	We may again split every term supported on $\Gcl(\st(v))$ into terms in $G_v$ and in $\Gcl(\Lk(v)) < \Gcl(\Gamma \setminus v)$.
	Thus $\tilde{c}$ and $c$ are equivalent to a chain $c' + c_v$, where $c'$ is supported on $\Gamma \setminus v$ and $c_v$ is supported on $G_v$.
	By the monotonicity of scl for the retraction $\Gcl(\Gamma)\to \Gcl(\Gamma \setminus v)$, we deduce that 
	$$\scl_{\Gcl(\Gamma)}(c)=\scl_{\Gcl(\Gamma)}(c'+c_v)\ge \scl_{\Gcl(\Gamma \setminus v)} (c').$$ 
	As $c'$ is not equivalent to a vertex chain since $c$ is not, we have $\scl_{\Gcl(\Gamma \setminus v)} (c')\ge \frac{1}{12(D+2)}$ by the induction hypothesis.
\end{proof}

\begin{proof}[Proof of Theorem \ref{thm: gap for graph products}]
	Let $c$ be an integral chain in a graph product $\Gcl(\Gamma)$.
	By Lemma \ref{lemma:small chains equiv to zero}, either $\scl_{\Gcl(\Gamma)}(c) \geq \frac{1}{12(\Delta(\Gamma)+2)}$ or $c$ is equivalent to a vertex chain. 
	By Proposition \ref{prop: pure factor chain}, $c$ is equivalent to such a vertex chain 
	if and only if $c^{\pf}$ is a vertex chain.
\end{proof}

\begin{proof}[Proof of Theorem \ref{thm:small scl graph prod}]
	For any graph $\Gamma$ and a graph product $\Gcl(\Gamma)$ on $\Gamma$, the inclusion $i_m \col \Gcl(\Delta_m) \to \Gcl(\Gamma)$ is a retract, where $m = \Delta(\Gamma)$. By Proposition \ref{prop: delta_m has low scl} there is an integral chain $\delta_m$ in $\Gcl(\Delta_m)$ such that
	$$
	\frac{1}{12(m+2)} \leq \scl_{\Gcl(\Delta_m)}(\delta_m) \leq \frac{1}{m}.
	$$
	Since a chain in the retract has the same scl as in the whole group (Proposition \ref{prop: mono and retract}), we conclude that $\delta = i_m(\delta_m)$ has the same property. This concludes the proof.
\end{proof}

\subsection{Applications to right-angled Artin Groups and right-angled Coxeter groups}\label{subsec: gaps for RAAGs and RACGs}

Our gap theorems can be simplified in the case of right-angled Artin Groups and right-angled Coxeter groups.

\begin{theorem}[RAAGs and RACGs]\label{thm: gap theorem for RAAGs and RACGs}
	Let $G$ be the right-angled Artin (or Coxeter) group with defining graph $\Gamma$. Then for any integral chain $c$ not equivalent to the zero chain, we have
	$\scl_G(c)\ge \frac{1}{12(\Delta(\Gamma)+2)}$.
\end{theorem}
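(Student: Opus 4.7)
The plan is to reduce the statement to Theorem \ref{thm: gap for graph products} and then verify that, in the right-angled setting, the ``vertex chain'' exception collapses. Concretely, I would first apply Theorem \ref{thm: gap for graph products}: either $\scl_G(c) \geq \frac{1}{12(\Delta(\Gamma)+2)}$ (in which case we are done), or $c$ is equivalent to a vertex chain $c' = \sum_{v \in \Vrm(\Gamma)} c_v$ with each $c_v$ supported on the vertex group $G_v$. Since $\scl_G$ takes the value $+\infty$ on non-null-homologous chains (and the desired inequality is then trivial), it suffices to show that every null-homologous vertex chain in a RAAG or RACG is equivalent to the zero chain; the contrapositive of this statement combined with the dichotomy above will yield the theorem.

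Next I would handle this vertex-by-vertex reduction. For each $v$, the chain $c_v$ is supported on $G_v$, which is $\Z$ in the RAAG case or $\Z/2$ in the RACG case. Using the relation $g^n \sim n \cdot g$ from Definition \ref{def: equivalent chains} (together with $e = g^0 \sim 0 \cdot g = 0$, which disposes of identity terms in the RACG case), I can replace $c_v$ by $m_v \cdot v$ for some integer $m_v$, where by abuse of notation $v$ also denotes the generator of $G_v$. A key fact I would invoke is that for a RAAG or RACG the abelianization splits as $H_1(\Gcl(\Gamma)) \cong \bigoplus_{v} H_1(G_v)$, so $c$ being null-homologous forces each $m_v$ to vanish in $H_1(G_v)$: that is, $m_v = 0$ for RAAGs, and $m_v$ even for RACGs. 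In the RAAG case, $c' \sim 0$ is immediate. In the RACG case, combining $v^2 = e \sim 0$ with the equivalence $v^2 \sim 2 v$ gives $2 v \sim 0$, whence $m_v \cdot v \sim 0$ whenever $m_v$ is even; summing over $v$ yields $c' \sim 0$.

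The main obstacle is not in this final reduction, which is essentially bookkeeping with Definition \ref{def: equivalent chains}, but in the availability of Theorem \ref{thm: gap for graph products}, which does all the heavy lifting. The only point that deserves care is to confirm that the equivalence moves on $c_v$ stay within the three basic relations of Definition \ref{def: equivalent chains} (using commutativity of $G_v$ for the identification $e \sim 0$), and that the decomposition of $H_1(\Gcl(\Gamma))$ used above really is a direct sum in the RAAG and RACG cases. Both are standard.
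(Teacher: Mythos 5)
Your overall route is exactly the paper's: apply Theorem \ref{thm: gap for graph products} and then check that every null-homologous vertex chain in a RAAG or RACG is equivalent to the zero chain (the paper compresses this check into one sentence). The RAAG half of your reduction is fine: $c_v\sim m_v\cdot v$ by move (1) of Definition \ref{def: equivalent chains}, and since $H_1(\Arm(\Gamma);\Q)\cong\Q^{\Vrm(\Gamma)}$ with the vertices as basis, rational null-homology forces $m_v=0$ for every $v$. However, the RACG half contains a genuine error. In this paper ``null-homologous'' means zero in $H_1(G;\Q)$ (Definition \ref{defn: scl via euler charac}), and this rational condition is exactly what makes $\scl$ finite. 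For a RACG the abelianization is $(\Z/2)^{\Vrm(\Gamma)}$, so $H_1(\Crm(\Gamma);\Q)=0$ and \emph{every} integral chain is rationally null-homologous; no parity constraint on $m_v$ is forced. Your claim that null-homology makes each $m_v$ even is therefore false under the paper's convention (the single-generator chain $c=v$ already has $m_v=1$), and your argument does not dispose of vertex chains with some odd coefficient --- which are precisely among the chains the dichotomy of Theorem \ref{thm: gap for graph products} can leave you with. If instead you read ``null-homologous'' integrally, you silently exclude chains that are rationally but not integrally null-homologous; these have finite scl and must be covered by the theorem, so the gap persists either way.

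The gap is easy to close, and the repair is what the paper's proof implicitly uses. In a RACG each generator $v$ is $2$-torsion: by move (1) with $n=-1$ we have $v^{-1}+v\in E(G)$, and $v^{-1}=v$, so $2v\in E(G)$ (alternatively, $v^{2}-2v=e-2v\in E(G)$ together with $e=g^{0}-0\cdot g\in E(G)$, as you note). Since $E(G)$ is by definition a linear subspace of the real vector space $C_1(G)$, it is closed under scalar multiplication, so $v=\tfrac12(2v)\in E(G)$, i.e.\ $v$ is equivalent to the zero chain \emph{regardless of the parity of $m_v$}. Hence every vertex chain in a RACG is equivalent to zero, while in the RAAG case your homology computation already gives $m_v=0$ for all $v$; combined with Theorem \ref{thm: gap for graph products} (and the observation, which you use implicitly, that the moves of Definition \ref{def: equivalent chains} preserve the rational homology class, so a chain equivalent to $c$ is null-homologous whenever $c$ is), the theorem follows.
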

\begin{proof}
	Note that any null-homologous chain of the form $\sum_v c_v$ in $G$ is equivalent to the zero chain since each vertex group is abelian, where each $c_v$ is a chain in the vertex group $G_v$.
	Thus the result follows from Theorem \ref{thm: gap for graph products}.
\end{proof}

By Theorem \ref{thm:small scl graph prod}, the gap above cannot be uniform in the class of RAAGs, although there is uniform gap $1/2$ for elements in RAAGs \cite{Heuer}.
It is natural to ask whether this holds analogously for RACGs.
\begin{question}
	Is there a uniform spectral gap for elements in RACGs?
\end{question}
Note that there is a uniform gap theorem \cite[Theorem F]{CH:sclgap} for elements in many graph products, but it does not apply to RACGs because of the existence of $2$-torsion.
However, we are able to characterize elements in RACGs with zero scl.

\begin{corollary}\label{cor: RACG gap for elements}
	Let $G$ be the right-angled Coxeter group with defining graph $\Gamma$. Then For any element $g\in G$, we have either $\scl_G(g)\ge \frac{1}{12(\Delta(\Gamma)+2)}$ or $\scl_G(g)=0$. 
	Moreover, the latter case occurs if and only if $g$ is conjugate to $g^{-1}$, or more precisely, $g=ab$ with $a^2=id$ and $b^2=id$.
\end{corollary}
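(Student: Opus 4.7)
The plan is to deduce the corollary by combining Theorem \ref{thm: gap for graph products} (applied to $g$ as a one-term integral chain) with Proposition \ref{prop: element zero chain in RACG}. Theorem \ref{thm: gap for graph products} gives a dichotomy: either $\scl_G(g) \geq \frac{1}{12(\Delta(\Gamma)+2)}$, which is the desired gap, or the pure factor chain $g^{\pf}$ is a vertex chain. In the second case, I will show that $g$ must already be equivalent to the zero chain, so $\scl_G(g)=0$, and then conclude via Proposition \ref{prop: element zero chain in RACG}.

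To handle the second case, fix a pure factor decomposition
$$g = p \cdot \gamma^*_1 \cdots \gamma^*_{\ell^*} \cdot \gamma_1^{e_1}\cdots \gamma_\ell^{e_\ell} \cdot p^{-1}$$
as in (\ref{eqn: pure factor}). Each $\gamma^*_i$ lies in a single vertex group, which in a RACG is $\Z/2$, so $(\gamma^*_i)^2=id$ and $\gamma^*_i$ is trivially conjugate to its own inverse; by Definition \ref{def:pure factor chain}, each $\gamma^*_i$ is therefore removed in forming $g^{\pf}$. Hence $g^{\pf}=\sum e_j \gamma_j$, summed over those $j$ for which $\gamma_j$ is not conjugate to $\gamma_j^{-1}$. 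Each remaining $\gamma_j$ is supported on a connected component of $\Theta(g)^\opp$ containing at least two vertices, so $|\gamma_j|\geq 2$ and $\gamma_j$ does not lie in any vertex group. Thus $g^{\pf}$ is a vertex chain only if no such $\gamma_j$ remains, i.e.\ every $\gamma_j$ in the decomposition is conjugate to $\gamma_j^{-1}$. Since the $\gamma^*_i$ and $\gamma_j$ all commute pairwise, this forces $g$ to be conjugate to $g^{-1}$.

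By Proposition \ref{prop: element zero chain in RACG} this is equivalent to $g$ being equivalent to the zero chain and to the explicit form $g=ab$ with $a^2=b^2=id$. In particular $\scl_G(g)=0$ in this case, by Proposition \ref{prop: equivalent chains have the same scl}. Conversely, if $g$ is conjugate to $g^{-1}$, then by equivalences (\ref{item: linear equiv}) and (\ref{item: conj equiv}) of Definition \ref{def: equivalent chains}, $2g$ is equivalent to the zero chain, whence $\scl_G(g)=0$. This proves both the dichotomy and the characterization of the $\scl_G(g)=0$ case.

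The main obstacle is the second paragraph: recognizing that in the RACG setting every short pure factor $\gamma^*_i$ is automatically its own inverse (so contributes nothing to $g^{\pf}$) while every longer pure factor $\gamma_j$ has $|\gamma_j|\geq 2$, so the hypothesis ``$g^{\pf}$ is a vertex chain'' forces every $\gamma_j$ in the pure factor decomposition to be conjugate to its inverse. Once this structural statement about $g^{\pf}$ is in hand, Proposition \ref{prop: element zero chain in RACG} translates it into the explicit product form $g=ab$ with $a^2=b^2=id$.
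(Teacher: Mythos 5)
Your proof is correct and takes essentially the same route as the paper: the paper simply invokes Theorem \ref{thm: gap theorem for RAAGs and RACGs} (the RACG specialization of Theorem \ref{thm: gap for graph products}) for the dichotomy and then Proposition \ref{prop: element zero chain in RACG} for the characterization of the zero case. Your second paragraph merely unpacks that specialization by hand, re-deriving (via the facts that each $\gamma^*_i$ has order $2$ and each surviving $\gamma_j$ has length at least $2$) a reduction that is already contained in the proofs of those cited results.
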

\begin{proof}
	The first assertion directly follows from Theorem \ref{thm: gap theorem for RAAGs and RACGs}. 
	It also implies that $\scl_G(g)=0$ if and only if $g$ is equivalent to the zero chain. Hence by Proposition \ref{prop: element zero chain in RACG}, we obtain the more explicit characterization of such $g$.
\end{proof}
One can similarly characterize elements with zero scl in other graph products if elements with zero scl are understood in vertex groups.

We also get a uniform gap for integral chains if we add a hyperbolicity assumption. 
Since the only hyperbolic RAAGs are free groups, we focus on hyperbolic RACGs below.

\begin{corollary}\label{cor: hyp RACGs}
	Let $G=\Crm(\Gamma)$ be a hyperbolic right-angled Coxeter group. Then $\scl_G(c) \geq \frac{1}{60}$ for any integral chain not equivalent to the zero chain.
\end{corollary}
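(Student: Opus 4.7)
The plan is to reduce to Theorem \ref{thm: gap theorem for RAAGs and RACGs} by proving the combinatorial bound $\Delta(\Gamma) \leq 3$ whenever $\Crm(\Gamma)$ is hyperbolic. Once this is established, any integral chain $c$ not equivalent to the zero chain satisfies
$$
\scl_G(c) \;\geq\; \frac{1}{12(\Delta(\Gamma)+2)} \;\geq\; \frac{1}{12 \cdot 5} \;=\; \frac{1}{60},
$$
which is exactly the claimed gap.

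To prove $\Delta(\Gamma) \leq 3$, I would use the classical hyperbolicity criterion for right-angled Coxeter groups (due to Moussong, see e.g.\ Davis): $\Crm(\Gamma)$ is word-hyperbolic if and only if $\Gamma$ contains no induced $4$-cycle. The direction we need is the forward one, which is the easy direction: any induced $4$-cycle $u_0 - u_1 - u_2 - u_3$ in $\Gamma$ (with $u_0,u_2$ and $u_1,u_3$ non-adjacent) produces a subgroup $\langle u_0, u_2 \rangle \times \langle u_1, u_3 \rangle \cong D_\infty \times D_\infty$ inside $\Crm(\Gamma)$, which contains $\Z^2$ and so obstructs hyperbolicity.

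Thus it suffices to show that $\Delta_4$ already contains an induced $4$-cycle; by the transitivity of induced subgraph relations, this will force an induced $4$-cycle in any $\Gamma$ with $\Delta(\Gamma) \geq 4$. Consider the four vertices $\{v_0, v_1, v_3, v_4\} \subset \Vrm(\Delta_4)$. By the defining edge condition $|i-j|\geq 2$, the pairs $(v_0,v_3), (v_0,v_4), (v_1,v_3), (v_1,v_4)$ are edges of $\Delta_4$ while $(v_0,v_1)$ and $(v_3,v_4)$ are not. Hence the induced subgraph on these four vertices is the $4$-cycle $v_0 - v_3 - v_1 - v_4 - v_0$, as desired.

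There is no real obstacle here — the argument is entirely a short combinatorial observation combined with a known structural theorem and our Theorem \ref{thm: gap theorem for RAAGs and RACGs}. The only thing to be a little careful about is to cite (or briefly justify) Moussong's criterion for hyperbolic RACGs, and to point out that the bound $\Delta(\Gamma)\leq 3$ is sharp in the sense that $\Delta_3$ itself has no induced $4$-cycle (it has only three edges on four vertices), so this line of argument cannot push the constant below $1/60$ without a genuinely different input.
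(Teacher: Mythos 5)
Your proposal is correct and follows essentially the same route as the paper: cite the hyperbolicity criterion for RACGs (no induced $4$-cycle), observe that $\Delta_4$ contains an induced $4$-cycle on $\{v_0,v_1,v_3,v_4\}$ so that hyperbolicity forces $\Delta(\Gamma)\le 3$, and then apply Theorem \ref{thm: gap theorem for RAAGs and RACGs} to get the gap $\frac{1}{12(3+2)}=\frac{1}{60}$. Your extra remarks (the $D_\infty\times D_\infty$ justification of the easy direction and the sharpness observation about $\Delta_3$) are fine but not needed.
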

\begin{proof}
	It is known by \cite{hypRACGs} that $\Crm(\Gamma)$ is hyperbolic if and only if the graph $\Gamma$ has no induced subgraph isomorphic to the cyclic graph of length $4$.
	Note that the graph $\Delta_4$ contains such an induced subgraph with vertices $v_0,v_1,v_3,v_4$. Thus $\Delta(\Gamma)\le 3$ if $\Crm(\Gamma)$ is hyperbolic.
	Hence the result follows from Theorem \ref{thm: gap theorem for RAAGs and RACGs}.
\end{proof}

Based on this, we make the following conjecture.
\begin{conjecture}\label{conj: hyp special}
	There is a uniform constant $B>0$ such that any hyperbolic $C$-special (or $A$-special, see \cite{HaglundWise} for definitions) group has a spectral gap $B$ for integral chains.
\end{conjecture}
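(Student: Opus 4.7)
The natural first step is to reduce to a graph product via monotonicity. By Haglund--Wise, any hyperbolic virtually (compact) $C$-special group $G$ is a virtual retract of some right-angled Coxeter group $\Crm(\Gamma)$; the analogous statement for $A$-special groups lands in a RAAG. Thus, combining the retraction equality of scl in Proposition \ref{prop: mono and retract} with the index formula in Proposition \ref{prop: index formula}, the plan is to establish a uniform gap for any integral chain in $\Crm(\Gamma)$ whose support lies in a hyperbolic quasi-convex subgroup $G$. Chains equivalent to vertex chains in $\Crm(\Gamma)$ that come from a torsion-free subgroup $G$ are automatically equivalent to $0$ (as in the proof of Theorem \ref{thm: gap theorem for RAAGs and RACGs}), so by Theorem \ref{thm: gap for graph products} only the opposite path length $\Delta(\Gamma)$ stands in the way of an immediate proof.

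The hard step, which is the entire content of the conjecture, is to decouple the gap bound from $\Delta(\Gamma)$ when only chains arising from a hyperbolic subgroup are considered. The guiding intuition is that large induced copies of $\Delta_m$ in $\Gamma$ produce subgroups containing $\Z^2$ (for $m \geq 3$ the group $\Gcl(\Delta_m)$ contains many commuting pairs), and the small-scl witnesses $\delta_m$ from Proposition \ref{prop: delta_m has low scl} are manifestly built out of such commuting data; a $\delta$-hyperbolic subgroup should not be able to detect such configurations in a genuinely nontrivial way. My plan is to make this precise by revisiting the surface arguments of Section \ref{sec:gaps from short overlaps}: given an admissible surface $S$ for a chain supported in the hyperbolic subgroup $G$, one should show that the gluing graph $\Gamma_S$ cannot contain long strips of rectangular pieces (as in Figure \ref{fig: strip}) without forcing the existence of a $\Z^2$ pattern in $G$, contradicting hyperbolicity. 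Quantitatively, one would bound the length of any such strip by a constant depending only on the hyperbolicity constant $\delta$ of $G$, which in turn plays the role of $\Delta(\Gamma) + 2$ in the estimate of Theorem \ref{thm:CM gap}.

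The main obstacle is that hyperbolicity of $G$ is a geometric property of its Cayley graph, whereas the surface arguments live in the ambient cube complex of $\Crm(\Gamma)$; bridging the two requires a version of the BCMS-$D$ machinery of Section \ref{sec:cm subgroups} that is relativized to the quasi-convex subgroup $G$. A promising form of such a result is: for every $g \in G$ with $|g|_{\Crm(\Gamma)}$ large, the normal-form data of $g$ as an element of $\Crm(\Gamma)$ is governed by a CM-sequence of length bounded in terms of $\delta$ (rather than $\Delta(\Gamma)$). Even granting this, one must verify that the quasimorphism approach of Section \ref{subsec:no long pairings qm} can be promoted to the relative setting, perhaps using the action of $G$ on its hyperbolic Cayley graph to produce Brooks-type counting quasimorphisms of uniformly bounded defect. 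A potentially simpler intermediate target would be Conjecture \ref{conj: hyp special} under the further assumption that $G$ is itself a RAAG or RACG subgroup admitting a convex-cocompact action on a CAT$(0)$ cube complex of bounded dimension, where the combinatorics of hyperplane stabilizers might be controlled directly.
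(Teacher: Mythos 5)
The statement you are trying to prove is stated in the paper as Conjecture \ref{conj: hyp special}; the authors offer no proof, only motivation (Corollary \ref{cor: hyp RACGs} and the proposed application via the index formula, Proposition \ref{prop: index formula}). So there is no ``paper proof'' to match, and your proposal does not close the gap either: as you yourself write, the step of decoupling the bound from $\Delta(\Gamma)$ is ``the entire content of the conjecture,'' and everything you say about it is programmatic. The reduction you do carry out (Haglund--Wise virtual retraction into a RACG or RAAG, retraction equality from Proposition \ref{prop: mono and retract}, disposal of vertex chains) is fine but only reproves what Theorem \ref{thm: gap for graph products} already gives, namely a gap of size $\frac{1}{12(\Delta(\Gamma)+2)}$, which the paper shows cannot be made uniform over all $\Gamma$.

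Two concrete points where the sketched strategy is doubtful. First, the long-strip analysis of Theorem \ref{thm: no long pairing} does not produce commuting elements; it produces the relation $g^N=h^kh'd$ with $d$ in the edge group, and the elements witnessing the smallness of $\scl(\delta_m)$ (the $c_j$'s of Lemma \ref{lemma: the c_m}) commute with pieces of the chain only inside the ambient group $\Crm(\Gamma)$, not inside the quasiconvex subgroup $G$. Hyperbolicity of $G$ therefore does not obviously forbid such configurations: the ``$\Z^2$ pattern'' you want to contradict lives in $\Crm(\Gamma)$, which is typically not hyperbolic, and a chain supported in $G$ can perfectly well bound efficient surfaces whose annuli and turns use commuting data outside $G$. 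Any relativized BCMS-type argument would have to control double cosets and centralizers of edge groups intersected with $G$, and no version of Sections \ref{sec:gaps from short overlaps}--\ref{sec:cm subgroups} is stated or proved in that generality. Second, even if your program worked as described, you say the strip length would be bounded ``by a constant depending only on the hyperbolicity constant $\delta$ of $G$''; that would yield a gap depending on $\delta$, which is essentially the already known Calegari--Fujiwara bound (Theorem \ref{thm: gap for chains summary}), not the uniform constant $B$, independent of the group, that the conjecture demands. The genuinely hard point is uniformity across all hyperbolic special groups, and nothing in the proposal addresses it.
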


If the conjecture holds true, one can use it and the index formula (Proposition \ref{prop: index formula}) to establish effective lower bounds for the index of special subgroups in hyperbolic groups. For instance, it is a well-known theorem that every hyperbolic $3$-manifold group contains a finite index subgroup that is special (and hyperbolic) \cite{Agol}, but it is unknown whether the index has a uniform upper bound independent of the manifold. This connection was suggested to us by Danny Calegari and motivated this work on scl of integral chains in RAAGs, but we did not anticipate the spectral gap to be non-uniform.

One can also bound $\Delta(\Gamma)$ in terms of other invariants of the graph $\Gamma$.
\begin{corollary}
	If $\Gamma$ is a simplicial graph where each vertex has valence at most $m\ge0$, then integral chains in $\Arm(\Gamma)$ and $\Crm(\Gamma)$ have a gap
	$\frac{1}{12(m+3)}$.
\end{corollary}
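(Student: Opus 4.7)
The plan is to bound $\Delta(\Gamma)$ in terms of the maximum valence $m$, and then directly invoke Theorem \ref{thm: gap theorem for RAAGs and RACGs}.

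First I would compute the valences of vertices in the opposite path $\Delta_n$. By definition, $\Delta_n$ has vertex set $\{v_0,\ldots,v_n\}$ with $v_i$ adjacent to $v_j$ precisely when $|i-j|\ge 2$. Thus the non-neighbors of $v_i$ (other than $v_i$ itself) are exactly $v_{i-1}$ and $v_{i+1}$ whenever these exist. In particular, the endpoints $v_0$ and $v_n$ have valence $n-1$ in $\Delta_n$.

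Next, suppose $\Delta_n$ is an induced subgraph of $\Gamma$. Then the valence of any vertex $v$ of $\Delta_n$ inside $\Gamma$ is at least its valence inside $\Delta_n$, since every edge of $\Delta_n$ incident to $v$ is also an edge of $\Gamma$ incident to $v$. Applying this to an endpoint $v_0$ and using that every vertex of $\Gamma$ has valence at most $m$, we obtain $n-1 \le m$, i.e., $n \le m+1$. Hence $\Delta(\Gamma) \le m+1$.

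Finally, by Theorem \ref{thm: gap theorem for RAAGs and RACGs}, every integral chain $c$ in $\Arm(\Gamma)$ (or $\Crm(\Gamma)$) not equivalent to the zero chain satisfies
\[
\scl(c) \;\ge\; \frac{1}{12(\Delta(\Gamma)+2)} \;\ge\; \frac{1}{12(m+3)},
\]
which is exactly the claimed gap. There is essentially no obstacle here: the entire content of the corollary is the elementary valence calculation in $\Delta_n$, and the hard work is already packaged in Theorem \ref{thm: gap theorem for RAAGs and RACGs}.
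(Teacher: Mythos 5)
Your proof is correct and follows essentially the same route as the paper: bound $\Delta(\Gamma)\le m+1$ by noting that an endpoint of the opposite path $\Delta_n$ has valence $n-1$ (equivalently, the paper observes $v_0$ has valence $m+1$ in $\Delta_{m+2}$), then apply Theorem \ref{thm: gap theorem for RAAGs and RACGs}. No issues.
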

\begin{proof}
	Note that in $\Delta_{m+2}$ the vertex $v_0$ is adjacent to $m+1$ vertices $v_2,v_3,\cdots,v_{m+2}$. Thus we must have $\Delta(\Gamma)\le m+1$. 
	We conclude by Theorem \ref{thm: gap theorem for RAAGs and RACGs}.
\end{proof}

The dimension of a right-angled Artin (resp. Coxeter) group $\Arm(\Gamma)$ (resp. $\Crm(\Gamma)$) associated to some simplicial graph $\Gamma$ 
is the largest size of cliques in $\Gamma$.
\begin{corollary}
	Any right-angled Artin (resp. Coxeter) group $G=\Arm(\Gamma)$ (resp. $G=\Crm(\Gamma)$) of dimension at most $d$ has
	a gap $\frac{1}{12(2d+1)}$ for integral chains.
\end{corollary}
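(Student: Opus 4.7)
The plan is to invoke Theorem \ref{thm: gap theorem for RAAGs and RACGs} and reduce the statement to a purely combinatorial estimate: if $\Gamma$ has no clique of size larger than $d$, then $\Delta(\Gamma) \le 2d-1$, so $\Delta(\Gamma)+2 \le 2d+1$ and the desired gap follows.

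First I would unwind the combinatorics of the opposite path $\Delta_m$. A subset $\{v_{i_1}, \ldots, v_{i_k}\} \subset \Vrm(\Delta_m)$ with $i_1 < \cdots < i_k$ spans a clique in $\Delta_m$ if and only if $i_{j+1} - i_j \ge 2$ for every $j$. The largest such subset is $\{v_0, v_2, v_4, \ldots\}$, which has size $\lfloor m/2 \rfloor + 1$. In particular, $\Delta_{2d}$ contains a clique of size $d+1$.

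Next I would argue that if $\Gamma$ has dimension at most $d$, then $\Gamma$ cannot contain $\Delta_{2d}$ as an induced subgraph: any induced subgraph of $\Gamma$ inherits the dimension bound, but $\Delta_{2d}$ has dimension $d+1$. Hence $\Delta(\Gamma) \le 2d-1$.

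Finally, applying Theorem \ref{thm: gap theorem for RAAGs and RACGs}, every integral chain $c$ in $G$ not equivalent to the zero chain satisfies
\[
\scl_G(c) \ge \frac{1}{12(\Delta(\Gamma)+2)} \ge \frac{1}{12(2d+1)},
\]
which is exactly the claimed gap. Since each step is a direct application of a previously established result or a two-line combinatorial calculation, there is no real obstacle; the only thing to get right is the sharp bound $\lfloor m/2 \rfloor + 1$ for the largest clique in $\Delta_m$, which forces the jump from $d$ to $2d-1$.
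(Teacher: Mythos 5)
Your proposal is correct and follows essentially the same route as the paper: observe that $\Delta_{2d}$ contains a clique of size $d+1$ (the vertices $v_0, v_2, \ldots, v_{2d}$), so a graph of dimension at most $d$ cannot contain $\Delta_{2d}$ as an induced subgraph, giving $\Delta(\Gamma) \le 2d-1$, and then apply Theorem \ref{thm: gap theorem for RAAGs and RACGs}. The extra detail you provide about the exact maximal clique size $\lfloor m/2 \rfloor + 1$ in $\Delta_m$ is a fine elaboration but not a different argument.
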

\begin{proof}
	Note from the definition that $\Delta_{2d}$ contains a clique of size $d+1$ with vertices $v_0,v_2,\ldots, v_{2d}$.
	Thus $\Delta(\Gamma)\le 2d-1$, and the result follows from Theorem \ref{thm: gap theorem for RAAGs and RACGs}.
\end{proof}

\subsection{Groups with interesting scl spectra} \label{subsec: scl spec on a delta inf}
Theorem \ref{thm: gap theorem for RAAGs and RACGs} implies interesting properties of the spectrum of the infinitely generated right-angled Artin group $\Arm(\Delta_\infty)$.
\begin{proposition}
	The set of values obtained as scl of integral chains in $\Arm(\Delta_\infty)$ is dense in $\R_{\ge0}$, and in particular there is no spectral gap. 
	However, there is a gap $1/2$ for elements in $\Arm(\Delta_\infty)$.
\end{proposition}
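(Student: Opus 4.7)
The plan is to combine two tools already developed in the paper: the chains $\delta_m \in \Arm(\Delta_m)$ with scl bounded above by $1/m$ (Proposition \ref{prop: delta_m has low scl}) and retract monotonicity of scl (Proposition \ref{prop: mono and retract}). For every $m \ge 2$, the graph $\Delta_m$ is an induced subgraph of $\Delta_\infty$, so $\Arm(\Delta_m)$ sits as a retract of $\Arm(\Delta_\infty)$ via the homomorphism that kills every generator outside $\Vrm(\Delta_m)$. Consequently $s_m \defeq \scl_{\Arm(\Delta_\infty)}(\delta_m) = \scl_{\Arm(\Delta_m)}(\delta_m)$, and this value lies in $[\tfrac{1}{12(m+2)}, \tfrac{1}{m}]$. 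In particular $s_m \to 0$ while remaining strictly positive, which immediately rules out a spectral gap for chains.

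For density in $\R_{\ge 0}$, given any $r \ge 0$ and $\epsilon > 0$, first choose $m$ so large that $s_m < \epsilon$, then pick $n \in \Z_{\ge 0}$ with $|n s_m - r| \le s_m$. Since scl is linear on rays (Proposition \ref{prop:scl is norm}), the integral chain $n \delta_m$ satisfies $\scl_{\Arm(\Delta_\infty)}(n \delta_m) = n s_m$, which lies within $\epsilon$ of $r$.

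For the uniform gap $1/2$ for elements, note that every element $g \in \Arm(\Delta_\infty)$ is represented by a finite word, hence lies in $\Arm(\Lambda)$ for some finite induced subgraph $\Lambda \subset \Delta_\infty$. Because $\Arm(\Lambda)$ is again a retract of $\Arm(\Delta_\infty)$, Proposition \ref{prop: mono and retract} gives $\scl_{\Arm(\Delta_\infty)}(g) = \scl_{\Arm(\Lambda)}(g)$, and the known uniform gap $1/2$ for elements in arbitrary RAAGs (Theorem \ref{thm: gap for elements summary}(4)) finishes the argument. There is no substantive obstacle once the prior machinery is in place; the only step requiring any care is the density argument, which reduces to the elementary observation that the set $\{n s_m : n \in \Z_{\ge 0},\, m \ge 2\}$ is dense in $\R_{\ge 0}$ since the gap $s_m$ between consecutive multiples shrinks to zero as $m \to \infty$.
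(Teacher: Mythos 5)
Your proof is correct and follows essentially the same route as the paper: retract monotonicity applied to the chains $\delta_m$ from Proposition \ref{prop: delta_m has low scl}, integer multiples for density, and the known $1/2$ gap for elements in RAAGs. The only addition is your explicit reduction of an element to a finitely generated retract, which the paper handles implicitly by citing \cite{Heuer}.
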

\begin{proof}
	Note that $\Arm(\Delta_\infty)$ retracts to $\Arm(\Delta_m)$ for any $m\in\Z_+$. Thus 
	$$\scl_{\Arm(\Delta_\infty)}(\delta_m)=\scl_{\Arm(\Delta_m)}(\delta_m)\in \left[\frac{1}{12(m+2)},\frac{1}{m}\right].$$
	by Proposition \ref{prop: delta_m has low scl}, where $\delta_m$ is defined in Section \ref{subsec:scl in opposite paths}.
	Thus we obtain a sequence of integral chains whose scl is positive and converges to $0$. 
	Taking integer multiples of such integral chains proves the density. 
	The gap $1/2$ for elements in $\Arm(\Delta_\infty)$ is shown in \cite{Heuer}.
\end{proof}

No groups were previously known to have a gap for elements but no gap for integral chains.

With a small modification to the group, we can make scl values of \emph{elements} eventually dense in $\R_{\ge0}$.
\begin{proposition}
	Let $G=\Arm(\Delta_\infty)\star F_3$, where $F_3$ is the free group generated by $a,b,c$. Then $\scl_G(g)\ge 1/2$ for all $g\neq id\in G$, and the set $\{\scl_G(g)\mid g\in[G,G]\}$ is dense in $[3/2,\infty)$.
\end{proposition}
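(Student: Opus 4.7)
The plan is to handle the two assertions separately: the spectral gap follows immediately from Heuer's theorem once we observe that $G$ is itself a RAAG, and the density is obtained by converting the small-scl chains $\delta_m$ from Proposition \ref{prop: delta_m has low scl} into single elements of $G$, using $a,b,c \in F_3$ as stable letters.

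For the first assertion, I would note that $G \cong \Arm(\Delta_\infty \sqcup E_3)$, where $E_3$ denotes the edgeless graph on three vertices, since the free product of RAAGs corresponds to the disjoint union of defining graphs. Any nontrivial $g \in G$ has finite support and so lies in a finitely generated sub-RAAG that is a retract of $G$ (the projection induced by sending the other generators to the identity). By Heuer's gap theorem (Theorem \ref{thm: gap for elements summary}(4)) applied to this retract and the monotonicity of scl under retractions (Proposition \ref{prop: mono and retract}), we obtain $\scl_G(g) \geq 1/2$.

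For the density claim, write $H := \Arm(\Delta_\infty)$ and recall from Proposition \ref{prop: delta_m has low scl} that $s_m := \scl_H(\delta_m) \in [\tfrac{1}{12(m+2)}, \tfrac{1}{m}]$, so $s_m \to 0$. The key observation is that $G = H \star F_3 = H \star \langle a \rangle \star \langle b \rangle \star \langle c \rangle$, which is precisely the group $\tilde H$ appearing in Proposition \ref{prop: formulas for scl} for chains with $n=4$ terms in $H$. Consequently, for any chain $c' = \sum_{i=1}^{4} h_i$ in $H$, the element
\[
w(c') := h_1 \cdot a\, h_2\, a^{-1} \cdot b\, h_3\, b^{-1} \cdot c\, h_4\, c^{-1} \in G
\]
satisfies $\scl_G(w(c')) = \scl_H(c') + 3/2$.

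Given any target $T \geq 3/2$, set $t := T - 3/2$ and $k_m := \max(1, \lceil t/s_m \rceil) \in \Z_+$; by the seminorm property (Proposition \ref{prop:scl is norm}), $\scl_H(k_m \delta_m) = k_m s_m \to t$ as $m \to \infty$. Rewriting the chain $k_m \delta_m$, using $n\cdot g \equiv g^n$ and $-g \equiv g^{-1}$ from Definition \ref{def: equivalent chains} (with equal scl by Proposition \ref{prop: equivalent chains have the same scl}), as $g_{0,m}^{k_m} + g_{0,m-1}^{-k_m} + g_{1,m}^{-k_m} + g_{1,m-1}^{k_m}$ and applying the conversion above produces $w_{m,T} \in [G,G]$ with $\scl_G(w_{m,T}) = k_m s_m + 3/2 \to T$. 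A quick homology check (the exponent sums of $a,b,c$ vanish, and the abelianization of $k_m\delta_m$ in $H$ is zero) confirms $w_{m,T} \in [G,G]$. Since $T \geq 3/2$ was arbitrary, the scl spectrum of $[G,G]$ is dense in $[3/2, \infty)$. No step presents a serious obstacle; the crux is simply the identification $H \star F_3 = H \star \langle t_1 \rangle \star \langle t_2 \rangle \star \langle t_3 \rangle$, which makes Proposition \ref{prop: formulas for scl} applicable directly inside $G$ without introducing auxiliary generators.
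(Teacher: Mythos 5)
Your proof is correct, and the density half is essentially the paper's own argument: the paper also applies Proposition \ref{prop: formulas for scl} (with $n=4$, so that $\tilde H = H\star\langle t_1\rangle\star\langle t_2\rangle\star\langle t_3\rangle = \Arm(\Delta_\infty)\star F_3 = G$) to multiples $n\delta_m$ of the chains from Proposition \ref{prop: delta_m has low scl}, producing elements with $\scl_G = \scl_H(n\delta_m)+\tfrac32$ and letting $m,n$ range; your choice $k_m=\max(1,\lceil t/s_m\rceil)$ is just an explicit way of organizing the same limit, and your homology check that the resulting elements lie in $[G,G]$ is a welcome (true) detail the paper leaves implicit. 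The only genuine divergence is the spectral gap part: the paper argues by cases, quoting \cite{Heuer} and \cite{DH91} when $g$ conjugates into a factor and the free-product gap theorem of \cite{Chen:sclfpgap} (for free products of torsion-free groups) otherwise, whereas you observe that $F_3=\Arm(E_3)$ makes $G$ itself the RAAG on $\Delta_\infty\sqcup E_3$, reduce to a finitely generated sub-RAAG retract, and invoke Heuer's gap theorem (Theorem \ref{thm: gap for elements summary}(4)) once. Both are valid; your route is slightly cleaner in that it needs only one external gap result, while the paper's case split is the natural one if one prefers not to re-package the free product as a RAAG.
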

\begin{proof}
	If $g\neq id$ conjugates into $\Arm(\Delta_\infty)$ or $F_3$, the lower bound $1/2$ is known by \cite{Heuer} and \cite{DH91}. Otherwise, the lower bound $1/2$ follows from \cite{Chen:sclfpgap} since both factor groups are torsion-free.
	
	As for the density, recall that the integral chain $\delta_m=g_{0,m}-g_{1,m}-g_{0,m-1}+g_{1,m-1}$ has scl between $1/(12(m+2))$ and $1/m$. 
	Applying Proposition \ref{prop: formulas for scl} to
	$g=cbag_{0,m}^n a^{-1} g_{1,m}^{-n}b^{-1}g_{0,m-1}^{-n}c^{-1}g_{1,m-1}^n$ for any $n\in\Z_+$, we have 
	$$\scl_G(g)=\scl_{\Arm(\Delta_\infty)}(n\delta_m)+\frac{3}{2}\in \left[\frac{3}{2}+\frac{n}{12(m+2)},\frac{3}{2}+\frac{n}{m}\right].$$
	The density follows since $m$ and $n$ are arbitrary positive integers.
\end{proof}

\section{Scl of vertex chains} \label{sec:vertex chains}

We describe an algorithm to compute scl of vertex chains in Section \ref{subsec:computation by linear program}. 
This allows us to relate $\scl$ to the \emph{fractional stability number} ($\fsn$) of graphs in Section \ref{subsec:fsn as scl}.
In Section \ref{subsec:statistics of scl and fsn}, we observe and explain the similarity in histograms of scl and fsn on random words and graphs, respectively (Figure \ref{figure:scl and fsn intro}).

\subsection{Computation by linear programming} \label{subsec:computation by linear program}
Given any vertex chain $c$, we will give two linear programming problems $(P_c)$ and $(P_c^*)$ that both compute $\scl(c)$. They are dual to each other and thus yield dual solutions. Moreover, feasible solutions of $(P_c)$ yield quasimorphisms with controlled defects and feasible solutions to $(P_c^*)$ yield admissible surfaces.

To describe the linear programming problems, we introduce the following notion.
\begin{definition} \label{def:frac stab set}
	A \emph{stable measure} on a graph $\Gamma$ is a list of numbers $\mu=(\mu_v)_{v\in \Vrm}$, one for each vertex, such that
	\begin{itemize}
		\item the sum of $\mu_v$ over all vertices in any given clique $q$ of $\Gamma$ is at most $1$;
		\item $\mu_v\ge 0$ for each $v$.
	\end{itemize}
\end{definition}

A set $S$ of vertices is called \emph{a stable set} if they are pairwise non-adjacent in $\Gamma$. Equivalently, each clique contains at most one vertex in $S$. Thus the indicator function of any stable set is a stable measure.

Given a stable measure $\mu$ and a vertex chain $c$, let 
$$|\mu|_c \defeq \sum_{v\in\Vrm}  \mu_v\cdot \scl_{G_v}(c_v),$$
which is linear in $\mu_v$. 
Then maximizing $|\mu|_c$ among stable measures is a linear programming problem $(P_c)$ since the defining properties of a stable measure are linear inequalities in $\mu_v$.
Note that the set of stable measures is a compact convex rational polyhedron in $\R^{|\Vrm|}$, and thus the problem $(P_c)$ has an optimal solution at a rational point.

In general, one can replace $\scl_{G_v}(c_v)$ by other weights on vertices, and the corresponding problem is called the \emph{fractional weighted stability number} in graph theory; see \cite[Page 333]{lovasz}. Note that the result is rational if the weights are, since the feasible set is a rational polyhedron.

To describe its dual problem $(P_c^*)$, we introduce weighted clique cover.
\begin{definition}
	Given a vertex chain $c$, a \emph{weighted clique cover} with respect to $c$ is a list of real numbers (considered as weights) $y=\{y_q\}$, indexed by the cliques $q$ of $\Gamma$ such that
	\begin{itemize}
		\item the sum of $y_q$ over all cliques containing any given vertex $v$ is at least $\scl_{G_v}(c_v)$.
		\item $y_q\ge 0$ for every clique $q$.
	\end{itemize}
	For any weighted clique cover $y$, let $|y|\defeq \sum y_q$, where the sum is taken over all cliques $q$ of $\Gamma$.
\end{definition}

Then minimizing $|y|$ over all weighted clique covers with respect to $c$ is the linear programming problem $(P_c^*)$ dual to the problem $(P_c)$. Thus they have the same optimal value by the strong duality theorem of linear programming, explained as follows.
\begin{lemma}\label{lemma: strong duality}
	For any vertex chain $c$ in a graph product $\Gcl(\Gamma)$, 
	we have
	$$\max_\mu |\mu|_c=\min_y |y|,$$
	where the maximization is taken over stable measures $\mu$ and the minimization is taken over weighted clique covers $y$.
\end{lemma}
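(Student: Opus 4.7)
The plan is to recognize that $(P_c)$ and $(P_c^*)$ are a standard primal-dual pair of linear programs and apply the strong duality theorem of linear programming. Writing the variables in the usual matrix form, index the stable measures by $\mu \in \R^{\Vrm}$ with constraints $\mu_v \ge 0$ and $\sum_{v \in q} \mu_v \le 1$ for every clique $q$, and objective $\sum_v \scl_{G_v}(c_v) \mu_v$ to be maximized. The dual has variables $y_q \ge 0$, one per clique, with constraints $\sum_{q \ni v} y_q \ge \scl_{G_v}(c_v)$ for every vertex $v$, and objective $\sum_q y_q$ to be minimized. A direct inspection of the constraint matrix (whose $(v,q)$-entry is $1$ if $v \in q$ and $0$ otherwise) shows that these are indeed LP duals in the sense of \cite{lovasz}.

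To invoke the strong duality theorem it suffices to produce a feasible point for each program. For the primal, $\mu \equiv 0$ is trivially feasible. For the dual, I would use that every singleton vertex $\{v\}$ is a clique of $\Gamma$, so setting $y_{\{v\}} = \scl_{G_v}(c_v)$ and $y_q = 0$ for every clique of size $\ge 2$ produces a feasible weighted clique cover. In particular both feasible regions are nonempty; moreover the primal objective is bounded above by $\sum_v \scl_{G_v}(c_v)$ (since each singleton clique constraint forces $\mu_v \le 1$), so both optima are finite.

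Strong LP duality then yields $\max_\mu |\mu|_c = \min_y |y|$, and since the feasible regions are rational polyhedra with rational objective data (assuming the $\scl_{G_v}(c_v)$ are rational — otherwise one argues by continuity on the compact feasible set), optima are attained at rational vertices. The only potential subtlety is whether some $\scl_{G_v}(c_v)$ can be infinite; in that case the vertex chain is not null-homologous in $G_v$, hence $c$ is not null-homologous in $\Gcl(\Gamma)$ and both sides equal $+\infty$ by convention, so the identity still holds trivially. Thus no genuine obstacle arises, and the lemma follows directly from standard LP duality; the substantive content of this section lies not in this lemma itself but in the subsequent identifications of $\max_\mu |\mu|_c$ and $\min_y |y|$ with $\scl_{\Gcl(\Gamma)}(c)$ via quasimorphisms and admissible surfaces, respectively.
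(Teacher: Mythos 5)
Your proposal is correct and follows essentially the same route as the paper: both write the two problems in matrix form with the clique--vertex incidence matrix and invoke the strong duality theorem of linear programming. The extra checks you supply (primal feasibility via $\mu\equiv 0$, dual feasibility via singleton cliques, boundedness, and the degenerate case of infinite $\scl_{G_v}(c_v)$) are fine and only make explicit what the paper leaves implicit.
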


\begin{proof}
Let $Cl(\Gamma)$ be the set of cliques of $\Gamma$. 
Let $M_\Gamma$ be the $0$-$1$ matrix where the columns are indexed by the vertices $v \in \Vrm(\Gamma)$ and the rows are indexed by all cliques $q \in Cl(\Gamma)$ such that the ($q$,$v$)-entry is $1$ if and only if $v \in q$.

Then in matrix form, the problem $(P_c)$ is to maximize $s^T\cdot \mu$ subject to $M_\Gamma\cdot \mu \leq 1_{Cl(\Gamma)}$ and $\mu\ge 0$. 
Here $1_{Cl(\Gamma)}$ is the vector of $1$'s of length $|Cl(\Gamma)|$ and $s$ is the vector indexed by $\Vrm(\Gamma)$ with entry $\scl_{G_v}(c_v)$ at vertex $v$. 
By the strong duality theorem of linear programming \cite[Page 91 (19)]{linprog} the optimal value agrees with
the minimal value of $1_{Cl(\Gamma)}^T \cdot y$ subject to $M_\Gamma^T\cdot y \ge s$ and $y\ge0$, which is the matrix form of $(P_c^*)$. 
\end{proof}

The main result of this subsection is that both $(P_c)$ and $(P_c^*)$ compute $\scl_{\Gcl(\Gamma)}(c)$.
\begin{theorem}\label{thm: scl of vertex chains}
	For any vertex chain $c$ in a graph product $G=\Gcl(\Gamma)$, we have
	$$\scl_G(c)=\max_\mu |\mu|_c=\min_y |y|,$$
	where the maximization is taken over stable measures $\mu$ and the minimization is taken over weighted clique covers $y$.
\end{theorem}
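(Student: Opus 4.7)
The plan is to establish both $\scl_G(c) \le \min_y |y|$ and $\scl_G(c) \ge \max_\mu |\mu|_c$; combined with the LP duality of Lemma~\ref{lemma: strong duality}, these give the theorem. Throughout, write $s_v := \scl_{G_v}(c_v)$.

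For the upper bound, fix a weighted clique cover $y$. For each vertex $v$ and each clique $q \ni v$, pick nonnegative reals $\mu_{v,q}$ with $\sum_{q \ni v}\mu_{v,q} = 1$ and $\mu_{v,q} s_v \le y_q$; this is feasible because the clique cover condition $\sum_{q \ni v} y_q \ge s_v$ gives $\sum_{q \ni v} y_q/s_v \ge 1$. Then
$$
c = \sum_v c_v = \sum_q c_q', \qquad c_q' := \sum_{v \in q} \mu_{v,q} c_v,
$$
with each $c_q'$ supported in the direct-product subgroup $\Gcl(q) \le G$. Proposition~\ref{prop:chains on direct products} gives $\scl_{\Gcl(q)}(c_q') = \max_{v \in q} \mu_{v,q} s_v \le y_q$, and combining the triangle inequality (Proposition~\ref{prop:scl is norm}) with monotonicity under the inclusion $\Gcl(q) \hookrightarrow G$ (Proposition~\ref{prop: mono and retract}) yields $\scl_G(c) \le \sum_q \scl_G(c_q') \le \sum_q y_q$. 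Minimizing over $y$ gives $\scl_G(c) \le \min_y |y|$.

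For the lower bound, fix a stable measure $\mu$. For each vertex $v$, let $\phi_v$ be the homogenization of the extremal antisymmetric quasimorphism for $c_v$ in $G_v$ supplied by Proposition~\ref{prop:extremal qm}; then $\phi_v$ is a homogeneous quasimorphism on $G_v$ with $D(\phi_v) = 1/2$ and $\phi_v(c_v) = s_v$. For $g \in G$ and any reduced expression $g = g_1 \cdots g_n$ with $g_i \in G_{v_i}$, set
$$
\Phi(g) := \sum_{i=1}^n \mu_{v_i}\,\phi_{v_i}(g_i).
$$
By Lemma~\ref{lemma: normal form for graph products}, any two reduced expressions of $g$ differ by syllable shufflings, which preserve the multiset of pairs $(v_i, g_i)$, so $\Phi$ is well-defined on $G$. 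It is routine to verify that $\Phi$ is homogeneous and antisymmetric, and that $\Phi(c) = \sum_v \mu_v \phi_v(c_v) = |\mu|_c$.

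The crux is the defect bound $D(\Phi) \le 1/2$. Given $g,h \in G$, apply Proposition~\ref{prop:normal forms for products} to write $g = g_0 q_g x$, $h = x^{-1} q_h h_0$, and $gh = g_0 q_{gh} h_0$ as reduced expressions, where $q_g = g_1 \cdots g_k$, $q_h = h_1 \cdots h_k$, and $q_{gh} = (g_1 h_1) \cdots (g_k h_k)$ are all supported on a single clique $\{v_1, \ldots, v_k\}$. Since $\Phi$ is additive on concatenations of reduced words and antisymmetric, so $\Phi(x^{-1}) = -\Phi(x)$, the contributions of $g_0$, $h_0$, and $x$ cancel in $\Phi(gh) - \Phi(g) - \Phi(h)$, leaving
$$
\Phi(gh) - \Phi(g) - \Phi(h) = \sum_{i=1}^k \mu_{v_i}\bigl(\phi_{v_i}(g_i h_i) - \phi_{v_i}(g_i) - \phi_{v_i}(h_i)\bigr),
$$
whose absolute value is at most $\sum_{i=1}^k \mu_{v_i} D(\phi_{v_i}) = \tfrac{1}{2}\sum_{i=1}^k \mu_{v_i} \le \tfrac{1}{2}$, using that $\{v_1, \ldots, v_k\}$ is a clique and $\mu$ is a stable measure. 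Thus $D(\Phi) \le 1/2$, and Bavard's duality (Theorem~\ref{thm:bavard}) gives $\scl_G(c) \ge \Phi(c)/(2 D(\Phi)) \ge |\mu|_c$. The main obstacle is precisely this defect estimate: it is exactly where the clique structure of the product normal form meshes with the stable measure constraint to turn the otherwise unwieldy sum $\sum_v \mu_v D(\phi_v)$ into the bound $1/2$.
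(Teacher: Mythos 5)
Your upper bound argument is correct, and it is genuinely different from the paper's: the paper proves $\scl_G(c)\le|y|$ by overcounting the chain and invoking a monotonicity-in-coefficients lemma (Lemma \ref{lemma: vertex chain monotone}), which in turn rests on the amalgam formula of \cite[Theorem 6.2]{CH:sclgap}, whereas you split $c$ exactly as a convex combination $\sum_q c'_q$ over cliques and only use subadditivity, the direct product formula (Proposition \ref{prop:chains on direct products}) and monotonicity under the inclusions $\Gcl(q)\hookrightarrow G$. That is a cleaner and more elementary route to that half of the theorem.

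The lower bound, however, has a genuine gap: the claim that $\Phi$ is homogeneous is false. When you pass from $g$ to $g^n$, syllables at the junctions between consecutive copies of $g$ can merge into a single syllable lying in one vertex group, and $\phi_v$ is only a (homogeneous) quasimorphism on $G_v$, not additive. Concretely, take two non-adjacent vertices $u,v$, a nonabelian $G_u$ (so that $\phi_u$ is not a homomorphism), and $g=a_1 b a_2$ with $a_1,a_2\in G_u$, $b\in G_v$, $a_2a_1\neq 1$; then the reduced expression of $g^n$ is $a_1\, b\, (a_2a_1)\, b \cdots (a_2a_1)\, b\, a_2$, so $\Phi(g^n)-n\Phi(g)=(n-1)\,\mu_u\bigl(\phi_u(a_2a_1)-\phi_u(a_1)-\phi_u(a_2)\bigr)$, which grows linearly in $n$ unless $\phi_u$ happens to be additive on this pair. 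So $\Phi$ is an antisymmetric quasimorphism with $D(\Phi)\le 1/2$ (that part of your computation is fine and matches Lemma \ref{lemma: vertex qm defect bound}), but it is not homogeneous, and Bavard's duality (Theorem \ref{thm:bavard}) only sees homogeneous quasimorphisms. Passing to the homogenization $\bar\Phi$ costs a factor of two in the defect, $D(\bar\Phi)\le 2D(\Phi)\le 1$ (Proposition \ref{prop:homog rep}), and since $\bar\Phi(c)=|\mu|_c$ you then only get $\scl_G(c)\ge |\mu|_c/2$. The paper avoids exactly this loss by the opposite order of operations: it combines the \emph{non-homogenized} antisymmetric extremal quasimorphisms of defect $1/4$ from Proposition \ref{prop:extremal qm}, so the combined map $f$ has $D(f)\le 1/4$, homogenizes once at the end to get $D(\bar f)\le 1/2$, and observes that $\bar f$ restricted to each $G_v$ is $\mu_v\bar\phi_v$, hence $\bar f(c)=|\mu|_c$. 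With that modification (or an independent proof that $D(\bar\Phi)\le 1/2$) your lower bound goes through; as written, the homogeneity step is wrong and the argument falls short by a factor of $2$.
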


By Lemma \ref{lemma: strong duality}, to prove Theorem \ref{thm: scl of vertex chains}, it suffices to establish the following two lemmas.
\begin{lemma}\label{lemma: vertex chain scl upper bound}
	For any vertex chain $c$ in a graph product $G=\Gcl(\Gamma)$, we have $\scl_G(c)\le |y|$ for any weighted clique cover $y$ with respect to $c$.
\end{lemma}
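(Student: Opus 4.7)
The plan is to exploit the fact that each clique $q \subset \Gamma$ generates the subgroup $\Gcl(q) \cong \prod_{v\in q} G_v$, which sits as a retract of $\Gcl(\Gamma)$, and where Proposition~\ref{prop:chains on direct products} gives $\scl_{\Gcl(q)}(\sum_{v\in q} x_v) = \max_{v \in q}\scl_{G_v}(x_v)$ for any chain supported on the vertex groups of $q$. Given a weighted clique cover $y = \{y_q\}$, I will split each chain $c_v$ into pieces indexed by the cliques containing $v$, reassemble those pieces into one chain per clique (which is now supported on a direct product), and then apply the triangle inequality for scl (Proposition~\ref{prop:scl is norm}) together with monotonicity (Proposition~\ref{prop: mono and retract}).

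Concretely, for each vertex $v$ with $\scl_{G_v}(c_v) > 0$, set
\[
\alpha_{v,q} \defeq \frac{y_q}{\sum_{q' \ni v} y_{q'}} \qquad \text{for each clique } q \ni v,
\]
so that $\sum_{q \ni v}\alpha_{v,q} = 1$. Define $c_{v,q} \defeq \alpha_{v,q}\, c_v \in C_1(G_v)$ (viewed as a real chain). For vertices with $\scl_{G_v}(c_v) = 0$, I choose any single clique $q \ni v$ (e.g.\ $\{v\}$) and put $c_{v,q} \defeq c_v$, $c_{v,q'} \defeq 0$ for $q' \neq q$; the defining inequality of a weighted clique cover still allows $y_q = 0$ in this degenerate case, and it poses no problem since the relevant scl contribution is $0$. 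In both cases, the choice ensures
\[
\sum_{q \ni v} c_{v,q} = c_v \qquad \text{and} \qquad \scl_{G_v}(c_{v,q}) = \alpha_{v,q}\scl_{G_v}(c_v) \leq y_q
\]
by the linearity of scl on rays, using $\alpha_{v,q}\scl_{G_v}(c_v) \leq \alpha_{v,q} \sum_{q'\ni v} y_{q'} = y_q$.

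For each clique $q$ of $\Gamma$, assemble the chain $c_q \defeq \sum_{v \in q} c_{v,q}$, which is supported on the direct product $\Gcl(q)$. By Proposition~\ref{prop:chains on direct products},
\[
\scl_{\Gcl(q)}(c_q) = \max_{v \in q}\scl_{G_v}(c_{v,q}) \leq y_q.
\]
Since $\Gcl(q)$ is a retract of $\Gcl(\Gamma)$ (as $q$ is an induced subgraph), monotonicity of scl gives $\scl_G(c_q) \leq \scl_{\Gcl(q)}(c_q) \leq y_q$. Summing over all cliques and using $\sum_q c_q = \sum_v \sum_{q \ni v} c_{v,q} = \sum_v c_v = c$, the triangle inequality of Proposition~\ref{prop:scl is norm} yields
\[
\scl_G(c) \leq \sum_q \scl_G(c_q) \leq \sum_q y_q = |y|,
\]
which is the desired bound. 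There is no serious obstacle here; the only subtlety is the bookkeeping needed when $\scl_{G_v}(c_v) = 0$ and all surrounding $y_q$ vanish, which is why the split $\alpha_{v,q}$ must be defined by a case distinction.
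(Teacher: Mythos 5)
Your proof is correct, but it takes a genuinely different route from the paper's. The paper scales the chain up rather than splitting it: after reducing (via retracts) to the case $\scl_{G_v}(c_v)>0$ for all $v$, it forms for each clique $q$ the chain $d_q=\sum_{v\in q}\frac{y_q}{\scl_{G_v}(c_v)}c_v$ with $\scl(d_q)=y_q$, observes that in $\sum_q d_q$ each $c_v$ carries coefficient $\frac{\sum_{q\ni v}y_q}{\scl_{G_v}(c_v)}\ge 1$, and then invokes Lemma \ref{lemma: vertex chain monotone} (scl of a vertex chain is non-decreasing in its coefficients) to compare with $c=\sum_v c_v$ before applying the triangle inequality. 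That monotonicity lemma is not formal: its proof relies on the scl formula for amalgams from \cite[Theorem 6.2]{CH:sclgap} combined with Proposition \ref{prop:chains on direct products}. Your argument bypasses it entirely by splitting each $c_v$ exactly into the convex pieces $\alpha_{v,q}c_v$, so that $c=\sum_q c_q$ holds on the nose and only the semi-norm properties of scl (linearity on rays and the triangle inequality, Proposition \ref{prop:scl is norm}), the direct product formula (Proposition \ref{prop:chains on direct products}, iterated over the finitely many factors of $\Gcl(q)$), and monotonicity under the retract $\Gcl(q)\le \Gcl(\Gamma)$ are needed; your explicit case distinction at vertices with $\scl_{G_v}(c_v)=0$ likewise replaces the paper's reduction to the induced subgraph of vertices with positive scl, and it is handled correctly since the cover condition at such vertices is vacuous. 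The trade-off: your proof is more elementary and self-contained for this upper bound, whereas the paper's route goes through Lemma \ref{lemma: vertex chain monotone}, a statement with some independent appeal but one that is only used for this purpose, so for the lemma at hand your exact splitting is arguably the cleaner argument.
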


\begin{lemma}\label{lemma: vertex chain scl lower bound}
	For any vertex chain $c$ in a graph product $G=\Gcl(\Gamma)$, we have $\scl_G(c)\ge |\mu|_c$ for any stable measure $\mu$.
\end{lemma}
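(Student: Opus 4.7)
The plan is to apply Bavard's duality (Theorem \ref{thm:bavard}) by constructing a homogeneous quasimorphism $\Phi$ on $G=\Gcl(\Gamma)$ with $\Phi(c) = |\mu|_c$ and defect $D(\Phi) \le 1/2$; this will give $\scl_G(c) \ge \Phi(c)/(2D(\Phi)) \ge |\mu|_c$. For each vertex $v$, Proposition \ref{prop:extremal qm} furnishes an antisymmetric homogeneous quasimorphism $\phi_v\colon G_v\to\R$ with $D(\phi_v)=1/4$ and $\phi_v(c_v)=\scl_{G_v}(c_v)$. Let $r_v\colon G\to G_v$ be the natural retraction killing each $G_w$ with $w\ne v$, and set $\Phi_v:=\phi_v\circ r_v$, a homogeneous quasimorphism on $G$ of defect at most $1/4$. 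Define
\[
\Phi := \sum_{v\in\Vrm(\Gamma)} \mu_v\,\Phi_v.
\]
Since $r_v(c_w)=0$ for $w\ne v$, one computes $\Phi(c) = \sum_v \mu_v \phi_v(c_v) = |\mu|_c$, so everything reduces to the defect bound.

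The heart of the argument is to show $D(\Phi)\le 1/2$. The naive triangle inequality only gives $D(\Phi)\le \sum_v\mu_v D(\phi_v)=|\mu|/4$, which is insufficient whenever $|\mu|>2$ (for instance on the $5$-cycle with $\mu\equiv 1/2$). I would proceed by induction on $|\Vrm(\Gamma)|$ and exploit the structure of $\Gamma$. If $\Gamma = \Gamma_1\sqcup\Gamma_2$ is disconnected, then $G$ is a free product, $c$ and $|\mu|_c$ split as sums over the two factors, and Proposition \ref{prop: vertex chains on free products} plus induction on each factor settles the case. If $\Gamma=\Gamma_1*\Gamma_2$ is a nontrivial join, then $G$ is a direct product and the cliques of the join are precisely $q_1\cup q_2$ for $q_i$ cliques of $\Gamma_i$; setting $M_i:=\max_{q\subset\Gamma_i}\sum_{v\in q}\mu_v$ the clique constraint forces $M_1+M_2\le 1$. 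The rescaled measure $\mu|_{\Gamma_i}/M_i$ is then stable on $\Gamma_i$, so by induction $|\mu|_{c^{(i)}}\le M_i\,\scl_{\Gcl(\Gamma_i)}(c^{(i)})$; summing and using Proposition \ref{prop:chains on direct products} yields $|\mu|_c\le (M_1+M_2)\max_i\scl_{\Gcl(\Gamma_i)}(c^{(i)})\le \scl_G(c)$.

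The main obstacle is the case when $\Gamma$ is connected and not a join, where neither decomposition applies. The plan here is to analyze the defect of $\Phi$ via the canonical form of products in $G$ (Proposition \ref{prop:normal forms for products}): for $gh = g_0 q_{gh} h_0$ with $q_{gh}$ supported on a clique $q$, the per-vertex contributions $a_v := \Phi_v(gh) - \Phi_v(g) - \Phi_v(h)$ are governed by the interaction at the clique $q$, and one hopes $\bigl|\sum_v\mu_v a_v\bigr|\le \tfrac14\sum_{v\in q}\mu_v\le \tfrac14$ via the stable-measure condition. When this fails because defects propagate to non-clique subsets (as for $g=g_1g_3$, $h=h_1h_3$ in $\Gcl(C_5)$, where the active set $\{v_1,v_3\}$ is stable rather than a clique), the plan is to refine $\Phi$ -- for instance by replacing $\phi_v\circ r_v$ with Brooks-style counting quasimorphisms attuned to the full normal form of $G$ so that cancellations between non-adjacent vertices are captured -- or to switch to a topological approach, extracting from each admissible surface $S$ of degree $n$ for $c$ a weighted clique cover $y_S$ with $|y_S|\le -\chi^-(S)/(2n)$ and then invoking the strong LP duality of Lemma \ref{lemma: strong duality} to conclude $\scl_G(c)\ge\min_y|y|=\max_\mu|\mu|_c$.
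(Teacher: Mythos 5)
Your overall strategy (Bavard duality via a combined quasimorphism with $\Phi(c)=|\mu|_c$ and defect at most $1/2$) is the same as the paper's, and your reductions for disconnected $\Gamma$ (free product) and join $\Gamma$ (direct product) are fine as far as they go. But the proof has a genuine gap: the main case, where $\Gamma$ is connected and not a join, is exactly the case you leave open, and your candidate $\Phi=\sum_v\mu_v\,\phi_v\circ r_v$ provably does not work there, as your own $\Gcl(C_5)$ example with $g=g_1g_3$, $h=h_1h_3$ shows -- the retractions multiply together syllables of $G_v$ that are separated by non-commuting letters, so defect contributions accumulate over \emph{stable} sets rather than cliques, and the stable-measure constraint gives no control. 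The closing suggestions (``Brooks-style counting quasimorphisms'' or extracting a weighted clique cover from an admissible surface) are only plans, not arguments, so the inequality is not established. There is also a smaller inconsistency: a \emph{homogeneous} $\phi_v$ with $D(\phi_v)=1/4$ and $\phi_v(c_v)=\scl_{G_v}(c_v)>0$ cannot exist by Bavard's duality; Proposition \ref{prop:extremal qm} gives a non-homogeneous extremal quasimorphism of defect $1/4$, and one must homogenize at the end (defect at most $1/2$ by Proposition \ref{prop:homog rep}).

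The missing idea, which is how the paper proceeds, is to combine the $\phi_v$ differently: for $g$ with reduced expression $g=g_1\cdots g_n$, set $f(g)=\sum_i \phi_{v_i}(g_i)$, i.e.\ evaluate each antisymmetric extremal $\phi_v$ on the individual syllables (weighted by $\mu_v$) rather than on the retracted product. With this definition, Proposition \ref{prop:normal forms for products} shows that in a product $gh=g_0q_{gh}h_0$ the cancelling piece $x,x^{-1}$ contributes zero by antisymmetry and all interaction is concentrated on the single clique $q$, which yields $D(f)=\sup_q\sum_{v\in q}\mu_v D(\phi_v)\le 1/4$ (Lemma \ref{lemma: vertex qm defect bound}); in your $C_5$ example this $f$ picks up no defect at all, since $g_1g_3h_1h_3$ is already reduced. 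Homogenizing and applying Bavard then gives $\scl_G(c)\ge \bar f(c)=|\mu|_c$. Without this (or a worked-out substitute such as the surface-to-clique-cover argument you gesture at), the hard case of the lemma remains unproved.
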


To prove Lemma \ref{lemma: vertex chain scl upper bound},
we first show that scl of a vertex chain is increasing in the coefficients.
\begin{lemma}\label{lemma: vertex chain monotone}
	Fix a chain $c_v$ in each vertex group $G_v$. 
	Given numbers $\lambda_v\ge\lambda'_v\ge 0$ for each vertex $v$, we have
	$\scl_G(\sum_v \lambda_v c_v)\ge \scl_G(\sum_v \lambda'_v c_v)$.
\end{lemma}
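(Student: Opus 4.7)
The plan is to reduce to the case where $\lambda$ and $\lambda'$ differ at only one vertex $v_0$, then combine two standard tools: the convexity of $\scl$ (which is a seminorm by Proposition \ref{prop:scl is norm}) and the retraction of $\Gcl(\Gamma)$ onto the sub-graph product $\Gcl(\Gamma \setminus \{v_0\})$. Changing one coordinate at a time and chaining the resulting inequalities reduces the general statement to the single-coordinate case: $\lambda_w = \lambda'_w$ for all $w \neq v_0$ and $\lambda_{v_0} > \lambda'_{v_0} \geq 0$.

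The retraction $r \colon \Gcl(\Gamma) \to \Gcl(\Gamma \setminus \{v_0\})$ sends every generator of $G_{v_0}$ to the identity, so $r(c_{v_0})$ is a chain supported at the identity element of $\Gcl(\Gamma\setminus\{v_0\})$; any such chain is equivalent to $0$ by Definition \ref{def: equivalent chains} (apply the commuting relation with $g = h = \mathrm{id}$). Combining the retract property (Proposition \ref{prop: mono and retract}) with Proposition \ref{prop: equivalent chains have the same scl} therefore yields the intermediate inequality
$$
\scl_G\Bigl(\sum_v \lambda_v c_v\Bigr) \;\geq\; \scl_{\Gcl(\Gamma\setminus\{v_0\})}\Bigl(r\Bigl(\sum_v \lambda_v c_v\Bigr)\Bigr) \;=\; \scl_G\Bigl(\sum_{v \neq v_0} \lambda_v c_v\Bigr).
$$

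If $\lambda'_{v_0} = 0$, the right-hand side already equals $\scl_G(\sum_v \lambda'_v c_v)$ and we are done. Otherwise, set $t = \lambda'_{v_0}/\lambda_{v_0} \in (0, 1)$; a direct coordinate-wise check gives the chain identity
$$
\sum_v \lambda'_v c_v \;=\; t\sum_v \lambda_v c_v + (1-t)\sum_{v \neq v_0} \lambda_v c_v
$$
in $C_1(G)$. The convexity of $\scl$ (subadditivity plus positive homogeneity from Proposition \ref{prop:scl is norm}) combined with the intermediate inequality above then delivers
$$
\scl_G\Bigl(\sum_v \lambda'_v c_v\Bigr) \;\leq\; t\,\scl_G\Bigl(\sum_v \lambda_v c_v\Bigr) + (1-t)\,\scl_G\Bigl(\sum_{v \neq v_0} \lambda_v c_v\Bigr) \;\leq\; \scl_G\Bigl(\sum_v \lambda_v c_v\Bigr).
$$
No serious obstacle arises: once the one-coordinate reduction is made, the argument is a two-line combination of convexity with the retraction bound, so the main content is simply noticing that $\sum \lambda'_v c_v$ lies on the segment between $\sum \lambda_v c_v$ and $\sum_{v \neq v_0} \lambda_v c_v$, both of which have scl at most $\scl_G(\sum_v \lambda_v c_v)$.
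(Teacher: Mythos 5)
Your proof is correct, but it follows a genuinely different route from the paper. The paper fixes one vertex $u$, splits $G$ as the amalgam $\Gcl(\st(u))\star_{\Gcl(\Lk(u))}\Gcl(\Gamma\setminus\{u\})$, invokes the scl formula for such splittings from \cite[Theorem 6.2]{CH:sclgap} (writing $\scl_G(c)$ as an infimum over edge-group chains $d$ of a sum of two scl's), and then uses the direct product formula of Proposition \ref{prop:chains on direct products} to see that the term $\scl_{\Gcl(\st(u))}(\lambda_u c_u+d)=\max(\lambda_u\scl_{G_u}(c_u),\scl_{\Gcl(\Lk(u))}(d))$ is non-decreasing in $\lambda_u$, hence so is the infimum. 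You instead reduce to changing one coordinate, use the retraction $\Gcl(\Gamma)\to\Gcl(\Gamma\setminus\{v_0\})$ killing $G_{v_0}$ together with Proposition \ref{prop: mono and retract} to get $\scl_G(\sum_v\lambda_v c_v)\ge\scl_G(\sum_{v\neq v_0}\lambda_v c_v)$ (the image of $\lambda_{v_0}c_{v_0}$ is supported at the identity, hence negligible — in fact $id=-id$ in $C_1$, so it is literally zero there), and then observe that $\sum_v\lambda'_v c_v$ lies on the segment between the two chains and apply convexity of the seminorm (Proposition \ref{prop:scl is norm}). All steps check out: the convex-combination identity is correct coordinatewise, $t=\lambda'_{v_0}/\lambda_{v_0}$ is well defined since $\lambda_{v_0}>0$ in the nontrivial case, and the retraction onto the induced subgraph is exactly the one the paper itself uses elsewhere (e.g.\ in the proof of Lemma \ref{lemma:small chains equiv to zero}). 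What each approach buys: yours is more elementary and self-contained, needing only the seminorm property, retract monotonicity, and the triviality of chains supported at the identity, and in particular it avoids the external amalgam formula entirely; the paper's argument is heavier but stays within the toolkit it uses throughout Section 6 (the same formula from \cite{CH:sclgap} reappears in Lemma \ref{lemma: vertex chain monotone}'s neighbours such as Proposition \ref{prop: vertex chains on free products}), and the amalgam-plus-direct-product computation exposes more structural information about how the scl of the vertex chain depends on the individual coefficient $\lambda_u$, not just that it is monotone.
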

\begin{proof}
	It suffices to show that scl is non-decreasing in every single $\lambda_u$ fixing $\lambda_v$ for all $v\neq u$. Split $G$ as an amalgam
	$\Gcl(\st(u))\star_{\Gcl(Lk(u))}\Gcl(\Gamma\setminus\{u\})$.
	Then we think of the vertex chain $c=\sum_v \lambda_v c_v=\lambda_u c_u + \sum_{v\neq u} \lambda_v c_v$ as a sum of
	two chains supported on the two factor groups.
	
	By \cite[Theorem 6.2]{CH:sclgap}, we have
	$$
	\scl_{G}(c)=\inf_{d} [
	\scl_{\Gcl(\st(u))}(\lambda_u c_u + d) + \scl_{\Gcl(\Gamma \setminus \{u\} )}(-d +
	\sum_{v\neq u} \lambda_v c_v)],
	$$
	where the infimum is taken over all chains $d$ in $\Gcl(Lk(v))$.
	
	Since $\Gcl(\st(u))=G_u\times \Gcl(Lk(u))$ is a direct product, by Proposition \ref{prop:chains on direct products} we have
	$\scl_{\Gcl(\st(u))}(\lambda_u c_u + d)=\max(\lambda_u \scl_{G_u}(c_u),\scl_{\Gcl(\Lk(v))}(d))$, which is clearly non-decreasing in $\lambda_u$.
	Thus $\scl_{G}(c)$ is non-decreasing in $\lambda_u$ by the formula above.
\end{proof}

\begin{proof}[Proof of Lemma \ref{lemma: vertex chain scl upper bound}]
	Recall that each induced subgroup is a retract in a graph product, so scl in the two groups agree for any chain in the subgroup (Proposition \ref{prop: mono and retract}).
	Without loss of generality, assume $\scl_{G_v}(c_v)>0$ for each vertex $v$, as otherwise we may consider the problem on the induced subgroup supported on those vertices with this property.
	Given a weighted clique cover $y$, for each clique $q$, define a vertex chain $d_q=\sum_{v\in q} \frac{y_q}{\scl_{G_v}(c_v)} c_v$. 
	Since $\Gcl(q)$ is the direct product of vertex groups $G_v$ for $v\in q$, 
	by Proposition \ref{prop:chains on direct products}, we have 
	$$\scl_{\Gcl(\Gamma)}(d_q)=\scl_{\Gcl(q)}(d_q)=\max_{v\in q} \scl_{G_v}\left(\frac{y_q}{\scl_{G_v}(c_v)} c_v\right)=y_q.$$
	Consider the vertex chain $\sum_q d_q=\sum_v \frac{\sum_{q\ni v} y_q}{\scl_{G_v}(c_v)} c_v$. 
	Note that the coefficient of $c_v$ is $\frac{\sum_{q\ni v} y_q}{\scl_{G_v}(c_v)}$, which is no less than $1$, the coefficient of $c_v$ in $c$, by the definition of weighted clique cover.
	Thus by Lemma \ref{lemma: vertex chain monotone}, we have
	$$\scl(c)=\scl(\sum_v c_v) \le \scl(\sum_q d_q)\le \sum_q \scl(d_q)=\sum_q y_q=|y|.$$ 
\end{proof}

To prove Lemma \ref{lemma: vertex chain scl lower bound}, we construct quasimorphisms and use Bavard's duality.

Given a quasimorphism $f_v$ on each vertex group $G_v$, we can combine them to obtain
a quasimorphism $f$ on the graph product $G=\Gcl(\Gamma)$ as follows.

For each element $g\in G$ with reduced expression $g=g_1\cdots g_n$, we naturally have a vertex chain
$s(g) \defeq \sum_i g_i$.
This only depends on $g$ since reduced expressions are unique up to syllable shuffling.

Define $f(c)\defeq \sum_v f_v(c_v)$ for all vertex chains and extend $f$ to $\Gcl(\Gamma)$ by setting 
$$f(g) \defeq f(s(g))$$ using the splitting above. 

\begin{lemma}\label{lemma: vertex qm defect bound}
	If each $f_v$ is antisymmetric,
	then the function $f$ defined above is a quasimorphism on $G$ with defect 
	$D(f)= \sup_q \sum_{v\in q} D(f_v)$,
	where the supremum is taken over all cliques $q$ of $\Gamma$.
\end{lemma}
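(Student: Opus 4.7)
The plan is to bound the defect by a direct computation using the common-cancellation decomposition of products (Proposition \ref{prop:normal forms for products}), with the antisymmetry assumption on each $f_v$ doing the essential work of killing the contribution of the cancelled part.

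First I would check that $f$ is well-defined: since reduced expressions are unique up to syllable shuffling (Lemma \ref{lemma: normal form for graph products}), the multiset of letters $\{g_1,\dots,g_n\}$ of $g=g_1\cdots g_n$ depends only on $g$, so $s(g)$ is well-defined as a vertex chain and $f(g) := \sum_i f_{v(g_i)}(g_i)$ makes sense. Moreover, if $x \in G$ has reduced form $x_1 \cdots x_m$ then $x^{-1}$ has reduced form $x_m^{-1} \cdots x_1^{-1}$, and antisymmetry of each $f_v$ gives
$$
f(x^{-1}) = \sum_i f_{v(x_i)}(x_i^{-1}) = -\sum_i f_{v(x_i)}(x_i) = -f(x).
$$

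For the upper bound $D(f) \le \sup_q \sum_{v \in q} D(f_v)$, given $g,h \in G$ I would apply Proposition \ref{prop:normal forms for products} to write reduced decompositions
$$
g = g_0 \, q_g \, x, \qquad h = x^{-1} \, q_h \, h_0, \qquad gh = g_0 \, q_{gh} \, h_0,
$$
where $q = \{v_1,\dots,v_k\}$ is a clique of $\Gamma$, $q_g = g_1\cdots g_k$, $q_h = h_1 \cdots h_k$, $q_{gh} = (g_1 h_1)\cdots(g_k h_k)$ with $g_i, h_i, g_i h_i \in G_{v_i}\setminus\{1\}$. Since $f$ is linear on vertex chains, applying it to $s(g), s(h), s(gh)$ and using $f(s(x^{-1}))=-f(s(x))$ kills the cancelled middle and the $g_0,h_0$ terms cancel across, leaving
$$
f(g) + f(h) - f(gh) \;=\; \sum_{i=1}^{k} \bigl[ f_{v_i}(g_i) + f_{v_i}(h_i) - f_{v_i}(g_i h_i) \bigr].
$$
Each term is bounded in absolute value by $D(f_{v_i})$, giving the desired upper bound.

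For the matching lower bound, given any clique $q$, I would choose for each $v \in q$ elements $g_v, h_v \in G_v$ so that $\delta_v := f_v(g_v) + f_v(h_v) - f_v(g_v h_v)$ is arbitrarily close to $D(f_v)$ (possibly $0$ if $D(f_v)=0$, so the contribution is trivially matched); by replacing $(g_v, h_v)$ with $(h_v^{-1}, g_v^{-1})$ and using antisymmetry, I can flip the sign of $\delta_v$, so I may assume all $\delta_v$ have the same sign (and are nonzero, hence $g_v h_v \neq 1$). Setting $g = \prod_{v \in q} g_v$ and $h = \prod_{v \in q} h_v$, which is reduced because distinct vertices of $q$ lie in different vertex groups, and using that all $g_v, h_v$ commute pairwise across $q$, I get $gh = \prod_{v \in q}(g_v h_v)$ in reduced form, whence
$$
f(g) + f(h) - f(gh) \;=\; \sum_{v\in q} \delta_v,
$$
which approaches $\sum_{v \in q} D(f_v)$. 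Taking the supremum over cliques yields the lower bound. The main point where one must be careful is simply the sign-matching in the lower bound and the use of antisymmetry to annihilate the $x, x^{-1}$ contribution in the upper bound; beyond that the argument is essentially bookkeeping on reduced forms.
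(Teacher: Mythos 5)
Your proposal is correct and follows essentially the same route as the paper: the upper bound via the decomposition of Proposition \ref{prop:normal forms for products} with antisymmetry cancelling the $x,x^{-1}$ contribution, and the lower bound via products of near-extremal pairs over a clique. Your extra care about sign-matching (replacing $(g_v,h_v)$ by $(h_v^{-1},g_v^{-1})$) is a small refinement the paper glosses over, but it does not change the argument.
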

\begin{proof}
	For each clique $q$ and each vertex $v\in q$, we can find elements $g_v,h_v\in G_v$
	with $f_v(g_v)+f_v(h_v)-f_v(g_v h_v)$ arbitrarily close to $D(f_v)$.
	Then for $g_q\defeq\prod_{v\in q} g_v$ and $h_q\defeq\prod_{v\in q} h_v$, we have
	$$f(g_q)+f(h_q)-f(g_qh_q)=\sum_{v\in q}[f_v(g_v)+f_v(h_v)-f_v(g_vh_v)],$$
	which can be made arbitrarily close to $\sum_{v\in q} D(f_v)$. This proves the ``$\ge$'' direction.
	
	For the reversed direction, for any $g,h\in G$, we have reduced expressions $g = g_0 q_g x$ and $h = x^{-1} q_h h_0$ as in Proposition \ref{prop:normal forms for products},
	where
	$\supp(q_g)=\supp(q_h)=q=\{v_1,\cdots,v_k\}$ is a clique,
	$q_g=g_1\cdots g_k$, $q_h=h_1\cdots h_k$ with $g_i,h_i\in G_{v_i}$,
	and $gh$ admits a reduced expression $gh=g_0 q_{gh} h_0$ with $q_{gh}=(g_1h_1)\dots (g_k h_k)=q_gq_h$.
	Since each $f_i$ is antisymmetric, we have $f(x)+f(x^{-1})=0$. 
	The definition of $f$ implies that for the reduced expression $g=g_0 q_g x$ we have $f(g)=f(g_0)+f(q_q)+f(x)$ and similarly for $h$ and $gh$.
	Hence
	$$
	|f(g)+f(h)-f(gh)|=|f(q_g)+f(q_h)-f(q_{gh})|=\left|\sum_{i=1}^k f_{v_i}(g_i+h_i-g_i h_i)\right|\le \sum_{i=1}^k D(f_{v_i}),
	$$
	where the second inequality uses the formula derived in the first part of the proof. This proves the equality.
\end{proof}

Now we are in a place to prove Lemma \ref{lemma: vertex chain scl lower bound}.
\begin{proof}[Proof of Lemma \ref{lemma: vertex chain scl lower bound}]
	For each vertex $v$, let $\phi_v$ be an extremal antisymmetric quasimorphism as in Proposition \ref{prop:extremal qm} for the chain $c_v$, i.e.\ we have $\bar{\phi}_v(c_v)=\scl_{G_v}(c_v)$ and $D(\phi_v)=1/4$.
	
	Given any stable measure $\mu=(\mu_v)$, let $f_v=\mu_v\cdot \phi_v$ and let $f$ be the quasimorphism obtained as above by combining $f_v$'s. Then for each clique $q$, 
	we have $\sum_{v\in q} D(f_v)=\frac{1}{4}\sum_{v\in q} \mu_v\le 1/4$ by the definition of stable measures.
	Thus by Lemma \ref{lemma: vertex qm defect bound}, we have $D(f)\le 1/4$, and thus $D(\bar{f})\le 1/2$ by Proposition \ref{prop:homog rep}.
	Note that $\bar{f}(g_v)=\mu_v\cdot\bar{\phi}_v(g_v)$ for each $g_v\in G_v$ and similarly for any chain in $G_v$.
	By Bavard's duality, we have
	$$\scl_G(c)\ge \frac{\bar{f}(c)}{2D(\bar{f})}\ge \bar{f}(c)=\sum_v \mu_v\cdot \bar{\phi}_v(c_v)=\sum_v \mu_v \cdot\scl_{G_v}(c_v)=|\mu|_c.$$
\end{proof}

\begin{proof}[Proof of Theorem \ref{thm: scl of vertex chains}]
	We have $\max_\mu |\mu|_c\le \scl_{\Gcl(\Gamma)}(c)\le\min_y |y|$ by Lemmas \ref{lemma: vertex chain scl upper bound} and \ref{lemma: vertex chain scl lower bound}. By Lemma \ref{lemma: strong duality}, we know $\max_x |x|_c=\min_y |y|$, which proves the equality. 
\end{proof}

Summarizing the results in this subsection, we give a proof of Theorem \ref{thm: scl of vertex chains, beginning of sec}.
\begin{proof}[Proof of Theorem \ref{thm: scl of vertex chains, beginning of sec}]
	The linear programming problems $(P_c)$ and $(P_c^*)$ both compute $\scl_{\Gcl(\Gamma)}(c)$ by Theorem \ref{thm: scl of vertex chains}. Since the optimal solution of $(P_c)$ is achieved at some rational point, we see that $\scl_{\Gcl(\Gamma)}(c)$ is rational when $\scl_{G_v}(c_v)$ is rational for all $v$. Finally, by taking $x_v=1$ and $x_u=0$ for all $u\neq v$, we obtain a stable measure and
	from the formulation $(P_c)$ we clearly have
	$$\scl_{\Gcl(\Gamma)}(c)\ge \scl_{G_v}(c_v)$$
	for each vertex $v$.
\end{proof}

Theorem \ref{thm: scl of vertex chains} yields an algorithm to compute stable commutator length of vertex chains. This algorithm has been implemented in Python. The code may be found on the second author's website \footnote{\url{https://www.nicolausheuer.com/code.html}}.

\subsection{scl and fractional stability number} \label{subsec:fsn as scl}

In this section we consider the case where all the vertex terms in the vertex chain have the same stable commutator length. This relates scl to well-studied invariants in graph theory.

To be explicit, we construct for a given graph $\Gamma$ a graph $D_\Gamma$ and a chain $d_\Gamma$ in the right-angled Artin group $\Arm(D_{\Gamma})$, such that $\Arm(D_{\Gamma})$ can be also viewed as a graph product over $\Gamma$ and $d_{\Gamma}$ is a vertex chain where each term has scl $1/2$.
%

\begin{definition}[Double Graph] \label{def:double graph}
For a graph $\Gamma$ with vertex and edge set $\Vrm(\Gamma)$ and $\Erm(\Gamma)$, let 
$D_\Gamma$ be the graph with vertex and edge set 
\begin{eqnarray*}
\Vrm(D_\Gamma) &=& \{ \att_v, \btt_v \mid v \in \Vrm(\Gamma) \} \mbox{ and} \\
\Erm(D_\Gamma) &=& \{ (\att_v, \att_w), (\att_v, \btt_w), (\btt_v, \att_w), (\btt_v, \btt_w) \mid (v,w) \in \Erm(\Gamma) \}.
\end{eqnarray*}
Moreover, let $d_{\Gamma} = \sum_{v \in \Vrm(\Gamma)} [\att_v, \btt_v]$ in $\Arm(D_\Gamma)$. Then $D_\Gamma$ is called the \emph{double graph} and $d_{\Gamma}$ the \emph{double chain} associated to $\Gamma$.
\end{definition}

\begin{definition}[Fractional Stability Number] \label{defn:fsn}
Let $\Gamma$ be a graph. Then the \emph{fractional stability number} of $\Gamma$ is defined as
$$
\fsn(\Gamma) \defeq \max_\mu \sum_{v} \mu_v ,
$$
where the maximum is taken over all stable measures $\mu$.
\end{definition}

The fractional stability number of a graph is the \emph{fractional chromatic number} of its opposite graph. 
This invariant appears more frequently in the literature. For a reference to fractional stability number see \cite{scheinerman2011fractional}. 
The results of the previous section implies:

\begin{theorem}[$\scl$ and $\fsn$] \label{thm: scl and fsn}
Let $\Gamma$ be a graph and let $D_\Gamma$ and $d_\Gamma$ be the associated double graph and double chain respectively.  Then
$$
\scl_{\Arm(D_{\Gamma})}(d_\Gamma) = \frac{1}{2} \fsn(\Gamma),
$$
where $\fsn(\Gamma)$ is the fractional stability number of $\Gamma$.
\end{theorem}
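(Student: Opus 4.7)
The plan is to identify $d_\Gamma$ as a vertex chain in a suitable graph product structure on $\Arm(D_\Gamma)$, and then apply Theorem \ref{thm: scl of vertex chains} directly.

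First I would verify the key structural observation, already highlighted after Definition \ref{def:double chain}: the group $\Arm(D_\Gamma)$ is naturally isomorphic to the graph product $\Gcl(\Gamma, \{F(\att_v,\btt_v)\}_{v \in \Vrm(\Gamma)})$ over $\Gamma$ itself, where each vertex group is the rank-two free group $F(\att_v,\btt_v)$. This is immediate from the edge set of $D_\Gamma$: two vertices $\att_v, \btt_v$ corresponding to the same vertex $v \in \Vrm(\Gamma)$ are never connected, so they generate a free subgroup $F(\att_v, \btt_v)$; and for each edge $(v,w) \in \Erm(\Gamma)$, every generator in $\{\att_v,\btt_v\}$ commutes with every generator in $\{\att_w,\btt_w\}$. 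These are precisely the defining relations of the graph product.

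Under this identification, each term $[\att_v, \btt_v]$ of $d_\Gamma$ lies in the vertex group $G_v = F(\att_v, \btt_v)$, so $d_\Gamma$ is a vertex chain of the form $\sum_{v \in \Vrm(\Gamma)} c_v$ with $c_v = [\att_v,\btt_v]$. A classical computation (e.g.\ via Bavard duality with the Brooks quasimorphism on $\att_v$, or as in \cite[Section 2.6]{Cal:sclbook}) gives
$$\scl_{G_v}(c_v) = \scl_{F_2}([\att_v,\btt_v]) = \tfrac{1}{2}$$
for every vertex $v$.

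Now I would apply Theorem \ref{thm: scl of vertex chains} to the vertex chain $d_\Gamma$ in the graph product $\Gcl(\Gamma, \{F(\att_v,\btt_v)\})$:
$$\scl_{\Arm(D_\Gamma)}(d_\Gamma) \;=\; \max_{\mu} |\mu|_{d_\Gamma} \;=\; \max_{\mu} \sum_{v \in \Vrm(\Gamma)} \mu_v \cdot \scl_{G_v}(c_v) \;=\; \tfrac{1}{2} \max_{\mu} \sum_{v \in \Vrm(\Gamma)} \mu_v,$$
where the maximum is over stable measures $\mu$ on $\Gamma$ (Definition \ref{def:frac stab set}). By Definition \ref{defn:fsn}, the right-hand side is exactly $\tfrac{1}{2}\fsn(\Gamma)$, yielding the claimed equality.

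There is essentially no obstacle here; the content of the theorem has already been absorbed into Theorem \ref{thm: scl of vertex chains}. The only things to check are the isomorphism $\Arm(D_\Gamma) \cong \Gcl(\Gamma, \{F_2\})$ and the vertex-group computation $\scl_{F_2}([\att,\btt]) = 1/2$, both of which are routine. The point of the construction is precisely that the double graph converts an arbitrary graph $\Gamma$ into a setting where every vertex-term has the same known scl value, so that the linear programming problem $(P_c)$ for scl of the vertex chain degenerates exactly into the linear programming problem defining the fractional stability number.
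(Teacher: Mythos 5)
Your proposal is correct and follows exactly the paper's own argument: identify $\Arm(D_\Gamma)$ as the graph product over $\Gamma$ with vertex groups $F(\att_v,\btt_v)$, observe that $d_\Gamma$ is a vertex chain whose terms each have $\scl = 1/2$, and apply Theorem \ref{thm: scl of vertex chains} so that the linear program $(P_c)$ reduces to the one defining $\fsn(\Gamma)$. Nothing is missing.
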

\begin{proof}
Note that for each $v\in \Gamma$, the vertices $\att_v$ and $\btt_v$ are not adjacent and hence $\Arm(\{\att_v,\btt_v\})$ is a free group $F_v=F(\att_v, \btt_v)$ of rank two.
Also note that $\att_v$ and $\btt_v$ are both (resp. not) adjacent to $\att_u$ and $\btt_u$ if $v$ is (resp. not) adjacent to $u$.
Thus we observe that $\Arm(D_{\Gamma})$ is a graph product over $\Gamma$, where the vertex groups are the free groups $F_v$. 
In this view, $d_{\Gamma}$ is a vertex chain where each vertex term is $[\att_v, \btt_v]$, which satisfies $\scl_{F_v}([\att_v, \btt_v]) = 1/2$. 
Thus the result follows from Theorem \ref{thm: scl of vertex chains}. 
\end{proof}

For the rest of this subsection, we apply known results of $\fsn$ on graphs to deduce properties of $\scl$ in such groups.
We first describe the full spectrum of $\fsn$ on graphs. Note that the full spectrum of scl is not known even in 
the best understood case of free groups.

\begin{proposition}[\protect{see also \cite[Proposition 3.2.2]{scheinerman2011fractional}}] \label{prop: rat realization of fsn}
	The set of numbers that appear as $\fsn(\Gamma)$ for some nonempty graph $\Gamma$ is
	$$\{1\}\cup [2,\infty)\cap \Q.$$
\end{proposition}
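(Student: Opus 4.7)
The plan is to prove the characterization in two parts: first, that every nonempty graph $\Gamma$ satisfies $\fsn(\Gamma) \in \{1\} \cup ([2, \infty) \cap \Q)$; second, that each such value is realized.

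For the first part, rationality is automatic from Definition \ref{def: fsn}: $\fsn(\Gamma)$ is the optimum of a linear program with $\{0,1\}$-coefficients over a bounded polyhedron, hence attained at a rational vertex. Placing unit mass on any single vertex gives $\fsn(\Gamma) \geq 1$, while if $\Gamma$ contains two non-adjacent vertices $u, v$ the measure assigning mass $1$ to each (and $0$ elsewhere) is stable, yielding $\fsn(\Gamma) \geq 2$. If instead $\Gamma$ is complete then $\Vrm(\Gamma)$ itself is a clique and the constraint $\sum_v \mu_v \leq 1$ forces $\fsn(\Gamma) = 1$. Thus no value in the open interval $(1,2)$ arises.

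For the realization, the value $1$ is attained by a single vertex. For rational $q \geq 2$, write $q = p/r$ with positive integers $p, r$ satisfying $p \geq 2r$, and consider the circulant graph $C_{p,r}$ on vertex set $\Z/p\Z$ in which $i \sim j$ iff the cyclic distance $\min(|i-j|, p-|i-j|)$ is strictly less than $r$. The $p$ sets $Q_i = \{i, i+1, \ldots, i+r-1\}$ are cliques of size $r$, each vertex lies in exactly $r$ of them, and summing the inequalities $\sum_{v\in Q_i}\mu_v \leq 1$ over $i \in \Z/p\Z$ gives $r\sum_v \mu_v \leq p$, hence $\fsn(C_{p,r}) \leq p/r$. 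For the matching lower bound, once one establishes $\omega(C_{p,r}) = r$, the uniform measure $\mu_v \equiv 1/r$ is stable with total mass $p/r$, so $\fsn(C_{p,r}) \geq p/r$. (The realization step also appears as \cite[Proposition 3.2.2]{scheinerman2011fractional}.)

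The only nontrivial step is the clique bound $\omega(C_{p,r}) \leq r$ under the hypothesis $p \geq 2r$. I would argue this via the arc representation: associate to each $v$ the arc $J_v = \{v-r+1, \ldots, v\}$ of length $r$ in $\Z/p\Z$; then $v \sim w$ iff $J_v \cap J_w \neq \emptyset$, so a clique $S$ corresponds to a pairwise-intersecting family of arcs of length $r$ on a cycle of length $p$. By the Helly property for arcs on a circle, either the arcs share a common point (whence $S$ is contained in a single $Q_i$ and $|S| \leq r$) or they together cover $\Z/p\Z$; the latter case is ruled out by a short pigeonhole argument on the maximal gap between consecutive elements of $S$ in cyclic order, using $p \geq 2r$, so in either case $|S| \leq r$.
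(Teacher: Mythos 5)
Your overall route coincides with the paper's: the constraint half (rationality from the rational LP polyhedron, value $1$ exactly for complete graphs, $\geq 2$ otherwise) is the same, your circulant graph $C_{p,r}$ is precisely the paper's graph $\Gamma_{m,n}$ (with $m=r$, $n=p$), and your upper bound obtained by summing the $p$ window inequalities is just the explicit form of the paper's weighted-clique-cover (duality) bound. The one step carrying real content is the clique bound $\omega(C_{p,r})\le r$, which is what makes the uniform measure $1/r$ stable, and there your argument has a genuine gap: the "covering" alternative in the Helly dichotomy cannot be ruled out. Take $p=6$, $r=3$ and the clique $S=\{0,2,4\}$ (pairwise cyclic distance $2<3$): the arcs are $J_0=\{4,5,0\}$, $J_2=\{0,1,2\}$, $J_4=\{2,3,4\}$, which pairwise intersect, have empty common intersection, and cover $\Z/6\Z$. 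The same configuration occurs with strict inequality $p>2r$, e.g.\ $p=9$, $r=4$, $S=\{0,3,6\}$ -- a case you actually need when realizing $q=9/4$. So no pigeonhole on the maximal gap, under the hypothesis $p\ge 2r$, can exclude this case; and in this case $S$ lies in no window $Q_i$, so your argument gives no bound on $|S|$ at all.

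The fact you need is nevertheless true: $\omega(C_{p,r})=r$ for $p\ge 2r$ is the standard statement that the independence number of the circular complete graph $K_{p/r}$ is $r$ (equivalently, the clique number of the $(r-1)$-st power of a $p$-cycle is $r$), but in the covering case it requires an actual argument -- for instance, analyzing the cyclic gaps between consecutive elements of $S$: at most one gap can be $\ge p-r+1$ (and then $S$ sits inside a window of $r$ consecutive vertices), while if all gaps are $\le r-1$ one must still play the split of forward/backward distances from a fixed element of $S$ against the remaining pairwise adjacency constraints, which is fiddlier than a one-line pigeonhole -- or one can simply invoke the known fact, e.g.\ \cite[Proposition 3.2.2]{scheinerman2011fractional}, which you mention only parenthetically. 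For what it is worth, the paper's own proof is terse at exactly this point: its assertion that the windows are all the maximal cliques of $\Gamma_{m,n}$ fails in the same examples (in $\Gamma_{3,6}$ the set $\{v_1,v_3,v_5\}$ is a maximal clique that is not a window), but the weaker statement that every clique has at most $m$ vertices -- which is all either proof needs -- does hold.
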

\begin{proof}
	We already know that $\fsn(\Gamma)$ is always rational since the feasible set is a rational polyhedron. It is also easy to notice that $\fsn(\Gamma)\ge 1$ since each vertex is a stable set, and that $\fsn(\Gamma)\ge 2$ whenever there are two non-adjacent vertices.
	
	So it suffices to construct graphs to achieve all rational numbers $r\ge2$. For any $m\ge2$ and $n\ge2m$, let $\Gamma_{m,n}$ be the graph with $n$ vertices $v_1,\ldots,v_n$ such that it is the union of cliques on $v_{i+1},\ldots,v_{i+m}$ for all $1\le i\le n$, where indices are taken mod $n$. We claim that $\fsn(\Gamma_{m,n})=n/m$, from which the result would follow.
	
	Using $n\ge 2m$, it is straightforward to check that the cliques used to described $\Gamma_{m,n}$ are all the maximal cliques. Thus having weight $1/m$ on all vertices is a stable measure, which shows $\fsn(\Gamma_{m,n})\ge n/m$.
	
	On the other hand, assigning weight $1/m$ to each maximal clique (and $0$ to all smaller cliques) is a weighted clique cover, and hence $\fsn(\Gamma_{m,n})\le n/m$ by the dual problem. Thus $\fsn(\Gamma_{m,n})=n/m$.
\end{proof}

For comparison, it is known that scl in free groups has a sharp lower bound $1/2$, and based on experiments, the spectrum seems to be proper in $[1/2,3/4)$ and dense in $[3/4,\infty)$. However, it appears to be much harder if possible at all to construct families of elements or integral chains in free groups with scl achieving arbitrary rational numbers greater than $1$.

Combining Theorem \ref{thm: scl and fsn} and Proposition \ref{prop: rat realization of fsn} we deduce:
\begin{theorem}[Rational realization] \label{thm: rat real sec}
For every rational number $q \geq 1$ there is an integral chain $c$ in a right-angled Artin group $\Arm(\Gamma)$ such that $\scl_{\Arm(\Gamma)}(c) = q$.
\end{theorem}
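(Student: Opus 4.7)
The plan is to derive this theorem as an immediate combination of Theorem \ref{thm: scl and fsn} and Proposition \ref{prop: rat realization of fsn}. The former produces, for every graph $\Gamma$, an integral chain $d_\Gamma$ in the right-angled Artin group $\Arm(D_\Gamma)$ with $\scl_{\Arm(D_\Gamma)}(d_\Gamma) = \tfrac{1}{2}\fsn(\Gamma)$, while the latter tells us exactly which rational values arise as $\fsn(\Gamma)$, namely $\{1\}\cup\bigl([2,\infty)\cap\Q\bigr)$.

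Given a rational $q\geq 1$, the key numerical observation is that $2q\geq 2$ and $2q\in\Q$, so by Proposition \ref{prop: rat realization of fsn} there exists a (nonempty) graph $\Gamma$ with $\fsn(\Gamma) = 2q$. First I would invoke that proposition to select such a $\Gamma$. Next, I would form the double graph $D_\Gamma$ and the associated double chain $d_\Gamma\in\Arm(D_\Gamma)$ as in Definition \ref{def:double graph}, and apply Theorem \ref{thm: scl and fsn} to conclude
\[
\scl_{\Arm(D_\Gamma)}(d_\Gamma) \;=\; \tfrac{1}{2}\fsn(\Gamma) \;=\; q.
\]
Setting the ambient graph to be $D_\Gamma$ and the chain to be $c = d_\Gamma$ delivers the theorem.

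There is essentially no obstacle here: all of the substantive content has been packaged into the two cited results. The only thing worth being careful about is ensuring the ranges line up, i.e.\ noting that the assumption $q\geq 1$ is exactly what puts $2q$ into the realized spectrum $\{1\}\cup[2,\infty)\cap\Q$ of $\fsn$ (indeed $q=1$ corresponds to $\fsn=2$, so the case $\fsn(\Gamma)=1$ is not needed). No additional construction is required beyond quoting Proposition \ref{prop: rat realization of fsn} for the existence of the witnessing graph.
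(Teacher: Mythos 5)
Your proposal is correct and is precisely the paper's argument: the paper deduces Theorem \ref{thm: rat real sec} by combining Theorem \ref{thm: scl and fsn} with Proposition \ref{prop: rat realization of fsn}, exactly as you do, with the observation that $q\geq 1$ puts $2q$ in the realized range $[2,\infty)\cap\Q$ of $\fsn$. Nothing further is needed.
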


Computing the fractional stability number is NP-hard \cite{grotschel1981ellipsoid}. This implies that computing scl in RAAGs is also NP-hard.
\begin{theorem}[NP-hardness] \label{thm: np hard sec}
Unless $P=NP$, there is no algorithm which, given a simplicial graph $\Gamma$, an element $g \in \Arm(\Gamma)$ and a rational number $q \in \Q^+$ decides if $\scl_{\Arm(\Gamma)}(g) \leq q$ in polynomial time in $|\Vrm(\Gamma)| + |g|$. The same holds for chains. 
\end{theorem}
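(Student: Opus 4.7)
The plan is to reduce the problem of computing the fractional stability number of graphs to the problem of computing scl in RAAGs, and then invoke the NP-hardness of the former due to \cite{grotschel1981ellipsoid}. The main tool is Theorem \ref{thm: scl and fsn}, which gives $\scl_{\Arm(D_\Gamma)}(d_\Gamma) = \tfrac{1}{2}\fsn(\Gamma)$ for the double graph $D_\Gamma$ and double chain $d_\Gamma$. For this to yield a Karp-style reduction, I need to verify that the construction $\Gamma \mapsto (D_\Gamma, d_\Gamma)$ is polynomial in $|\Vrm(\Gamma)|$: this is immediate since $|\Vrm(D_\Gamma)| = 2|\Vrm(\Gamma)|$, $|\Erm(D_\Gamma)| \le 4|\Erm(\Gamma)|$, and $d_\Gamma$ is a sum of $|\Vrm(\Gamma)|$ commutators each of word length $4$.

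Given any instance $(\Gamma, q)$ of the decision problem "is $\fsn(\Gamma) \le q$?", I would output the instance $(D_\Gamma, d_\Gamma, q/2)$ and use the hypothetical polynomial-time algorithm to decide whether $\scl_{\Arm(D_\Gamma)}(d_\Gamma) \le q/2$. By Theorem \ref{thm: scl and fsn} these two inequalities are equivalent, so such an algorithm would decide fractional stability number in polynomial time, contradicting P $\ne$ NP. This establishes the NP-hardness for chains.

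For the element version, I would apply Proposition \ref{prop: formulas for scl}: writing $d_\Gamma = g_1 + \cdots + g_n$ with $n = |\Vrm(\Gamma)|$ commutators, one has
$$\scl_{\Arm(D_\Gamma)}(d_\Gamma) = \scl_{\widetilde{G}}\Bigl(g_1 \prod_{i=1}^{n-1} t_i g_{i+1} t_i^{-1}\Bigr) - \frac{n-1}{2},$$
where $\widetilde{G} = \Arm(D_\Gamma) \star \langle t_1\rangle \star \cdots \star \langle t_{n-1}\rangle$. Crucially, $\widetilde{G}$ is itself a RAAG $\Arm(\widetilde{D}_\Gamma)$, where $\widetilde{D}_\Gamma$ is obtained from $D_\Gamma$ by adjoining $n-1$ isolated vertices $t_i$. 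Both $\widetilde{D}_\Gamma$ and the single element $w = g_1\prod_i t_i g_{i+1}t_i^{-1}$ still have size polynomial in $|\Vrm(\Gamma)|$, and the rational threshold $q$ is transformed to $q/2 + (n-1)/2$, a rational with polynomial-size encoding. The same reduction then shows NP-hardness for elements.

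The only step that requires care, rather than routine manipulation, is ensuring that the NP-hardness input for $\fsn$ is stated in the right form: one needs the \emph{decision} problem "$\fsn(\Gamma) \le q$" to be NP-hard (not merely the exact computation), and this follows from the standard LP-duality correspondence with the fractional chromatic number of the complement graph, whose decision version is known to be NP-hard. Aside from this bookkeeping, the reduction is entirely mechanical once Theorem \ref{thm: scl and fsn} and Proposition \ref{prop: formulas for scl} are in hand; there is no serious geometric obstacle, since the hard work has already been done in establishing the identity $\scl_{\Arm(D_\Gamma)}(d_\Gamma) = \tfrac{1}{2}\fsn(\Gamma)$.
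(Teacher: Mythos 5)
Your proposal is correct and follows essentially the same route as the paper: reduce from the NP-hardness of fractional stability number via Theorem \ref{thm: scl and fsn} to handle chains, then pass to a single element by adjoining isolated vertices (so the free product in Proposition \ref{prop: formulas for scl} is again a RAAG) and adjusting the threshold by $(n-1)/2$. Your extra remark about needing the decision version of $\fsn$ rather than exact computation is a reasonable bookkeeping refinement that the paper glosses over, but it does not change the argument.
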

\begin{proof}
It is known \cite{grotschel1981ellipsoid} that computing $\fsn$ for a graph $\Gamma$ is NP-hard. Given a graph $\Gamma$, we may in polynomial time construct the double graph and the double chain $d_\Gamma \in \Arm(D_{\Gamma})$. By Theorem \ref{thm: scl and fsn}, computing $\scl(d_\Gamma)=\frac{1}{2}\fsn(\Gamma)$ is NP-hard as well.

Let $\tilde{D}_\Gamma$ be the graph obtained from $D_\Gamma$ by adding $|V(\Gamma)|$ isolated vertices. Then $\Arm(\tilde{D}_{\Gamma})$ is a free product $\Arm(D_\Gamma)\star F_{|V(\Gamma)|}$.
Using Proposition \ref{prop: formulas for scl}, we may in polynomial time construct an element $\tilde{d}$ in $\Arm(\tilde{D}_{\Gamma})$ such that $\scl(\tilde{d}) = \scl(d_{\Gamma}) + \frac{\Vrm(\Gamma)-1}{2}$. Thus computing scl of elements in RAAGs is also NP-hard.
\end{proof}

\subsection{Histograms of scl and fsn} \label{subsec:statistics of scl and fsn}

Although it is NP-hard, we may compute fsn relatively quickly for graphs with up to $30$ vertices. This allows us to perform computer experiments on the distribution of $\fsn$ for random graphs.
The result of these experiments is recorded (rescaled by $1/2$) in Figure \ref{fig:fsn} in the introduction. 
Here we considered $50,000$ random graphs with $25$ vertices, where between every two vertices there is an edge with probability $1/2$. This reveals an interesting distribution of $\fsn$ on random graphs: Values with low denominator appear much more frequently and the histogram exhibits a self-similar behavior.

The same type of histogram has been observed for stable commutator length of random elements in the free group (Figure \ref{fig:scl}). Here we consider $50,000$ uniformly chosen random words of length $24$ in the commutator subgroup of the free group $F_2$. See \cite[Section 4.1.9]{Cal:sclbook} for a discussion of this phenomenon and comparison to Arnold's tongue. 
Explanations of these patterns in the frequency for either $\scl$ or $\fsn$ are not known.

In this section we will give a brief statistical analysis of both $\scl$ and $\fsn$. We show that both scl and fsn can be modeled using the same type of distributions which we describe in Definition \ref{defn:dist X}. 
While this is purely heuristic, 
it indicates that $\fsn$ and $\scl$ converge for large graph sizes / word lengths to a similar distribution; see Question \ref{quest: dist scl and fsn similar}.

Let $SCL$ denote the random variable $2 \cdot \scl(W)$ where $W$ is the random variable with a uniform distribution on $\{w \in F_2 \mid |w| \leq 24 \}$ and let $FSN$ be the random variable $\fsn(\Gamma)$ where $\Gamma$ is a random variable with uniform distribution $\{ \Gamma \mid \Vrm(\Gamma) = 25 \}$.
We note that the factor of $2$ for $\scl$ is intended and indeed necessary. In light of the relationship to Euler characteristic (Definition \ref{defn: scl via euler charac}) and Bavard's Duality Theorem (Theorem \ref{thm:bavard}), $2 \cdot \scl$ seems to be the more natural invariant. 
The histograms of $50,000$ independent instances of $SCL$ and $FSN$ may be found in Figure \ref{figure:scl and fsn sec}.

We make two crucial heuristic observations:
\begin{enumerate}
\item For large integers $n$, we observe that $\Pbb(\mbox{$X$ has denominator n}) \sim \frac{\phi(n)}{n^d}$, where $\phi$ is Euler's Totient function and $X$ is $SCL$ or $FSN$. This is depicted in Figure \ref{fig:denominator_power}. Experimentally we may estimate that $d \sim 1.7$ for SCL and $d \sim 2.5$ for $FSN$.  
It is also apparent that for smaller $n$ this heuristic does not hold, and that instead this coefficient is much smaller. This suggests that the exponent may be approximated by $d \cdot (1 - n^\beta)$ for some negative $\beta$.

\item For a fixed denominator $n$, let $X_n$ be the random variable of $X$ conditioned on that $X$ has denominator $n$. Then $X_n$ follows roughly a normal distribution $B_n$ (rounded to the closest rational in $1/n$) with fixed mean $\mu$ and standard deviation $\sigma_n$; see Figure \ref{fig:denominator_dist}. The standard deviation appears to be roughly of the form $\sigma_n = c_1 \cdot n^{c_2}$; see Figure \ref{fig:different_std}.
\end{enumerate}
This suggests that both the histogram of $\scl$ and $\fsn$ are the result of an interference of several (rounded) `normal' distributions $B_n$. 

 \begin{figure}
  \centering

  \subfloat[]{\includegraphics[width=0.3\textwidth]{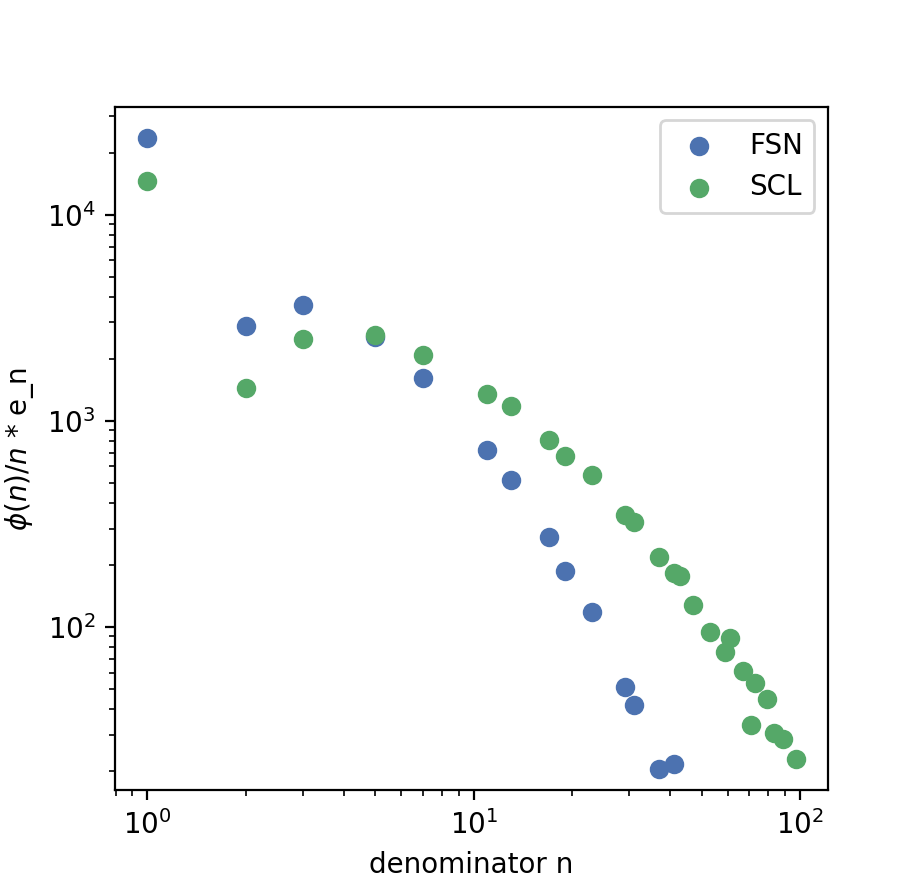} \label{fig:denominator_power}} 
  \hfill
  \subfloat[]{\includegraphics[width=0.3\textwidth]{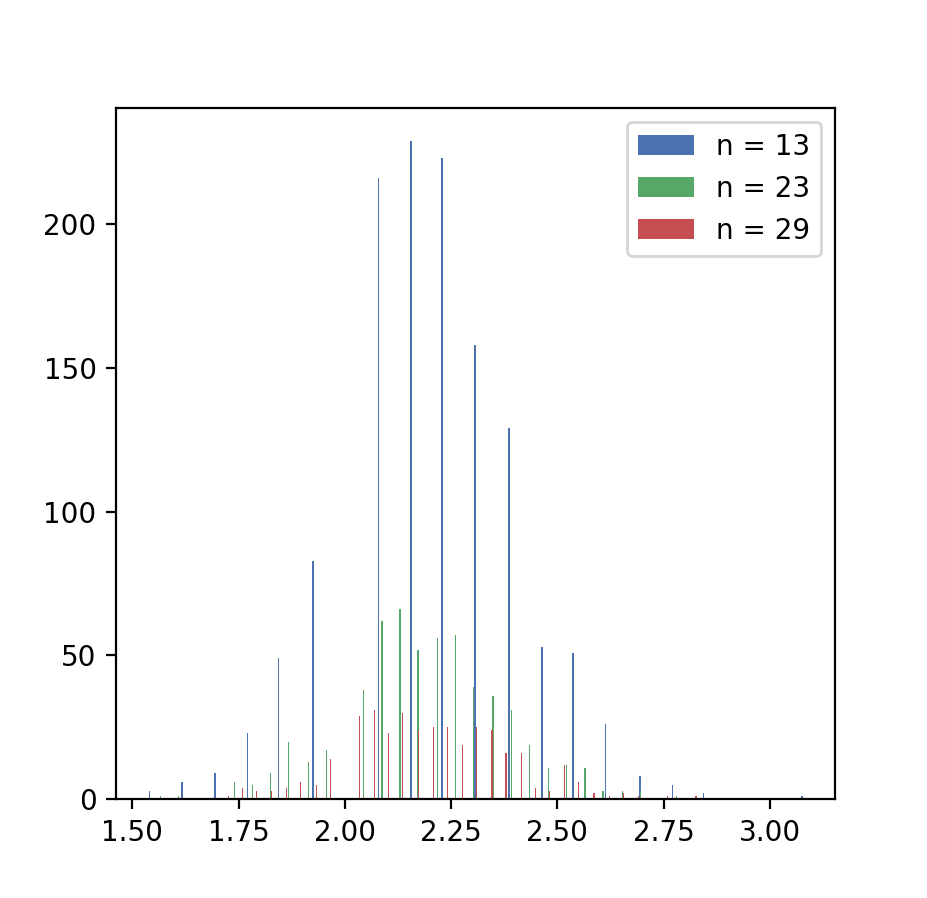} \label{fig:denominator_dist}}
  \hfill
  \subfloat[]{\includegraphics[width=0.3\textwidth]{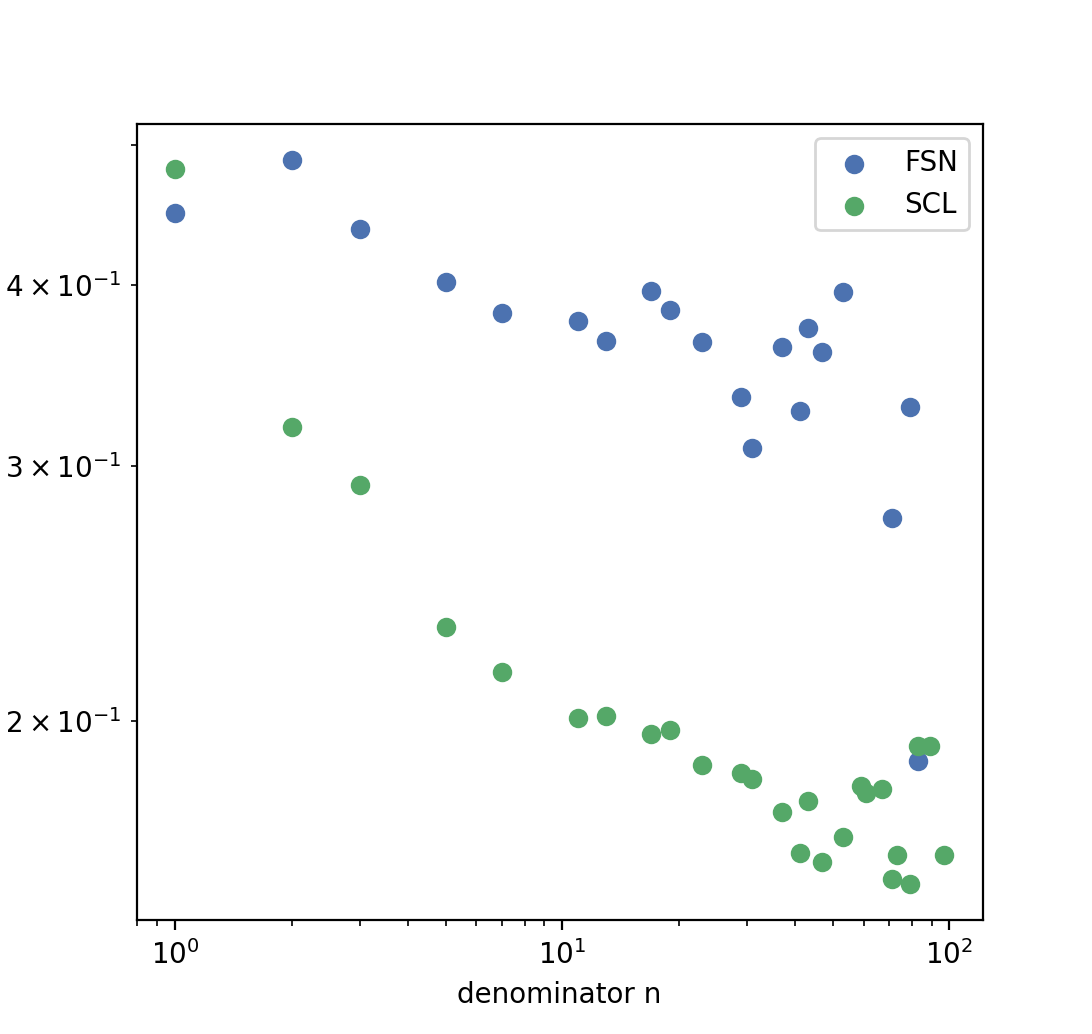} \label{fig:different_std}}

  \caption{Statistical analysis of SCL and FSN: Let $X$ be either a random $\scl$ in $F_2$ on words of length $24$ or a $\fsn$ of a random graph on $25$ vertices. Let $X_n$ denote the set of elements with denominator exactly $n$. Figure \ref{fig:denominator_power} plots $e_n = \# X_n$, the number of elements having denominator $n$ for $50,000$ random samples. Figure \ref{fig:denominator_dist} shows the distribution of SCL having denominator $13$, $23$ and $29$ for $50,000$ samples. Figure  \ref{fig:different_std} shows the different standard sample deviations of $X_n$.} \label{fig:statistics scl and fsn}
\end{figure}

 \begin{figure}[!tbp]
  \centering
   \subfloat[$2 \cdot \scl(w)$ for $w \in F_2$ in the commutator subgroup with length $24$ uniformly chosen for $50,000$ instances (green) vs. $50,000$ random instances of the $X$ distribution modeled with parameters $d = -2$, $\beta = -0.2$, $\mu = 2.164$, $c_1 = 0.3$ and $c_2 = -0.14$ (blue)]{\includegraphics[width=\textwidth]{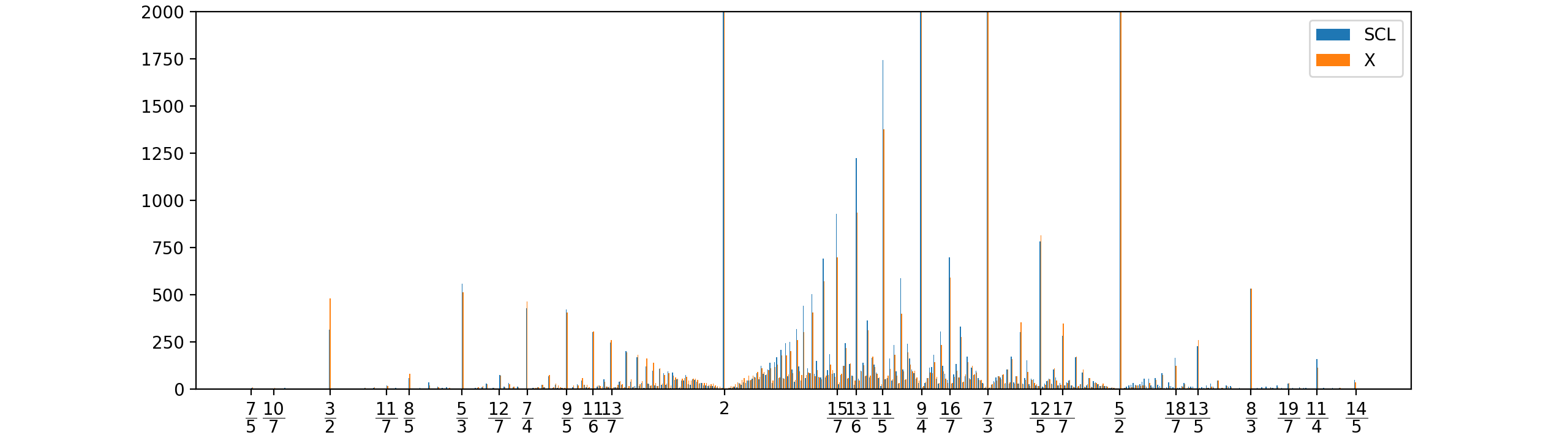}\label{fig:scl_sec}}
  \\
  \subfloat[$\fsn(\Gamma)$ for $\Gamma$ uniformly chosen as a graph with $25$ vertices for $50,000$ instances (green) vs. $50,000$ random instances of the $X$ distribution with parameters  $d = -2.8$, $\beta = -0.2$, $\mu = 6.141$, $c_1 = 0.5$ and $c_2 = -0.1$ (blue).]{\includegraphics[width=\textwidth]{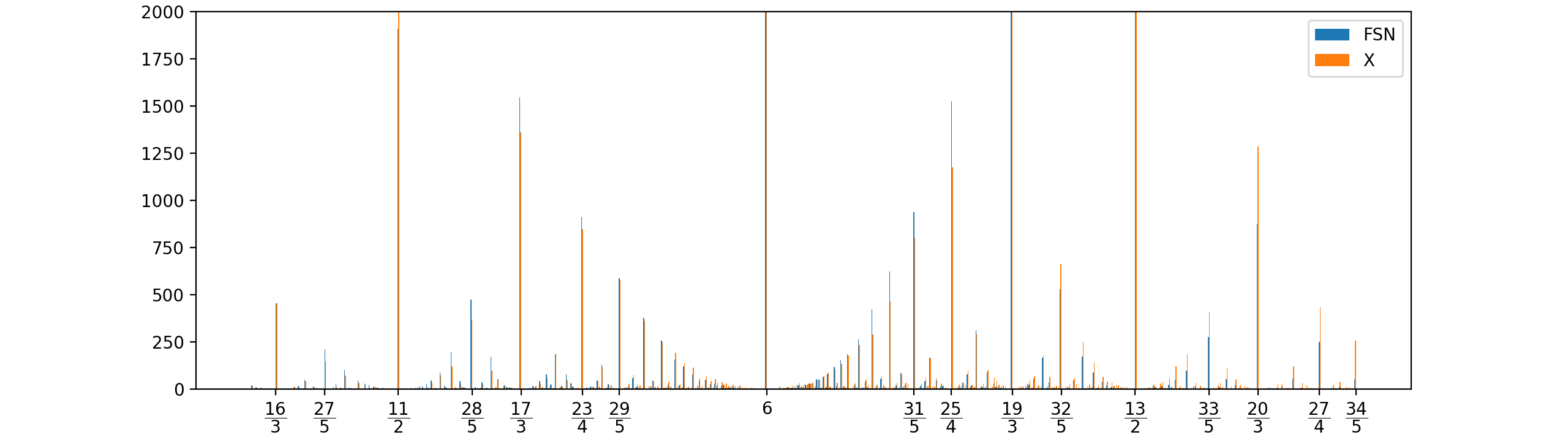}\label{fig:fsn_sec}}  
  \caption{Modeling $2 \cdot \scl$ and $\fsn$ using the $X$ distribution. In both cases, we truncated the spikes to fit the figure.} \label{figure:scl and fsn sec}
\end{figure}

These observations lead us to the following construction of a random variable $X$ depending on real parameters $d, \beta, \mu, c_1, c_2$.

\begin{definition}[The distribution $X$] \label{defn:dist X}
Let $d<-1$, $\beta<0$, $c_1> 0$, $c_2<0$, and $\mu$ be real parameters.
Define the random variable $X = X(d, \beta, \mu, c_1, c_2)$ as follows:

Set $p(n, \beta, d) = n^{(1-n^{\beta}) \cdot d}$. Choose with probability $p(n, \beta, d) / \sum_{n=1}^\infty p(n, \beta, d)$ an integer $n \in \N$.
Choose the rational $X$ in $\frac{1}{n} \Z$ as follows: Let $N_n$ be the random variable with distribution $\Ncl(\mu, \left(c_1 \cdot n^{c_2} \right)^2 )$, the normal distribution with mean $\mu$ and standard deviation $c_1 \cdot n^{c_2}$. Set $X$ to be the number in $\frac{1}{n} \Z$ closest to $N_n$.
\end{definition}

The distribution of $X$ may be found on the second authors website \footnote{\url{https://www.nicolausheuer.com/code.html}}. We may use this to fit $X$ to $SCL$ and $FSN$. The result of this experiment is shown in Figure \ref{figure:scl and fsn sec}. At least qualitatively, $X$ is a good approximation of the distribution of $SCL$ and $FSN$. 

Based on this, we ask:
\begin{question} \label{quest: dist scl and fsn similar}
Is there a \emph{natural} distribution $Y$ indexed by some parameter set $\mathcal{P}$ such that there are sequences of parameters $s_n$, $f_n$ for $n \in \N$ such that as $n \to \infty$, both the random variable $\scl(w)$, for $w$ uniformly chosen from $\{ w \in [F_2, F_2] \mid |w| = 2 \cdot n \}$, and $\fsn(\Gamma)$ where $\Gamma$ is uniformly chosen among all graphs with $n$ vertices converge almost surely to $Y(s_n)$ and $Y(f_n)$, respectively?
\end{question}

\bibliographystyle{alpha}
\bibliography{gap_for_chains}

%
%
%
%
%
%
%

\end{document}